\DeclareMathAlphabet\mathbfcal{OMS}{cmsy}{b}{n}
\newcommand{\cdotspace}{\hspace{-1.5pt}\cdot\hspace{-1.5pt}}
\newcommand{\new}{\mathrm{new}}
\newcommand{\ord}{\mathrm{ord}}
\newcommand{\sst}{\mathrm{st}}
\newcommand{\tupH}{\textup{H}}
\newcommand{\ccF}{\mathbfcal{F}}
\DeclareSymbolFont{cyrletters}{OT2}{wncyr}{m}{n}
\DeclareMathSymbol{\Sha}{\mathalpha}{cyrletters}{"58}
\definecolor{Green}{rgb}{0.0, 0.5, 0.0}
\newcommand{\p}{\mathfrak{p}}
\numberwithin{equation}{section}
\begin{document}

\title[Rationality of Bianchi Stark--Heegner cycles]{The rationality of Stark--Heegner cycles attached to Bianchi modular forms -- The base--change scenario.}

\begin{abstract}
We study Stark--Heegner cycles attached to Bianchi modular forms, that is automorphic forms for $\mathrm{GL}(2)$ over an imaginary quadratic field $F$. The Stark--Heegner cycles are local cohomology classes in the $p$-adic Galois representation associated to the Bianchi eigenform. They are conjectured to be the restriction (at a prime $p$) of global cohomology classes in the (semistable) Bloch--Kato Selmer group defined over ring class fields of a relative quadratic extension $K/F$. In this article, we show that these conjectures hold when the Bianchi eigenform is the base-change of a classical elliptic cuspform.
\end{abstract}

\author{Guhan Venkat}
\address{Guhan Venkat\newline
Department of Mathematics\\
Ashoka University\\
Rajiv Gandhi Education City\\
Sonipat, 131029\\
India.}
\email{guhan.venkat@ashoka.edu.in}

\thanks{The author's work was partially supported by the Department of Science \& Technology's INSPIRE Faculty Grant IFA20-MA-147 and a SERB MATRICS grant -- MTR/2022/000031.}
\subjclass[2010]{11F41, 11F67, 11F85, 11S40}
\keywords{Bianchi modular forms, Stark--Heegner cycles, $p$-adic Gross--Zagier formulas, $\p$-adic Abel--Jacobi maps, Heegner cycles, $p$-adic $L$-functions.}
\maketitle
\tableofcontents


\setcounter{tocdepth}{1}
\renewcommand{\baselinestretch}{0.4}
\small \tableofcontents
\renewcommand{\baselinestretch}{1.0}\normalsize


\section{Introduction} \label{sec:intro}
The arithmetic theory of automorphic $L$-functions has been the subject of mathematical research for long. The Birch \& Swinnerton--Dyer Conjecture (BSD) and its generalization, the Bloch--Kato Conjecture predict a mysterious relationship between the arithmetic of an automorphic form and the special values of its $L$-function.  For instance, if $E/\QQ$ is an elliptic curve over the field of rational numbers, then BSD predicts that the order of vanishing of the Hasse--Weil $L$-function at $s = 1$ (analytic rank) equals the Mordell--Weil rank of the group of rational points (algebraic rank), i.e. \[\mathrm{ord}_{s=1}L(E,s) = \mathrm{rank}_{\ZZ}E(\QQ).\] When the analytic rank is precisely one, the theory of Complex Multiplication via \emph{Heegner points} plays a key role in the celebrated proof of BSD by Gross--Zagier (\cite{GZ86}) and Kolyvagin (\cite{Kol88}).  In scenarios that go beyond the realms of the theory of Complex Multiplication, Darmon (\cite{Dar01}) used $p$-adic methods to construct local points in the Mordell--Weil group of the curve, known as \emph{Stark--Heegner points}. These points are conjectured to be global points and satisfy a reciprocity law under Galois automorphisms similar to the ones satisfied by Heegner points.
\paragraph*{} When $E$ is replaced by the Galois representation $\rho_{/K}$ attached to the quadratic base--change of a Bianchi modular form (i.e. a modular form over an imaginary quadratic field $F$), then the Bloch--Kato Conjecture predicts that the order of vanishing of its $L$-function at critical values equals the rank of a Selmer group attached to $\rho_{/K}$. Inspired by the ideas of Darmon and generalizing earlier works of Trifkovi\'c (\cite{Tri06}) \& Rotger--Seveso (\cite{RS12}), we construct local Selmer classes via \emph{Stark--Heegner cycles} in \cite{VW19}. We then conjecture these local classes to be global Selmer classes that satisfy a reciprocity isomorphism as in \cite{Dar01}.  The main aim of this article (Theorem~\ref{thm:maintheoremintroduction}) is to show that the global rationality conjecture of \cite{VW19} (Conjecture 6.8) holds when the Bianchi eigenform is the base--change to the imaginary quadratic field $F$ of a classical elliptic eigenform. We also record the consequences of Theorem~\ref{thm:maintheoremintroduction} towards Trifkovi\'{c}'s global rationality conjecture for Stark--Heegner points attached to (modular) elliptic curves defined over imaginary quadratic fields (\cite[Conjecture 6]{Tri06}) in \S\ref{subsec:literature} below. In particular, our results fit within Darmon's conjectural program initiated in \cite{Dar01}.
\subsection{Set--up}
Let $p$ be a rational prime and fix embeddings $\iota_{\infty} : \overline{\QQ} \rightarrow \mathbb{C}$ and $\iota_{p}: \overline{\QQ} \rightarrow \overline{\QQ}_{p}$ once and for all.  Let $F$ be an imaginary quadratic field with ring of integers $\mathcal{O}_{F}$ and discriminant $D_{F}$ such that $p$ is unramified in $F$, and we denote by $\mathfrak{p}$ to be the prime above $p$ in $\mathcal{O}_{F}$ corresponding to $\iota_{p}$. For ease of notation, we assume that the class number of $F$ is one. Let $\cF \in S_{\underline{k_{0}}+2}(U_{0}(\mathcal{N}))^{\mathrm{new}}$ be a \emph{Bianchi cuspidal newform} of (parallel) even weight $k_{0} + 2$\footnote{In \cite{VW19}, we denoted the form to have parallel weight $(k_{0},k_{0})$.} and square--free level $\cN = \p\cM \subseteq \cO_{F}$ (such that $(\p, \cM) = 1$). Let $K/F$ be a quadratic extension of $F$ of relative discriminant (resp. absolute discriminant) $\mathcal{D}_{K/F}$ (resp. $D_{K}$) relatively prime to $\cN$. Further we assume that $K$ satisfies the following Stark--Heegner hypothesis (\textbf{SH-Hyp})
\begin{itemize}
\item[•] $\p$ is inert in $K$
\item[•] All primes $\mathfrak{l} \mid \cM$ split in $K$
\end{itemize}
Under (\textbf{SH-Hyp}), the sign of the functional equation of the base--change $L$-function $L(\cF/K,s)$ is $-1$ and in particular forces the vanishing of the central critical value, i.e.
\[ L(\cF/K,k_{0}/2 + 1) = 0 \]
This allows us to force higher orders of vanishing over ring class extensions of the field $K$. Let $\mathcal{C} \subseteq \cO_{F}$ be any ideal relatively prime to $\cN\mathcal{D}_{K/F}$ and let 
\[ \cO_{\mathcal{C}} := \cO_{F} + \mathcal{C}\cO_{K} \]
be the $\cO_{F}$-order of conductor $\mathcal{C}$ in $K$. Let $H_{\mathcal{C}}/K$ be the ring class field of conductor $\mathcal{C}$ and let $G_{\mathcal{C}} := \mathrm{Gal}(H_{\mathcal{C}}/K)$ which we know by global class field theory is isomorphic to $\mathrm{Pic}(\cO_{\mathcal{C}})$. For any character $\chi:G_{\mathcal{C}} \rightarrow \mathbb{C}^{\times}$, the sign of the twisted $L$-series $L(\cF/K,\chi,s)$ is again $-1$. Further, the $L$-series admits a factorisation
\begin{equation} \label{eqn:factorisation}
L(\cF/H_{\mathcal{C}},s) = \prod\limits_{\chi \in G_{\mathcal{C}}^{\vee}}L(\cF/K,\chi,s)
\end{equation}
and it follows that 
\[ \mathrm{ord}_{s = k_{0}/2 + 1}L(\cF/H_{\mathcal{C}},s) \geq h(\mathcal{O}_{\mathcal{C}}) := |G_{\mathcal{C}}| \]
We denote by $V_{p}(\cF)$ to be the two dimensional $G_{F} \defeq \mathrm{Gal}(\overline{\QQ}/F)$--representation attached to $\cF$, taking values in a finite extension $L/\QQ_{p}$. The Bloch--Kato conjecture then predicts the existence of a family of non-trivial cohomology classes
\[ \{s_{\mathcal{C}} \in \mathrm{Sel}_{\sst}(H_{\mathcal{C}}, V_{p}(\cF)(k_{0}/2 + 1))\}\]
over towers of class fields $H_{\mathcal{C}}$ for $\mathcal{C}$ relatively prime to $\cN\mathcal{D}_{K/F}$. Following the ideas of \cite{Dar01} and \cite{RS12}, in \cite{VW19} we proposed conjectural candidates for such a family of cohomology classes, viz. Stark--Heegner cycles which can be regarded as \emph{local cohomology classes} 
\[s_{\Psi} \in \mathrm{H}^{1}_{\mathrm{st}}(L, V_{p}(\cF)(k_{0}/2+1))\] 
assoicated to \emph{optimal embeddings} of $\cO_{F}[1/{\p}]$-orders. See \S\ref{subsec:starkheegnercycles} below where we briefly recall the construction of Stark--Heegner cycles. The aim of this article (See Theorem~\ref{thm:maintheoremintroduction} below) is to give some evidence for Conjecture 6.8 of \cite{VW19} by showing that the Stark--Heegner classes are in fact (the restriction at $\p$ of) global Selmer classes in the base--change scenario. 
\paragraph*{} Before stating Theorem~\ref{thm:maintheoremintroduction} precisely, we introduce some notation. Let $\mathcal{R}$ be the Eichler $\mathcal{O}_{F}[1/\mathfrak{p}]$-order in $\textup{M}_{2}(\mathcal{O}_{F}[1/\mathfrak{p}])$ of $2\times 2$ matrices that are upper triangular modulo $\mathcal{M}$ and let $\Gamma := \mathcal{R}_{1}^{\times}$ be the set of invertible matrices of $\mathcal{R}$ of determinant $1$. Let $\Psi : \mathcal{O}_{\mathcal{C}} \hookrightarrow \mathcal{R}$ be an optimal embedding of $\mathcal{O}_{\mathcal{C}}$ -- an $\mathcal{O}_{F}[1/\mathfrak{p}]$-order in $K$ of conductor $\mathcal{C}$ prime to $\mathcal{N}\mathcal{D}_{K}$. The \emph{Stark--Heegner} (or \emph{Darmon}) cycle is then a homology class
\[ 
\mathrm{D}_{\Psi} \in (\Delta_{0} \otimes \textup{Div}(\mathcal{H}_{\mathfrak{p}}^{\textup{ur}}) \otimes V_{k_{0},k_{0}})_{\Gamma} 
\]
where $\Delta_{0} := \textup{Div}^{0}(\mathbb{P}^{1}(F))$, $\textup{Div}(\mathcal{H}_{\mathfrak{p}}^{\textup{ur}})$ denotes the subgroup of divisors supported on the unramified $\p$-adic upper half plane $\cH_{\p}^{\mathrm{ur}} \defeq \left(\mathbb{P}^{1}(\mathbb{Q}_{p}^{\textup{ur}})\setminus\mathbb{P}^{1}(F_{\mathfrak{p}})\right)^{{\textup{Gal}(\mathbb{Q}_{p}^{\textup{ur}}/L^{0})}}$ where $L^{0} := L \cap \mathbb{Q}_{p}^{\textup{ur}}$, and $V_{k_{0},k_{0}} := V_{k_{0}} \otimes V_{k_{0}}$ where $V_{k_{0}}$ is the ring of homogenous polynomials of degree $k_{0}$ in two variables with coefficients in $L$. This space should be regarded as an explicit substitute for the local Chow group. 
\paragraph*{} In \cite{VW19}, we developed a `modular symbol theoretic' $\p$-adic integration theory following \cite{Sev12} (See \S\ref{subsec:padicintegrationrecap} below). Our $\p$-adic integration theory can be regarded as a morphism
\[ \Phi_{\cF}^{\sigma} : (\Delta_{0} \otimes \textup{Div}^{0}(\mathcal{H}_{\mathfrak{p}}^{\textup{ur}}) \otimes V_{k_{0},k_{0}})_{\Gamma} \rightarrow \mathbf{D}_{\cF, L}^{\sigma}/\mathrm{Fil}^{\frac{k_{0}+2}{2}}(\mathbf{D}_{\cF, L}^{\sigma}) \]
for each embedding $\sigma : F_{\p} \hookrightarrow L$. Here $\mathbf{D}_{\cF, L}^{\sigma}$ is a two-dimensional filtered $L$-vector space built from the space of overconvergent Bianchi modular symbols over the Bruhat--Tits tree $\mathcal{T}_{\p}$ for $\mathrm{GL}_{2}/F_{\p}$ associated to $\cF$, which we denote by $\mathbf{MS}_{\Gamma}(L)_{(\cF)}$. In \cite{VW19}, we showed that $\mathbf{D}_{\cF} \defeq \bigoplus\limits_{\sigma} \mathbf{D}_{\cF, L}^{\sigma} \in \mathrm{MF}(\varphi, N, F_{\p}, L)$ -- the category of filtered Frobenius modules over $F_{\p}$ with coefficients in $L$. The $\p$-adic Abel--Jacobi map that we construct is a lift of $\Phi_{\cF}^{\sigma}$ (See Theorem~\ref{thm:padicabeljacobiimageofdarmoncycles} below)
\[ \Phi_{\sigma}^{\mathrm{AJ}} : (\Delta_{0} \otimes \textup{Div}(\mathcal{H}_{\mathfrak{p}}^{\textup{ur}}) \otimes V_{k_{0},k_{0}})_{\Gamma} \rightarrow \mathbf{D}_{\cF, L}^{\sigma}/\mathrm{Fil}^{\frac{k_{0}+2}{2}}(\mathbf{D}_{\cF, L}^{\sigma}) \]
removing the condition on the degree of divisors on $\cH_{\p}^{\mathrm{ur}}$. In \S\ref{sec:reviewofpadicAJ} below, we show that the $\left(\mathbf{D}_{\cF, L}^{\sigma}/\mathrm{Fil}^{\frac{k_{0}+2}{2}}\right)$-valued $\p$-adic integration theory can be realized as an $\mathbf{MS}_{\Gamma}(L)_{(\cF)}^{\vee}$-valued integration theory via
\[ \mathrm{log}\:\Phi^{\mathrm{AJ}}_{\sigma}: (\Delta_{0} \otimes \textup{Div}(\mathcal{H}_{\mathfrak{p}}^{\textup{ur}}) \otimes V_{k_{0},k_{0}})_{\Gamma} \rightarrow \mathbf{D}_{\cF, L}^{\sigma}/\mathrm{Fil}^{\frac{k_{0}+2}{2}}(\mathbf{D}_{\cF, L}^{\sigma}) \xrightarrow{\cong} \mathbf{MS}_{\Gamma}(L)_{(\cF)}^{\vee} \]
We also define 
\[\mathrm{log}\:\Phi^{\mathrm{AJ}} \defeq \sum\limits_{\sigma} \mathrm{log}\:\Phi^{\mathrm{AJ}}_{\sigma}\] 
and show that there exists a $\left(\mathbf{D}_{\cF, L}^{\sigma}/\mathrm{Fil}^{\frac{k_{0}+2}{2}}\right)$-valued integration theory, viz. $\Phi^{\mathrm{AJ}}$, equivalent to $\mathrm{log}\;\Phi^{\mathrm{AJ}}$ for any choice of $\sigma : F_{\p} \hookrightarrow L$ (See Remark~\ref{rem:choiceofembedding} in particular). 
\paragraph*{} Let $\mathbb{D}_{\cF} \defeq \mathbb{D}_{\mathrm{st}}(V_{p}(\cF)) \in MF(\varphi, N, F_{\p}, L)$ be Fontaine's semistable Dieudonn\'{e} module attached to the local Galois representation $V_{p}(\cF)|_{G_{F_{\p}}}$. The \emph{trivial zero conjecture} (See \cite[Conjecture 4.2]{VW19}) would then afford a $(\varphi, N)$-module (over $F_{\p}$ with coefficients in $L$) isomorphism
\[ \mathbf{D}_{\cF} \overset{\varphi}{\cong} \mathbb{D}_{\cF} \]
which induces an identification of the tangent spaces
\[ \frac{\mathbf{D}_{\cF, L}^{\sigma}}{\mathrm{Fil}^{\frac{k_{0}+2}{2}}(\mathbf{D}_{\cF, L}^{\sigma})} \overset{\varphi}{\cong} \frac{\mathbb{D}_{\cF, L}^{\sigma}}{\mathrm{Fil}^{\frac{k_{0}+2}{2}}(\mathbb{D}_{\cF, L}^{\sigma})} \overset{\mathrm{exp}_{\mathrm{BK}}}{\cong} \mathrm{H}^{1}_{\mathrm{st}}(L, V_{p}(\cF)(k_{0}/2+1)) \]
where $\mathbb{D}_{\cF, L}^{\sigma} \defeq \mathbb{D}_{\cF} \otimes_{F_{\p}\otimes L, \sigma} L$ and the last isomorphism is given by the Bloch--Kato exponential. 

To a character $\chi : \textup{Gal}(H_{\mathcal{C}}/K) \rightarrow \mathbb{C}^{\times}$, we define a \emph{$\chi$-twisted Stark--Heegner cycle} (See Definition~\ref{defn:stark-heegnercycle2} below) 
\[ 
\mathrm{D}_{\chi} \in (\Delta_{0} \otimes \textup{Div}(\mathcal{H}_{\mathfrak{p}}^{\textup{ur}}) \otimes V_{k_{0},k_{0}})_{\Gamma} \otimes \chi,
 \]
 where $(-) \otimes \chi$ denotes suitable scalar extension by $\chi$. Let $H_{\chi}$ denote the abelian subextension of $H_{\cC}$ cut out by the character $\chi$. Then similar to \cite[Conjecture 6.8]{VW19}, we may formulate
 \begin{conjecture} \label{conj:starkheegnerconjectureintro} 
 There exists a global Selmer class $\mathcal{S}_{\chi} \in \mathrm{Sel}_{\mathrm{st}}(H_{\chi}, V_{p}(\cF)(k_{0}/2+1))^{\chi}$ such that
 \[ \mathrm{exp}_{\mathrm{BK}} \circ \varphi \left( \Phi^{\mathrm{AJ}}(\mathrm{D}_{\chi}) \right) = \mathrm{res}_{\p}\left(\mathcal{S}_{\chi} \right) \]
 where $\left( - \right)^{\chi}$ denotes the $\chi$-isotypic component.
 \end{conjecture}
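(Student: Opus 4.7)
The plan is to exploit the base-change hypothesis and reduce Conjecture~\ref{conj:starkheegnerconjectureintro} to the rationality of classical Nekov\'a\v{r}--Heegner cycles. Write $\cF$ as the base-change to $F$ of a classical elliptic newform $f$ of weight $k_0+2$, so that $V_p(\cF) \cong V_p(f)|_{G_F}$. Analysing $K/\QQ$ under \textbf{(SH-Hyp)}, the principal case is that $K/\QQ$ is biquadratic, hence $K = F\cdot M$ for a uniquely determined quadratic field $M/\QQ$; the local conditions at $\mathfrak{p}$ and at primes of $\mathcal{M}$ then put the pair $(f, M)$ in the classical Heegner scenario (with $M$ imaginary quadratic and satisfying the Heegner hypothesis relative to $f$).

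With $M$ fixed, orders $\mathcal{O}_\mathcal{C}\subset K$ correspond to $\ZZ$-orders $\mathcal{O}_{\mathcal{C}_0}\subset M$ of compatible conductor, and a character $\chi\in G_\mathcal{C}^\vee$ pulls back to a character $\chi_0\in\mathrm{Gal}(H^M_{\mathcal{C}_0}/M)^\vee$. On the classical side, Nekov\'a\v{r}'s Heegner cycles on the Kuga--Sato variety attached to $f$ together with Besser's $p$-adic Abel--Jacobi map produce a global Selmer class $\mathcal{H}_{\chi_0}\in\mathrm{Sel}_{\mathrm{st}}(H^M_{\mathcal{C}_0}, V_p(f)(k_0/2+1))^{\chi_0}$. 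Transporting $\mathcal{H}_{\chi_0}$ along the restriction $G_F\hookrightarrow G_\QQ$ and cutting out the $\chi$-isotypic piece over $H_\chi$ yields the candidate $\mathcal{S}_\chi$.

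The core of the proof is to establish $\mathrm{exp}_{\mathrm{BK}}\circ\varphi(\Phi^{\mathrm{AJ}}(\mathrm{D}_\chi)) = \mathrm{res}_{\mathfrak{p}}(\mathcal{S}_\chi)$. I compare the two sides through their respective $p$-adic Gross--Zagier formulas: on the Bianchi side, the formula of \cite{VW19} (combined with Theorem~\ref{thm:padicabeljacobiimageofdarmoncycles}) realises $\mathrm{log}\,\Phi^{\mathrm{AJ}}(\mathrm{D}_\chi)$ as the first derivative of the two-variable Bianchi anticyclotomic $p$-adic $L$-function $L_p(\cF/K,\chi,\cdot)$ at $s = k_0/2+1$; on the classical side, a Bertolini--Darmon--Prasanna-type formula expresses $\mathrm{res}_{\mathfrak{p}}(\mathcal{H}_{\chi_0})$ through the first derivative of a classical anticyclotomic $p$-adic $L$-function over $M$. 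The bridge is the base-change factorization
\[ L_p(\cF/K,\chi,s) \;\doteq\; L_p(f,\widetilde\chi_1,s)\cdot L_p(f,\widetilde\chi_2,s), \]
where $\widetilde\chi_1,\widetilde\chi_2$ are the two characters of $\mathrm{Gal}(M^{\mathrm{ab}}/M)$ inducing $\chi$. Combining these ingredients, and identifying $\mathbf{D}_\cF$ with $\mathbb{D}_\cF$ via the base-change form of the trivial zero isomorphism, yields the desired equality up to explicit non-zero constants.

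The main obstacle will be the precise matching of periods, Euler factors, and optimal embeddings under the three comparisons above. The base-change factorization of the two-variable Bianchi $p$-adic $L$-function must be proved with the correct interpolation formulae at the central critical point so that both derivatives align, and the identification $\mathbf{D}_\cF\cong\mathbb{D}_\cF$ (i.e.\ \cite[Conjecture 4.2]{VW19}) needs to be secured unconditionally in the base-change setting; this should follow by constructing the overconvergent Bianchi modular symbol attached to $\cF$ directly from the classical overconvergent modular symbol of $f$, identifying both filtered $(\varphi, N)$-modules canonically with the one attached to $V_p(f)|_{G_{F_\mathfrak{p}}}$.
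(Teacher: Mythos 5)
Your proposal takes a genuinely different route from the paper and contains a significant gap. The paper does \emph{not} assume $K/\QQ$ is biquadratic, and indeed the Stark--Heegner hypothesis only constrains $K$ as a quadratic extension of $F$; for general such $K$ there is no quadratic field $M/\QQ$ with $K = F\cdot M$. Your reduction to Nekov\'a\v{r}--Heegner cycles over a third quadratic field $M$ therefore does not apply in the generality claimed, and you do not address the non-biquadratic case at all.

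The paper's actual comparison stays at the level of $F$: since $F$ is itself imaginary quadratic and satisfies \textbf{(Heeg-Hyp)} relative to $f$, the classical Heegner cycle $\mathcal{Y}$ is constructed using CM by $\mathcal{O}_F$ on the Shimura curve $X_{N^+,pN^-}$ over $\QQ$, giving a class in $\mathrm{CH}^{k_0/2+1}(\cM_{k_0}\otimes F)$ via Iovita--Spiess. The bridge between the two sides is not a BDP-type formula but a weight-variable $p$-adic Gross--Zagier formula in the style of Bertolini--Darmon and Seveso, and it is the \emph{second} derivative of the two-variable $p$-adic $L$-function $L_p(\mathbfcal{F}/K,\lambda_\kappa)$ in the weight $\lambda_\kappa$ at $\lambda_{k_0}$ (Theorem~\ref{thm:padicGZformula1intro}) that captures the Abel--Jacobi image of the Stark--Heegner cycle, matched against Seveso's formula (Theorem~\ref{thm:HeegnercyclespadicAJintro}) for the Heegner cycle via the factorization $(D_K)^{\lambda_\kappa/2}L_p(\mathbfcal{F}/K,\lambda_\kappa) = \eta\,L_p(\mathbfcal{F},\lambda_\kappa)\,L_p(\mathbfcal{F},\epsilon_{K/F},\lambda_\kappa)$ into two base-change Bianchi $p$-adic $L$-functions over $F$ (Theorem~\ref{thm:factorizationofpadicLfnsintro}), not into classical $p$-adic $L$-functions over a quadratic field $M$. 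This factorization rests on the genus-character formalism for $K/F$, which is the only mechanism in the paper for reducing to characters of $F$, and it restricts the result to genus characters; the paper in fact only establishes the conjecture for $\chi = \chi_{\mathrm{triv}}$, whereas you claim the general $\chi$.

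Two further points you do not address: extracting a square root from the identity of squared Abel--Jacobi images requires Assumption~\ref{ass:sFisasquare} (that the ratio $S_\cF$ is a square in $\QQ(\cF)^\times$), and the non-vanishing of the second derivative forces the Atkin--Lehner sign condition $\omega_{\cM} = (-1)^{(k_0+2)/2}$ in Theorem~\ref{thm:maintheoremintroduction}; both are essential to the paper's argument. Your observation that the trivial-zero isomorphism $\mathbf{D}_\cF\cong\mathbb{D}_\cF$ is unconditional in the base-change setting is correct and is used by the paper (Remark~\ref{rem:isomorphismofphiNmodules}, citing \cite[Lemma 4.4]{VW19}), though not via the construction you sketch.
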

 \begin{remark} \label{rem:conjectureremarkintro}
The formulation of Conjecture~\ref{conj:starkheegnerconjectureintro} above is slightly different from that in \cite[Conjecture 6.8]{VW19} which asserts the global rationality of $\Phi^{\mathrm{AJ}}_{\sigma}(\mathrm{D}_{\chi})$ over each embedding $\sigma : F_{\p} \hookrightarrow L$. As explained in \S\ref{sec:reviewofpadicAJ} below, $\Phi^{\mathrm{AJ}}(\mathrm{D}_{\chi})$ should be considered as the sum of $\Phi^{\mathrm{AJ}}_{\sigma}(\mathrm{D}_{\chi})$ over all possible embeddings $\sigma : F_{\p} \hookrightarrow L$.
 \end{remark}
\begin{remark} \label{rem:maintheoremintroremark}
When $\chi = \chi_{\mathrm{triv}} : \mathrm{Gal}(H_{K}/K) \rightarrow \mathbb{C}^{\times}$ -- the trivial character thought of as an unramified character associated to the maximal order $\cO_{\cC} = \cO_{K}$, then we denote the $\chi$--twisted Stark--Heegner cycle $\mathrm{D}_{\chi}$ simply by $\mathrm{D}_{\mathbbm{1}}$. Here $H_{K}$ is the Hilbert class field of $K$ and $H_{\chi} = K$.
\end{remark}
\subsection{Main results} \label{subsec:mainresults}
Now suppose that $\cF\in S_{\underline{k_{0}}+2}(U_{0}(\cN))^{\mathrm{new}}$ is the base-change of an elliptic cuspidal newform $f \in S_{k_{0}+2}(\Gamma_{0}(N))^{\mathrm{new}}$, where $\cN = N\cO_{F}$. Note that this is always the case when $(N, D_{F}) = 1$ which we assume to hold. Since the base--change $\cF$ is cuspidal, we know that $f$ doesn't have CM by the imaginary quadratic field $F$. By Atkin--Lehner--Li theory, we know that $a_{p}(f) = \omega_{p}p^{k_{0}/2}$ where $-\omega_{p}$ is the eigenvalue of the Atkin-Lehner involution $W_{p}$. We shall assume that $\omega_{p} = 1$ throughout (i.e. $f$ has split multiplicative reduction at $p$). Further assume that the level $N$ admits a factorization of relatively prime integers
\begin{equation} \label{eqn:factorizationofN}
N = pM = pN^{+}N^{-}
\end{equation}
such that the following Heegner hypothesis holds (\textbf{Heeg--Hyp}) :-
\begin{itemize}
\item[•] $p$ is inert in $F$
\item[•] All primes dividing $N^{+}$ (resp. $N^{-}$) split (resp. are inert) in F
\item[•] $N^{-}$ is the square--free product of an odd number of primes.
\end{itemize} 
Let $V_{p}(f)$ denote Deligne's two dimensional $p$-adic $G_{\QQ}$--representation attached to the newform $f$. Note that $V_{p}(\cF) = V_{p}(f)|_{G_{F}}$ as $p$-adic $G_{F}$-representations. Let $\omega_{\cM}$ be the eigenvalue of the Atkin--Lehner involution $W_{\cM}$ acting on $S_{\underline{k_{0}}+2}(U_{0}(\cN))^{\mathrm{new}}$. The main result of this article is to shed some evidence towards Conjecture~\ref{conj:starkheegnerconjectureintro} formulated above.
\begin{theorem} \label{thm:maintheoremintroduction}
With notation as above, suppose that $\omega_{\cM} = (-1)^{\frac{k_{0}+2}{2}}$. Then there exists a global Selmer class
\[ \mathcal{S}_{K} \in \mathrm{Sel}_{\mathrm{st}}(K, V_{p}(\cF)(k_{0}/2+1)) \]
such that
\[ \mathrm{exp}_{\mathrm{BK}}\circ\varphi\left(\Phi^{\mathrm{AJ}}(\mathrm{D}_{\mathbbm{1}})\right) = \mathrm{res}_{\p}(\mathcal{S}_{K}) \]
\end{theorem}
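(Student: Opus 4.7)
The plan is to construct the global Selmer class $\mathcal{S}_K$ from a classical Heegner cycle attached to $f$ and to match the desired identity through a pair of $p$-adic Gross--Zagier formulas. The key structural observation is that $F$ itself satisfies the (indefinite) Heegner hypothesis for $f$: the primes dividing $N^+$ split in $F$, the primes dividing $N^-$ are inert in $F$, and $N^-$ is a square--free product of an odd number of primes. Hence $f$ admits a classical Heegner cycle on a Kuga--Sato variety over the Shimura curve attached to the indefinite quaternion algebra $B/\mathbb{Q}$ of discriminant $N^-$; an optimal embedding of $\mathcal{O}_F$ yields such a cycle, and its $p$-adic Abel--Jacobi image produces a global Selmer class
\[
\mathcal{S}_F \in \mathrm{Sel}_{\mathrm{st}}(F, V_p(f)|_{G_F}(k_0/2+1)).
\]
I would then set $\mathcal{S}_K \defeq \mathrm{res}_{F/K}(\mathcal{S}_F)$, which lies in $\mathrm{Sel}_{\mathrm{st}}(K, V_p(\cF)(k_0/2+1))$ under the base--change identification $V_p(\cF) = V_p(f)|_{G_F}$.

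To match $\mathcal{S}_K$ locally with $\Phi^{\mathrm{AJ}}(\mathrm{D}_{\mathbbm{1}})$, the idea is to compare both sides to the first derivative at $s=k_0/2+1$ of one and the same $p$-adic $L$-function. On the analytic side, one establishes a $p$-adic Gross--Zagier formula relating $\varphi\bigl(\Phi^{\mathrm{AJ}}(\mathrm{D}_{\mathbbm{1}})\bigr)$, viewed in $\mathbb{D}_{\cF,L}^\sigma/\mathrm{Fil}^{(k_0+2)/2}$, to $\tfrac{d}{ds}L_\p(\cF/K,s)\big|_{s=k_0/2+1}$, up to an explicit factor involving the $\mathcal{L}$-invariant encoded in the filtered $(\varphi,N)$-module $\mathbf{D}_\cF$. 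The trivial zero is present because $f$ has split multiplicative reduction at $p$ and $\p$ is inert in $F$, so that $V_p(\cF)|_{G_{F_\p}}$ is semistable non--crystalline. On the arithmetic side, one applies the classical $p$-adic Gross--Zagier formula (in the spirit of Bertolini--Darmon--Prasanna, Nekov\'a\v{r}, and Seveso in higher weight) to express $\log_{\mathrm{BK}} \mathrm{res}_\p(\mathcal{S}_F)$ as the derivative of the classical $p$-adic $L$-function $L_\p(f/F,s)$ at the same point, with the same $\mathcal{L}$-invariant (which is insensitive to the unramified base change from $\mathbb{Q}_p$ to $F_\p$).

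The two derivatives are then reconciled through the base--change factorisation
\[
L_\p(\cF/K,s) \;=\; L_\p(f/F,s)\cdot L_\p(f/F,\chi_{K/F},s),
\]
corresponding to $\mathrm{Ind}_{G_K}^{G_F}\mathbf{1} \simeq \mathbf{1} \oplus \chi_{K/F}$. The Atkin--Lehner hypothesis $\omega_\cM = (-1)^{(k_0+2)/2}$ pins down the sign of the functional equation of the twisted factor so that $L_\p(f/F,\chi_{K/F},k_0/2+1)\neq 0$; its central value supplies the explicit non--zero scalar converting the Bianchi derivative into the classical one, and together with matching $\mathcal{L}$-invariants on both sides this yields the desired identity $\mathrm{exp}_{\mathrm{BK}} \circ \varphi \circ \Phi^{\mathrm{AJ}}(\mathrm{D}_{\mathbbm{1}}) = \mathrm{res}_\p(\mathcal{S}_K)$.

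The principal obstacle will be the Bianchi $p$-adic Gross--Zagier formula itself: one must translate the modular--symbol--theoretic $p$-adic integration of \S\ref{subsec:padicintegrationrecap} into a derivative statement for $L_\p(\cF/K,s)$ at its trivial zero, and explicitly identify the resulting $\mathcal{L}$-invariant with the one produced by $\mathbf{D}_\cF$. A secondary technical point is confirming the trivial zero isomorphism $\mathbf{D}_\cF \overset{\varphi}{\cong} \mathbb{D}_\cF$ in the base--change scenario; this should reduce to the classical Fontaine--Mazur type identification for $V_p(f)$ together with a compatibility check under restriction to $G_{F_\p}$.
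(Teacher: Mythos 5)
Your proposal shares the paper's overall architecture (construct a global class from a classical Heegner cycle attached to $f$, then match it locally to $\Phi^{\mathrm{AJ}}(\mathrm{D}_{\mathbbm{1}})$ via Gross--Zagier formulas and the Artin factorisation induced by $\mathrm{Ind}_{G_K}^{G_F}\mathbf{1}=\mathbf{1}\oplus\epsilon_{K/F}$), but the analytic machinery you invoke is different from, and weaker than, what is actually available, and this produces a genuine gap.

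First, the $p$-adic Gross--Zagier formula you posit on the Stark--Heegner side --- a linear relation between $\varphi(\Phi^{\mathrm{AJ}}(\mathrm{D}_{\mathbbm{1}}))$ and $\tfrac{d}{ds}L_\p(\cF/K,s)\big|_{s=k_0/2+1}$ --- is not the one the paper proves, and it is not clear such a formula exists. The $p$-adic $L$-function over $K$ is constructed (Definition~\ref{def:squarerootpadicLfn}) as the \emph{square} of a partial square-root $p$-adic $L$-function, built from a toric period integral in which the Stark--Heegner data enters quadratically. Accordingly, the formula actually established (Corollary~\ref{cor:padicGZformula4}) relates the \emph{second} derivative in the \emph{weight} variable $\lambda_\kappa$ (not the cyclotomic variable $s$) to $\bigl(\log\Phi^{\mathrm{AJ}}(\mathrm{D}_{\mathbbm{1}})(\Phi_\cF^{\mathrm{har}})\bigr)^2$. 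The same quadratic structure governs the classical Heegner side: Seveso's theorem (Theorem~\ref{thm:HeegnercyclespadicAJ}) likewise gives $\tfrac{d^2}{d\lambda_\kappa^2}[L_p(\mathbfcal{F},\lambda_\kappa)]$ as $2\bigl(\log\mathrm{cl}_{f,L}(\mathcal{Y})(f^{\mathrm{rig}})\bigr)^2$. Your first-derivative-in-$s$ comparison has no counterpart in the paper or the literature it cites, and its hypothesised linearity is incompatible with the Waldspurger-type origin of these formulas.

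Second, and as a direct consequence, the comparison you outline can only produce an identity of \emph{squares}. This is precisely why the paper needs Assumption~\ref{ass:sFisasquare}: the quantity $S_\cF$ that shows up as the ratio of squared Abel--Jacobi images must itself be a perfect square in $\QQ(\cF)^\times$ before one can extract a square root and get a linear relation. Your write-up produces no such parity obstruction and therefore overlooks the conditionality on $S_\cF\in(\QQ(\cF)^\times)^2$ flagged in Remark~\ref{rem:assumption}. Relatedly, your definition $\mathcal{S}_K=\mathrm{res}_{F/K}(\mathcal{S}_F)$ is off by the scalar $s_\cF$: the paper's class is $\mathcal{S}_K=\mathrm{cl}_{f,K}(s_\cF\cdot\mathcal{Y})$, and without the rescaling the claimed equality $\mathrm{exp}_{\mathrm{BK}}\circ\varphi(\Phi^{\mathrm{AJ}}(\mathrm{D}_{\mathbbm{1}}))=\mathrm{res}_\p(\mathcal{S}_K)$ would only hold up to an undetermined nonzero constant. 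To repair the argument you would have to pass to the weight-variable $p$-adic $L$-functions, work with second derivatives and squares throughout, and add the square-root hypothesis.
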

\paragraph*{} The strategy of our proof is inspired from the ideas developed in \cite{BD09}, \cite{LV14}, \cite{LMH20}, \cite{Sev12} and \cite{GSS16} (where similar global rationality results of Stark--Heegner points/cycles have been established)  viz. via comparing $p$-adic Gross--Zagier formulas, which we briefly describe. Let $\cW_{F}(L)$ be the Bianchi weight space introduced in \S\ref{sec:p-adicfamilies} and $\cW_{F,\mathrm{par}}(L)$ be the parallel weight line defined as the image of $\cW_{\QQ}$ in $\cW_{F}$. We may regard the set of integers $\ZZ$ as a subset of $\cW_{F, \mathrm{par}}$ via the characters $\lambda_{k}$, for $k \in \ZZ$, given by $\lambda_{k}(z) \defeq [\mathrm{N}_{F/\QQ}(z)]^{k}$. Let $U \subseteq \cW_{F, \mathrm{par}} \subseteq \cW_{F}(L)$ be a slope-adapted affinoid centred around the point $\lambda_{k_{0}}$. In \S\ref{subsec:p-adicL-functionsoverK}, we construct a base--change $p$-adic $L$-function 
\[ L_{p}(\mathbfcal{F}/K, \chi, - ) : U \rightarrow \mathbb{C}_{p} \]
which interpolates the central critical $L$-values $L^{\mathrm{alg}}(\cF_{k}/K, \chi, k/2 + 1)$ of the classical specialisations of the Coleman family $\ccF$ (See Theorem~\ref{thm:popageneralization} and Theorem~\ref{thm:padicinterpolationoverK} below). When $K/F$ is a relative quadratic extension that satisfies (\textbf{SH-Hyp}) mentioned above, we show a $p$-adic Gross--Zagier type formula relating the second derivative of this $p$-adic $L$-function to the $p$-adic Abel--Jacobi image of the Stark--Heegner cycles described above. More precisely, we show that
\begin{theorem} \label{thm:padicGZformula1intro}
\[ \frac{d^{2}}{d\lambda_{\kappa}^{2}}[L_{p}(\mathbfcal{F}/K, \lambda_{\kappa})]_{\lambda_\kappa = \lambda_{k_{0}}} =
\begin{cases}
2\left(\mathrm{N}_{F/\QQ}(\cD_{K/F})\right)^{\frac{k_{0}}{2}}\left(\mathrm{log}\:\Phi^{\mathrm{AJ}}(\mathrm{D}_{\mathbbm{1}})(\Phi_{\cF}^{\mathrm{har}})\right)^{2} & \text{if }\omega_{\cM} = (-1)^{\frac{k_{0}+2}{2}}  \\
0 & \text{if }\omega_{\cM} = (-1)^{\frac{k_{0}}{2}}
\end{cases}
\]
where $\Phi_{\cF}^{\mathrm{har}} \in \mathbf{MS}_{\Gamma}(L)_{(\cF)}$ is the harmonic modular symbol attached to $\cF$ in \S\ref{subsec:harmonicmodularsymbol}.
\end{theorem}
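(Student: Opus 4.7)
My plan is to follow the Greenberg--Stevens style strategy of \cite{BD09, Sev12, LV14, GSS16}, adapted to the Bianchi Coleman family over the parallel weight line $\cW_{F,\mathrm{par}}$. First, I would invoke the Popa-type formula constructed in \S\ref{subsec:p-adicL-functionsoverK} (Theorem~\ref{thm:popageneralization}) to write, at any classical parallel weight $\lambda_k \in U$,
\[
L_p(\ccF/K, \lambda_k) \;=\; \bigl\langle \Phi^{\mathrm{oc}}_{\ccF, k},\, \mu_K \bigr\rangle,
\]
where $\Phi^{\mathrm{oc}}_{\ccF, k}$ is the weight-$k$ specialization along $U$ of the overconvergent Bianchi modular symbol attached to $\ccF$, and $\mu_K$ is the Heegner-data distribution for $\cO_K$. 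The proof then reduces to Taylor-expanding this integrand in the weight variable about $\lambda_{k_0}$.

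The zeroth-order Taylor term pairs $\Phi_\cF^{\mathrm{har}}$ against $\mu_K$ and recovers, up to a nonzero period, the central value $L(\cF/K, k_0/2+1)$; under the Stark--Heegner hypothesis the sign of the functional equation is $-1$ and this value vanishes, and the refinement $\omega_\cM = (-1)^{(k_0+2)/2}$ is precisely what aligns the Atkin--Lehner eigenvalue at $\cM$ with this vanishing. The first-derivative contribution is annihilated by a second source of vanishing coming from the split multiplicative reduction of $f$ at $p$ and the inert behaviour of $\p$ in $K$: the trivial-zero Euler factor appears squared in the $p$-adic interpolation, forcing $L_p(\ccF/K, \lambda_k)$ to vanish to order two at $\lambda_{k_0}$, so in particular its first derivative vanishes.

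Writing $L_p(\ccF/K, \lambda_k) = (\lambda_k - \lambda_{k_0})^2\,\widetilde{L}_p(\lambda_k)$ near $\lambda_{k_0}$, the second derivative at $\lambda_{k_0}$ equals $2\,\widetilde{L}_p(\lambda_{k_0})$, accounting for the factor of $2$ in the theorem. To identify $\widetilde{L}_p(\lambda_{k_0})$, I would Taylor-expand $\Phi^{\mathrm{oc}}_{\ccF, k}$ and pair against $\mu_K$: the leading surviving contribution comes from the linear Taylor coefficient of $\Phi^{\mathrm{oc}}_{\ccF, k}$, which by the defining overconvergent-lift property reviewed in \S\ref{sec:reviewofpadicAJ} is precisely the cocycle computing $\log\Phi^{\mathrm{AJ}}(\mathrm{D}_\mathbbm{1})$. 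A direct computation on the Bruhat--Tits tree $\mathcal{T}_\p$ and the unramified upper half plane $\cH_\p^{\mathrm{ur}}$, paralleling the Bertolini--Darmon computation for Stark--Heegner points, then identifies $\widetilde{L}_p(\lambda_{k_0})$ with $\mathrm{N}_{F/\QQ}(\cD_{K/F})^{k_0/2}\cdot \bigl(\log\Phi^{\mathrm{AJ}}(\mathrm{D}_\mathbbm{1})(\Phi_\cF^{\mathrm{har}})\bigr)^2$, with the squaring arising from the doubled trivial-zero factor. The opposite parity $\omega_\cM = (-1)^{k_0/2}$ is handled in parallel: the zeroth-order term does not vanish, but the analogous parity consideration annihilates the whole second-derivative contribution.

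The main obstacle I expect is the identification of the linear Taylor coefficient of $\Phi^{\mathrm{oc}}_{\ccF, k}$ with the cocycle representing $\log\Phi^{\mathrm{AJ}}(\mathrm{D}_\mathbbm{1})$ in a manner compatible with the Heegner-data integration against $\mu_K$. These two objects are built by different recipes---one as an infinitesimal weight variation along the Coleman family, the other via the $\p$-adic Abel--Jacobi map of \S\ref{sec:reviewofpadicAJ}---and tracing the signs and the exact normalization constant $\mathrm{N}_{F/\QQ}(\cD_{K/F})^{k_0/2}$ through the Popa period and the lift of $\mu_K$ is the technical heart of the proof.
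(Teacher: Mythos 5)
Your high-level strategy (Greenberg--Stevens weight differentiation, Taylor-expansion of the overconvergent symbol along the Coleman family, identification of the linear coefficient with the Abel--Jacobi cocycle) matches the paper's in spirit, but two essential mechanisms are missing, and without them you cannot derive the case distinction in the theorem.

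First, the paper's p-adic L-function over $K$ is a perfect square \emph{by construction}: Definition~\ref{def:squarerootpadicLfn} sets $L_p(\mathbfcal{F}/K,\psi_K,\lambda_\kappa) := \cL_p(\mathbfcal{F}/K,\psi_K,\lambda_\kappa)^2$ where $\cL_p$ is a partial square-root L-function. You instead attribute the order-two vanishing to the squared Euler factor in the interpolation, but that is a consequence of the construction, not a proof of it. The useful feature is that $\frac{d^2}{d\lambda_\kappa^2}[L_p]_{\lambda_{k_0}} = 2(\cL_p'(\lambda_{k_0}))^2$ once $\cL_p(\lambda_{k_0}) = 0$, and the whole computation reduces to a \emph{first}-derivative computation on $\cL_p$. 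Moreover, the vanishing $\cL_p(\lambda_{k_0}) = 0$ is proved by harmonicity of the pushforward modular symbol (Theorem~\ref{thm:vanishingofp-adicLfn1}, via the total-mass-zero property on $\PP^1(F_\p)$), not by appealing to the sign of the functional equation as you suggest.

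Second, and more seriously, you have not addressed where the case distinction on $\omega_\cM$ comes from. In the paper, the derivative of $\cL_p(\mathbfcal{F}/K,\Psi,\lambda_\kappa)$ splits into two contributions, one for $\tau_\Psi$ and one for its Galois conjugate $\tau_\Psi^\theta$, via the factorization $P_\Psi(x,y) = \mathrm{N}_{F/\QQ}\bigl(A(x-\tau_\Psi y)(x-\tau_\Psi^\theta y)\bigr)$ and Proposition~\ref{prop:derivatives}; this produces the two terms $\log\Phi^{\mathrm{AJ}}(\mathrm{D}_{[\Psi]})$ and $(-1)^{(k_0+2)/2}\log\Phi^{\mathrm{AJ}}(\mathrm{D}_{[\Psi^\theta]})$ in Theorem~\ref{thm:padicGZformula1}, the sign coming from $P_{\Psi^\theta} = -P_\Psi$. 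One then needs the Atkin--Lehner involution $W_\cM$ together with the $\mathrm{Gal}(H_K/K)$-torsor structure on $\Gamma/\mathrm{Emb}^{\mathfrak{o}}(\cO,\cR)$ (Proposition~\ref{prop:picardgrouptorsor} and Remark~\ref{rem:atkinlehnerorientation}) to rewrite the $\mathrm{D}_{\mathbbm{1}}^\theta$ term as $\omega_\cM$ times the $\mathrm{D}_{\mathbbm{1}}$ term, yielding the factor $(1 + (-1)^{(k_0+2)/2}\omega_\cM)^2$, which equals $4$ or $0$ and accounts for both branches. Your invocation of ``parity considerations'' does not supply this argument; without the conjugate decomposition and the Atkin--Lehner torsor computation, the factor $2$ and the vanishing in the opposite parity case are not derived.
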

Let $\epsilon_{K/F}$ be the quadratic id\`{e}le class character of $F$ that cuts out the relative quadratic extension $K/F$. In \S\ref{subsec:factorization}, we show a $p$-adic Artin formalism for the base--change $p$-adic $L$-function described above. 
\begin{theorem} \label{thm:factorizationofpadicLfnsintro}
For all $\lambda_{\kappa} \in U$, we have a factorization of $p$-adic $L$-functions, 
\[ (D_{K})^{\lambda_{\kappa}/2}L_{p}(\mathbfcal{F}/K, \lambda_{\kappa}) = \eta L_{p}(\mathbfcal{F}, \lambda_{\kappa})L_{p}(\mathbfcal{F}, \epsilon_{K/F}, \lambda_{\kappa})\]
for some constant $\eta \in \overline{\QQ}^{\times}$.
\end{theorem}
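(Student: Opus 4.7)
The plan is to deduce Theorem~\ref{thm:factorizationofpadicLfnsintro} from classical Artin formalism together with the interpolation properties of the three $p$-adic $L$-functions that appear on the two sides, and finally invoke a density/uniqueness argument on the affinoid $U$.

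First, I would record the classical Artin factorization through the intermediate field $F$: for any classical weight $\lambda_k \in U \cap \mathbb{Z}$ (i.e.\ $\kappa = k$ an integer with $f_k$ the specialisation of the Coleman family, $\cF_k$ its base-change to $F$), the equality
\[
L(\cF_k/K,s) \;=\; L(\cF_k,s)\,L(\cF_k,\epsilon_{K/F},s)
\]
holds as an identity of meromorphic functions, expressing that $\mathrm{Ind}_{K}^{F}\mathbbm{1} = \mathbbm{1} \oplus \epsilon_{K/F}$. Evaluating at the central critical point $s = k/2 + 1$ and dividing by suitable canonical periods gives a factorization of the \emph{algebraic} parts
\[
L^{\mathrm{alg}}(\cF_k/K,k/2+1) \;=\; C_k \cdot L^{\mathrm{alg}}(\cF_k,k/2+1)\,L^{\mathrm{alg}}(\cF_k,\epsilon_{K/F},k/2+1),
\]
where $C_k$ is an explicit algebraic constant whose dependence on $k$ will be computed by tracking Gauss sums, the ratio of the period for $\cF_k/K$ to the product of the periods for $\cF_k$ and $\cF_k\otimes \epsilon_{K/F}$, and the discriminant factor. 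The key output of this bookkeeping is that $C_k = \eta \cdot (D_K)^{-k/2}$ for a constant $\eta \in \overline{\mathbb{Q}}^{\times}$ that is \emph{independent} of $k$; the $(D_K)^{k/2}$ piece is precisely what is absorbed by the factor $(D_K)^{\lambda_\kappa/2}$ in the statement, while the rest is absorbed into $\eta$.

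Next I would use the interpolation theorems already established in \S\ref{subsec:p-adicL-functionsoverK} (Theorem~\ref{thm:padicinterpolationoverK} for $L_p(\ccF/K,\lambda_\kappa)$) together with the analogous interpolation properties for the Mazur--Kitagawa style $p$-adic $L$-function $L_p(\ccF,\lambda_\kappa)$ and its $\epsilon_{K/F}$-twist $L_p(\ccF,\epsilon_{K/F},\lambda_\kappa)$. Each of these functions is rigid analytic on $U$ and, at every classical weight $\lambda_k$ in $U$, its value equals the corresponding $L^{\mathrm{alg}}$ times a common $p$-adic multiplier involving Euler-type factors at $p$. Because the family $\ccF$ is the base-change of a single Hida/Coleman family over $\mathbb{Q}$, these Euler factors at $p$ are matched on the two sides of the asserted identity (each prime of $K$ above $p$ contributes, and its contribution splits as a product over primes of $F$ above $p$, which then further splits as the appropriate elliptic/twisted Euler factors). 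Combining the classical factorization above with these matched interpolation formulas, one obtains, at every classical point $\lambda_k \in U$,
\[
(D_K)^{k/2} L_p(\ccF/K,\lambda_k) \;=\; \eta\, L_p(\ccF,\lambda_k)\,L_p(\ccF,\epsilon_{K/F},\lambda_k).
\]

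Finally, since the set of classical points $\lambda_k \in U$ is Zariski-dense in $U$ and both sides of the identity are rigid analytic functions on $U$, they agree on all of $U$. The main obstacle I expect is the careful comparison of the transcendental periods and the local Euler factors at $p$: one must verify that the ratio of the period used to define $L_p(\ccF/K,\lambda_\kappa)$ and the product of the periods defining $L_p(\ccF,\lambda_\kappa)$ and $L_p(\ccF,\epsilon_{K/F},\lambda_\kappa)$ is indeed of the form $\eta \cdot (D_K)^{-\lambda_\kappa/2}$ with $\eta$ a weight-independent algebraic constant. Once this period comparison is established (it is here that the normalization chosen in \S\ref{subsec:p-adicL-functionsoverK} pays off), the density argument is routine and yields the asserted factorization.
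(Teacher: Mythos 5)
Your proposal follows essentially the same approach as the paper's proof (Theorem~\ref{thm:factorizationofpadicLfns1} and Corollary~\ref{cor:factorizationofpadicLfns}): classical Artin formalism for the genus decomposition, a comparison of the algebraic $L$-values using the Gauss-sum and conductor identities together with the explicit period normalizations of (\ref{eqn:bianchiperiods}) and (\ref{eqn:basechangeperiods}), matching of the Euler factors at $\p$ via the interpolation formulas of Theorems~\ref{thm:bianchiinterpolation} and~\ref{thm:padicinterpolationoverK}, and then Zariski density of classical weights in $U$. The only cosmetic points are that the $L$-functions $L_p(\mathbfcal{F},\lambda_\kappa)$ and $L_p(\mathbfcal{F},\epsilon_{K/F},\lambda_\kappa)$ appearing here are the Bianchi (Seveso) $p$-adic $L$-functions rather than the Mazur--Kitagawa ones, and the Euler-factor matching at $p$ is the elementary identity $(1-x)(1+x)=1-x^{2}$ applied with $\epsilon_{K/F}(p)=-1$ (forced by \textbf{SH-Hyp}) rather than a decomposition over primes of $K$ above $p$, but neither affects the argument.
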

Here $L_{p}(\mathbfcal{F}, \lambda_{\kappa})$ and $L_{p}(\mathbfcal{F}, \epsilon_{K/F}, \lambda_{\kappa})$ are the two variable base--change Bianchi $p$-adic $L$-functions constructed by Seveso in \cite{Sev12} and recalled in Section~\ref{subsec:bianchip-adicLfns} below.
Recall the factorization $N = pM = pN^{+}N^{-}$. Let $\cB$ be the indefinite quaternion algebra ramified at the primes dividing $pN^{-}$.  Let $X \defeq X_{N^{+},pN^{-}}$ be the Shimura curve associated to $\cB$ and an Eichler order of level $N^{+}$ in $\cB$. Let $\cM_{k_{0}}/\QQ$ be the Chow motive attached to the space of weight $k_{0}+2$ modular forms on the Shimura curve $X$ and let $\mathrm{CH}^{k_{0}/2+1}(\cM_{k_{0}} \otimes F)$ denote the Chow group of co-dimension $k_{0}/2+1$ cycles of $\cM_{k_{0}}$ base-changed to $F$. Let $M_{k_{0}+2}(\Gamma', L)$ denote the space of rigid analytic modular forms, over $L$, on the Mumford curve $X_{\Gamma'}$ associated to the arithmetic subgroup $\Gamma'$  defined in \S\ref{subsec:Heegnercycles}.  The Mumford curve $X_{\Gamma'}$ can be identified with the rigid analytification -- $X^{\mathrm{an}}$ of the Shimura curve $X$ via the Cerednik-Drinfeld Theorem of $p$-adic uniformization.  Then the $p$-adic \'{e}tale Abel--Jacobi map described in \S\ref{subsec:Heegnercycles} can be regarded as 
\begin{equation} \label{abeljacobiintroduction}
\mathrm{log\;cl}_{f,L}: \mathrm{CH}^{k_{0}/2+1}\left( \cM_{k_{0}} \otimes F \right)  
  \rightarrow M_{k_{0}+2}(X, L)_{(f^{\mathrm{JL}})}^{\vee} \rightarrow M_{k_{0}+2}(\Gamma', L)_{(f^{\mathrm{rig}})}^{\vee}.
\end{equation}
where $f^{\mathrm{JL}} \in M_{k_{0}+2}(X,  L)$ (resp.  $f^{\mathrm{rig}} \in M_{k_{0}}(\Gamma',  L)$) is the modular form on the Shimura curve $X$ (resp.  on the Mumford curve $X_{\Gamma'}$) associated to $f \in S_{k_{0}+2}(\Gamma_{0}(N))^{\mathrm{new}}$ via the Jacquet--Langlands correspondence (resp. via the Cerednik--Drinfeld $p$-adic uniformization Theorem).  Let $\mathcal{Y} \in \mathrm{CH}^{k_{0}/2+1}\left( \cM_{k_{0}} \otimes F \right)$ be the Heegner cycle constructed in \cite[Section 8]{IS03} using the theory of Complex Multiplication.  We then have the following result of M. Seveso (\cite{Sev14}) on the $p$-adic Abel--Jacobi image of the Heegner cycle :-

\begin{theorem} \label{thm:HeegnercyclespadicAJintro}
\[ \frac{d^{2}}{d\lambda_{\kappa}^{2}}\left[ L_{p}(\mathbfcal{F}, \lambda_\kappa) \right]_{\lambda_\kappa = \lambda_{k_{0}}} = \frac{d^{2}}{d\lambda_{\kappa}^{2}}\left[ L_{p}(\mathbf{f}/F, \lambda_{\kappa}) \right]_{\lambda_{\kappa} = \lambda_{k_{0}}} = 2 \mathrm{log\;cl}_{f,L}(\mathcal{Y})(f^{\mathrm{rig}})^{2} \]
\end{theorem}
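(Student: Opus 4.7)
The plan is to recognise this as the $p$-adic Gross--Zagier formula for Heegner cycles due to Seveso \cite{Sev14}, adapted to the base--change setting over the imaginary quadratic field $F$. The first equality $L_{p}(\mathbfcal{F},\lambda_{\kappa}) = L_{p}(\mathbf{f}/F,\lambda_{\kappa})$ is essentially tautological from the set-up of \S\ref{subsec:bianchip-adicLfns}: the Bianchi Coleman family $\mathbfcal{F}$ is by construction the base--change to $F$ of the Coleman family $\mathbf{f}$ through $f$, and restricting the two--variable Bianchi $p$-adic $L$-function to the parallel weight line matches the $p$-adic $L$-function of $\mathbf{f}/F$ via a Shapiro-type identity at the level of overconvergent modular symbols.

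For the second equality, which is the genuine arithmetic content, I would proceed as follows. First, by the interpolation property of $L_{p}(\mathbf{f}/F,\lambda_{\kappa})$ at classical weights $\lambda_{k}$, together with (\textbf{Heeg--Hyp}) and the hypothesis $\omega_{p}=1$, the sign of the functional equation of $L(f_{k}/F,s)$ is $-1$, so the $L$-value factor vanishes at $s = k/2+1$; moreover, the interpolating Euler factor at $p$ contributes an additional (trivial/exceptional) zero along the parallel weight line because $f$ has split multiplicative reduction at $p$. These two independent sources of vanishing force a double zero at $\lambda_{k_{0}}$, so the first potentially non--vanishing derivative is the second one. Next, following the Bertolini--Darmon--Iovita--Spiess strategy, one deforms the Heegner cycle $\mathcal{Y}$ of \cite[Section 8]{IS03} into a Coleman family $\{\mathcal{Y}_{\kappa}\}$ of cycles on the Shimura curve $X$, and proves an explicit reciprocity law of the shape
\[
L_{p}(\mathbf{f}/F,\lambda_{\kappa}) \;=\; \bigl( \mathrm{log\;cl}_{f_{\kappa},L}(\mathcal{Y}_{\kappa})(f_{\kappa}^{\mathrm{rig}}) \bigr)^{2} \cdot (\text{terms of higher order in }\kappa - k_{0}) .
\]
Taking the second derivative via the Leibniz rule and evaluating at $\kappa = k_{0}$ produces both the factor of $2$ and the square appearing in the statement. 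Cerednik--Drinfeld $p$-adic uniformisation is the essential bridge that rewrites the étale Abel--Jacobi map \eqref{abeljacobiintroduction} as a pairing against the rigid analytic form $f^{\mathrm{rig}}$ on the Mumford curve $X_{\Gamma'}$.

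The main obstacle is Step 2: the construction of the big Heegner class and the explicit reciprocity law. One must simultaneously control the variation of CM cycles in $p$-adic families, track the Frobenius and monodromy on the fibrewise Dieudonné modules, and identify the Coleman integration underlying $L_{p}(\mathbf{f}/F,\lambda_{\kappa})$ with the rigid analytic Abel--Jacobi pairing over the Mumford model of $X$. All of this is carried out in \cite{Sev14}, and the result as stated is obtained by transporting Seveso's formula across the tautological identification in the first equality.
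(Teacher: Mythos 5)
The paper's proof of this statement is a one-line citation to \cite[Corollary 5.27]{Sev12} (in turn a special case of \cite[Corollary 9.2]{Sev14}), and your proposal, which also recognizes the result as Seveso's and defers to \cite{Sev14} after outlining the strategy (double vanishing from sign $-1$ plus the exceptional zero, Cerednik--Drinfeld uniformization bridging the rigid-analytic and \'{e}tale Abel--Jacobi maps, $p$-adic variation of Heegner cycles), takes essentially the same approach. One small imprecision to flag: your displayed ``reciprocity law'' with higher-order terms is not quite the right shape --- Seveso's framework expresses $L_{p}(\mathbf{f}/F,\lambda_{\kappa})$ as the exact square of a square-root $p$-adic $L$-function $\mathcal{L}_{p}(\lambda_{\kappa})$ satisfying $\mathcal{L}_{p}(\lambda_{k_{0}})=0$ and $\mathcal{L}_{p}'(\lambda_{k_{0}})=\mathrm{log\;cl}_{f,L}(\mathcal{Y})(f^{\mathrm{rig}})$ up to normalization, so that the factor of $2$ and the square both come from $\tfrac{d^{2}}{d\lambda_{\kappa}^{2}}\bigl[\mathcal{L}_{p}^{2}\bigr]_{\lambda_{\kappa}=\lambda_{k_{0}}}=2\bigl(\mathcal{L}_{p}'(\lambda_{k_{0}})\bigr)^{2}$, rather than from a Leibniz expansion against ``terms of higher order in $\kappa-k_{0}$.''
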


Under the assumption that $L(F, \epsilon_{K/F}, k_{0}/2+1) \neq 0$, it can be shown that the $p$-adic $L$-value $L_{p}(\mathbfcal{F}, \epsilon_{K/F}, \lambda_{k_{0}}) \neq 0$.  On comparing the $p$-adic Abel--Jacobi image of Stark--Heegner cycles with that of classical Heegner cycles via the $p$-adic Artin formalism described above, we show in \S\ref{sec:mainresult}  that (See Theorem~\ref{thm:comparingAJimages}) :-

\begin{theorem} \label{thm:comparingAJimagesintro}
Suppose that $\omega_{\cM} = (-1)^{\frac{k_{0}+2}{2}}$. Then there exists $\mathcal{Y} \in \mathrm{CH}^{k_{0}/2+1}\left( \cM_{k_{0}} \otimes F \right) \subset \mathrm{CH}^{k_{0}/2+1}\left( \cM_{k_{0}} \otimes K \right)$ and $s_{\cF} \in \QQ(\cF)^{\times}$ such that
\[ \mathrm{log}\:\Phi^{\mathrm{AJ}}(\mathrm{D}_{\mathbbm{1}})(\Phi_{\cF}^{\mathrm{har}}) = s_{\cF}\cdot \mathrm{log\;cl}_{f,L}(\mathcal{Y})(f^{\mathrm{rig}})\]
\end{theorem}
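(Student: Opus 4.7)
The strategy is to combine the two $p$-adic Gross--Zagier formulas via the Artin-style factorization of $p$-adic $L$-functions. The plan is to work on the identity of Theorem~\ref{thm:factorizationofpadicLfnsintro}, view both sides as rigid analytic functions of $\lambda_{\kappa}\in U$, and analyze their Taylor expansions around $\lambda_{k_0}$. Since $f$ has split multiplicative reduction at $p$ and $\mathfrak{p}$ is inert in $F$, the Bianchi $p$-adic $L$-function $L_p(\ccF,\lambda_\kappa)$ carries an exceptional (trivial) zero at $\lambda_{k_0}$, and the non-vanishing hypothesis $L(F,\epsilon_{K/F},k_0/2+1)\neq 0$ ensures that $L_p(\ccF,\epsilon_{K/F},\lambda_{k_0})\neq 0$. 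Under $\omega_{\cM}=(-1)^{(k_0+2)/2}$, Theorem~\ref{thm:padicGZformula1intro} produces a non-zero second derivative for $L_p(\ccF/K,\lambda_\kappa)$, so $L_p(\ccF/K,\lambda_\kappa)$ vanishes to order exactly $2$ at $\lambda_{k_0}$. Dividing through the factorization and using $L_p(\ccF,\epsilon_{K/F},\lambda_{k_0})\neq 0$ then forces $L_p(\ccF,\lambda_\kappa)$ to vanish to order $\geq 2$ as well, i.e.\ $L_p(\ccF,\lambda_{k_0})=L'_p(\ccF,\lambda_{k_0})=0$.

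With this order-of-vanishing input, I would differentiate both sides of Theorem~\ref{thm:factorizationofpadicLfnsintro} twice and evaluate at $\lambda_{k_0}$. On the left, since $L_p(\ccF/K,\cdot)$ vanishes to order $2$, the only surviving contribution is $(D_K)^{k_0/2}$ times the second derivative of $L_p(\ccF/K,\lambda_\kappa)$. On the right, because $L_p(\ccF,\lambda_\kappa)$ also vanishes to order $2$, the Leibniz expansion collapses and all cross terms drop out, leaving
\[
(D_K)^{k_0/2}\,\frac{d^2}{d\lambda_\kappa^2}\bigl[L_p(\ccF/K,\lambda_\kappa)\bigr]_{\lambda_{k_0}}
=\eta\cdot\frac{d^2}{d\lambda_\kappa^2}\bigl[L_p(\ccF,\lambda_\kappa)\bigr]_{\lambda_{k_0}}\cdot L_p(\ccF,\epsilon_{K/F},\lambda_{k_0}).
\]
Substituting Theorems~\ref{thm:padicGZformula1intro} and~\ref{thm:HeegnercyclespadicAJintro} into both sides yields
\[
(D_K)^{k_0/2}\bigl(\mathrm{N}_{F/\QQ}(\cD_{K/F})\bigr)^{k_0/2}\bigl(\mathrm{log}\,\Phi^{\mathrm{AJ}}(\mathrm{D}_{\mathbbm{1}})(\Phi_{\cF}^{\mathrm{har}})\bigr)^{2}
= \eta\,L_p(\ccF,\epsilon_{K/F},\lambda_{k_0})\,\bigl(\mathrm{log\;cl}_{f,L}(\mathcal{Y}_0)(f^{\mathrm{rig}})\bigr)^{2}
\]
for some fixed Heegner cycle $\mathcal{Y}_0 \in \mathrm{CH}^{k_0/2+1}(\cM_{k_0}\otimes F)$ coming from Seveso's work.

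Extracting square roots produces $\mathrm{log}\,\Phi^{\mathrm{AJ}}(\mathrm{D}_{\mathbbm{1}})(\Phi_{\cF}^{\mathrm{har}}) = \pm\, t\cdot \mathrm{log\;cl}_{f,L}(\mathcal{Y}_0)(f^{\mathrm{rig}})$ where $t^2$ lies in $\overline{\QQ}^{\times}$ by algebraicity of special $L$-values and of the Mazur--Kitagawa-type constant $\eta$. To land the ratio inside $\QQ(\cF)^{\times}$ as required, I would identify $\eta\cdot L_p(\ccF,\epsilon_{K/F},\lambda_{k_0})$ (up to rational factors) with the algebraic $L$-value $L^{\mathrm{alg}}(F,\epsilon_{K/F},k_0/2+1)$, which lies in $\QQ(\cF)$, and then replace $\mathcal{Y}_0$ by an appropriate $\QQ(\cF)$-multiple in the Chow group with coefficients to absorb the square-root ambiguity, setting $s_{\cF}$ equal to the residual rational scalar.

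The main obstacle is the rigorous justification that $L_p(\ccF,\lambda_\kappa)$ vanishes to order exactly $2$ at $\lambda_{k_0}$: this is a refinement of the exceptional zero phenomenon and depends delicately on matching the sign of the functional equation for $L(f/F,s)$ with the local invariant at $\p$ dictated by $\omega_{\cM}=(-1)^{(k_0+2)/2}$. A secondary (but essentially cosmetic) difficulty is descending from the algebraic scalar $t^{2}\in\overline{\QQ}^{\times}$ to an honest element of $\QQ(\cF)^{\times}$, which requires the algebraicity statement for the twisted special value together with a geometric scaling argument in the Chow group; both of these are available in the literature of Bertolini--Darmon and Longo--Vigni on which this strategy is modelled.
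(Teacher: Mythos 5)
Your strategy follows the paper's route closely: apply the $p$-adic Artin formalism of Theorem~\ref{thm:factorizationofpadicLfnsintro}, differentiate twice at $\lambda_{k_0}$ using the order-$\geq 2$ vanishing on both sides, substitute the two $p$-adic Gross--Zagier formulas (Corollary~\ref{cor:padicGZformula4} for Stark--Heegner cycles and Theorem~\ref{thm:HeegnercyclespadicAJ} for Heegner cycles), and extract square roots. The central identity you arrive at is precisely equation~(\ref{eqn:squaresofAJmaps}) in the paper.

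There is, however, a genuine gap in your final square-root step, and it is not cosmetic. You propose to ``replace $\mathcal{Y}_0$ by an appropriate $\QQ(\cF)$-multiple in the Chow group with coefficients to absorb the square-root ambiguity.'' This cannot work. The map $\mathrm{log\;cl}_{f,L}$ is $\QQ(\cF)$-linear, so replacing $\mathcal{Y}_0$ by $c\,\mathcal{Y}_0$ with $c \in \QQ(\cF)^{\times}$ merely changes the proportionality constant $t$ to $t/c$, and $t/c \in \QQ(\cF)^{\times}$ if and only if $t$ already lies in $\QQ(\cF)^{\times}$. Since all you know a priori is that $t^2 = S_\cF$ (up to squares) lies in $\QQ(\cF)^{\times}$, the scalar $t$ may generate a quadratic extension of $\QQ(\cF)$, and no rescaling inside the Chow group with $\QQ(\cF)$-coefficients can repair this. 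This is precisely why the paper makes Assumption~\ref{ass:sFisasquare} explicit: the quantity $S_{\cF} = 2\,L^{\mathrm{alg}}(\cF,\epsilon_{K/F},k_0/2+1)/\mathrm{N}_{F/\QQ}(\cD_{K/F}) \in \QQ(\cF)^{\times}$ must be assumed to lie in $\left(\QQ(\cF)^{\times}\right)^{2}$ (and separately, the constant $T$ in equation~(\ref{eqn:squaresofAJmaps}) is a square because the Gross--Prasad test vector $\phi$ is a translate of the new vector $\phi_\pi$). Your proposal does not introduce this hypothesis, so the conclusion $s_\cF \in \QQ(\cF)^{\times}$ does not follow.

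A secondary, but incorrect, step: you assert that Theorem~\ref{thm:padicGZformula1intro} ``produces a non-zero second derivative for $L_p(\ccF/K,\lambda_\kappa)$,'' hence order of vanishing exactly $2$. That theorem only expresses the second derivative as $2\left(\mathrm{N}_{F/\QQ}(\cD_{K/F})\right)^{k_0/2}\bigl(\mathrm{log}\,\Phi^{\mathrm{AJ}}(\mathrm{D}_{\mathbbm{1}})(\Phi_{\cF}^{\mathrm{har}})\bigr)^2$, and a priori this could vanish; there is no independent reason it is non-zero. Luckily exactness of the order is irrelevant: the argument only needs $\geq 2$ vanishing of both $L_p(\ccF/K,\cdot)$ and $L_p(\ccF,\cdot)$ (established via the trivial-zero phenomenon since $\omega_\p = 1$, together with the sign of the functional equation forcing $L(\cF,k_0/2+1)=0$), after which the Leibniz expansion of the twice-differentiated factorization collapses exactly as you describe.
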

from which Theorem~\ref{thm:maintheoremintroduction} above follows.

\begin{remark} \label{rem:assumption}
It can be shown unconditionally that 
\[ \left(\Phi^{\mathrm{AJ}}(\mathrm{D}_{\mathbbm{1}})(\Phi_{\cF}^{\mathrm{har}})\right)^{2} = S_{\cF}\cdot \left(\mathrm{log\;cl}_{f,L}(\mathcal{Y})(f^{\mathrm{rig}})\right)^{2}\]
for some constant $S_{\cF} \in \QQ(\cF)^{\times}$. We wish to remark that Theorem~\ref{thm:comparingAJimagesintro} above is conditional on the fact that $S_{\cF} \in \left(\QQ(\cF)^{\times}\right)^{2}$ is indeed a square, which is consistent with the Birch and Swinnerton--Dyer conjecture. See Assumption~\ref{ass:sFisasquare} below.
\end{remark}


\subsection{Comparison to relevant literature} \label{subsec:literature} We have already mentioned that the construction of Stark--Heegner cycles for Bianchi modular forms in \cite{VW19} should be viewed as a higher weight generalization of Trifkovi\'{c}'s construction of Stark--Heegner points for (modular) elliptic curves defined over imaginary quadratic fields (\cite{Tri06}). In particular, Conjecture~\ref{conj:starkheegnerconjectureintro} formulated above is a direct generalization of \cite[Conjecture 6]{Tri06}. 
\paragraph*{} Assume in this section that $f \defeq f_{E} \in S_{2}(\Gamma_{0}(N))^{\mathrm{new}}$ is the weight two newform associated to an elliptic curve $E/\QQ$ of conductor $N$ by modularity and let $\cF_{E} \in S_{2}(U_{0}(\cN))^{\mathrm{new}}$ (i.e. $k_{0} = 0$) be the Bianchi modular form that corresponds to the quadratic base--change $E_{/F}$. Let $V_{p}(E) \defeq T_{p}(E) \otimes_{\ZZ_{p}} \QQ_{p}$ denote the $p$-adic Galois representation attached to $E$. Then, the Kummer map gives us the following exact sequence
\begin{equation} \label{eqn:kummermap}
0 \rightarrow E(K) \otimes_{\ZZ} \QQ_{p} \xrightarrow{\kappa} \mathrm{Sel}_{\mathrm{st}}(K, V_{p}(E)) \rightarrow \Sha(E/K)[p^{\infty}] \rightarrow 0 
\end{equation}
where $\Sha(E/K)[p^{\infty}]$ denotes the $p$-primary part of the Tate--Shafarevich group. If we assume that $\Sha(E/K)[p^{\infty}]$ is finite, which is conjectured to be always true by the Birch and Swinnerton--Dyer conjecture, then $\kappa$ is an isomorphism and $\mathrm{dim}\left(\mathrm{Sel}_{\mathrm{st}}(K, V_{p}(E))\right) = \mathrm{rank}(E(K))$. 
\paragraph*{} Let $P \in E(K_{\p})$ be the Stark--Heegner point constructed by Trifkovic in \cite{Tri06}. Then Theorem~\ref{thm:maintheoremintroduction} above implies 
\begin{theorem} \label{thm:trifkovic}
Under the hypothesis that $\Sha(E/K)[p^{\infty}]$, the Stark--Heegner point $P \in E(K_{\p})$ is a global $K$--rational point in $E(K)$. 
\end{theorem}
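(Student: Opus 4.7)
The plan is to derive Theorem~\ref{thm:trifkovic} as the weight $2$ specialisation ($k_0 = 0$) of Theorem~\ref{thm:maintheoremintroduction}. Applied to the Bianchi base change $\cF_E$ of $f_E$, the main theorem produces a global class $\mathcal{S}_K \in \mathrm{Sel}_{\mathrm{st}}(K, V_p(\cF_E)(1))$. Since $V_p(\cF_E)(1) = V_p(f_E)(1)|_{G_F}$ and the Weil pairing identifies $V_p(f_E)(1)$ with the usual Tate module $V_p(E) = T_p(E) \otimes \QQ_p$, we may regard $\mathcal{S}_K$ as a class in $\mathrm{Sel}_{\mathrm{st}}(K, V_p(E))$. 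Under the hypothesis that $\Sha(E/K)[p^{\infty}]$ is finite, the Kummer map~\eqref{eqn:kummermap} is an isomorphism, producing a unique $Q \in E(K) \otimes_{\ZZ} \QQ_p$ with $\kappa(Q) = \mathcal{S}_K$.

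It then remains to identify the local restriction $Q_\p$ of $Q$ at $\p$ with the Stark--Heegner point $P \in E(K_\p)$ of \cite{Tri06}. Because $f$ has split multiplicative reduction at $p$ and $p$ is inert in $F$, the curve $E/K_\p$ admits a Tate uniformization; correspondingly, the local Kummer map
\[ \kappa_\p \colon E(K_\p) \otimes \QQ_p \hookrightarrow H^1_{\mathrm{st}}(K_\p, V_p(E)) \]
is injective, and its image is identified, via $\exp_{\mathrm{BK}} \circ \varphi$, with $\mathbf{D}_{\cF_E, L}^\sigma / \mathrm{Fil}^1$. In the weight $2$ setting, Trifkovi\'c's Stark--Heegner point $P$ is constructed precisely as the image of $\mathrm{D}_{\mathbbm{1}}$ under the multiplicative $\p$-adic integration on the Bruhat--Tits tree for $\mathrm{GL}_2(F_\p)$, which matches $\exp_{\mathrm{BK}} \circ \varphi \circ \Phi^{\mathrm{AJ}}$ under the standard dictionary between the Tate uniformization and Fontaine's semistable Dieudonn\'e module $\mathbb{D}_{\cF_E}$. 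Thus the conclusion of Theorem~\ref{thm:maintheoremintroduction} reads
\[ \kappa_\p(P) \;=\; \exp_{\mathrm{BK}} \circ \varphi\bigl(\Phi^{\mathrm{AJ}}(\mathrm{D}_{\mathbbm{1}})\bigr) \;=\; \mathrm{res}_\p(\mathcal{S}_K) \;=\; \kappa_\p(Q_\p), \]
and injectivity of $\kappa_\p$ on $E(K_\p) \otimes \QQ_p$ forces $Q_\p = P$ in $E(K_\p) \otimes \QQ_p$. Hence $P$ lies in the image of $E(K) \otimes \QQ_p \hookrightarrow E(K_\p) \otimes \QQ_p$, i.e. $P$ is $K$-rational modulo torsion.

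The principal non-formal ingredient is the local comparison above: that Trifkovi\'c's concrete multiplicative $\p$-adic integration construction of $P$ agrees, after passing through the Tate period, with the Bloch--Kato/Dieudonn\'e formalism $\exp_{\mathrm{BK}} \circ \varphi$ used throughout this paper. This is the standard weight $2$ analogue of the overconvergent-cohomology setup underlying Theorem~\ref{thm:padicGZformula1intro} and is already implicit in the proof of the $p$-adic Gross--Zagier formula; making this dictionary explicit, and then tracking integrality to promote ``$K$-rational up to torsion'' to a genuine element of $E(K)$, is the only remaining bookkeeping.
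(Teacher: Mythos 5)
Your argument is essentially the same as the paper's: apply Theorem~\ref{thm:maintheoremintroduction} to get a global class $\mathcal{S}_K$, use the commutative square~\eqref{eqn:trifkovic} relating the global and local Kummer maps, invoke finiteness of $\Sha(E/K)[p^\infty]$ to make the global $\kappa$ an isomorphism, and conclude via injectivity of $\kappa_\p$ and the identification of Trifkovi\'c's Stark--Heegner point with $\exp_{\mathrm{BK}}\circ\varphi\bigl(\Phi^{\mathrm{AJ}}(\mathrm{D}_{\mathbbm{1}})\bigr)$. You are in fact a bit more careful than the paper at two points the paper states without comment: you correctly flag the weight-$2$ compatibility between Trifkovi\'c's multiplicative integration and the Bloch--Kato/Dieudonn\'e dictionary as the genuine non-formal ingredient, and you note that the Kummer argument a priori gives a point only in $E(K)\otimes\QQ_p$ (i.e.\ $K$-rational up to torsion) rather than literally in $E(K)$.
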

\begin{proof}
Firstly, note that when $\cF_{E}$ corresponds to the quadratic base--change of $f = f_{E}$, then we may take our coefficient field $L/\QQ_{p}$ to be just $K_{\p}$. In particular, the local cohomology classes constructed in \cite{VW19} may be regarded as classes $\mathfrak{s}_{\chi} \in \mathrm{H}^{1}_{\mathrm{st}}(K_{\p}(\chi), V_{p}(E))$. In particular, we have a commutative diagram for the trivial character $\chi = \chi_{\mathrm{triv}}$
\begin{equation} \label{eqn:trifkovic}
\begin{tikzcd} 
E(K) \otimes \QQ_{p}  \arrow[r, "\kappa"] \arrow[d, hook]
& \mathrm{Sel}_{\mathrm{st}}(K, V_{p}(E)) \arrow[d, "\mathrm{res}_{\p}"] \\
E(K_{\p}) \otimes \QQ_{p}  \arrow[r, "\kappa_{\p}"] & \mathrm{H}^{1}_{\mathrm{st}}(K_{\p}, V_{p}(E))
\end{tikzcd}
\end{equation}
It is well known that the local Kummer map $\kappa_{\p}$ is an isomorphism (See \cite[Example 3.10.1]{BK07}) and maps the Stark--Heegner point $J \in E(K_{\p})$ to the local Selmer class $\mathfrak{s}_{\mathbbm{1}} \in \mathrm{H}^{1}_{\mathrm{st}}(K_{\p}, V_{p}(E))$ of \cite{VW19}. Theorem~\ref{thm:maintheoremintroduction} above shows the existence of a global Selmer class $\mathcal{S}_{K} \in \mathrm{Sel}_{\mathrm{st}}(K, V_{p}(E))$ such that 
\[ \mathrm{res}_{\p}(\mathcal{S}_{K}) = \mathfrak{s}_{\mathbbm{1}} \]
Further, if we assume that $\Sha(E/K)[p^{\infty}]$ is a finite group then $\kappa$ is an isomorphism and the commutativity of (\ref{eqn:trifkovic}) above shows that there exists a global $K$--rational point $\mathbf{P} \in E(K)$ that is mapped to the Stark--Heegner point $P \in E(K_{\p})$ under the natural inclusion $E(K) \hookrightarrow E(K_{\p})$.
\end{proof}
\begin{remark} \label{rem:trifkovic}
Theorem~\ref{thm:trifkovic} in particular confirms Trifkovi\'{c}'s Conjecture (\cite[Conjecture 6]{Tri06}) on the rationality of Stark--Heegner points for modular elliptic curves $E/F$ that are base--change from $\QQ$.
\end{remark}
\subsection*{Acknowledgements} We thank Henri Darmon, Lennart Gehrmann, Matteo Longo, Kimball Martin, Chung--Pang Mok, Alexandru Popa, Luis Santiago Palacios and Chris Williams for their invaluable comments.  We are extremely grateful to Santiago Molina for patiently explaining his work on higher Waldspurger formula to us. We also thank the anonymous referees for their invaluable comments on an earlier draft of this article.


\section{Review of Stark--Heegner cycles attached to Bianchi modular forms} \label{sec:stark-heegnercycles}
In this section, we briefly review the construction of Stark--Heegner cycles attached to Bianchi modular forms from \cite{VW19}.
\subsection{Bianchi modular forms} \label{subsec:bianchimodularforms} 
Bianchi modular forms are adelic automorphic forms for $\GLt$ over the imaginary quadratic field $F$. 
We recall here some basic properties of Bianchi modular forms. Let $U$ be any open compact subgroup of $\GLt(\A_F^f)$, and for any $k \ge 0$, there exists a finite-dimensional $\C$-vector space $S_{\underline{k}+2}(U)$ of \emph{Bianchi cusp forms} of (parallel) weight $k + 2$ and level $U$, which are functions
\[ \widehat{\cF} : \GLt(F) \backslash \GLt(\A_F) / U \longrightarrow V_{2k+2}(\C) \]
that transform appropriately under the subgroup $\C^\times \cdotspace \ \SUt(\C)$, and also satisfy suitable harmonicity and growth conditions. We will be chiefly interested in the case where
\[
	U = U_0(\cN) = \big\{\smallmatrd{a}{b}{c}{d} \in \GLt(\widehat{\roi}_F) : c \equiv 0 \newmod{\cN}\big\},
\]
where $\cN = \p\cM \subset \roi_F$ is square--free and $\p \nmid \cM$ as before.

Bianchi modular forms admit an analogue of $q$-expansions (cf.~\cite[\S1.2]{Wil17}), giving rise to a system of Fourier--Whittaker coefficients $c(I,\widehat{\cF})$ indexed by the ideals $I \subset \mathscr{D}^{-1}$ (where $\mathscr{D}$ is the different of $F/\Q$). These can be described as the eigenvalues of Hecke operators. In fact, one can define a family of (commuting) Hecke operators indexed by ideals $\m \subset \roi_F$, defined via double coset operators. When $\widehat{\cF}$ is a normalised Hecke eigenform (i.e. $c(1,\widehat{\cF}) = 1$), then the eigenvalue $\lambda_{\m}$ of the $\m$-th Hecke operator on $\widehat{\cF}$ is equal to $c(\m\mathcal{D}^{-1},\widehat{\cF})$ (see \cite[Cor.\ 6.2]{Hid94}). 

For $M$ any module equipped with an action of the Hecke operators, and $\widehat{\cF}$ a cuspidal Bianchi eigenform, we denote by $M_{(\widehat{\cF})}$ for the $\widehat{\cF}$-isotypic part of $M$. This is the generalised eigenspace where the Hecke operators act with the same eigenvalues as on $\widehat{\cF}$.

\subsection{Bianchi modular symbols} \label{subsec:bianchimodularsymbols} For an integer $k \geq 0$, and a ring $R$, we define $V_k(R)$ to be the space of homogeneous polynomials in two variables of degree $k$ over $R$. We define a left $\GLt(R)$ action via
\begin{equation} \label{eqn:leftGL2action}
	\left(\smallmatrd{a}{b}{c}{d}\cdotspace P\right)(x,y) \defeq P\left(by+dx, ay+cx\right).
\end{equation}
We let $V_{k,k}(R) \defeq V_{k}(R)\otimes_{R} V_{k}(R)$ which carries a left action of $\GLt(R)^{2}$, acting on each component via (\ref{eqn:leftGL2action}). In particular, if $L$ is a large enough field containing both the embeddings $\sigma : F \hookrightarrow \overline{\QQ}$, then we get a left action of $\GLt(F)$ on $V_{k,k}(L)$ acting on the first component via one embedding and on the other via its conjugate. In particular, we may think of $V_{k,k}(L)$ as either the space of homogeneous polynomials of degree $k$ in two variables $x$ and $y$ and homogeneous of degree $k$ in two further variables $\overline{x}$ and $\overline{y}$ or as the space of polynomials of degree less than or equal to $k$ in both the variables $x$ and $\overline{x}$. 

Let $\Delta_{0}\defeq \mathrm{Div}^{0}(\mathbb{P}^{1}(F))$ denote the space of degree zero divisors supported on the cusps $\mathbb{P}^{1}(F)$ of the hyperbolic $3$-space $\mathcal{H}_{3}$. Note that $\GLt(F)$ acts on $\Delta_{0}$ via M\"{o}bius transformations $\delta \mapsto (a\delta+b)/(c\delta+d)$. For $\Gamma \subset \GLt(F)$ any subgroup, and $V$ a right $\Gamma$-module, we set $\Delta(V)\defeq \mathrm{Hom}(\Delta_{0}, V)$. Further, we equip this space with a $\Gamma$ action by
\[ (\gamma.\phi)(D) \defeq \phi(\gamma.D)|\gamma \]
The space of \emph{$V$-valued modular symbols} for $\Gamma$ is then defined as the $\Gamma$-invariants
\[ \mathrm{Symb}_{\Gamma}(V) \defeq \mathrm{H}^{0}(\Gamma, \Delta(V)) \]
\begin{definition} \label{defn:bianchimdoularsymbols}
The space of \emph{Bianchi modular symbols} of level $\Gamma_{0}(\mathcal{N}) \defeq U_{0}(\mathcal{N}) \cap \mathrm{SL}_{2}(\cO_{F})$ and \textit{parallel weight} $k_{0}+2$ is defined to be the space $\mathrm{Symb}_{\Gamma_{0}(\mathcal{N})}(V_{k_{0},k_{0}}(\mathbb{C}_{p})^{\vee})$ where $V_{{k_{0},k_{0}}}(\mathbb{C}_{p})^{\vee}$ is the $\mathbb{C}_{p}$-dual of $V_{k_{0},k_{0}}(\mathbb{C}_{p})$, equipped with the right dual action of $\Gamma_{0}(\mathcal{N})$.  
\end{definition}  
\begin{remark} \label{rem:heckeinjection}
The space $\mathrm{Symb}_{\Gamma_{0}(\mathcal{N})}(V_{{k_{0},k_{0}}}(\mathbb{C}_{p})^{\vee})$ admits an action of the Hecke operators. In particular, there is a Hecke-equivariant injection (under our assumption that $F$ has class number one)
\begin{equation} \label{eqn:heckeinjection}
S_{\underline{k_{0}}+2}(U_{0}(\mathcal{N})) \hookrightarrow \mathrm{Symb}_{\Gamma_{0}(\mathcal{N})}(V_{k_{0},k_{0}}(\mathbb{C}_{p})^{\vee})
\end{equation}
with the co-kernel consisting of Eisenstein packets. In particular, to each cuspidal eigenform $\widehat{\cF} \in S_{\underline{k_{0}}+2}(U_{0}(\mathcal{N}))$, we can attach an eigensymbol $\phi_{\widehat{\cF}} \in \mathrm{Symb}_{\Gamma_{0}(\mathcal{N})}(V_{k_{0},k_{0}}(\mathbb{C}_{p})^{\vee})$.
\end{remark}
Let $\phi_{k_{0}} = \phi_{\cF}$ be the Bianchi modular symbol associated to $\cF = \cF_{k_{0}} \in S_{\underline{k_{0}}+2}(U_{0}(\mathcal{N}))^{\mathrm{new}}$.

\subsection{Overconvergent modular symbols} \label{subsec:overconvergentmodularsymbols}
We recall the theory of \emph{overconvergent modular symbols} of \cite{Wil17}. Since we are working under (\textbf{Heeg-Hyp}), we only consider the case that $p$ is inert in $F$ here. We refer the reader to \cite[\S3,\S7]{Wil17} for more details.

\begin{definition} \label{defn:locallyanalyticfunctions}
Let $F_{\p}$ denote the completion of $F$ at $\p = p$ and $\cO_{F_{\p}}$ its ring of integers. For an extension $L/\Qp$, let $\cA^{\p}_{k_{0},k_{0}}(\cO_{F_\p},L )$ denote the ring of locally analytic functions $\cO_{F_{\p}} \rightarrow L$ equipped with a natural `weight $k_{0}$' left action of the semigroup
\[
	\Sigma_0(\p) := \left\{\smallmatrd{a}{b}{c}{d} \in M_2(\cO_{F_{p}}) : v_{\p}(c) > 0, v_{\p}(a) = 0, ad-bc \neq 0\right\}
\]
given by
\[
	\smallmatrd{a}{b}{c}{d}\cdotspace g(z) = (a+cz)^{k_{0}}f\left(\tfrac{b+dz}{a+cz}\right).
\]
\end{definition}
We denote by $\cD_{k_{0},k_{0}}^{\p}(\cO_{F_\p},L)$ -- the space of locally analytic distributions on $\cO_{F_\p}$, to be the continuous $L$-dual of $\cA_{k_{0},k_{0}}^{\p}(\cO_{F_\p},L)$ equipped with the corresponding dual right (weight $k_{0}$) action. Note that the inclusion $V_{k_{0},k_{0}} \subset \cA_{k_{0},k_{0}}^{\p}$ induces, on taking the dual, a surjection $\cD_{k_{0},k_{0}}^{\p} \rightarrow V_{k_{0},k_{0}}^\vee$. In particular, we have a Hecke-equivariant map
\[
\rho : \Symb_{\Gamma_0(\cN)}(\cD_{k_{0},k_{0}}^{\p}(\cO_{F_\p},L)) \longrightarrow \Symb_{\Gamma_{0}(\cN)}(V_{k_{0},k_{0}}^\vee(L)).
\]
We recall the following \emph{Control Theorem} of Williams.
\begin{theorem}[Williams] \label{thm:control theorem}
We have
\[
	\rho|_{(\cF)} : \Symb_{\Gamma_0(\cN)}(\cD_{k_{0},k_{0}}^{\p}(\cO_{F_\p},L))_{(\cF)} \cong \Symb_{\Gamma_{0}(\cN)}(V_{k_{0},k_{0}}^\vee)_{(\cF)},
\]
that is, the restriction of $\rho$ to the $\mathcal{F}$-isotypic subspaces of the Hecke operators is an isomorphism. In particular, there is a unique overconvergent lift $\Psi_{\mathcal{F}}$ of $\phi_{\cF} = \phi_{k_{0}}$ under the map $\rho$.
\end{theorem}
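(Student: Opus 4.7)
The plan is to follow the Stevens-style control theorem strategy, adapted to the Bianchi setting by Williams in \cite{Wil17}. First, set up the short exact sequence of $\Sigma_0(\p)$-modules
\[
0 \longrightarrow N \longrightarrow \cD_{k_0,k_0}^{\p}(\cO_{F_\p}, L) \xrightarrow{\rho^*} V_{k_0,k_0}^{\vee}(L) \longrightarrow 0,
\]
dual to the inclusion $V_{k_0,k_0}(L) \hookrightarrow \cA^{\p}_{k_0,k_0}(\cO_{F_\p}, L)$ of polynomial functions inside locally analytic functions. Applying $\mathrm{Hom}(\Delta_0, -)$ (exact, since $\Delta_0$ is $\ZZ$-free) and then $\Gamma_0(\cN)$-group cohomology yields a Hecke-equivariant four-term exact sequence
\[
0 \to \Symb_{\Gamma_0(\cN)}(N) \to \Symb_{\Gamma_0(\cN)}(\cD_{k_0,k_0}^{\p}) \xrightarrow{\rho} \Symb_{\Gamma_0(\cN)}(V_{k_0,k_0}^{\vee}) \to H^1\bigl(\Gamma_0(\cN), \mathrm{Hom}(\Delta_0, N)\bigr).
\]

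The key ingredient is then a slope bound: $U_\p$ acts on $N$ with slopes $\geq k_0 + 1$. To prove this, filter $N$ dually to the $\pi_\p$-adic filtration on $\cA_{k_0,k_0}^{\p}$ and compute on graded pieces --- the matrix $\smallmatrd{1}{a}{0}{\pi_\p} \in \Sigma_0(\p)$ acts on $f(z)$ by $f\bigl((z+a)/\pi_\p\bigr)$, and on distributions annihilating polynomials of degree $\leq k_0$ only the $(k_0+1)$-th and higher Taylor coefficients contribute, producing an explicit factor of $\pi_\p^{k_0+1}$. Since slope decompositions are exact for the compact operator $U_\p$, this bound transfers to the first and fourth terms of the four-term sequence.

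The hypothesis $\omega_p = 1$ gives $a_p(f) = p^{k_0/2}$, from which the $U_\p$-eigenvalue of $\cF$ (computable via the quadratic base-change at the inert prime $\p$) has $\p$-adic valuation strictly less than $k_0 + 1$. Passing to the $(\cF)$-isotypic component therefore kills both the kernel term and the trailing $H^1$ term, so $\rho|_{(\cF)}$ becomes an isomorphism, and the overconvergent lift $\Psi_\cF$ of $\phi_\cF$ exists and is unique.

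The main obstacle is the slope bound on $N$ in the Bianchi inert setting. Over $\QQ$ this is a clean exercise, but here one works with distributions on the two-dimensional $p$-adic Lie group $\cO_{F_\p}$, and must first equip $\cD_{k_0,k_0}^{\p}$ with a Banach norm making $U_\p$ compact of the expected slope. This technical set-up is precisely the content of \cite[\S 7]{Wil17}, whose computations the present argument invokes directly.
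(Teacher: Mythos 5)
The proposal correctly outlines the Stevens-style control theorem strategy that underlies Williams' result. The paper itself gives no argument here: its proof is simply the citation ``This is \cite[Corollary 5.9]{Wil17} as $\cF$ is new at $p$'' — so your sketch reconstructs what that citation rests on, and it is structurally sound: the short exact sequence of $\Sigma_0(\p)$-modules, the resulting four-term exact sequence of modular symbol spaces, the slope bound $\geq k_0+1$ on the kernel $N$ under $U_\p$, and the comparison with the $U_\p$-slope of $\cF$, which is $k_0$.

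Two small corrections worth noting. First, your computation of the $\Sigma_0(\p)$-action (you write $f(z)\mapsto f((z+a)/p)$) uses a transposed/inverse convention relative to the paper's Definition~\ref{defn:locallyanalyticfunctions}, under which $\smallmatrd{1}{a}{0}{p}$ sends $g(z)$ to $g(a+pz)$. After dualizing, both conventions give slope $\geq k_0+1$ on $N$, so the conclusion stands, but the literal matrix computation as you write it points the wrong way relative to the paper's set-up. Second, you invoke the hypothesis $\omega_p=1$ to determine the $U_\p$-slope, but this is unnecessary for the control theorem: newness of $\cF$ at $p$ already forces $v_p(a_\p(\cF)) = k_0 < k_0+1$ regardless of the Atkin--Lehner sign, since $|a_p(f)|_p = p^{-k_0/2}$ for any $p$-newform and $a_\p(\cF) = a_p(f)^2$ when $p$ is inert. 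The paper's one-line justification uses the cleaner ``new at $p$'' condition; $\omega_p = 1$ only becomes relevant later (Remark~\ref{rem:Up-eigenvalue}) when the trivial-zero phenomenon is invoked.
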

\begin{proof}
This is \cite[Corollary 5.9]{Wil17} as $\cF$ is new at $p$ (See also \cite[Corollary 4.8]{BW19}).
\end{proof}
\subsection{Harmonic modular symbols} \label{subsec:harmonicmodularsymbol} \label{subsec:harmonicmodularsymbols}
In order to define a suitable $\p$-adic Integration theory which links the Stark--Heegner cycles to arithmetic data, we need to spread out the overconvergent modular symbol $\Psi_{\mathcal{F}}$ which is invariant under $\Gamma_{0}(\cN)$ to the larger Ihara group $\Gamma$, thereby enabling us to define a family of distributions over the projective line $\mathbb{P}^{1}(F_{\p})$. For more details see \cite[Section 3]{VW19} and \cite{BW19}.

\begin{definition} \label{def:bruhattitstree}
Let $\mathcal{T}_{\p}$ be the Bruhat--Tits tree for $\GLt(F_{\p})$ which is a connected tree with vertices given by homothety classes of $\cO_{F_{\p}}$-lattices $\mathscr{L} \subset (F_{\p})^{2}$. Two vertices are joined by an edge $e$ if one can find representatives of lattices $\mathscr{L}$ and $\mathscr{L}'$ such that
\[ p\mathscr{L}' \subset \mathscr{L} \subset \mathscr{L}' \]
where by abuse of notation, we denote a uniformiser in $\cO_{F_{\p}}$ simply by $p$. 
\end{definition}

Each edge comes with an orientation (given by the source and target vertices) and we denote the set of oriented edges of the Bruhat--Tits tree by $\mathcal{E}(\mathcal{T}_{\p})$ and the set of vertices by $\mathcal{V}(\mathcal{T}_{\p})$. For an ordered edge $e \in \cE(\cT_{\p})$, we denote its source vertex by $s(e)$ and target verex by $t(e)$. The edge $\overline{e} \in \cE(\cT_{\p})$ obtained from $e$ by interchanging its source and target vertices is called the edge \emph{opposite} to $e$.

Further we denote by $v_{*} \defeq [\mathscr{L}_{*}]$ to be the \emph{standard vertex} corresponding to the homothety class of lattices represented by $\mathscr{L}_{*} \defeq \cO_{F_{\p}}\oplus\cO_{F_{\p}}$. Similarly denote by $v_{\infty} \defeq [\mathscr{L}_{\infty}]$ for $\mathscr{L}_{\infty} \defeq p\cO_{F_{\p}}\oplus \cO_{F_{\p}}$. We also set $e_{\infty}$ to be the standard edge connecting $v_{*}$ and its neighbor $v_{\infty}$. We say a vertex is even (resp. odd) if it is connected to $v_{*}$ by an even (resp. odd) number of edges. An edge $e \in \mathcal{E}(\mathcal{T}_{\p})$ is called even (resp. odd) if its source vertex $v_{s(e)}$ is.  We denote the set of even (resp. odd) vertices and edges by $\mathcal{V}^{+}(\mathcal{T}_{\p})$ and $\mathcal{E}^{+}(\mathcal{T}_{\p})$ (resp. $\mathcal{V}^{-}(\mathcal{T}_{\p})$ and $\mathcal{E}^{-}(\mathcal{T}_{\p})$). There is a natural transitive action of $\PGLt(F_{\p})$ on the tree $\mathcal{T}_{\p}$ via M\"{o}bius transformation, which we can extended to a larger group. We recall 
\begin{definition}\label{groupomega}Recall that $\cN = p\cM$ with $p\nmid \cM$.
\begin{itemize}
\item[(i)] For $v$ a finite place of $F$, define 
\[R_0(\cM)_v \defeq \left\{\matr\in\mathrm{M}_2(\roi_v): c \equiv 0 \newmod{\cM}\right\}.\]
\item[(ii)] Let $R = R_0(\cM) \defeq \left\{\gamma \in \mathrm{M}_2\left(\A_F^f\right): \gamma_{v} \in R_0(\cM)_{v}\text{ for }v\neq p, \gamma_{\p}\in\mathrm{M}_2(F_{\p}) \right\}.$
\item[(iii)]Let $\widetilde{\Omega}$ denote the image of $R^\times$ in $\mathrm{PGL}_2\left(\A_F^f\right)$.
\item[(iv)] Let $\Omega \defeq \mathrm{PGL}_2^+\left(\A_F^f\right)\cap \widetilde{\Omega},$ where 
\[\PGLt^+\left(\A_F^f\right) \defeq \left\{\gamma \in \PGLt\left(\A_F^f\right):v_{\p}(\det(\gamma_{\p})) \equiv 0\newmod{2}\right\}.\]
\item[(v)] Finally, let 
\[
	\widetilde{\Gamma} =\widetilde{\Omega}\cap\PGLt(F), \hspace{15pt} \Gamma = \Omega\cap\PGLt(F).
\]
\end{itemize}
\end{definition}
The groups $\widetilde{\Omega}$ and $\Omega$ act on $\mathcal{T}_{\p}$ via projection onto $\PGLt(F_{\p})$. By \cite[Theorem 2, Chapter II.1.4]{Ser80}, we know that $\widetilde{\Omega}$ acts transitively on the sets $\mathcal{E}(\mathcal{T}_{\p})$ and $\mathcal{V}(\mathcal{T}_{\p})$ whilst $\Omega$ acts transitively on $\mathcal{E^{\pm}}(\mathcal{T}_{\p})$ and $\mathcal{V}^{\pm}(\mathcal{T}_{\p})$. Let $\PGLt(F_{\p})$ act on the projective line $\mathbb{P}^{1}(F_{\p})$ via 
\begin{equation} \label{eqn:projectivelineaction} \begin{pmatrix}a & b\\c & d\end{pmatrix}\cdot x \defeq \frac{b + dx}{a + cx}
\end{equation}
For $e \in \mathcal{E}(\mathcal{T}_{\p})$, let $\gamma_{e} \in \widetilde{\Omega}$ be such that $e = \gamma_{e}e_{*}$. We associate to the edge $e$, the open set 
\[ U_{e} \defeq \gamma_{e}^{-1}(\cO_{F_{\p}}) \defeq \lbrace x \in \mathbb{P}^{1}(F_{\p}) : \gamma_{e}.x \in \cO_{F_{\p}} \rbrace \subset \mathbb{P}^{1}(F_{\p}) \]
\begin{remark} \label{rem:compactopen}
The sets $U_{e}$, as $e$ ranges over $\mathcal{E}(\mathcal{T}_{\p})$, form a basis of compact open subsets of $\mathbb{P}^{1}(F_{\p})$.
\end{remark}
We can define Bianchi modular forms on the Bruhat--Tits tree $\mathcal{T}_{\p}$ which will allow us to extend distributions from $\cO_{F_{\p}}$ to those that are projective in $\p$. See \cite[Section 2]{BW19} and \cite[Section 3.2]{VW19} for details.
\begin{definition}
Let $\cA_{k_{0}}(\PP^1_{\p},L)$ denote the space of $L$-valued functions on $\PP^1(F_{\pri})$ that are locally analytic except for a pole of order at most $k_{0}$ at $\infty$ and let $\cD_{k_{0}}^{\pri}(\PP^1_{\pri},L)$ denote its continuous dual, i.e. $\cD_{k_{0}}^{\pri}(\PP^1_{\pri},L) := \mathrm{Hom}_{\mathrm{cts}}(\cA_{k_{0}}(\PP^1_{\pri},L), L)$.
\end{definition}

This space of distributions is a right $\Gamma$-module, and there is a natural restriction map 
\[
	\cD_{k_{0}}^{\pri}(\PP^1_{\pri},L) \longrightarrow \cD_{k_{0},k_{0}}^{\pri}(\cO_{F_\p},L),
\]

inducing
\[
	\rho_{\cT} : \Symb_{\Gamma}(\cD_{k_{0}}^{\pri}(\PP^1_{\pri},L)) \longrightarrow \Symb_{\Gamma_0(\cN)}(\cD_{k_{0},k_{0}}^{\pri}(\cO_{F_\p},L)).
\]
We call the domain $\Symb_{\Gamma}(\cD_{k_{0}}^{\pri}(\PP^1_{\pri},L))$ as `\emph{harmonic modular symbols} on $\cT_{\pri}$'. There is a natural action of the Hecke operators making the map $\rho_{\cT}$ Hecke equivariant. Recall that $\mathcal{F} \in S_{\underline{k_{0}}+2}(U_0(\cN))$ is a cuspidal Bianchi eigenform that is new at $p$.  Then Theorem 3.8 of \cite{VW19} shows that
\begin{theorem} \label{thm:overconvergentprojectivemodularsymbol}
We have an isomorphism on the $\mathcal{F}$-isotypic Hecke-eigenspace
\[
\rho_{\cT}|_{(\mathcal{F})} : \Symb_{\Gamma}(\cD_{k_{0}}^{\pri}(\PP^1_{\pri},L))_{(\mathcal{F})} \cong \Symb_{\Gamma_0(\cN)}(\cD_{k_{0},k_{0}}^{\pri}(\cO_{F_\p},L))_{(\mathcal{F})}.
\] 

\end{theorem}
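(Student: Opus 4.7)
The plan is to construct an explicit inverse to $\rho_{\cT}$ on the $\mathcal{F}$-isotypic component. By Theorem~\ref{thm:control theorem}, the target $\Symb_{\Gamma_0(\cN)}(\cD_{k_{0},k_{0}}^{\p}(\cO_{F_\p},L))_{(\mathcal{F})}$ is generated by the overconvergent lift $\Psi_\mathcal{F}$ of $\phi_\mathcal{F}$, so it suffices to write down a canonical harmonic modular symbol whose image under $\rho_{\cT}$ is $\Psi_\mathcal{F}$, and to argue this is forced by Hecke-equivariance.

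First I would set up the edgewise data. Using Definition~\ref{groupomega} and the transitivity of $\widetilde{\Omega}$ on $\cE(\cT_{\p})$ (cf.~\cite[Theorem 2, Chapter II.1.4]{Ser80}), choose for each edge $e\in\cE(\cT_{\p})$ an element $\gamma_e\in\widetilde{\Omega}$ with $e=\gamma_e\cdot e_\infty$. Working locally on the compact open $U_e=\gamma_e^{-1}(\cO_{F_{\p}})$, I would define the distribution that $\Phi$ assigns to $U_e$ to be the pushforward $\gamma_e\cdot\Psi_\mathcal{F}$ (with the weight-$k_0$ action). Independence of the choice of $\gamma_e$ reduces to the fact that the stabilizer of $e_\infty$ inside $\widetilde{\Omega}\cap\PGLt(F)$ is (a conjugate of) $\Gamma_0(\cN)$, combined with the $\Gamma_0(\cN)$-invariance of $\Psi_\mathcal{F}$.

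Next I would verify the harmonicity. Since the $U_e$ for $e$ running through the edges out of a vertex $v$ form a disjoint cover of $\PP^1(F_{\p})\setminus\{\text{one point}\}$ (in terms of the standard basis of compact opens from Remark~\ref{rem:compactopen}), the condition for the edgewise data to assemble into a genuine distribution on $\PP^1(F_{\p})$ comes down to a finite linear identity at each vertex. A short computation identifies this identity with the statement that $\Psi_\mathcal{F}$ is a $U_{\p}$-eigensymbol with the correct eigenvalue. Because $\mathcal{F}$ is new at $\p$ and (by the split multiplicative assumption $\omega_p=1$) the $U_\p$-eigenvalue is precisely $p^{k_0/2}$, the harmonicity holds automatically on the $\mathcal{F}$-isotypic component. $\Gamma$-equivariance of the resulting $\Phi$ is then built into the construction, since any $\gamma\in\Gamma$ permutes the edges $e\mapsto\gamma e$ and acts compatibly on the chosen representatives $\gamma_e\mapsto\gamma\gamma_e$.

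Finally, $\rho_{\cT}(\Phi)=\Psi_\mathcal{F}$ by construction (restrict to the standard edge), giving surjectivity on the isotypic component. For injectivity, suppose $\Phi\in\Symb_\Gamma(\cD_{k_0}^\p(\PP^1_\p,L))_{(\mathcal{F})}$ satisfies $\rho_{\cT}(\Phi)=0$. Then $\Phi$ vanishes on $U_{e_\infty}=\cO_{F_\p}$, and by $\Gamma$-equivariance and the transitivity of $\widetilde{\Omega}$ on $\cE(\cT_\p)$, it vanishes on every $U_e$; since these generate the topology of $\PP^1(F_\p)$, we deduce $\Phi=0$. The main obstacle, and the place where real work must be done, is the harmonicity verification in the previous paragraph: one must identify the vertex-level compatibility relations with the $U_\p$-eigenvalue equation, and check that the constants match up (including the weight-$k_0$ twist in the action). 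Once this identification is pinned down, the newness hypothesis on $\mathcal{F}$ at $\p$ — specifically that $\Psi_\mathcal{F}$ is a $U_\p$-eigenvector with slope $k_0/2$ — promotes $\Gamma_0(\cN)$-invariance to full $\Gamma$-invariance and closes the argument.
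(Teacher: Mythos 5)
Your overall strategy --- build the harmonic lift of $\Psi_\mathcal{F}$ by pushing it around the tree with edge representatives, verify the gluing condition as a $U_\p$-eigenvalue relation, and get injectivity from the fact that the $U_e$ generate the topology of $\PP^1(F_\p)$ --- is the standard one and is essentially what underlies \cite[Theorem 3.8]{VW19} and \cite{BW19}. Your identification of the vertex-level compatibility with the $U_\p$-eigenvalue equation is the right heart of the argument. However, there are a few technical slips worth flagging.

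The most substantive gap is in the injectivity step. You write that vanishing on $U_{e_\infty}$ propagates to every $U_e$ ``by $\Gamma$-equivariance and the transitivity of $\widetilde{\Omega}$ on $\cE(\cT_\p)$.'' These do not combine: $\Phi$ is only $\Gamma$-invariant, and $\Gamma$ acts transitively only on $\cE^+(\cT_\p)$ (or $\cE^-$), \emph{not} on all of $\cE(\cT_\p)$; the group $\widetilde{\Omega}$ is transitive on all edges but is adelic and does not act on the modular symbol. So $\Gamma$-equivariance only gives vanishing on $U_e$ for $e$ in the $\Gamma$-orbit of $e_\infty$ (the even edges). To finish, you need the observation that every odd $U_e$ is a finite disjoint union of even $U_{e'}$'s: if $e$ is odd with source $w$ and target $u$ (so $u$ is even), then $U_e = \bigsqcup_{f\colon s(f)=u,\, f\neq \bar{e}} U_f$, and each $f$ there is even. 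Finite additivity then closes the argument. Without this extra step the injectivity claim is unjustified.

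Two further smaller points. First, the elements $\gamma_e$ should be drawn from $\widetilde{\Gamma}=\widetilde{\Omega}\cap\PGLt(F)$, not merely from $\widetilde{\Omega}$: to push $\Psi_\mathcal{F}$ forward as a modular symbol you need to act on the divisor module $\Delta_0 = \mathrm{Div}^0(\PP^1(F))$, which requires a global element. (Transitivity of $\widetilde{\Gamma}$ on $\cE(\cT_\p)$ follows by strong approximation.) Second, your parenthetical about the edges out of a vertex covering ``$\PP^1(F_\p)$ minus one point'' is off: for $F_\p\cong\QQ_{p^2}$ the $p^2+1$ sets $U_e$ with $s(e)=v$ partition all of $\PP^1(F_\p)$. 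Finally, note that the split-multiplicative condition $\omega_\p=1$ is not needed for this theorem; newness of $\cF$ at $\p$ is what makes the harmonicity relation go through, and the sign $\omega_\p$ only governs certain orientation conventions (cf.\ the remark in the proof of Corollary~\ref{cor:pushforwardtoharmonic}), not the existence of the isomorphism.
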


In particular, after combining with Theorem \ref{thm:control theorem} above, we obtain a canonical element $\Phi_{\cF}^{\mathrm{har}} \in \Symb_{\Gamma}(\cD_{k_{0}}^{\pri}(\PP^1_{\pri},L))$ attached to the newform $\mathcal{F}$. For brevity, we set $\mathbf{MS}_{\Gamma}(L) \defeq \textup{Symb}_{\Gamma}(\cD_{k_{0}}^{\pri}(\PP^1_{\pri},L))$. In particular $\mathbf{MS}_{\Gamma}(L)_{(\mathcal{F})}$ is a one-dimensional $L$-vector space (See \cite[Remark 3.9]{VW19}). $\Phi_{\cF}^{\mathrm{har}}$ is called the `harmonic modular symbol' attached to the Bianchi eigenform $\cF$. 
\subsection{Double integrals and $\p$-adic Integration} \label{subsec:padicintegrationrecap}
We now recall the theory of double integrals developed in \cite[Section 6]{BW19}. Let $\log_{p}$ denote the branch of the $p$-adic logarithm $\log_{p}:\mathbb{C}_{p}^{\times} \longrightarrow \mathbb{C}_{p}$ such that $\log_{p}(p) = 0$. Recall that the unramified $\p$-adic upper half plane introduced in Section~\ref{sec:intro} is denoted by $\uhp_{\pri}^{\mathrm{ur}}$. 
\begin{definition} \label{defn:doubleintegrals}
	Let $\tau_1, \tau_2 \in \uhp_{\pri}^{\mathrm{ur}}$, $P \in V_{k_{0},k_{0}}(\Cp)$, $\mu \in \textup{Hom}(\Delta_{0}, \cD_{k_{0}}^{\pri}(\PP^1_{\pri},L))$ and $r,s \in \Proj(F)$.
\begin{itemize}
\item[(i)] For each $\sigma : F_{\p} \hookrightarrow L$ an embedding, define the `log' double integral at $\sigma$ by
	\[
	\int_r^s\int_{\tau_{1}}^{\tau_{2}} (P)\omega_{\mu}^{\textup{log}_p,\sigma} \defeq \int_{\Proj_{\pri}}\log_p\left(\frac{t_{\pri}-\tau_1}{t_{\pri}-\tau_2}\right)^{\sigma}P(t)d\mu\{r-s\}(t),
\]
	where $t_{\pri}$ is the projection of $t \in \Proj_{\pri}$ to $\Proj(F_{\pri})$. 
	\item[(ii)] We also define the `normed log' double integral as
	 \begin{multline}	\int_r^s\int_{\tau_{1}}^{\tau_{2}} (P)\omega_{\mu}^{\textup{log}_{p}} \defeq \sum\limits_{\sigma : F_{\p} \hookrightarrow L} \int_r^s\int_{\tau_{1}}^{\tau_{2}} (P)\omega_{\mu}^{\textup{log}_{p},\sigma}  \\ = \int_{\Proj_{\pri}}\log_p\circ\  \mathrm{N}_{F_{\pri}/\QQ_{p}}\left(\frac{t_{\pri}-\tau_1}{t_{\pri}-\tau_2}\right)P(t)d\mu\{r-s\}(t) \end{multline}
\item[(iii)] Define the `ord' double integral by
\begin{align*}
\int_r^s\int_{\tau_{1}}^{\tau_{2}}\ (P)\omega_{\mu}^{\textup{ord}_{\p}} &\defeq \int_{\Proj_{\pri}}\mathrm{ord}_{\pri}\left(\frac{t_{\pri}-\tau_1}{t_{\pri}-\tau_2}\right)P(t)d\mu\{r-s\}(t)\\
&=\sum\limits_{e : \textup{red}_{\mathfrak{p}}(\tau_1) \rightarrow \textup{red}_{\mathfrak{p}}(\tau_2)} \int_{U_{e}} P(t)d\mu\{r-s\}(t),
\end{align*} 
	where $\mathrm{red}_{\pri}:\uhp_{\pri} \rightarrow \mathcal{T}_{\pri} = \mathcal{E}(\mathcal{T}_{\pri}) \sqcup \mathcal{V}(\mathcal{T}_{\pri})$ is the reduction map and $U_e$ is the corresponding open set of $\Proj_{\pri}$.
\end{itemize}
Here we normalise so that $\mathrm{ord}_{\pri}(p) = 1$, noting that $p$ is a uniformiser in $F_{\pri}$. 
\end{definition}

\begin{remark} \label{rem:omegaF}
When $\mu = \Phi_{\cF}^{\mathrm{har}}$, we denote the double integrals defined above by 
$\int_{\tau_{1}}^{\tau_{2}}\int_r^s (P)\omega_{\cF}^{?}$ for $?$ either $\textup{log}_{p},\sigma$ or $\textup{ord}_{\pri}$. This notation will be useful in Section~\ref{sec:p-adicfamilies} below.
\end{remark}

For $?$ either $\textup{log}_{p},\sigma$ or $\textup{ord}_{\pri}$, we think of the \textit{double integrals} defined above as maps 
\begin{align*}
\Phi^{?} : &[\Delta_{0} \otimes \textup{Div}^{0}(\mathcal{H}_{\mathfrak{p}}^{\textup{ur}}) \otimes V_{k_{0},k_{0}}] \otimes 	\textup{Hom}(\Delta_{0}, \cD_{k_{0}}^{\pri}(\PP^1_{\pri},L)) \longrightarrow L\\
	&\big[(r - s)\ \otimes \ (\tau_1 - \tau_2) \ \otimes \ \ P \big]\ \otimes \ \mu \ \ \ \  \ \ \ \longmapsto \int_{\tau_{1}}^{\tau_{2}}\int_r^s (P)\omega_{\mu}^{?}.
\end{align*}
Also let
\[ \Phi^{\textup{log}_{p}} \defeq \Phi^{\textup{log}_p\circ N_{F_\pri/\Qp}} \defeq \sum_{\sigma} \Phi^{\textup{log}_{p},\sigma}. \] 
Since the pairings $\Phi^{\textup{log}_{p},\sigma}$ and $\Phi^{\textup{ord}_{\pri}}$ are $\Gamma$-invariant, we may interpret them as morphisms
\begin{equation} \label{eqn:homologytocohomology1}
\Phi^{\log_p},\Phi^{\log_p,\sigma},\ \Phi^{\ord_{\pri}} : (\Delta_{0} \otimes \textup{Div}^{0}(\uhp_{\mathfrak{p}}^{\textup{ur}}) \otimes V_{k,k})_{\Gamma} \rightarrow \mathbf{MS}_{\Gamma}(L)^{\vee}.
\end{equation}
We will denote the projection onto the $(\mathcal{F})$-isotypic component of $\mathbf{MS}_{\Gamma}(L)^{\vee}$, $\mathrm{pr}_{\mathcal{F}}\circ \Phi^{?}$ simply by $\Phi_{\mathcal{F}}^{?}$. Note that we have an exact sequence 
\begin{equation} \label{eqn:divisorexactsequence}
0 \rightarrow \Delta_0 \otimes \textup{Div}^{0}(\uhp_{\pri}^{\textup{ur}}) \otimes V_{k_{0},k_{0}} \rightarrow \Delta_0 \otimes \textup{Div}(\uhp_{\pri}^{\textup{ur}}) \otimes V_{k_{0},k_{0}} \rightarrow \Delta_0 \otimes V_{k_{0},k_{0}} \rightarrow 0,
\end{equation}
obtained by tensoring $0 \rightarrow \mathrm{Div}^0 \rightarrow \mathrm{Div} \rightarrow \Z \rightarrow 0$ with the flat $\Z$-module $\Delta_0\otimes V_{k_{0},k_{0}}$. On taking $\Gamma$-homology, we have
\begin{multline} \label{eqn:homologylongexactseq1}
\cdots \rightarrow \tupH_{i}(\Gamma, \Delta_0 \otimes \textup{Div}^{0}(\uhp_{\pri}^{\textup{ur}}) \otimes V_{k_{0},k_{0}})  \rightarrow \tupH_{i}(\Gamma, \Delta_0 \otimes \textup{Div}(\uhp_{\pri}^{\textup{ur}}) \otimes V_{k_{0},k_{0}}) \\
\rightarrow \tupH_{i}(\Gamma,\Delta_0 \otimes V_{k_{0},k_{0}}) \rightarrow \cdots
\end{multline}
In particular, we have the connecting morphism
\begin{equation} \label{eqn:connectingmorphism}
\tupH_{1}(\Gamma, \Delta_{0} \otimes V_{k_{0},k_{0}}) \xrightarrow{\delta} (\Delta_0 \otimes \textup{Div}^{0}(\uhp^{\textup{ur}}_{\pri}) \otimes V_{k_{0},k_{0}})_{\Gamma}.
\end{equation}
We record the following results from \cite{VW19}
\begin{theorem} \label{thm:towardsabeljacobi}
\begin{itemize}
\item[(i)]The morphism between the $L$-vector spaces
\[ \Phi^{\textup{ord}_{\pri}}_{\mathcal{F}}\circ\delta : \tupH_{1}(\Gamma, \Delta_{0} \otimes V_{k_{0},k_{0}}) \longrightarrow \mathbf{MS}_{\Gamma}(L)^{\vee}_{(\mathcal{F})} \]
is surjective.
\item[(ii)] For each embedding $\sigma:F_{\pri}\hookrightarrow L$, there exists a unique $\mathcal{L}^{\sigma}_{\pri} \in \Cp$ such that
\[ \Phi^{\log_p,\sigma}_{\mathcal{F}}\circ\delta = \mathcal{L}^{\sigma}_{\pri}\circ\Phi^{\ord_{\pri}}_{\mathcal{F}}\circ\delta : \tupH_{1}(\Gamma,\Delta_{0}\otimes V_{k_{0},k_{0}}) \rightarrow \mathbf{MS}_{\Gamma}(L)_{(\mathcal{F})}^{\vee}.\]
\end{itemize}
\begin{proof}
Part (i) follows from \cite[Thereom 3.15]{VW19} while Part (ii) is \cite[Corollary 3.17]{VW19}.
\end{proof}
\end{theorem}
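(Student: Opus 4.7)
I would treat the two parts separately, since part (i) provides the uniqueness in (ii) for free.

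For part (i), the plan is to unwind the connecting homomorphism $\delta$ explicitly. A class in $\tupH_{1}(\Gamma, \Delta_{0} \otimes V_{k_{0},k_{0}})$, represented by a $1$-cycle $\sum_{i} \gamma_{i} \otimes [(r_{i} - s_{i}) \otimes P_{i}]$, lifts after fixing a base vertex $v_{*} \in \mathcal{V}(\mathcal{T}_{\p}) \subset \mathcal{H}^{\textup{ur}}_{\p}$ to the $\Gamma$-invariant element $\sum_{i} (r_{i} - s_{i}) \otimes (\gamma_{i} v_{*} - v_{*}) \otimes P_{i}$ sitting in the middle term of (\ref{eqn:divisorexactsequence}). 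Plugging this into the ord double integral of Definition~\ref{defn:doubleintegrals}(iii), the resulting expression is a sum over the edges of the tree geodesic from $v_{*}$ to $\gamma_{i} v_{*}$, so it factors through the harmonic cocycle data underlying $\mathbf{MS}_{\Gamma}(L)$. Projecting to the $\mathcal{F}$-isotypic component, the resulting functional on $\mathbf{MS}_{\Gamma}(L)_{(\mathcal{F})}$ is essentially pairing against $\Phi_{\mathcal{F}}^{\textup{har}}$. Since $\mathbf{MS}_{\Gamma}(L)_{(\mathcal{F})}$ is one-dimensional (Theorem~\ref{thm:overconvergentprojectivemodularsymbol} together with the remark that follows it), it suffices to exhibit a single homology class whose image is nonzero, and this can be built from a modular symbol relation witnessing the non-vanishing of the harmonic modular symbol $\Phi_{\mathcal{F}}^{\textup{har}}$.

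For part (ii), uniqueness of $\mathcal{L}^{\sigma}_{\p}$ is automatic from the surjectivity of $\Phi^{\textup{ord}_{\p}}_{\mathcal{F}} \circ \delta$ in (i). Existence reduces to the implication: if $\Phi^{\textup{ord}_{\p}}_{\mathcal{F}} \circ \delta(c) = 0$ then $\Phi^{\log_{p}, \sigma}_{\mathcal{F}} \circ \delta(c) = 0$. The strategy is to lift $\Phi_{\mathcal{F}}^{\textup{har}}$ to a distribution-valued overconvergent modular symbol on $\Gamma$ via the combination of Theorem~\ref{thm:control theorem} and Theorem~\ref{thm:overconvergentprojectivemodularsymbol}, and then interpret both the log and the ord double integrals as cohomological operations on this single overconvergent lift. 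Using the long exact sequence (\ref{eqn:homologylongexactseq1}), both functionals factor through a common quotient of the image of $\delta$, namely (up to identification) the tangent space $\mathbf{D}^{\sigma}_{\mathcal{F},L}/\mathrm{Fil}^{\frac{k_{0}+2}{2}}$, inside which the log and ord functionals differ by a canonical scalar. That scalar is then taken as the definition of $\mathcal{L}^{\sigma}_{\p}$.

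The main obstacle is the vanishing implication in (ii): this is where the Bianchi setting departs most sharply from the classical Darmon--Orton picture, because one has two embeddings $F_{\p} \hookrightarrow L$ that must be disentangled, and the filtered $(\varphi, N)$-module $\mathbf{D}_{\mathcal{F}}$ built from overconvergent Bianchi symbols must carry enough rigidity for the proportionality to persist on the full image of $\delta$. Concretely, one must check that the Coleman--Stevens style argument producing the $\mathcal{L}$-invariant in the classical case transports to the Bianchi overconvergent setup without a priori assuming the trivial-zero conjecture; this is the technical heart of [VW19, Corollary 3.17] that I would cite rather than redo here.
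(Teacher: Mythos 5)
Your proposal lands on the same resolution as the paper --- both ultimately cite \cite{VW19} for the substance --- so at the level of strategy you match. But the intermediate sketch you give for part (ii) is circular and would not stand on its own. The filtration $\mathrm{Fil}^{\frac{k_0+2}{2}}\mathbf{D}^\sigma_{\mathcal{F},L}$ is \emph{defined} in the text (immediately after the monodromy module is introduced) as $\{(-\mathcal{L}^\sigma_{\mathfrak{p}}x,\,x): x \in \mathbf{MS}_\Gamma(L)^\vee_{(\mathcal{F})}\}$; it presupposes the very $\mathcal{L}$-invariant that part (ii) is asserting exists. Appealing to the tangent space $\mathbf{D}^\sigma_{\mathcal{F},L}/\mathrm{Fil}^{\frac{k_0+2}{2}}$ and claiming ``the log and ord functionals differ by a canonical scalar inside it'' is precisely begging the question: the whole content of (ii) is that the image of $\Phi^\sigma_{\mathcal{F}}\circ\delta$ in $\mathbf{MS}^\vee\oplus\mathbf{MS}^\vee$ is a line (a graph), rather than all of the two-dimensional target. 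What \cite{VW19} (Corollary 3.17) actually does is establish the vanishing implication --- $\Phi^{\mathrm{ord}_{\mathfrak{p}}}_{\mathcal{F}}\circ\delta(c)=0 \Rightarrow \Phi^{\log_p,\sigma}_{\mathcal{F}}\circ\delta(c)=0$ --- directly at the level of the overconvergent harmonic lift, without invoking the filtered $(\varphi,N)$-module structure, which is constructed only afterwards from the resulting $\mathcal{L}$-invariant. You do say you would defer to the citation for the ``technical heart,'' which is the right instinct; just be aware that the route you outline beforehand cannot be completed.

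Part (i) is on sounder footing: the ord integral of a lifted cycle reduces to a sum over geodesic edges and hence to harmonic-cocycle data, and since $\mathbf{MS}_\Gamma(L)_{(\mathcal{F})}$ is one-dimensional, surjectivity amounts to a single nonvanishing, which is what \cite{VW19} (Theorem 3.15) supplies. One small slip: $\mathcal{V}(\mathcal{T}_{\mathfrak{p}})$ is not a subset of $\mathcal{H}^{\textup{ur}}_{\mathfrak{p}}$ --- the divisor lift in $\delta$ should be based at a point $\tau_* \in \mathcal{H}^{\textup{ur}}_{\mathfrak{p}}$, and the ord computation then depends only on its image $\mathrm{red}_{\mathfrak{p}}(\tau_*)$ in the tree.
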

\begin{remark} \label{rem:bwlinvariant}
We have an equality
\[  \mathcal{L}_{\pri}^{\mathrm{BW}} \defeq \sum_{\sigma} \mathcal{L}_{\pri}^\sigma,\]
where $\sigma$ ranges over all embeddings and $\mathcal{L}_{\pri}^{\mathrm{BW}}$ is the $\mathcal{L}$-invariant of \cite{BW19}. In particular, we also have 
\[ \Phi^{\log_p}_{\mathcal{F}}\circ\delta = \mathcal{L}^{\mathrm{BW}}_{\pri}\circ\Phi^{\ord_{\pri}}_{\mathcal{F}}\circ\delta : \tupH_{1}(\Gamma,\Delta_{0}\otimes V_{k_{0},k_{0}}) \rightarrow \mathbf{MS}_{\Gamma}(L)_{(\mathcal{F})}^{\vee}.\]
\end{remark}
For each $\sigma:F_{\pri} \hookrightarrow L$ we define
\begin{equation} \label{eqn:padicabeljacobi}
\Phi_{\cF}^{\sigma} \defeq -\Phi^{\mathrm{log}_{p},\sigma}_{\mathcal{F}} \oplus \Phi^{\mathrm{ord}_{\p}}_{\mathcal{F}} : (\Delta_0 \otimes \textup{Div}^{0}(\uhp_{\pri}^{\textup{ur}}) \otimes V_{k_{0},k_{0}})_{\Gamma} \rightarrow \mathbf{D}_{\mathcal{F},L}^{\sigma},
 \end{equation}
where we set 
\[\mathbf{D}_{\mathcal{F},L}^{\sigma} \defeq \mathbf{MS}_{\Gamma}(L)_{(\mathcal{F})}^{\vee} \oplus \mathbf{MS}_{\Gamma}(L)_{(\mathcal{F})}^{\vee}.\] 
Further, let
\begin{equation} \label{eqn:monodromymodule}
\mathbf{D}_{\cF} \defeq \bigoplus_{\sigma:F_{\p}\hookrightarrow L} \mathbf{D}_{\cF, L}^{\sigma}
\end{equation}
where each $\mathbf{D}_{\cF, L}^{\sigma}$ is a two dimensional $L$-vector space but with scalar action of $F_{\p}$ (viewed as a subfield of $L$) given by $\sigma$. In \cite[Section 4]{VW19}, we had given $\mathbf{D}_{\mathcal{F}}$ the structure of a rank two filtered $(\varphi, N)$-module over $F_{\p}$ with coefficients in $L$. $\mathbf{D}_{\cF, L}^{\sigma}$  is then a filtered $L$-vector space of dimension two with the filtration given by 
\[ \mathbf{D}_{\cF, L}^{\sigma} = \mathrm{Fil}^{0} \supsetneq \mathrm{Fil}^{1} = \ldots = \mathrm{Fil}^{k_{0}+1} \supsetneq \mathrm{Fil}^{k_{0}+2} = 0 \] 
where 
\begin{align*}
 \mathrm{Fil}^{\frac{k_{0}+2}{2}}\mathbf{D}_{\cF,L}^{\sigma} & \defeq \lbrace (-\mathcal{L}_{\p}^{\sigma}x,x) : x \in \mathbf{MS}_{\Gamma}(L)_{(\mathcal{F})}^{\vee} \rbrace\\
& = \mathrm{Im}(\Phi_{\mathcal{F}}^{\sigma}\circ\delta)  
 \end{align*} 
for each $\sigma : F_{\p} \hookrightarrow L$. 
\begin{remark} \label{rem:dependenceofbranchoflogp}
By definition, the morphisms $\Phi_{\cF}^{\log_{p},\sigma},\Phi_{\cF}^{\log_{p}}$ \& $\Phi_{\cF}^{\sigma}$,  the $\cL$-invariants $\cL_{\p}^{\sigma}$ and the filtered $(\varphi,N)$ module $\mathbf{D}_{\cF}$ all depend on the choice of the branch of the $p$-adic logarithm.
\end{remark}
From the $\Gamma$-homology exact sequence (\ref{eqn:homologylongexactseq1}), we have the connecting morphisms
\begin{equation} \label{eqn:homologylongexactseq2}
 \frac{(\Delta_0 \otimes \textup{Div}^{0}(\uhp_{\pri}^{\textup{ur}}) \otimes V_{k_{0},k_{0}})_{\Gamma}}{\delta(\tupH_{1}(\Gamma,\Delta_0 \otimes V_{k_{0},k_{0}}))} \xhookrightarrow{\partial_{1}} (\Delta_0 \otimes \textup{Div}(\uhp_{\pri}^{\textup{ur}}) \otimes V_{k_{0},k_{0}})_{\Gamma} \xrightarrow{\partial_{2}} (\Delta_0 \otimes V_{k_{0},k_{0}})_{\Gamma}.
 \end{equation}
We recall the definition of a $\p$-adic Abel--Jacobi map from \cite[Definition 5.1]{VW19}
\begin{definition} \label{defn:p-adicAbeliJacobi}
A \emph{$\pri$-adic Abel--Jacobi map} is a morphism
\[ \Phi^{\mathrm{AJ}}_{\sigma} : (\Delta_0 \otimes \textup{Div}(\uhp_{\pri}^{\textup{ur}}) \otimes V_{k_{0},k_{0}})_{\Gamma} \rightarrow \mathbf{D}_{\mathcal{F},L}^{\sigma}/\mathrm{Fil}^{\frac{k_{0}+2}{2}}(\mathbf{D}_{\mathcal{F},L}^{\sigma}) \]
such that the following diagram commutes:
\begin{equation} \label{eqn:commutativaAbelJacobi}
\xymatrix{
 \frac{(\Delta_0 \otimes \textup{Div}^{0}(\uhp_{\pri}^{\textup{ur}}) \otimes V_{k_{0},k_{0}})_{\Gamma}}{\delta(\tupH_{1}(\Gamma,\Delta_0 \otimes V_{k_{0},k_{0}}))} \ar@{^{(}->}[d]^{\partial_1} \ar[r]^<<<<<{\Phi^{\sigma}_{\mathcal{F}}} &     \mathbf{D}_{\mathcal{F},L}^{\sigma}/\mathrm{Fil}^{\frac{k_{0}+2}{2}}(\mathbf{D}_{\mathcal{F},L}^{\sigma}) \\
(\Delta_0 \otimes \textup{Div}(\uhp_{\pri}^{\textup{ur}}) \otimes V_{k_{0},k_{0}})_{\Gamma} \ar@{-->}[ur]^{\Phi^{\mathrm{AJ}}_{\sigma}} 
}
\end{equation}
for $\sigma: F_{\p} \hookrightarrow L$. In other words, a $\pri$-adic Abel--Jacobi map $\Phi^{\mathrm{AJ}}_{\sigma}$ is a lift of the morphism $\Phi_{\mathcal{F}}^{\sigma}$. 
\end{definition}
\begin{remark} \label{rem:uniquenessofabelijacobi}
Note that while there is no unique choice of a  lift of $\Phi^{\sigma}_{\mathcal{F}}$ to a $\p$-adic Abel--Jacobi map, we have shown in \cite[Theorem 6.5]{VW19} that the image of the $\p$-adic Abel--Jacobi image of the Stark--Heegner cycle is independent of such a choice (See also \cite[Remark 5.2]{VW19}).
\end{remark}


\subsection{Stark--Heegner cycles} \label{subsec:starkheegnercycles}
Recall from the Introduction (\S\ref{sec:intro}) that $K/F$ is a quadratic extension of relative discriminant $\mathcal{D}_{K/F}$ prime to the level $\mathcal{N} = \pri\mathcal{M}$ and satisfies the Stark--Heegner hypothesis (\textbf{SH-Hyp}). In particular, the completion $K_{\pri}$ of $K$ at the prime $p$ is the quadratic unramified extension of $F_{\mathfrak{p}}$. We fix $\delta_{K} \in \mathcal{O}_{K}\backslash\mathcal{O}_{F}$ such that $\delta_{K}^{2} \in \mathcal{O}_{F}$ is a generator of the discriminant ideal $(\cD_{K/F})$ (recall that we have assumed $F$ to have class number one). We will regard $\delta_{K}$ as an element of $K_{\mathfrak{p}}$ via $\iota_{p}$. 
Let $\mathcal{O}$ be an $\mathcal{O}_{F}[1/\mathfrak{p}]$-order of conductor $\mathcal{C}$ prime to $\mathcal{D}_{K/F}\mathcal{N}$ and let $\cR$ be the Eichler $\cO_{F}\left[1/\p\right]$--order in $M_{2}(\cO_{F}\left[1/\p\right])$ that are upper triangular modulo $\cM$.
\begin{definition} \label{defn:optimalembedding}
An embedding $\Psi : K \hookrightarrow M_{2}(F)$ is said to be \emph{optimal} if $\Psi(K) \cap \mathcal{R} = \Psi(\mathcal{O})$ and we denote the set of optimal embeddings of $\cO_{F}[1/\p]$-orders $\cO \hookrightarrow \cR$ by $\textup{Emb}(\mathcal{O}, \mathcal{R})$.  
\end{definition}
To an optimal embedding $\Psi \in \textup{Emb}(\mathcal{O}, \mathcal{R})$, we associate the following data.
\begin{itemize}
\item The two points $\tau_{\Psi}$ and $\tau_{\Psi}^{\theta} \in \mathcal{H}_{\mathfrak{p}}^{\textup{ur}}(K) := \mathcal{H}_{\mathfrak{p}}^{\textup{ur}} \cap K$ that are fixed by the action of $\Psi(K^{\times})$. Here $\tau_{\Psi}^{\theta} \defeq \theta(\tau_{\Psi})$ for $\theta \in \textup{Gal}(K/F)$, $\theta \neq \textup{id}$.
\item The fixed vertex $v_{\Psi} \in \mathcal{V}$ in the Bruhat-Tits tree for the action of $\Psi(K^{\times})$ on $\mathcal{V}$. Note that $v_{\Psi} = \mathrm{red}_{\p}(\tau_{\Psi}) = \mathrm{red}_{\p}(\tau_{\Psi}^{\theta})$.
\item The polynomial $P_{\Psi}(x,y) \defeq (cx^2 + (a - d)xy - by^2)(\overline{c}\overline{x}^2 + (\overline{a} - \overline{d})\overline{xy} - \overline{b}\overline{y}^{2})\in V_{2,2}$,
where $\Psi(\delta_{K}) = \smallmatrd{a}{b}{c}{d}$. 
\item Let $u$ be a fixed generator of $\mathcal{O}_{1}^{\times}/\{\textup{torsion}\} \cong \ZZ$ (by Dirichlet's Unit theorem), where $\mathcal{O}_{1} := \{ x \in \mathcal{O}\mid N_{K/F}(x) = 1 \}$. Let $\gamma_{\Psi} := \Psi(u)$ and $\Gamma_{\Psi}$ be the cyclic subgroup of $\Gamma$ generated by $\gamma_{\Psi}$. In particular $\Gamma_{\Psi} = \textup{Stab}(\Psi) \subseteq \Gamma$ and $P_{\Psi} \in (V_{2,2})^{\Gamma_{\Psi}}$.  
\end{itemize}
$\Psi$ is said to have \textit{positive} (resp.\ \emph{negative}) \emph{orientation} (at $\p$) if $v_{\Psi} \in \mathcal{V}^{+}(\mathcal{T}_{\p})$ (resp. $\mathcal{V}^{-}(\mathcal{T}_{\p})$). Then
\[ \textup{Emb}(\mathcal{O}, \mathcal{R}) = \textup{Emb}^{+}(\mathcal{O}, \mathcal{R}) \sqcup \textup{Emb}^{-}(\mathcal{O}, \mathcal{R}) \]
where $\textup{Emb}^{\pm}(\mathcal{O}, \mathcal{R})$ denotes the set of embeddings with positive/negative orientation. $\Gamma$ acts naturally on the set $ \textup{Emb}(\mathcal{O}, \mathcal{R})$ by conjugation and it preserves the subsets $\textup{Emb}^{\pm}(\mathcal{O}, \mathcal{R})$. Further, we know that the association
\[ \Psi \mapsto (\tau_{\Psi}, P_{\Psi}, \gamma_{\Psi}) \]
under conjugation action by any $\gamma \in \Gamma$ satisfies
\begin{equation} \label{eqn:conjugationaction}
(\tau_{\gamma\Psi\gamma^{-1}},P_{\gamma\Psi\gamma^{-1}},\gamma_{\gamma\Psi\gamma^{-1}}) = (\gamma\cdotspace\tau_{\Psi}, \gamma\cdotspace P_{\Psi}, \gamma\gamma_{\Psi}\gamma^{-1}).
\end{equation}
\begin{remark} \label{rem:galoisactiononembeddings}
Let $\theta \in \Gal(K/F)$ be the non-trivial element. We let $\Psi^{\theta} \in \textup{Emb}(\mathcal{O}, \mathcal{R})$ be the embedding defined as $\Psi^{\theta}(-) := \Psi(\theta(-))$. Then, a simple calculation shows that  
\[ 
(\tau_{\Psi^{\theta}},P_{\Psi^{\theta}},\gamma_{\Psi^{\theta}}) = (\tau_{\Psi}^{\theta}, -P_{\Psi}, \gamma_{\Psi}^{-1}).
 \]
 \end{remark}
Once we fix a cusp $x \in \mathbb{P}^{1}(F)$, we define
\[ 
\mathrm{D} : \textup{Emb}(\mathcal{O},\mathcal{R}) \longrightarrow \Delta_{0} \otimes \textup{Div}(\mathcal{H}_{\mathfrak{p}}^{\textup{ur}}) \otimes V_{k_{0},k_{0}},
 \]
\[ 
\mathrm{D}_{\Psi} := \mathrm{D}(\Psi) := (\gamma_{\Psi}\cdotspace x - x)\otimes \tau_{\Psi} \otimes \Big(\frac{1}{\sqrt{\mathrm{N}_{F/\QQ}(\cD_{K/F})}}\Big)^{k_{0}/2}P_{\Psi}^{k_{0}/2}. 
\]
\begin{remark} \label{rem:errorinVW21}
Note that there is a subtle error in \cite[\S6.1]{VW19} in defining the classes $\mathrm{D}_{\Psi}$ that has been corrected above.
\end{remark}
\begin{lemma} [Lemma 6.3, \cite{VW19}] \label{lem:darmoncycles1}
The image of $\mathrm{D}_{\Psi}$ in $(\Delta_{0} \otimes \textup{Div}(\mathcal{H}_{\mathfrak{p}}^{\textup{ur}}) \otimes V_{k_{0},k_{0}})_{\Gamma}$, which is denoted by $[\mathrm{D}_{\Psi}]$ remains the same if we replace $x$ with any $y \in \Gamma x$. Further, $[\mathrm{D}_\Psi]$ is invariant under the conjugation action of $\Gamma$ on $\textup{Emb}(\mathcal{O}, \mathcal{R})$. 
\end{lemma}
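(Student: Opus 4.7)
The plan is to deduce both assertions from the single observation that, by construction, $\gamma_\Psi$ stabilises both the point $\tau_\Psi$ (since $\gamma_\Psi=\Psi(u)\in\Psi(K^\times)$) and the polynomial $P_\Psi^{k_0/2}\in V_{k_0,k_0}$ (since $P_\Psi\in V_{2,2}^{\Gamma_\Psi}$ by construction). As a consequence, in the coinvariants $(\Delta_0\otimes\mathrm{Div}(\mathcal{H}_{\p}^{\mathrm{ur}})\otimes V_{k_0,k_0})_{\Gamma}$, any class of the form $[D\otimes \tau_\Psi\otimes P_\Psi^{k_0/2}]$ is insensitive to translating $D$ by $\gamma_\Psi$: one can freely push $\gamma_\Psi$ through to the other two tensor factors, where it acts trivially.

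For the first assertion, I would write $y=\gamma x$ for some $\gamma\in\Gamma$ and compute at the level of $\Delta_0$:
\[
(\gamma_\Psi y-y)-(\gamma_\Psi x-x)=\gamma_\Psi(y-x)-(y-x).
\]
Since $\gamma_\Psi$ fixes both $\tau_\Psi$ and $P_\Psi^{k_0/2}$, the element $\gamma_\Psi(y-x)\otimes \tau_\Psi \otimes P_\Psi^{k_0/2}$ is the $\Gamma$-translate by $\gamma_\Psi$ of $(y-x)\otimes\tau_\Psi\otimes P_\Psi^{k_0/2}$, so the two agree in the coinvariants. Multiplication by the normalising scalar $(\mathrm{N}_{F/\QQ}(\mathcal{D}_{K/F}))^{-k_0/4}$ does not affect this, yielding the equality of $[\mathrm{D}_\Psi]$ computed with cusp $x$ and with cusp $y$.

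For the second assertion, fix $\gamma\in\Gamma$ and put $\Psi'\defeq\gamma\Psi\gamma^{-1}$. Invoking \eqref{eqn:conjugationaction}, the triple $(\tau_{\Psi'},P_{\Psi'},\gamma_{\Psi'})$ equals $(\gamma\cdot\tau_\Psi,\gamma\cdot P_\Psi,\gamma\gamma_\Psi\gamma^{-1})$. By the cusp-independence just established, I may compute $\mathrm{D}_{\Psi'}$ using the cusp $\gamma x$ rather than an arbitrary fixed cusp. Since the $\Gamma$-action on $V_{k_0,k_0}$ is multiplicative (as the action in \eqref{eqn:leftGL2action} is by linear substitution), $\gamma\cdot P_\Psi^{k_0/2}=(\gamma\cdot P_\Psi)^{k_0/2}$, and a direct substitution then gives
\[
(\gamma_{\Psi'}\gamma x-\gamma x)\otimes\tau_{\Psi'}\otimes P_{\Psi'}^{k_0/2}=\gamma\cdot\bigl((\gamma_\Psi x-x)\otimes\tau_\Psi\otimes P_\Psi^{k_0/2}\bigr).
\]
Passing to $\Gamma$-coinvariants yields $[\mathrm{D}_{\Psi'}]=[\mathrm{D}_\Psi]$. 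I do not anticipate any genuine obstacle: both parts reduce to routine bookkeeping in the bar complex computing $\Gamma$-coinvariants, the key technical inputs being the $\Gamma_\Psi$-invariance of $(\tau_\Psi,P_\Psi)$ and the multiplicative compatibility of the $\Gamma$-action on $V_{k_0,k_0}$, both of which are built into the construction.
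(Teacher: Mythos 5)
Your proof is correct, and it is the natural argument this kind of statement admits; the paper merely cites \cite[Lemma 6.3]{VW19} rather than reproducing a proof, but your two reductions --- push $\gamma_\Psi$ through the tensor factors using $\gamma_\Psi\in\Gamma$, $\gamma_\Psi\tau_\Psi=\tau_\Psi$, $\gamma_\Psi\cdot P_\Psi=P_\Psi$, and use the covariance formula (\ref{eqn:conjugationaction}) together with multiplicativity of the $\GLt$-action under taking powers --- are exactly the two ingredients the construction is designed to support. One small observation worth flagging: in your first step the identity $(\gamma_\Psi y-y)-(\gamma_\Psi x-x)=\gamma_\Psi(y-x)-(y-x)$ and the fact that $\gamma_\Psi\in\Gamma$ already give independence of $[\mathrm{D}_\Psi]$ from the base cusp for \emph{any} $y\in\PP^1(F)$, not just $y\in\Gamma x$, since $y-x\in\Delta_0$ regardless; so you have in fact established a slightly stronger claim than the lemma states, which is harmless.
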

In particular, there is a well defined map
\begin{equation} \label{eqn:darmoncycles2}
\mathrm{D} : \Gamma/\textup{Emb}(\mathcal{O},\mathcal{R}) \longrightarrow (\Delta_{0} \otimes \textup{Div}(\mathcal{H}_{\mathfrak{p}}^{\textup{ur}}) \otimes V_{k_{0},k_{0}})_{\Gamma}.
\end{equation}
\begin{definition} \label{defn:stark-heegnercycle1}
We call $[\mathrm{D}_\Psi]$ the \emph{Stark--Heegner cycle} attached to the conjugacy class of optimal embeddings $[\Psi]$.
\end{definition}
Let $\sigma:F_{\p} \hookrightarrow L$ be any embedding as before. Recall from \cite[Theorem 6.5]{VW19} that
\begin{theorem} \label{thm:padicabeljacobiimageofdarmoncycles}
The $\pri$-adic Abel--Jacobi image of the Stark--Heegner cycle $\mathrm{D}_{[\Psi]}$ is independent of the choice of a $\pri$-adic Abel--Jacobi map. In other words, if  
\[ 
\Phi^{\mathrm{AJ}}_{\sigma, i} : (\Delta_0 \otimes \textup{Div}(\uhp_{\pri}^{\textup{ur}}) \otimes V_{k_{0},k_{0}})_{\Gamma} \longrightarrow \mathbf{D}_{\cF, L}^{\sigma}/\mathrm{Fil}^{\frac{k_{0}+2}{2}}(\mathbf{D}_{\cF, L}^{\sigma}) , \hspace{12pt} i = 1,2
\] 
 are any two $\pri$-adic Abel--Jacobi maps lifting $\Phi_{\mathcal{F}}^{\sigma}$, then
\begin{multline*} 
\Phi^{\mathrm{AJ}}_{\sigma,1}\left(\left[(\gamma_{\Psi}\cdotspace x - x) \otimes \tau_{\Psi} \otimes \left(\sqrt{\mathrm{N}_{F/\QQ}(\cD_{K/F})}\right)^{-k_{0}/2}P_{\Psi}^{k_{0}/2}\right]\right)\\ = \Phi^{\mathrm{AJ}}_{\sigma,2}\left(\left[(\gamma_{\Psi}\cdotspace x - x) \otimes \tau_{\Psi} \otimes \left(\sqrt{\mathrm{N}_{F/\QQ}(\cD_{K/F})}\right)^{-k_{0}/2}P_{\Psi}^{k_{0}/2}\right]\right)
\end{multline*}
\end{theorem}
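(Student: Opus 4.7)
The proof is essentially a diagram chase in the exact sequence \eqref{eqn:homologylongexactseq2} combined with one short computation in $\Gamma$-coinvariants, so I will lay out the argument in that order.

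\textbf{Step 1 (Two lifts differ on the cokernel of $\partial_1$).} By the defining commutative diagram \eqref{eqn:commutativaAbelJacobi}, any $\p$-adic Abel--Jacobi map $\Phi^{\mathrm{AJ}}_\sigma$ agrees with $\Phi_{\mathcal{F}}^\sigma$ after pre-composition with $\partial_1$. Hence for any two lifts $\Phi^{\mathrm{AJ}}_{\sigma,1}, \Phi^{\mathrm{AJ}}_{\sigma,2}$, the difference
\[
\Delta_\sigma \defeq \Phi^{\mathrm{AJ}}_{\sigma,1} - \Phi^{\mathrm{AJ}}_{\sigma,2} : (\Delta_0 \otimes \mathrm{Div}(\mathcal{H}_{\p}^{\mathrm{ur}}) \otimes V_{k_0,k_0})_\Gamma \longrightarrow \mathbf{D}_{\cF,L}^\sigma/\mathrm{Fil}^{(k_0+2)/2}
\]
vanishes on the image of $\partial_1$. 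Thus $\Delta_\sigma$ factors through the cokernel of $\partial_1$, which by the exact sequence \eqref{eqn:homologylongexactseq2} embeds under $\partial_2$ into $(\Delta_0 \otimes V_{k_0,k_0})_\Gamma$. Consequently, it is enough to verify that
\[
\partial_2\bigl([\mathrm{D}_\Psi]\bigr) = 0 \quad \text{in } (\Delta_0 \otimes V_{k_0,k_0})_\Gamma.
\]

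\textbf{Step 2 (The boundary computation).} The map $\partial_2$ is induced from the degree map $\mathrm{Div}(\mathcal{H}_{\p}^{\mathrm{ur}}) \to \mathbb{Z}$, so applied to $\mathrm{D}_\Psi = (\gamma_\Psi \cdot x - x) \otimes \tau_\Psi \otimes c \, P_\Psi^{k_0/2}$ (with the scalar $c = (\mathrm{N}_{F/\mathbb{Q}}(\cD_{K/F}))^{-k_0/4}$) it yields the class of $(\gamma_\Psi \cdot x - x) \otimes c\, P_\Psi^{k_0/2}$ in $(\Delta_0 \otimes V_{k_0,k_0})_\Gamma$. Recall that $\gamma_\Psi = \Psi(u) \in \Gamma_\Psi \subset \Gamma$ and that $P_\Psi \in V_{2,2}^{\Gamma_\Psi}$, in particular $\gamma_\Psi \cdot P_\Psi^{k_0/2} = P_\Psi^{k_0/2}$. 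The defining relation in $\Gamma$-coinvariants therefore gives
\[
(\gamma_\Psi \cdot x) \otimes P_\Psi^{k_0/2} \;=\; \gamma_\Psi \cdot \bigl( x \otimes P_\Psi^{k_0/2}\bigr) \;=\; x \otimes P_\Psi^{k_0/2},
\]
so $(\gamma_\Psi \cdot x - x) \otimes c\, P_\Psi^{k_0/2} = 0$ in $(\Delta_0 \otimes V_{k_0,k_0})_\Gamma$, as required.

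\textbf{Step 3 (Conclusion).} Combining Step 1 and Step 2, $\Delta_\sigma([\mathrm{D}_\Psi]) = 0$ in $\mathbf{D}_{\cF,L}^\sigma/\mathrm{Fil}^{(k_0+2)/2}$, which is exactly the claimed equality $\Phi^{\mathrm{AJ}}_{\sigma,1}([\mathrm{D}_\Psi]) = \Phi^{\mathrm{AJ}}_{\sigma,2}([\mathrm{D}_\Psi])$. There is no genuine analytic obstacle here: the entire content sits in observing that $\gamma_\Psi$ simultaneously stabilizes both the base point (up to a boundary) and the coefficient polynomial, so that $\partial_2$ kills the Stark--Heegner cycle. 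The only mildly delicate point is bookkeeping the scalar normalization $(\sqrt{\mathrm{N}_{F/\mathbb{Q}}(\cD_{K/F})})^{-k_0/2}$ introduced in \S\ref{subsec:starkheegnercycles}, which plays no role in the vanishing but must be carried along consistently; this is precisely the correction alluded to in Remark~\ref{rem:errorinVW21}.
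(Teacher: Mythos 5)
Your Step 1 reduction is correct and is the right way to set up the problem: since any two $\p$-adic Abel--Jacobi maps agree on $\mathrm{Im}(\partial_1) = \ker(\partial_2)$ and $\partial_2$ is surjective, the statement is equivalent to $\partial_2([\mathrm{D}_\Psi]) = 0$ in $(\Delta_0 \otimes V_{k_0,k_0})_\Gamma$. The gap is in Step~2. The element $(x) \otimes P_\Psi^{k_0/2}$ through which you transport $\gamma_\Psi$ does \emph{not} lie in $\Delta_0 \otimes V_{k_0,k_0}$: the divisor $(x)$ has degree one, so $(x) \notin \Delta_0 = \mathrm{Div}^0(\mathbb{P}^1(F))$. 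The relation $\gamma m \equiv m$ defining $(\Delta_0 \otimes V_{k_0,k_0})_\Gamma$ is only available for $m \in \Delta_0 \otimes V_{k_0,k_0}$, so the chain
\[
(\gamma_\Psi \cdot x) \otimes P_\Psi^{k_0/2} = \gamma_\Psi \cdot \bigl( x \otimes P_\Psi^{k_0/2}\bigr) = x \otimes P_\Psi^{k_0/2}
\]
is a manipulation in $\mathrm{Div}(\mathbb{P}^1(F)) \otimes V_{k_0,k_0}$ followed by an unjustified passage to the smaller coinvariant space. What it actually shows is only that $\partial_2([\mathrm{D}_\Psi])$ maps to zero under the further map $(\Delta_0 \otimes V_{k_0,k_0})_\Gamma \to (\mathrm{Div}(\mathbb{P}^1(F)) \otimes V_{k_0,k_0})_\Gamma$, i.e.\ that it lies in the image of the connecting homomorphism $\mathrm{H}_1(\Gamma, V_{k_0,k_0}) \to (\Delta_0 \otimes V_{k_0,k_0})_\Gamma$ attached to $0 \to \Delta_0 \to \mathrm{Div}(\mathbb{P}^1(F)) \to \Z \to 0$. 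That image is non-trivial in general, so this does not give the vanishing.

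Note the contrast with the proof of Lemma~\ref{lem:darmoncycles1}: there, to show basepoint-independence one forms the genuinely degree-zero auxiliary element $\bigl((x)-(y)\bigr) \otimes \tau_\Psi \otimes P_\Psi^{k_0/2}$ and uses that $\gamma_\Psi$ fixes $\tau_\Psi$ and $P_\Psi^{k_0/2}$; the coinvariants relation then applies cleanly. Your Step~2 lacks such a degree-zero substitute, because $\gamma_\Psi$ is hyperbolic and has no fixed cusp in $\mathbb{P}^1(F)$ that could replace $x$ to kill the class. To repair the argument you need a different input for the vanishing of $\partial_2([\mathrm{D}_\Psi])$ --- for instance exploiting the Galois-conjugate point $\tau_\Psi^\theta$ so that a degree-zero divisor $\{\tau_\Psi - \tau_\Psi^\theta\}$ in $\mathrm{Div}^0(\cH_\p^{\mathrm{ur}})$ appears (as it does in the proof of Theorem~\ref{thm:padicAJimage}), thereby exhibiting an explicit multiple of $[\mathrm{D}_\Psi]$ in $\mathrm{Im}(\partial_1)$, then dividing in the $L$-vector space target. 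As written, the vanishing is asserted but not proved.
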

\begin{definition} \label{defn:starkheegnercohomologyclass}
The $\p$-adic Abel--Jacobi image of the Stark--Heegner cycle attached to a conjugacy class of embeddings $[\Psi] \in \Gamma/\mathrm{Emb}(\cO,\mathcal{R})$ is defined as 
\[
	\mathfrak{s}_{[\Psi]}^{\sigma} := \Phi^{\mathrm{AJ}}_{\sigma}(\mathrm{D}_{[\Psi]}) \in \mathbf{D}_{\cF, L}^{\sigma}/\mathrm{Fil}^{\frac{k_{0}+2}{2}}(\mathbf{D}_{\cF, L}^{\sigma}).
\]
where $\Phi^{\mathrm{AJ}}_{\sigma}$ is any $\p$-adic Abel--Jacobi map associated to $\sigma : F_{\p} \hookrightarrow L$.
\end{definition}
There is a natural action of $\textup{Pic}(\mathcal{O})$ (by conjugation) on the set $\Gamma/\textup{Emb}(\mathcal{O},\mathcal{R})$ as in \cite[Proposition 5.8]{Dar01} and \cite[Proposition 2]{Tri06}. By the reciprocity isomorphism of class field theory,
\begin{equation} \label{eqn:cftisomorphism}
\mathrm{rec} : \textup{Pic}(\mathcal{O}) \cong \textup{Gal}(H_{\mathcal{C}}/K)
\end{equation}
we get a transported action of $\textup{Gal}(H_{\mathcal{C}}/K)$ on $\Gamma/\textup{Emb}(\mathcal{O},\mathcal{R})$ (via $\mathrm{rec}^{-1}$). Here $H_{\mathcal{C}}$ is the ring class field of conductor $\mathcal{C}$.

\subsubsection{Picard group torsors} \label{subsubsec:picardgrouptorsors} We will now fix orientations (at $\cM$) for the set of optimal embeddings. Note that by the Stark--Heegner hypothesis (\textbf{SH-Hyp}), there exists $\mathscr{M}' \subseteq \cO_{K}$ such that $\mathrm{N}_{K/F}\mathscr{M}' = \cM$. Now the ideal $\mathscr{M} \defeq \mathscr{M}'\cO_{F}\left[1/\p\right]$ is the kernel of a unique surjective $\cO_{F}$-algebra homomorphism (since $(\cC\p, \cM) = 1$)
\begin{equation} \label{eqn:orientation}
\mathfrak{o}: \cO \rightarrow \cO_{F}/\cM  
\end{equation}
The homomorphism $\mathfrak{o}$ is called an \emph{orientation} of the order $\cO$ (at $\cM$) which we fix along with the ideal $\mathscr{M}$. To an optimal embedding $\Psi \in \mathrm{Emb}(\cO, \cR)$, we can associate an orientation 
\[ \mathfrak{o}_{\Psi} : \cO \rightarrow \cO_{F}/\cM \]
which maps $a \in \cO$ to the upper-left hand entry of the matrix $\Psi(a)$ which can easily be verified to be an $\cO_{F}$-algebra morphism since $\Psi(a)$ is upper triangular modulo $\cM$.
\begin{definition} \label{def:orientedembeddings}
An optimal embedding $\Psi$ is said to be oriented if $\mathfrak{o}_{\Psi} = \mathfrak{o}$. We denote the set of oriented optimal embeddings by $\mathrm{Emb}^{\mathfrak{o}}(\cO, \cR) \subset \mathrm{Emb}(\cO, \cR)$. 
\end{definition}
The conjugation action of $\Gamma$ on $\mathrm{Emb}(\cO, \cR)$ in fact preserves $\mathrm{Emb}^{\mathfrak{o}}(\cO, \cR)$. Furthermore, we have
\begin{proposition} \label{prop:picardgrouptorsor}
There exists a bijection
\[ \mathrm{Pic}(\cO) \cong \mathrm{Gal}(H_{\cC}/K) \cong \Gamma/\mathrm{Emb}^{\mathfrak{o}}(\cO,\cR) \]
\end{proposition}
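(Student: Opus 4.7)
The middle isomorphism $\mathrm{Pic}(\cO) \cong \mathrm{Gal}(H_{\cC}/K)$ is simply the reciprocity isomorphism of global class field theory already recorded in \eqref{eqn:cftisomorphism}. The substantive content therefore lies in establishing the bijection
\[ \mathrm{Pic}(\cO) \cong \Gamma\backslash\mathrm{Emb}^{\mathfrak{o}}(\cO,\cR), \]
which I would set up as a simply transitive action of $\mathrm{Pic}(\cO)$ on the right-hand side. This is a standard Eichler-type statement adapted to $\cO_{F}[1/\p]$-orders under (\textbf{SH-Hyp}) and the fixed orientation $\mathfrak{o}$.

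My first step is to show that $\mathrm{Emb}^{\mathfrak{o}}(\cO,\cR)$ is non-empty. Using (\textbf{SH-Hyp}), one verifies the local existence of compatible embeddings: at primes $\mathfrak{l}\nmid \cN$ one uses that $\cO_{\mathfrak{l}}$ is a maximal order in $K_{\mathfrak{l}}$ embedded in $\mathrm{M}_{2}(\cO_{F,\mathfrak{l}})$; at primes $\mathfrak{l}\mid\cM$ the splitting hypothesis $\mathfrak{l}=\mathscr{M}'\overline{\mathscr{M}'}$ lets one send $\mathscr{M}'$ to the strictly upper-triangular subalgebra, which is precisely what is needed for the orientation $\mathfrak{o}_{\Psi}=\mathfrak{o}$; at $\p$, inertness in $K$ together with $\p$ being inverted in $\cO$ makes $\cO_{\p}\cong K_{\p}$ embed locally into $\mathrm{M}_{2}(F_{\p})$. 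Assembling these local data into a global embedding is standard (strong approximation for $\mathrm{SL}_{2}$).

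Next I would define the action. Fix a base oriented optimal embedding $\Psi_{0}\in\mathrm{Emb}^{\mathfrak{o}}(\cO,\cR)$. For a fractional $\cO$-ideal $\mathfrak{a}$, choose local generators $\alpha_{\mathfrak{l}}\in K_{\mathfrak{l}}^{\times}$ with $\mathfrak{a}_{\mathfrak{l}}=\alpha_{\mathfrak{l}}\cO_{\mathfrak{l}}$, view these via $\Psi_{0}$ inside $\mathrm{GL}_{2}(F_{\mathfrak{l}})$, and use strong approximation on $\mathrm{SL}_{2}$ to find $g\in\mathrm{GL}_{2}(F)$ whose local components agree (up to $\cR_{\mathfrak{l}}^{\times}$) with $\Psi_{0}(\alpha_{\mathfrak{l}})$ for all finite $\mathfrak{l}$. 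Set $\Psi_{0}\cdot\mathfrak{a}\defeq g\Psi_{0} g^{-1}$. A direct check shows $\Psi_{0}\cdot\mathfrak{a}\in\mathrm{Emb}(\cO,\cR)$, because the local Eichler order $\cR_{\mathfrak{l}}$ is adjusted precisely so that $\Psi_{0}\cdot\mathfrak{a}(\cO)$ becomes the optimal embedding twisted by $\mathfrak{a}$; the orientation $\mathfrak{o}$ is preserved because the twist at primes $\mathfrak{l}\mid\cM$ happens only through an element of $K_{\mathfrak{l}}^{\times}$ acting diagonally on the split decomposition. Different choices of $g$ differ by an element of $\Gamma$, and different local representatives $\alpha_{\mathfrak{l}}$ differ by $\cO_{\mathfrak{l}}^{\times}$ (which is absorbed into $\cR_{\mathfrak{l}}^{\times}$), so the class $[\Psi_{0}\cdot\mathfrak{a}]$ in $\Gamma\backslash\mathrm{Emb}^{\mathfrak{o}}(\cO,\cR)$ depends only on $[\mathfrak{a}]\in\mathrm{Pic}(\cO)$.

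Finally, I must check simple transitivity. For transitivity, given any $\Psi\in\mathrm{Emb}^{\mathfrak{o}}(\cO,\cR)$, by Skolem--Noether there exists $g\in\mathrm{GL}_{2}(F)$ with $\Psi=g\Psi_{0}g^{-1}$; the element $g$ is well-defined up to $\Psi_{0}(K^{\times})$, and the adelic components of $g$ relative to $\cR^{\times}$ encode a fractional $\cO$-ideal $\mathfrak{a}_{g}$, whose class in $\mathrm{Pic}(\cO)$ is intrinsic and satisfies $\Psi_{0}\cdot\mathfrak{a}_{g}=\Psi$ up to $\Gamma$. For freeness, if $\Psi_{0}\cdot\mathfrak{a}$ is $\Gamma$-conjugate to $\Psi_{0}$, the corresponding $g$ lies in $\Psi_{0}(K^{\times})\cdot\Gamma$, forcing $\mathfrak{a}$ to be principal in $\cO$. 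Combining these gives the desired bijection.

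The main technical obstacle is a clean bookkeeping of orientations together with the local compatibility at primes $\mathfrak{l}\mid\cM$ and at $\p$: one must verify that the $\mathrm{Pic}(\cO)$-action preserves $\mathrm{Emb}^{\mathfrak{o}}$ and that the strong approximation argument respects the Eichler level structure $\cR$, in particular that the orientation homomorphism $\mathfrak{o}$ is preserved under the twist by $\mathfrak{a}$. Once this is handled, the simple transitivity is a formality of adelic double cosets identical in spirit to \cite[Prop.\ 5.8]{Dar01} and \cite[Prop.\ 2]{Tri06}, with $\Gamma$ playing the role of the Ihara group for the Eichler $\cO_{F}[1/\p]$-order $\cR$.
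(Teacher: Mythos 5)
Your proposal is correct and matches the paper's approach: the paper simply cites \cite[Proposition 2]{Tri06}, whose proof is precisely the Eichler-type strong approximation argument you reconstruct (and which you correctly trace back to \cite[Prop.\ 5.8]{Dar01}). The only cosmetic difference is that you write the set of conjugacy classes as $\Gamma\backslash\mathrm{Emb}^{\mathfrak{o}}(\cO,\cR)$, whereas the paper's (non-standard) notation is $\Gamma/\mathrm{Emb}^{\mathfrak{o}}(\cO,\cR)$ for the same object.
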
 
\begin{proof}
See \cite[Proposition 2]{Tri06}.
\end{proof}
In particular, Proposition~\ref{prop:picardgrouptorsor} shows that the set $\Gamma/\mathrm{Emb}^{\mathfrak{o}}(\cO,\cR)$ is a $\mathrm{Gal}(H_{\cC}/K)$-torsor with the group action as described above.
\begin{remark} \label{rem:atkinlehnerorientation} For primes $\mathfrak{l} \mid \cN$, let $\alpha_{\mathfrak{l}}$ be the Atkin-Lehner matrices that define the Atkin-Lehner operators on the space of Bianchi eigenforms of level $\cN$ (See \cite[Section 3.3]{Cre81} and \cite[Section 5.3]{Ling05} for a precise definition), i.e.
\[ \cF \mid W_{\mathfrak{l}} \defeq \cF \mid \alpha_{\mathfrak{l}} = \omega_{\mathfrak{l}}\cF \]
where $\omega_{\mathfrak{l}} \in \lbrace \pm 1 \rbrace$.
For an oriented optimal embedding $\Psi \in \mathrm{Emb}^{\mathfrak{o}}(\cO, \cR)$, the optimal embedding $\alpha_{\cM}\Psi^{\theta}\alpha_{\cM}^{-1}$ has the same orientation as $\Psi$, i.e.
\[ \alpha_{\cM}\Psi^{\theta}\alpha_{\cM}^{-1} \in \mathrm{Emb}^{\mathfrak{o}}(\cO, \cR)\]
\end{remark}
\begin{definition} \label{defn:stark-heegnercycle2}
Let $\chi : \textup{Gal}(H_{\mathcal{C}}/K) \rightarrow \mathbb{C}^{\times}$ be any character. The \emph{$\chi$-twisted Stark--Heegner cycle} is then defined as 
\[ 
\mathrm{D}_{\chi} := \sum\limits_{\sigma \in \textup{Gal}(H_{\mathcal{C}}/K)} \chi^{-1}(\sigma)\mathrm{D}_{\sigma\Psi} \in (\Delta_{0} \otimes \textup{Div}(\mathcal{H}_{\mathfrak{p}}^{\textup{ur}}) \otimes V_{k_{0},k_{0}})_{\Gamma} \otimes \chi,
 \]
 where $(-) \otimes \chi$ denotes suitable scalar extension by $\chi$. We may also similarly define 
\[ 
\mathrm{D}_{\chi}^{\theta} \defeq \sum\limits_{\sigma \in \textup{Gal}(H_{\mathcal{C}}/K)} \chi^{-1}(\sigma)\mathrm{D}_{(\sigma\Psi)^{\theta}} \in (\Delta_{0} \otimes \textup{Div}(\mathcal{H}_{\mathfrak{p}}^{\textup{ur}}) \otimes V_{k_{0},k_{0}})_{\Gamma} \otimes \chi.
\]
Here $\sigma\Psi$ denotes the action of $\textup{Gal}(H_{\mathcal{C}}/K)$ on $\Gamma/\mathrm{Emb}(\cO,\cR)$ described above.  
\end{definition}
Further, we also set $\mathfrak{s}_{\chi}^{\sigma} \defeq \Phi^{\mathrm{AJ}}_{\sigma}(\mathrm{D}_{\chi})$ (resp. $\mathfrak{s}_{\chi}^{\sigma, \theta}\defeq \Phi^{\mathrm{AJ}}_{\sigma}(\mathrm{D}_{\chi}^{\theta})$). 
\begin{remark} \label{rem:notfixinganembedding}
In \cite[Section 5 and 6]{VW19}, we had dropped the dependence of the embedding $\sigma : F_{\pri} \hookrightarrow L$ from the notation in the $\pri$-adic Abel--Jacobi map as well as the image of the Stark--Heegner cycle under it. However, in the sequel we will need to consider the $\pri$-adic Abel--Jacobi image of the Stark--Heegner cycles over all possible embeddings when we relate them to $p$-adic $L$-functions, thus making the notation used here slightly different from that in \cite{VW19}. We explain this in detail in \S\ref{sec:reviewofpadicAJ} below.
\end{remark}

\section{Review of the $\p$-adic Abel--Jacobi map} \label{sec:reviewofpadicAJ}
For $\sigma : F_{\p} \hookrightarrow L$, we consider the following commutative diagram:
\begin{equation} \label{eqn:padicAJcommutativediagram}
\begin{tikzcd}
(\Delta_0 \otimes \textup{Div}^{0}(\uhp_{\pri}^{\textup{ur}}) \otimes V_{k_{0},k_{0}})_{\Gamma}  \arrow[r, "\Phi_{\cF}^{\sigma}"] \arrow[d, equal]
& \mathbf{D}_{\mathcal{F},L}^{\sigma}/\mathrm{Fil}^{\frac{k_{0}+2}{2}}(\mathbf{D}_{\mathcal{F},L}^{\sigma}) \arrow[d, "\mathrm{Pr}^{\sigma}" ] \\
(\Delta_0 \otimes \textup{Div}^{0}(\uhp_{\pri}^{\textup{ur}}) \otimes V_{k_{0},k_{0}})_{\Gamma}  \arrow[r, "\mathrm{log}\: \Phi_{\cF}^{\sigma}"]
& \mathbf{MS}_{\Gamma}(L)_{(\mathcal{F})}^{\vee} 
\end{tikzcd}
\end{equation}
where $\mathrm{Pr}^{\sigma}(x,y) \defeq -x - \cL_{\p}^{\sigma}y$ is well--defined \& an isomorphism and 
\begin{equation} \label{eqn:logAJ1}
\mathrm{log}\:\Phi_{\cF}^{\sigma} \defeq \Phi_{\cF}^{\mathrm{log_{p}}, \sigma} - \cL_{\p}^{\sigma}\Phi_{\cF}^{\mathrm{ord}_{\p}}. 
\end{equation}
By Theorem~\ref{thm:towardsabeljacobi}, we know that $\mathrm{log}\:\Phi_{\cF}^{\sigma} \circ \delta = 0$. Since $\mathrm{Pr}^{\sigma}$ is an isomorphism, we may identify $\mathrm{log}\;\Phi_{\cF}^{\sigma}$ and $\Phi_{\cF}^{\sigma}$. We also define
\begin{equation} \label{eqn:logAJ1.1}
(\Delta_0 \otimes \textup{Div}^{0}(\uhp_{\pri}^{\textup{ur}}) \otimes V_{k_{0},k_{0}})_{\Gamma} \xrightarrow{\mathrm{log}\: \Phi_{\cF}} \mathbf{MS}_{\Gamma}(L)_{(\mathcal{F})}^{\vee} 
\end{equation}
as $\mathrm{log}\:\Phi_{\cF} \defeq \sum\limits_{\sigma} \mathrm{Pr}^{\sigma}\circ\Phi_{\cF}^{\sigma}$. In particular, we have 
\begin{equation} \label{eqn:logAJ2} \mathrm{log}\:\Phi_{\cF} = \sum\limits_{\sigma}\mathrm{log}\:\Phi^{\sigma}_{\cF} = \Phi_{\cF}^{\mathrm{log_{p}}} - \cL_{\p}^{\mathrm{BW}}\Phi_{\cF}^{\mathrm{ord}_{\p}} 
\end{equation}
\paragraph*{} Recall that we have fixed a branch of the $p$-adic logarithm such that $\mathrm{log}_{p}(p) = 0$. For every $\ell \in L$, we define
\begin{equation} \label{eqn:branchp-adiclog} \mathrm{log}_{\ell} \defeq \mathrm{log}_{p} - \ell\mathrm{ord}_{\p} : L^{\times} \rightarrow L 
\end{equation}
to be the branch of the $p$-adic logarithm such that $\mathrm{log}_{\ell}(p) = -\ell$ (Note that in our convention $\mathrm{log}_{p}(p) = 0$ and not $-p$).  Corresponding to this branch of the $p$-adic logarithm, for each $\sigma : F_{\p} \hookrightarrow L$, we may define 
\[
\Phi^{\log_{\ell},\sigma} : (\Delta_{0} \otimes \textup{Div}^{0}(\mathcal{H}_{\mathfrak{p}}^{\textup{ur}}) \otimes V_{k_{0},k_{0}})_{\Gamma}  \longrightarrow \mathbf{MS}_{\Gamma}(L)^{\vee}
\]
and
\[
\Phi_{\cF}^{\log_{\ell},\sigma} : (\Delta_{0} \otimes \textup{Div}^{0}(\mathcal{H}_{\mathfrak{p}}^{\textup{ur}}) \otimes V_{k_{0},k_{0}})_{\Gamma}  \longrightarrow \mathbf{MS}_{\Gamma}(L)_{(\cF)}^{\vee}
\]
as in Section~\ref{subsec:padicintegrationrecap} above. As in Theorem~\ref{thm:towardsabeljacobi}, let $\cL_{\p}^{\ell,\sigma} \in \mathbb{C}_{p}$ be the unique scalar such that 
\[ \Phi_{\cF}^{\log_{\ell},\sigma} \circ \delta = \cL_{\p}^{\ell,\sigma} \circ \Phi_{\cF}^{\ord_{\p}} \circ \delta. \]
We may then set 
\[ \Phi_{\cF}^{\ell,\sigma} \defeq -\Phi_{\cF}^{\log_{\ell},\sigma} \oplus \Phi_{\cF}^{\ord_{\p}} : (\Delta_0 \otimes \textup{Div}^{0}(\uhp_{\pri}^{\textup{ur}}) \otimes V_{k_{0},k_{0}})_{\Gamma} \rightarrow \mathbf{D}_{\mathcal{F},L}^{\ell,\sigma} \]
where
\[\mathbf{D}_{\mathcal{F},L}^{\ell,\sigma} \defeq \mathbf{MS}_{\Gamma}(L)_{(\mathcal{F})}^{\vee} \oplus \mathbf{MS}_{\Gamma}(L)_{(\mathcal{F})}^{\vee}\]
is as before but with filtration given by
\begin{align*}
 \mathrm{Fil}^{\frac{k_{0}+2}{2}}\mathbf{D}_{\cF,L}^{\ell,\sigma} & \defeq \lbrace (-\mathcal{L}_{\p}^{\ell,\sigma}x,x) : x \in \mathbf{MS}_{\Gamma}(L)_{(\mathcal{F})}^{\vee} \rbrace\\
& = \mathrm{Im}(\Phi_{\mathcal{F}}^{\ell,\sigma}\circ\delta).  
 \end{align*}
 Similar to (\ref{eqn:padicAJcommutativediagram}) above, we have a commutative diagram 
\begin{equation} \label{eqn:padicAJcommutativediagram2}
\begin{tikzcd}
(\Delta_0 \otimes \textup{Div}^{0}(\uhp_{\pri}^{\textup{ur}}) \otimes V_{k_{0},k_{0}})_{\Gamma}  \arrow[r, "\Phi_{\cF}^{\ell,\sigma}"] \arrow[d, equal]
& \mathbf{D}_{\mathcal{F},L}^{\ell,\sigma}/\mathrm{Fil}^{\frac{k_{0}+2}{2}}(\mathbf{D}_{\mathcal{F},L}^{\ell,\sigma}) \arrow[d, "\mathrm{Pr}^{\ell,\sigma}" ] \\
(\Delta_0 \otimes \textup{Div}^{0}(\uhp_{\pri}^{\textup{ur}}) \otimes V_{k_{0},k_{0}})_{\Gamma}  \arrow[r, "\mathrm{log}\: \Phi_{\cF}^{\ell,\sigma}"]
& \mathbf{MS}_{\Gamma}(L)_{(\mathcal{F})}^{\vee} 
\end{tikzcd}
\end{equation}
where $\mathrm{Pr}^{\ell,\sigma}(x,y) = -x - \cL_{\p}^{\ell,\sigma}y$ and 
\begin{equation} \label{eqn:logAJ3}
\mathrm{log}\:\Phi_{\cF}^{\ell,\sigma} \defeq \Phi_{\cF}^{\mathrm{log_{\ell}}, \sigma} - \cL_{\p}^{\ell,\sigma}\Phi_{\cF}^{\mathrm{ord}_{\p}}. 
\end{equation}
We then have, 
\begin{proposition} \label{prop:p-adiclogarithmbranch1}
For each $\sigma : F_{\p} \hookrightarrow L$ and for every $\ell \in L$,
\[ \Phi_{\cF}^{\mathrm{log}_{\ell},\sigma} = \Phi_{\cF}^{\mathrm{log}_{p},\sigma} - \ell\Phi_{\cF}^{\mathrm{ord}_{\p}} \in \mathrm{Hom} \left( (\Delta_0 \otimes \textup{Div}^{0}(\uhp_{\pri}^{\textup{ur}}) \otimes V_{k_{0},k_{0}})_{\Gamma}, \mathbf{MS}_{\Gamma}(L)_{(\cF)}^{\vee} \right)  \]
and 
\[ \cL_{\p}^{\ell,\sigma} = \cL_{\p}^{\sigma} - \ell. \]
\end{proposition}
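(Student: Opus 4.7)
The plan is to reduce both assertions directly to the definition of the branch $\mathrm{log}_{\ell}$ and to the formal properties of the double integrals recalled in \S\ref{subsec:padicintegrationrecap}.

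For the first identity, I would unfold the definition of the ``log'' double integral attached to $\mathrm{log}_{\ell}$, namely
\[
\int_{r}^{s}\int_{\tau_{1}}^{\tau_{2}} (P)\omega_{\mu}^{\mathrm{log}_{\ell},\sigma} = \int_{\Proj_{\p}} \mathrm{log}_{\ell}\!\left(\tfrac{t_{\p}-\tau_{1}}{t_{\p}-\tau_{2}}\right)^{\sigma} P(t)\, d\mu\{r-s\}(t),
\]
and substitute the defining relation $\mathrm{log}_{\ell} = \mathrm{log}_{p} - \ell\,\mathrm{ord}_{\p}$ from (\ref{eqn:branchp-adiclog}). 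The key observation at this step is that although the $\mathrm{log}_{p}$ integrand carries a genuine dependence on the embedding $\sigma: F_{\p}\hookrightarrow L$, the $\mathrm{ord}_{\p}$ integrand does \emph{not}: any embedding $\sigma$ preserves the normalised $\p$-adic valuation, so $\mathrm{ord}_{\p}(x^{\sigma}) = \mathrm{ord}_{\p}(x)$ for all $x \in F_{\p}^{\times}$. By linearity of the integral this yields, at the level of pairings,
\[
\Phi^{\mathrm{log}_{\ell},\sigma} = \Phi^{\mathrm{log}_{p},\sigma} - \ell\,\Phi^{\mathrm{ord}_{\p}},
\]
and projecting to the $\cF$-isotypic quotient of $\mathbf{MS}_{\Gamma}(L)^{\vee}$ gives the stated identity for $\Phi_{\cF}^{\mathrm{log}_{\ell},\sigma}$.

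For the second identity, I would apply Theorem~\ref{thm:towardsabeljacobi}. By definition $\cL_{\p}^{\ell,\sigma}$ is the unique scalar with $\Phi_{\cF}^{\mathrm{log}_{\ell},\sigma}\circ\delta = \cL_{\p}^{\ell,\sigma}\cdot\Phi_{\cF}^{\mathrm{ord}_{\p}}\circ\delta$. Composing the first identity with $\delta$ and invoking part (ii) of Theorem~\ref{thm:towardsabeljacobi} for the branch $\mathrm{log}_{p}$ gives
\[
\Phi_{\cF}^{\mathrm{log}_{\ell},\sigma}\circ\delta = \Phi_{\cF}^{\mathrm{log}_{p},\sigma}\circ\delta - \ell\,\Phi_{\cF}^{\mathrm{ord}_{\p}}\circ\delta = (\cL_{\p}^{\sigma}-\ell)\,\Phi_{\cF}^{\mathrm{ord}_{\p}}\circ\delta.
\]
Since $\mathbf{MS}_{\Gamma}(L)_{(\cF)}^{\vee}$ is one--dimensional and $\Phi_{\cF}^{\mathrm{ord}_{\p}}\circ\delta$ is surjective by Theorem~\ref{thm:towardsabeljacobi}(i), it is in particular nonzero, so the uniqueness of $\cL_{\p}^{\ell,\sigma}$ forces $\cL_{\p}^{\ell,\sigma} = \cL_{\p}^{\sigma} - \ell$.

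There is no real obstacle here; the statement is essentially a bookkeeping result. The only subtle point worth flagging explicitly in the write-up is the embedding-invariance of $\mathrm{ord}_{\p}$, which is what allows the $\ell$-correction to be pulled out as a scalar multiple of the single (embedding-free) morphism $\Phi^{\mathrm{ord}_{\p}}$ rather than an embedding-twisted variant.
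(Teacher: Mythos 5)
Your proof is correct. The paper itself does not spell out an argument but simply cites Proposition~3.1 and Lemma~3.2 of \cite{Sev12}; your write-up supplies exactly the direct verification that reference contains, namely unfolding the definition of the branch $\mathrm{log}_{\ell}=\mathrm{log}_p-\ell\,\mathrm{ord}_\p$ inside the ``log'' double integral, isolating the embedding-invariance of $\mathrm{ord}_\p$ so that the $\ell$-correction factors through the single (embedding-free) morphism $\Phi^{\mathrm{ord}_\p}$, and then composing with $\delta$ and invoking Theorem~\ref{thm:towardsabeljacobi}(i),(ii) to read off $\cL_\p^{\ell,\sigma}=\cL_\p^\sigma-\ell$ from the uniqueness of the $\cL$-invariant and the nonvanishing of $\Phi_{\cF}^{\ord_\p}\circ\delta$. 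The point you flag about $\mathrm{ord}_\p$ commuting with $\sigma$ is indeed the one nontrivial observation, and is already built into Definition~\ref{defn:doubleintegrals}(iii), where no $\sigma$ appears; so the argument is complete.
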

\begin{proof} See Proposition 3.1 and Lemma 3.2 of \cite{Sev12}.
\end{proof}
Now, let us choose the branch of the $p$-adic logarithm, $\mathrm{log}_{\cL_{\p}^{\sigma}}$, corresponding to $\ell = \cL_{\p}^{\sigma} \in L$. Then, by Proposition~\ref{prop:p-adiclogarithmbranch1}, the commutative diagram (\ref{eqn:padicAJcommutativediagram2}) above simplifies as
\begin{itemize}
\item[•] $\mathrm{Pr}^{\cL_{\p}^{\sigma},\sigma}(x,y) = -x$. 
\item[•] $\mathrm{log}\;\Phi_{\cF}^{\cL_{\p}^{\sigma},\sigma} = \mathrm{log}\;\Phi_{\cF}^{\sigma} = \Phi_{\cF}^{\mathrm{log}_{\cL_{\p}^{\sigma}}, \sigma}$.
\end{itemize}
In particular, $\mathrm{log}\;\Phi_{\cF}^{\ell,\sigma}$ is independent of the choice of a branch of the $p$-adic logarithm and
\begin{equation} \label{eqn:logPhiFbranch}
 \mathrm{log}\;\Phi_{\cF} = \sum\limits_{\sigma} \Phi_{\cF}^{\mathrm{log}_{\cL_{\p}^{\sigma}},\sigma} 
 \end{equation}
\subsection{Galois Action}
Since we consider only the case when $p$ inert in $F$, we have $F_{\p} \cong \QQ_{p^{2}}$ -- the unique quadratic unramifed extension of $\QQ_{p}$. In this setting, we have two (Galois) embeddings, viz. the identity and the (lift of the) Frobenius, $\sigma_{\mathrm{Id}}, \sigma_{\mathrm{Fr}}: F_{\p} \hookrightarrow L$. We fix an identification
\[ \sigma^{*} : \frac{\mathbf{D}_{\cF, L}^{\sigma_{\mathrm{Id}}}}{\mathrm{Fil}^{\frac{k_{0}+2}{2}}(\mathbf{D}_{\cF, L}^{\sigma_{\mathrm{Id}}})} \xlongrightarrow{\cong} \frac{\mathbf{D}_{\cF, L}^{\sigma_{\mathrm{Fr}}}}{\mathrm{Fil}^{\frac{k_{0}+2}{2}}(\mathbf{D}_{\cF, L}^{\sigma_{\mathrm{Id}}})} \]
such that the following diagram commutes 
\begin{equation} \label{eqn:projections}
\begin{tikzcd}
\frac{\mathbf{D}_{\cF, L}^{\sigma_{\mathrm{Id}}}}{\mathrm{Fil}^{\frac{k_{0}+2}{2}}(\mathbf{D}_{\cF, L}^{\sigma_{\mathrm{Id}}})} \arrow[d, "\mathrm{Pr}^{\sigma_{\mathrm{Id}}}"] \arrow[r, "\sigma^{*}"] & \frac{\mathbf{D}_{\cF, L}^{\sigma_{\mathrm{Fr}}}}{\mathrm{Fil}^{\frac{k_{0}+2}{2}}(\mathbf{D}_{\cF, L}^{\sigma_{\mathrm{Fr}}})} \\
 \mathbf{MS}_{\Gamma}(L)_{(\cF)}^{\vee} \arrow[r, equal] & \mathbf{MS}_{\Gamma}(L)_{(\cF)}^{\vee} \arrow[u, "(\mathrm{Pr}^{\sigma_{\mathrm{Fr}}})^{-1}"] 
\end{tikzcd}
 \end{equation}
 Via the identification $\sigma^{*}$, we have
 \[ \mathrm{Pr}^{\sigma_{\mathrm{Fr}}}\circ\sigma^{*}\circ\Phi_{\cF}^{\sigma_{\mathrm{Id}}} = \mathrm{Pr}^{\sigma_{\mathrm{Id}}}\circ\Phi_{\cF}^{\sigma_{\mathrm{Id}}} \in \mathrm{Hom}\left( \left( \Delta_{0} \otimes \mathrm{Div}^{0}(\cH_{\p}^{\mathrm{ur}}) \otimes V_{k_{0},k_{0}} \right)_{\Gamma}, \mathbf{MS}_{\Gamma}(L)_{(\cF)}^{\vee} \right) \]
 Then,
 \begin{equation} \label{eqn:logPhiF}
 \mathrm{log}\;\Phi_{\cF} = \mathrm{Pr}^{\sigma_{\mathrm{Fr}}}\circ\left( \Phi_{\cF}^{\sigma_{\mathrm{Fr}}} + \sigma^{*}\Phi_{\cF}^{\sigma_{\mathrm{Id}}} \right) \in \mathrm{Hom}\left( \left( \Delta_{0} \otimes \mathrm{Div}^{0}(\cH_{\p}^{\mathrm{ur}}) \otimes V_{k_{0},k_{0}} \right)_{\Gamma}, \mathbf{MS}_{\Gamma}(L)_{(\cF)}^{\vee} \right).
 \end{equation}
This motivates us to set
 \begin{equation} \label{eqn:PhiF}
 \Phi_{\cF} \defeq \Phi_{\cF}^{\sigma_{\mathrm{Fr}}} + \sigma^{*}\Phi_{\cF}^{\sigma_{\mathrm{Id}}} \in \mathrm{Hom}\left( \left( \Delta_{0} \otimes \mathrm{Div}^{0}(\cH_{\p}^{\mathrm{ur}}) \otimes V_{k_{0},k_{0}} \right)_{\Gamma}, \frac{\mathbf{D}_{\cF, L}^{\sigma_{\mathrm{Fr}}}}{\mathrm{Fil}^{\frac{k_{0}+2}{2}}(\mathbf{D}_{\cF, L}^{\sigma_{\mathrm{Fr}}})} \right)
 \end{equation}
 and 
 \begin{equation} \label{eqn:PhiAJ}
 \Phi^{\mathrm{AJ}} \defeq \Phi^{\mathrm{AJ}}_{\sigma_{\mathrm{Fr}}} + \sigma^{*}\Phi^{\mathrm{AJ}}_{\sigma_{\mathrm{Id}}} \in \mathrm{Hom}\left( \left( \Delta_{0} \otimes \mathrm{Div}(\cH_{\p}^{\mathrm{ur}}) \otimes V_{k_{0},k_{0}} \right)_{\Gamma}, \frac{\mathbf{D}_{\cF, L}^{\sigma_{\mathrm{Fr}}}}{\mathrm{Fil}^{\frac{k_{0}+2}{2}}(\mathbf{D}_{\cF, L}^{\sigma_{\mathrm{Fr}}})} \right)
 \end{equation}
 so that we have the following commutative diagrams :-
 \begin{equation} \label{eqn:PhiFlogPhiFcommutativediagram}
\begin{tikzcd}
(\Delta_0 \otimes \textup{Div}^{0}(\uhp_{\pri}^{\textup{ur}}) \otimes V_{k_{0},k_{0}})_{\Gamma}  \arrow[r, "\Phi_{\cF}"] \arrow[d, equal]
& \mathbf{D}_{\mathcal{F},L}^{\sigma_{\mathrm{Fr}}}/\mathrm{Fil}^{\frac{k_{0}+2}{2}}(\mathbf{D}_{\mathcal{F},L}^{\sigma_{\mathrm{Fr}}}) \arrow[d, "\mathrm{Pr}^{\sigma_{\mathrm{Fr}}}" ] \\
(\Delta_0 \otimes \textup{Div}^{0}(\uhp_{\pri}^{\textup{ur}}) \otimes V_{k_{0},k_{0}})_{\Gamma}  \arrow[r, "\mathrm{log}\: \Phi_{\cF}"]
& \mathbf{MS}_{\Gamma}(L)_{(\mathcal{F})}^{\vee} 
\end{tikzcd}
\end{equation} 
and
 \begin{equation} \label{eqn:PhiAJlogPhiAJcommutativediagram}
\begin{tikzcd}
(\Delta_0 \otimes \textup{Div}(\uhp_{\pri}^{\textup{ur}}) \otimes V_{k_{0},k_{0}})_{\Gamma}  \arrow[r, "\Phi^{\mathrm{AJ}}"] \arrow[d, equal]
& \mathbf{D}_{\mathcal{F},L}^{\sigma_{\mathrm{Fr}}}/\mathrm{Fil}^{\frac{k_{0}+2}{2}}(\mathbf{D}_{\mathcal{F},L}^{\sigma_{\mathrm{Fr}}}) \arrow[d, "\mathrm{Pr}^{\sigma_{\mathrm{Fr}}}" ] \\
(\Delta_0 \otimes \textup{Div}(\uhp_{\pri}^{\textup{ur}}) \otimes V_{k_{0},k_{0}})_{\Gamma}  \arrow[r, "\mathrm{log}\: \Phi^{\mathrm{AJ}}"]
& \mathbf{MS}_{\Gamma}(L)_{(\mathcal{F})}^{\vee} 
\end{tikzcd}
\end{equation} 
$\Phi_{\cF}$ (resp. $\Phi^{\mathrm{AJ}}$) should be considered as the sum of $\Phi_{\cF}^{\sigma}$ (resp. $\Phi^{\mathrm{AJ}}_{\sigma}$) over the possible embeddings $\sigma : F_{\p} \hookrightarrow L$.
\begin{remark} \label{rem:galoisequivarianceofL-invs}
Since $\cF$ is the base--change to $F$ of $f \in S_{k_{0}+2}(\Gamma_{0}(N))^{\mathrm{new}}$, we know by \cite[Lemma 4.4]{VW19} that 
\[ \cL_{\p}^{\sigma_{\mathrm{Id}}} = \cL_{\p}^{\sigma_{\mathrm{Fr}}} = \cL_{p}(f) \]
where $\cL_{p}(f)$ is the Darmon--Orton $\cL$-invariant attached to the newform $f$. Then, by (\ref{eqn:logPhiFbranch}),
\begin{equation} \label{eqn:logPhiFbasechange}
 \mathrm{log}\;\Phi_{\cF} = \Phi_{\cF}^{\mathrm{log}_{\cL_{p}(f)},\sigma_{\mathrm{Id}}} + \Phi_{\cF}^{\mathrm{log}_{\cL_{p}(f)},\sigma_{\mathrm{Fr}}} =  \Phi_{\cF}^{\mathrm{log}_{\cL_{p}(f)}}
\end{equation}
i.e. the $\mathbf{D}_{\cF, L}^{\sigma_{\mathrm{Fr}}}/\mathrm{Fil}^{\frac{k_{0}+2}{2}}$-valued integration theory $\Phi_{\cF}$ is equivalent, via $\mathrm{Pr}^{\sigma_{\mathrm{Fr}}}$, to the $\mathbf{MS}_{\Gamma}(L)_{(\cF)}^{\vee}$-valued integration theory
\[ \mathrm{log}\;\Phi_{\cF} = \Phi_{\cF}^{\mathrm{log}_{p}} - \cL_{\p}^{\mathrm{BW}}\Phi_{\cF}^{\mathrm{ord}_{\p}} =  \Phi_{\cF}^{\mathrm{log}_{\cL_{p}(f)}} \]
\end{remark}
\begin{remark} \label{rem:logAJ}
Similar to Definition~\ref{defn:p-adicAbeliJacobi}, we may think of $\mathrm{log}\;\Phi_{\cF}^{\mathrm{AJ}}$ as a lift of $\mathrm{log}\;\Phi_{\cF}$
\begin{equation} \label{eqn:commutativaAbelJacobi2}
\xymatrix{
 \frac{(\Delta_0 \otimes \textup{Div}^{0}(\uhp_{\pri}^{\textup{ur}}) \otimes V_{k_{0},k_{0}})_{\Gamma}}{\delta(\tupH_{1}(\Gamma,\Delta_0 \otimes V_{k_{0},k_{0}}))} \ar@{^{(}->}[d]^{\partial_1} \ar[r]^<<<<<{\mathrm{log}\;\Phi_{\mathcal{F}}} &     \mathbf{MS}_{\Gamma}(L)_{(\mathcal{F})}^{\vee}  \\
(\Delta_0 \otimes \textup{Div}(\uhp_{\pri}^{\textup{ur}}) \otimes V_{k_{0},k_{0}})_{\Gamma} \ar@{-->}[ur]^{\mathrm{log}\;\Phi^{\mathrm{AJ}}} 
}
\end{equation}

\end{remark}

\begin{remark} \label{rem:choiceofembedding}
Alternatively, we could also have set \[\Phi_{\cF} \defeq \Phi_{\cF}^{\sigma_{\mathrm{Id}}} + (\sigma^{*})^{-1}\Phi_{\cF}^{\sigma_{\mathrm{Fr}}} \in \mathrm{Hom}\left( \left( \Delta_{0} \otimes \mathrm{Div}^{0}(\cH_{\p}^{\mathrm{ur}}) \otimes V_{k_{0},k_{0}} \right)_{\Gamma}, \frac{\mathbf{D}_{\cF, L}^{\sigma_{\mathrm{Id}}}}{\mathrm{Fil}^{\frac{k_{0}+2}{2}}(\mathbf{D}_{\cF, L}^{\sigma_{\mathrm{Id}}})} \right)\] 
and
\[\Phi^{\mathrm{AJ}} \defeq \Phi^{\mathrm{AJ}}_{\sigma_{\mathrm{Id}}} + (\sigma^{*})^{-1}\Phi^{\mathrm{AJ}}_{\sigma_{\mathrm{Fr}}} \in \mathrm{Hom}\left( \left( \Delta_{0} \otimes \mathrm{Div}(\cH_{\p}^{\mathrm{ur}}) \otimes V_{k_{0},k_{0}} \right)_{\Gamma}, \frac{\mathbf{D}_{\cF, L}^{\sigma_{\mathrm{Id}}}}{\mathrm{Fil}^{\frac{k_{0}+2}{2}}(\mathbf{D}_{\cF, L}^{\sigma_{\mathrm{Id}}})} \right)\]
which would then fit in the following commutative diagrams :-
 \begin{equation} \label{eqn:PhiFlogPhiFcommutativediagram2}
\begin{tikzcd}
(\Delta_0 \otimes \textup{Div}^{0}(\uhp_{\pri}^{\textup{ur}}) \otimes V_{k_{0},k_{0}})_{\Gamma}  \arrow[r, "\Phi_{\cF}"] \arrow[d, equal]
& \mathbf{D}_{\mathcal{F},L}^{\sigma_{\mathrm{Id}}}/\mathrm{Fil}^{\frac{k_{0}+2}{2}}(\mathbf{D}_{\mathcal{F},L}^{\sigma_{\mathrm{Id}}}) \arrow[d, "\mathrm{Pr}^{\sigma_{\mathrm{Id}}}" ] \\
(\Delta_0 \otimes \textup{Div}^{0}(\uhp_{\pri}^{\textup{ur}}) \otimes V_{k_{0},k_{0}})_{\Gamma}  \arrow[r, "\mathrm{log}\: \Phi_{\cF}"]
& \mathbf{MS}_{\Gamma}(L)_{(\mathcal{F})}^{\vee} 
\end{tikzcd}
\end{equation} 
and
 \begin{equation} \label{eqn:PhiAJlogPhiAJcommutativediagram2}
\begin{tikzcd}
(\Delta_0 \otimes \textup{Div}(\uhp_{\pri}^{\textup{ur}}) \otimes V_{k_{0},k_{0}})_{\Gamma}  \arrow[r, "\Phi^{\mathrm{AJ}}"] \arrow[d, equal]
& \mathbf{D}_{\mathcal{F},L}^{\sigma_{\mathrm{Id}}}/\mathrm{Fil}^{\frac{k_{0}+2}{2}}(\mathbf{D}_{\mathcal{F},L}^{\sigma_{\mathrm{Id}}}) \arrow[d, "\mathrm{Pr}^{\sigma_{\mathrm{Id}}}" ] \\
(\Delta_0 \otimes \textup{Div}(\uhp_{\pri}^{\textup{ur}}) \otimes V_{k_{0},k_{0}})_{\Gamma}  \arrow[r, "\mathrm{log}\: \Phi^{\mathrm{AJ}}"]
& \mathbf{MS}_{\Gamma}(L)_{(\mathcal{F})}^{\vee} 
\end{tikzcd}
\end{equation} 
In particular, we may think of $\Phi_{\cF}$ (resp. $\Phi^{\mathrm{AJ}}$) as being $\left(\mathbf{D}_{\cF, L}^{\sigma}/\mathrm{Fil}^{\frac{k_{0}+2}{2}}\right)$--valued for either choice of an embedding $\sigma : F_{\p} \hookrightarrow L$.
\end{remark}

\section{Families of Bianchi modular forms and families of Bianchi modular symbols} \label{sec:p-adicfamilies}
The goal of this section is to prove Theorem~\ref{thm:padicAJimage} below which is a crucial ingredient in the proof of Theorem~\ref{thm:padicGZformula1} in Section~\ref{sec:p-adicLfns}. We first recall some of the requisite results on $p$-adic families of Bianchi modular forms, mainly following the exposition in \cite[\S3]{BW20a} (See also \cite{Han17}). Let $L/\QQ_{p}$ be a sufficiently large finite extension of $\QQ_{p}$ as before. Since $p$ is inert in $F$ under our running hypothesis (\textbf{Heeg-Hyp}), we have 
\[ \cO_{F} \otimes_{\ZZ} \ZZ_{p} \cong \cO_{F_{\p}}. \]
\begin{definition}
The Bianchi weight space of level $U_{0}(\mathcal{N})$ is defined as the rigid analytic space whose $L$--points are given by 
\[ \mathcal{W}_{F,\mathcal{N}}(L) \defeq \mathrm{Hom}_{\mathrm{cts}}\left(\cO_{F_{\p}}^{\times}/E(\mathcal{N}), L^{\times}\right)\]
where $E(\mathcal{N}) \defeq F^{\times} \cap U_{0}(\mathcal{N}) \cong \mathcal{O}_{F}^{\times} \defeq \mu(\cO_{F})$ -- the roots of unity in $\cO_{F}$.
\end{definition}    
A weight $\lambda_{\kappa} \in \mathcal{W}_{F,\mathcal{N}}(L)$ is said to be \emph{classical} if it is of the form $\epsilon\lambda_{\kappa}^{\mathrm{alg}}$ for $\epsilon$ a finite order character and $\lambda_{\kappa}^{\mathrm{alg}}(z) = z^{\mathbf{\kappa}} \defeq (z^{k_{1}})(\overline{z}^{k_{2}})$ for $\mathbf{\kappa} = (k_1, k_2)$ with $k_1,k_2 \in \ZZ$. Here the over--line indicates complex conjugation (i.e. action by the non-trivial element of $\mathrm{Gal}(F/\QQ)$).
\begin{remark} \label{rem:nolevel}
When the level of the $p$-adic weight space is clear, we shall drop it from the notation and denote the weight space simply by $\mathcal{W}_{F}$.
\end{remark} 
\begin{remark} \label{rem:parallelweights}
When $\epsilon$ is the trivial character and $\mathbf{\kappa} = (k,k)$ for $k \in \ZZ$, we shall call $\lambda_{\kappa} = \lambda_{k}$ a \emph{parallel weight}. Note that in this case
\[ \lambda_{k}(z) = (z^{k})(\overline{z}^{k}) = \mathrm{N}_{F/\QQ}(z)^{k} \]
We shall see later that the parallel weights $\lambda_{k}$ parametrize classical Bianchi modular forms of (parallel) weight $k+2$.
\end{remark}
\begin{remark} \label{rem:roleofmuOF}
Note that if $z \in \mu(\cO_{F})$, then for all parallel weights $\lambda_{k}$, we have that
\[ \lambda_{k} = \mathrm{N}_{F/\QQ}(z)^{k} = 1.\]
This is the reason that we consider the space of `null weights' as in \cite{BW20a} than in \cite{Han17} who considers characters on the split torus $\mathbf{T}$ of diagonal matrices in $\GL_{2}(\cO_{F_{\p}})$. See \cite[Remark 3.2]{BW20a}. 
\end{remark}
\begin{definition} \label{def:weightlambdaaction} 
Let $\mathcal{A}(\cO_{F_{\p}},L)$ denote the ring of $L$-valued \emph{locally analytic} functions on $\cO_{F_{\p}}$. For $\lambda_{\kappa} \in \mathcal{W}_{F}(L)$, we equip this space with a natural weight $\lambda_{\kappa}$ left action of the semi-group 
\[ \Sigma_{0}(\mathfrak{p}) \defeq \left\{ \begin{pmatrix}a & b\\c & d\end{pmatrix} \in \mathrm{M}_{2}(\cO_{F_{\p}}) : \nu_{\mathfrak{p}}(c) > 0, \nu_{\mathfrak{p}}(a) = 0, ad - bc \neq 0 \right\} \] 
given by 
\[\begin{pmatrix}a & b\\c & d\end{pmatrix}\cdot_{\lambda_{\kappa}} f(z) \defeq \lambda_{\kappa}(a+cz)f\left(\frac{b+dz}{a+cz}\right) \]
This transcends to a dual weight $\lambda_{\kappa}$ right action on $\mathcal{D}(L) \defeq \mathrm{Hom}_{\mathrm{cts}}(\mathcal{A}(\cO_{F_{\p}},L),L)$ - the space of $L$-valued \emph{locally analytic distributions} on $\cO_{F_{\p}}$. We will denote this space by $\mathcal{D}_{\lambda_{\kappa}}(L)$ to make the weight $\lambda_{\kappa}$ action implicit.
\end{definition}
Let $U \subset \mathcal{W}_{F}$ be an affinoid with associated universal character $\lambda_{U}^{\mathrm{un}} : \cO_{F_{\p}}^{\times} \longrightarrow \mathcal{O}(U)^{\times}$
i.e. any weight $\lambda_{\kappa} : \cO_{F_{\p}}^{\times}\rightarrow L^{\times}$ in $U(L)$ factors via 
\[ \cO_{F_{\p}}^{\times} \xrightarrow{\lambda_{U}^{\mathrm{un}}} \mathcal{O}(U)^{\times} \xrightarrow{\mathrm{ev}_{\lambda_{\kappa}}} L^{\times} \]
where the last map is evaluation at $\lambda_{\kappa}$.  Let $\mathcal{A}_{U} \defeq \cA(\cO(U))$ denote the space of $\cO(U)$-valued locally analytic functions on $\cO_{F_{\p}}$. The universal character enables us to equip $\mathcal{A}_{U}$ with a `weight $U$' action of the semi-group $\Sigma_{0}(\p)$ as follows
\[ \begin{pmatrix}a & b\\c & d\end{pmatrix}\cdot_{U}f(z) = \lambda_{U}^{\mathrm{un}}(a+cz)f \left( \frac{b+dz}{a+cz} \right). \]
Correspondingly, we get a dual `weight $U$' right action on $\cD_{U} \defeq \cD(\cO(U))$ -- the space of $\cO(U)$-valued locally analytic distributions on $\cO_{F_{\p}}$. For $W \subset U$ a closed subset, we have an isomorphism $\cD_{U} \otimes_{\cO(U)} \cO(W) \cong \cD_{W}$ (See \cite[Section 2.2]{Han17}). Particularly, if $\lambda_{\kappa} \in U(L)$ corresponds to a maximal ideal $\mathfrak{m}_{\lambda_{\kappa}} \subset \cO(U)$, then we have $\cD_{U} \otimes_{\cO(U)} \cO(U)/\mathfrak{m}_{\lambda_{\kappa}} \cong \cD_{\lambda_{\kappa}}$.
\paragraph*{} For the rest of this paper, we shall fix an affinoid $U \subset \mathcal{W}_{F}$ that contains the classical (parallel) weight $\lambda_{k_{0}}$. We will also denote $E(\mathcal{N})$ simply by $\mu(\cO_{F})$ henceforth. By Remark~\ref{rem:parallelweights}, we have 
\[ \lambda_{k_{0}}(z) = \mathrm{N}_{F/\QQ}(z)^{k_{0}} = \mathrm{N}_{F_{\p}/\QQ_{p}}(z)^{k_{0}}. \]
Further since $\cO_{F_{\p}}^{\times} \cong (\mathbb{F}_{p^{2}})^{\times} \times (1+p\cO_{F_{\p}})$, we may write any $z \in \cO_{F_{\p}}^{\times}$ in the form $z \defeq [z]\langle z \rangle$, where $[z] \in (\mathbb{F}_{p^{2}})^{\times}$ and $\langle z \rangle \in (1+p\cO_{F_{\p}})$ is the projection to the group of principal units. Up to shrinking the affinoid $U \subset \mathcal{W}_{F}$, we may assume that any $\lambda_{\kappa} \in U(L)$ is of the form $\lambda_{\kappa}(z) = [z]^{k_{0}}\langle z \rangle^{s} \defeq [z]^{k_{0}}\mathrm{exp}(s.\mathrm{log}_{p}(z))$ for $s \in \cO_{F_{\p}}$. In particular, any classical (parallel) weight $\lambda_{k} \in U(L)$ is of the form 
\[\lambda_{k}(z) = [N_{F_{\p}/\QQ_{p}}(z)]^{k_{0}}\langle N_{F_{\p}/\QQ_{p}}(z)\rangle^{k}.\]
Note that for all $\lambda_{k} \in U$ we have $k \equiv k_{0}\;\mathrm{mod}\:(p^{2}-1)$.
\begin{remark} \label{rem:parallelweightspace}
Let $\mathcal{W}_{F,\mathrm{par}} \subset \mathcal{W}_{F}$ be the parallel weight line defined as the image of $\mathcal{W}_{\QQ} \defeq \mathrm{Hom}_{\mathrm{cont}}(\ZZ_{p}^{\times},L^{\times})$ in $\mathcal{W}_{F}$. Our choice of the affinoid $U$ will be such that $U \subseteq \mathcal{W}_{F,\mathrm{par}}$. In particular, the pair $(U, k_{0})$ will be a \emph{slope-$k_{0}$ adapted} affinoid as defined in \cite[Section 4.1]{BW20a}.  By abuse of notation, we shall also denote the pre-image of $U$ in $\cW_{\QQ}$ by $U$. 
\end{remark}
\paragraph*{}
Let $\chi^{\mathrm{cycl}}_{\QQ}:\mathrm{G}_{\QQ} \rightarrow \ZZ_{p}^{\times}$ be the $p$-adic cyclotomic character. We denote by $\chi^{\mathrm{cycl}}_{F}$ its restriction to $\mathrm{G}_{F}$ which corresponds, via global class field theory, to a character 
\[\chi^{\mathrm{cycl}}_{F}:F^{\times}/\mathbb{A}_{F}^{\times} \longrightarrow \ZZ_{p}^{\times}\]
normalized in such a way that the restriction of $\chi^{\mathrm{cycl}}_{F}$ to $\cO_{F_{\p}}^{\times}$ is the local norm $\mathrm{N}_{F_{\p}/\QQ_{p}}$.
 \paragraph*{} Set $\mathscr{W} \defeq (F_{\p})^{2} - \left\{ 0,0 \right\}$ and denote by $\mathscr{Y}$ the space of orbits $\mathscr{Y} \defeq \mu(\cO_{F}) \backslash \mathscr{W}$, where $\mu(\cO_{F})$ acts diagonally. Consider the projection given by
\[
 \pi : \mathscr{W} \longrightarrow \mathbb{P}^{1}(F_{\p})
 \]
 \[ \pi((x,y)) \defeq [x:y] \]
which $\pi$ factors via $\mathscr{Y}$. 

For $\mathscr{L} \subset (F_{\p})^{2}$ any $\cO_{F_{\p}}$-lattice, let $\mathscr{L}'$ denote the set of primitive vectors of $\mathscr{L}$ i.e. vectors in $\mathscr{L}$ that are not divisible by $p$. For $g$ any $\cO_{F_{\p}}$--basis of $\mathscr{L}$, we set $|\mathscr{L}| \defeq (\mathrm{N}_{F/\QQ}(p))^{\mathrm{ord}_{p}(\mathrm{det}(g))}$. As before, let $\mathscr{L}_{*}$ denote the standard lattice $\cO_{F_{\p}}\oplus\cO_{F_{\p}}$ and let $\mathscr{L}_{\infty} = p\cO_{F_{\p}}\oplus \cO_{F_{\p}}$ which correspond to the vertices $v_{*}$ and $v_{\infty}$ in the Bruhat--Tits tree $\mathcal{T}_{\p}$ respectively. Let $e_{\infty} \in \mathcal{E}(\mathcal{T}_{\p})$ denote the (oriented) edge joining $v_{*}$ and $v_{\infty}$. In fact, for any $e \in \mathcal{E}(\mathcal{T_{\p}})$, we fix lattices $\mathscr{L}_{s(e)}$ and $\mathscr{L}_{t(e)}$ such that the homothety classes $[\mathscr{L}_{s(e)}]$ and $[\mathscr{L}_{t(e)}]$ represent the source and target vertices in the Bruhat--Tits tree respectively. 

For every $e \in \mathcal{E}(\mathcal{T}_{\p})$, denote by $W_{e} \defeq \mathscr{L}'_{s(e)}\cap\mathscr{L}'_{t(e)}$ and its image in $\mathscr{Y}$ by $Y_{e}$. Let $U_{e} \subset \mathbb{P}^{1}(F_{\p})$ be the open compact subset as in \cite[Proposition 2.4]{BW19}. Note that we have $p^{\infty}W_{e} \defeq \bigcup\limits_{n} p^{n}W_{e} = \pi^{-1}(U_{e})$. In particular $U_{e_{\infty}} = \cO_{F_{\p}}$. For brevity, we denote $W_{e_{\infty}} = p\cO_{F_{\p}} \oplus \cO_{F_{\p}}^{\times}\oplus$ and $Y_{e_{\infty}}$ simply by $W_{\infty}$ and $Y_{\infty}$ respectively.

\begin{definition} For $\mathscr{X}$ any open compact subset of $\mathscr{W}, \mathscr{Y}$ or $\mathbb{P}^{1}(F_{\p})$, we denote by $\mathcal{A}(\mathscr{X})$ the space of $L$-valued locally analytic functions on $\mathscr{X}$ and accordingly by $\mathcal{D}(\mathscr{X})$ the space of locally analytic distributions. 
\end{definition}

For any $\mu \in \mathcal{D}(\mathscr{X})$ and any $F \in \mathcal{A}(\mathscr{X})$, we use the measure theoretic definition $\int_{\mathscr{X}}Fd\mu$ to denote $\mu(F)$. Further, if $\mathscr{X}' \subset \mathscr{X}$ is any subset, then by $\int_{\mathscr{X}'}$ we mean $\mu(F.\chi_{\mathscr{X}'})$ where $\chi_{\mathscr{X}'}$ is the \emph{characteristic function} on $\mathscr{X}'$. 

\paragraph*{} By viewing elements of $(F_{\p})^{2}$ as column vectors, we have a natural left action of $\mathrm{GL}_{2}(F_{\p})$ on $(F_{\p})^{2}$ which induces a left action on the spaces $\mathscr{W}, \mathscr{Y}$ and the Bruhat--Tits tree $\mathcal{T}_{\p}$.  For $\mathscr{L}$ any lattice, we have an induced left action of $\mathrm{GL}_{2}(\cO_{F_{\p}})$ on $\mathscr{L}'$ and on $\widetilde{L}' \defeq \mu(\cO_{F}) \backslash \mathscr{L}' \subset \mathscr{Y}$. The diagonal action of $\cO_{F_{\p}}^{\times}$ on $\mathscr{L}'$, given by $t.(x,y)\defeq (t.x,t.y)$, descends to an action of $\cO_{F_{\p}}^{\times}/\mu(\cO_{F})$ on $\widetilde{L}'$. Note that we have a natural $\mathrm{GL}_{2}(\cO_{F_{\p}})$ action on $\mathcal{A}(\widetilde{L}')$ and an induced left $\mathrm{GL}_{2}(\cO_{F_{\p}})$ action on $\mathcal{D}(\widetilde{L}')$. We set $\mathbb{D} \defeq \mathcal{D}(\widetilde{L}_{*}')$ where $\widetilde{L}_{*}' \defeq \mu(\cO_{F})\backslash \mathscr{L}_{*}'$.

Similarly, let $\mathbb{D}^{\dagger} \defeq \cD(\mathscr{Y})$ where $\mathscr{Y} = \mu(\cO_{F})\backslash \mathscr{W}$. We give $\mathbb{D}^{?}$ for $? \in \lbrace \dagger, \emptyset \rbrace$ a $\mathcal{D}(\cO_{F_{\p}}^{\times}/\mu(\cO_{F}))$-module structure as follows
\[ \mathcal{D}(\cO_{F_{\p}}^{\times}/\mu(\cO_{F})) \times \mathbb{D}^{?} \longrightarrow \mathbb{D}^{?} \]
\[ (\nu, \mu) \mapsto \nu\mu \]
where
\[ \int_{\widetilde{L}_{*}'} F(x,y)d\nu\mu(x,y) \defeq \int_{\cO_{F_{\p}}^{\times}/\mu(\cO_{F})}\left(\int_{\widetilde{L}_{*}'} F(zx,zy)d\mu(x,y)\right) d\nu(z) \]
for $z \in \cO_{F_{\p}}^{\times}/\mu(\cO_{F})$. Further, we also give $\mathcal{O}(U)$ a $\mathcal{D}(\cO_{F_{\p}}^{\times}/\mu(\cO_{F}))$-module structure via the Amice--Velu Fourier transform given by 
\[ \nu \mapsto \left[ \lambda_{\kappa} \mapsto \int_{\cO_{F_{\p}}^{\times}/\mu(\cO_{F})}\lambda_{\kappa}(z)d\nu(z) \right]. \]
In particular, under the Amice--Velu Fourier transform (See \cite[Section 3.5]{AS08} for more details), we have 
\begin{equation} \label{eqn:Amice-VeluIsomorphism}
\mathcal{D}(\cO_{F_{\p}}^{\times}/\mu(\cO_{F})) \cong \mathcal{O}(\cW_{F}).
\end{equation}  
We set $\mathbb{D}_{U}^{?} \defeq \mathcal{O}(U) \widehat{\otimes}_{\mathcal{O}(\cW_{F})} \mathbb{D}^{?}$ which by (\ref{eqn:Amice-VeluIsomorphism}) is isomorphic to $\mathcal{O}(U) \widehat{\otimes}_{\mathcal{D}(\cO_{F_{\p}}^{\times}/\mu(\cO_{F}))} \mathbb{D}^{?}$. Note that that space $\mathbb{D}_{U}$ is contained in $\mathbb{D}_{U}^{\dagger}$ as locally analytic distributions with support contained in $\widetilde{L}_{*}'$.
\begin{definition} \label{defn:bigcell}
Let $\mathbf{N}^{\mathrm{opp}}$ (resp. $\mathbf{N}$) denote the set of unipotent lower triangular (resp. unipotent upper triangular) matrices in $\GL_{2}(\cO_{F_{\p}})$ and let $\mathbf{T} \defeq \cO_{F_{\p}}^{\times}/\mu(\cO_{F})$ viewed as diagonal matrices in $GL_{2}(\cO_{F_{\p}})$. Then, the ``big cell'' of \cite{AS08} is defined as $\mathbf{N}^{\mathrm{opp}}\mathbf{T}\mathbf{N}$. 
\end{definition}
\begin{lemma} \label{lem:bigcellprimitivevectors}
There is a bijective correspondence between the big cell $\mathbf{N}^{\mathrm{opp}}\mathbf{T}\mathbf{N}$ and the set $\widetilde{L}_{*}'$.
\end{lemma}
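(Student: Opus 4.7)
The plan is to exhibit the bijection explicitly via the unique Iwasawa-type factorization of elements in the big cell. First, I would verify that every $\gamma \in \mathbf{N}^{\mathrm{opp}}\mathbf{T}\mathbf{N}$ admits a unique factorization
\[
\gamma \;=\; \begin{pmatrix} 1 & 0 \\ c & 1 \end{pmatrix} \begin{pmatrix} a & 0 \\ 0 & 1 \end{pmatrix} \begin{pmatrix} 1 & b \\ 0 & 1 \end{pmatrix}
\]
with $c, b \in \cO_{F_{\p}}$ and $a \in \cO_{F_{\p}}^{\times}/\mu(\cO_{F})$. This is a direct matrix computation: writing $\gamma = \bigl(\begin{smallmatrix} \alpha & \beta \\ \gamma_{21} & \delta \end{smallmatrix}\bigr)$, the condition that $\gamma$ lies in the big cell forces $\alpha \in \cO_{F_{\p}}^{\times}$, and the parameters are then recovered uniquely by $a = \alpha$, $b = \beta/\alpha$, $c = \gamma_{21}/\alpha$, so the product map $\mathbf{N}^{\mathrm{opp}} \times \mathbf{T} \times \mathbf{N} \to \mathbf{N}^{\mathrm{opp}}\mathbf{T}\mathbf{N}$ is bijective.

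Next, I would define the map $\Psi : \mathbf{N}^{\mathrm{opp}}\mathbf{T}\mathbf{N} \to \widetilde{L}_{*}'$ by reading off the canonical primitive vector attached to $\gamma$ via this factorization --- the natural candidate being the image of a fixed basis vector under $\gamma$, projected modulo the diagonal $\mu(\cO_{F})$-action defining $\widetilde{L}_{*}' = \mu(\cO_{F})\backslash\mathscr{L}_{*}'$. The resulting vector lies in $\mathscr{L}_{*}'$ because $\gamma \in \GL_{2}(\cO_{F_{\p}})$, and the well-definedness modulo $\mu(\cO_{F})$ reduces to the observation that the $\mu(\cO_{F})$-ambiguity in $a$ coming from the quotient in $\mathbf{T}$ corresponds exactly to the diagonal $\mu(\cO_{F})$-action on the primitive vector.

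For injectivity, the uniqueness of the Iwasawa-type factorization together with the recovery formulas $a = \alpha$, $b = \beta/\alpha$, $c = \gamma_{21}/\alpha$ shows that $\Psi$ reconstructs $\gamma$ from the image primitive vector. For surjectivity, given a class $[v] \in \widetilde{L}_{*}'$, I would use the fact that any primitive vector in $\mathscr{L}_{*}'$ has at least one coordinate which is a unit to select a representative in the image of the first-column map, and then invert the formulas to produce the triple $(c, a, b)$.

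The main delicate point, though essentially bookkeeping, is the simultaneous handling of the two $\mu(\cO_{F})$-quotients --- the one implicit in the torus $\mathbf{T} = \cO_{F_{\p}}^{\times}/\mu(\cO_{F})$ and the one defining $\widetilde{L}_{*}' = \mu(\cO_{F})\backslash\mathscr{L}_{*}'$ --- and the verification that $\Psi$ intertwines them. Once this compatibility is settled, the lemma reduces to a direct computation in $\GL_{2}(\cO_{F_{\p}})$ using the explicit factorization above.
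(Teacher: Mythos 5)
Your proposal diverges from the paper's proof on two points and, more importantly, contains a genuine gap in the injectivity step. The paper's proof uses the scalar embedding of the torus, $\mathbf{T}\ni b \mapsto \mathrm{diag}(b,b)$, so that a big-cell element is explicitly $\begin{pmatrix} b & bc \\ ab & b+abc \end{pmatrix}$, and it defines the map to $\widetilde{L}_{*}'$ by reading off the \emph{second row} $(ab,\,b+abc)$. You instead embed $\mathbf{T}\ni a \mapsto \mathrm{diag}(a,1)$, so that $\gamma = \begin{pmatrix} a & ab \\ ca & cab+1 \end{pmatrix}$, and you propose to send $\gamma$ to the image of a basis vector, i.e.\ a \emph{column}. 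These are not the same subsets of $\mathrm{GL}_2(\cO_{F_{\p}})$ (your matrices have determinant $a$, the paper's $b^2$), nor is it the same map.

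The gap is this: $\Psi(\gamma)$ retains only two entries of $\gamma$, but the unique factorization you verify is parametrized by the triple $(a,b,c)$. Concretely, your first column $(a, ca)$ does not depend on $b$ at all, and your second column $(ab,\, cab+1)$ determines only the products $ab$ and $cab$, not $a$ and $b$ separately---so neither column choice can yield an injective map. The recovery formulas $a=\alpha$, $b=\beta/\alpha$, $c=\gamma_{21}/\alpha$ that you invoke reconstruct $(a,b,c)$ from \emph{three} entries $(\alpha,\beta,\gamma_{21})$ of $\gamma$; they establish that the factorization of $\gamma$ is unique given $\gamma$, but not that $\gamma$ can be recovered from the two entries of $\Psi(\gamma)$, and conflating these is exactly where the argument breaks down. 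To repair this you would need to fix a concrete map and confront the coordinate count directly (a triple in $\cO_{F_{\p}}\times(\cO_{F_{\p}}^{\times}/\mu(\cO_F))\times\cO_{F_{\p}}$ versus a class of primitive vectors in $\mu(\cO_F)\backslash\mathscr{L}_{*}'$); the same tension is latent in the paper's choice $(a,b,c)\mapsto(ab,\,b+abc)$, whose fibre over $(0,b)$ contains every value of the $c$-coordinate, so the ``easy to see'' step deserves closer scrutiny in either version.
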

\begin{proof}
Any element in $\mathbf{N}^{\mathrm{opp}}\mathbf{T}\mathbf{N}$ is of the form \[\begin{pmatrix} b & bc \\ ab & b+abc \end{pmatrix}\] for $a,c \in \cO_{F_{\p}}$ and $b \in \cO_{F_{\p}}^{\times}/\mu(\cO_{F})$. Note that at most one of $ab$ or $b+abc$ is divisible by $p$ and in particular we have that $(ab, b+abc) \in \widetilde{L}_{*}'$. It is easy to see that this map is a bijection. 
\end{proof}
\begin{remark} \label{rem:DUvsDU}
The distribution module $\mathbb{D} = \cD(\widetilde{L}_{*}')$ should thus be considered as the space of locally analytic distributions on ``the big cell'' as in \cite{AS08} and \cite{AS00} (Similar spaces make an appearance in \cite{GS93}, \cite{BD09}, \cite{BDI10}, \cite{Sev12} as well as in \cite{Mok11} and \cite{BL11} in similar contexts) whilst the distribution module $\cD_{\lambda_{\kappa}}$ introduced in Definition~\ref{def:weightlambdaaction} is the classical one considered in \cite{BW20a}. By the \emph{universal property of the highest weight module/vector pair}, there exists a unique $\Sigma_{0}(\p)$-equivariant morphism $(\mathbb{D},\delta) \rightarrow (\cD_{\lambda_{\kappa}},\delta)$ that is compatible with the ``weight $k$'' specialisation to $(V_{k,k},\nu)$ for $\delta$ (resp. $\nu$) a highest weight vector in $\cD_{\lambda_{\kappa}}$ and $\mathbb{D}$ (resp. $V_{k,k}$). This extends to a $\Sigma_{0}(\p)$-equivariant morphism $(\mathbb{D}_{U},\delta) \rightarrow (\cD_{U},\delta)$.  

\end{remark}
\begin{definition} \label{def:homogeneous}
Let $\lambda_{\kappa} \in U(L)$ and $\mathscr{X}$ an $\cO_{F_{\p}}^{\times}$-stable open compact subset of $\mathscr{W},\mathscr{Y}$ or $\PP^{1}(F_{\p})$ (eg. $\mathscr{X} = \widetilde{L}_{*}'$). A function $f \in \mathcal{A}(\mathscr{X})$ is said to be homogeneous of weight $\lambda_{\kappa}$ if 
\[ f(zx,zy) = \lambda_{\kappa}(z)f(x,y) = [z]^{k_{0}}\langle\mathrm{N}_{F_{\p}/\QQ_{p}}(z)\rangle^{s}f(x,y) \]
for all $z \in \cO_{F_{\p}}^{\times}/\mu(\cO_{F})$. We denote the subspace of `homogeneous of weight $\lambda_{\kappa}$' functions by $\mathcal{A}^{\lambda_{\kappa}}(\mathscr{X}) \subset \mathcal{A}(\mathscr{X})$. 
\end{definition}
\begin{remark} \label{rem:homogeneouspolynomials}
Let $k \in \ZZ^{\geq 0}$ be any integer and let $P \in V_{k,k}(L)$.  Then the function $\widetilde{P}(x,y) \defeq P(x,y,\overline{x},\overline{y})$ for $(x,y) \in \widetilde{L}_{*}'$
is homogeneous of weight $\lambda_{k}$ on $\widetilde{L}_{*}'$, i.e. $\widetilde{P}(x,y) \in \cA^{\lambda_{k}}(\widetilde{L}_{*}').$ 
\end{remark}
For $\lambda_{\kappa} \in U(L)$, we define 
\[ B_{\lambda_{\kappa}} : \mathcal{O}(U) \times \mathbb{D} \longrightarrow \mathrm{Hom}_{\mathrm{cont}}(\mathcal{A}^{\lambda_{\kappa}}(\widetilde{L}_{*}'), L) \]
\[ B_{\lambda_{\kappa}}(\alpha,\mu)(F) \defeq \alpha(\lambda_{\kappa})\int_{\widetilde{L}_{*}'}F(x,y)d\mu(x,y) \]
In particular, we have a bilinear pairing
\[ B : \mathbb{D}_{U} \times \mathcal{A}^{\lambda_{\kappa}}(\widetilde{L}_{*}') \rightarrow L \]
We once again use the measure-theoretic notation to denote $B(\mu_{U},F)$ as $\int_{\widetilde{L}_{*}'}F(x,y)d\mu_{U}(x,y)$. For $\tau \in \mathcal{H}_{\p}^{\mathrm{ur}}$ and $\mathbf{P} \in \mathcal{A}^{\lambda_{k_{0}}}(\widetilde{L}_{*}')$ 
define
\[ F: U(L) \times \widetilde{L}_{*}' \longrightarrow \mathbb{C}_{p} \]
\[ F(\lambda_{\kappa}, (x,y)) \defeq \mathbf{P}(x,y)\left(\langle x-\tau y \rangle \langle \overline{x}-\overline{\tau
}\overline{y})\rangle \right)^{\lambda_{\kappa}-\lambda_{k_{0}}}. \]
Here $\langle x - \tau y\rangle^{\lambda_{\kappa}-\lambda_{k_{0}}} =\mathrm{exp}((s - k_{0})\mathrm{log}_{p}(x - \tau y))$, where $s \in \cO_{F_{\p}}$ is such that $\lambda_{\kappa}(z) = [z]^{k_{0}}\langle z \rangle^{s}$.  In particular $F(\lambda_{\kappa},(x,y)) \in \cA^{\lambda_{\kappa}}(\widetilde{L}_{*}')$.

\begin{lemma} \label{lem:analyticity}
For $\mu_{U} \in \mathbb{D}_{U}$, the function $U(L) \longrightarrow \mathbb{C}_{p}$ given by 
\[ \lambda_{\kappa} \mapsto \int_{\widetilde{L}_{*}'} F(\lambda_{\kappa}, (x,y))d\mu_{U}(x,y) \]
is analytic around the point $\lambda_{k_{0}} \in U(L)$.
\end{lemma}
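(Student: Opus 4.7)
The plan is to expand $F(\lambda_\kappa, (x,y))$ as a convergent Taylor series in the parameter $s - k_0$ (where any $\lambda_\kappa \in U(L)$ near $\lambda_{k_0}$ is written as $\lambda_\kappa(z) = [\mathrm{N}_{F_\p/\QQ_p}(z)]^{k_0} \langle \mathrm{N}_{F_\p/\QQ_p}(z)\rangle^s$ with $s$ near $k_0$), whose coefficients are bounded locally analytic functions on $\widetilde{L}_*'$, and then to interchange the distribution pairing against $\mu_U$ with the resulting sum.

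First I would set $g(x,y) := \log_p\langle x - \tau y\rangle + \log_p\langle \overline{x} - \overline{\tau}\,\overline{y}\rangle$, so that
\[ F(\lambda_\kappa, (x,y)) = \mathbf{P}(x,y)\exp\bigl((s - k_0)\, g(x,y)\bigr). \]
The next step is to verify that $g$ is a bounded locally analytic function on $\widetilde{L}_*'$. For $(x,y) \in \widetilde{L}_*'$ primitive and $\tau \in \mathcal{H}_\p^{\ur}$, the reduction of $x - \tau y$ modulo the maximal ideal of $\cO_{\QQ_p^{\ur}}$ is nonzero (if $\bar y = 0$ then $\bar x \neq 0$ by primitivity, while if $\bar y \neq 0$ then $\bar x - \bar\tau\bar y = \bar y(\bar y^{-1}\bar x - \bar\tau) \neq 0$ since $\bar y^{-1}\bar x \in \mathbb{F}_{p^2}$ but $\bar\tau \notin \mathbb{F}_{p^2}$). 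Hence $x - \tau y$ is a unit in $\cO_{\QQ_p^{\ur}}$, its principal-unit part $\langle x - \tau y\rangle$ lies in $1 + p\cO_{\QQ_p^{\ur}}$, and $\log_p\langle x - \tau y\rangle \in p\cO_{\QQ_p^{\ur}}$. This yields the uniform bound $|g(x,y)| \leq p^{-1}$ on $\widetilde{L}_*'$.

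Expanding the exponential then gives
\[ F(\lambda_\kappa, (x,y)) = \sum_{n \geq 0} \frac{(s - k_0)^n}{n!}\, \mathbf{P}(x,y)\, g(x,y)^n, \]
and the bound on $g$ ensures that for $s$ in a sufficiently small disk around $k_0$ this series converges uniformly in $(x,y)$ in the Banach space of bounded locally analytic functions on $\widetilde{L}_*'$. Pairing term-by-term with $\mu_U \in \mathbb{D}_U \cong \mathcal{O}(U)\widehat\otimes \mathbb{D}$ (writing $\mu_U = \sum_i \alpha_i \otimes \mu_i$ as a convergent tensor and using continuity of each $\mu_i$ on this Banach space) produces
\[ \int_{\widetilde{L}_*'} F(\lambda_\kappa, \cdot)\, d\mu_U \;=\; \sum_{n \geq 0} \frac{(s - k_0)^n}{n!}\int_{\widetilde{L}_*'} \mathbf{P}\, g^n\, d\mu_U, \]
with each coefficient lying in $\mathcal{O}(U)$. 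This exhibits the integral as a convergent power series in $s - k_0$, hence an analytic function of $\lambda_\kappa$ near $\lambda_{k_0}$.

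The main obstacle will be justifying the interchange of the infinite sum with the distribution pairing: this reduces to controlling uniform convergence in a Banach norm for which $\mu_U$ is continuous, which in turn follows from the uniform bound on $g$ together with the continuity of $\mu_U$. A secondary technical point is that the individual Taylor coefficients $\mathbf{P}\, g^n$ need not be homogeneous of weight $\lambda_\kappa$; this is circumvented by regarding $\mu_U$ directly as an element of $\mathcal{O}(U)\widehat\otimes \mathcal{D}(\widetilde{L}_*')$ and using the standard pairing on locally analytic functions, with the homogeneity of the full integrand $F(\lambda_\kappa,-)$ ensuring compatibility with the pairing $B_{\lambda_\kappa}$ defined above.
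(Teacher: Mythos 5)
Your approach is correct and coincides with the Taylor-expansion argument the paper cites from Bertolini--Darmon--Iovita (Lemma 4.5 of \cite{BDI10}): write $F = \mathbf{P}\exp\bigl((s-k_0)g\bigr)$, bound $g$ uniformly, expand the exponential, and pair term-by-term against $\mu_U$. One small slip worth noting: $\tau \in \mathcal{H}_{\p}^{\ur}$ alone does not force $\bar\tau \notin \mathbb{F}_{p^2}$ (this requires the implicit hypothesis $\mathrm{red}_{\p}(\tau) = v_*$), but since $\log_p\langle z\rangle \in p\,\cO_{\QQ_p^{\ur}}$ for every nonzero $z \in \QQ_p^{\ur}$ once $\langle\cdot\rangle$ is extended beyond units in the usual way, your uniform bound $|g| \leq p^{-1}$ holds regardless and the rest of the argument is unaffected.
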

\begin{proof}
The same proof as in \cite[Lemma 4.5]{BDI10} goes through.
\end{proof}
With notations in place as above, Lemma~\ref{lem:analyticity} motivates us to make the following definition.
\begin{definition} \label{def:derivative}
We define $ \int_{\widetilde{L}_{*}'} \mathbf{P}(x,y)\log_{p}\big(\langle x-\tau y \rangle \langle \overline{x}-\overline{\tau}\overline{y} \rangle \big)d\mu_{U}(x,y) $
to be the derivative
\[ \left( \frac{d}{d\lambda_{\kappa}}\int_{\widetilde{L}_{*}'}F(\lambda_{\kappa},(x,y))d\mu_{U}(x,y)\right)_{\lambda_{\kappa} = \lambda_{k_{0}}} \]
\end{definition}
The following result will be useful to compute derivatives of $p$-adic $L$-functions later on in the sequel.
\begin{proposition} \label{prop:derivatives}
Let $P(x,y) \in \cA^{\lambda_{k_{0}}}(\widetilde{L}_{*}')$ and $\tau_{1}, \tau_{2} \in \mathcal{H}_{\p}^{\mathrm{ur}}$. Then for $\mu \in \mathbb{D}_{U}$, we have 
\begin{align*}
\frac{d}{d\lambda_{\kappa}}\left(\int_{\widetilde{L}_{*}'}P(x,y)\left(\langle x-\tau_{1}y\rangle \langle \overline{x}-\tau_{1}\overline{y})\rangle \right)^{\frac{\lambda_{\kappa}-\lambda_{k_{0}}}{2}}\left( \langle x-\tau_{2} y \rangle \langle \overline{x}-\tau_{2}\overline{y}) \rangle \right)^{\frac{\lambda_{\kappa}-\lambda_{k_{0}}}{2}}d\mu_{U}(x,y)\right)_{\lambda_{\kappa} = \lambda_{k_{0}}} \\ =
\frac{1}{2}\frac{d}{d\lambda_{\kappa}}\left(\int_{\widetilde{L}_{*}'}P(x,y)\left( \langle x-\tau_{1}y \rangle \langle \overline{x}-\tau_{1}\overline{y} \rangle \right)^{\lambda_{\kappa}-\lambda_{k_{0}}}d\mu_{U}(x,y)\right)_{\lambda_{\kappa} = \lambda_{k_{0}}} \\ +
\frac{1}{2}\frac{d}{d\lambda_{\kappa}}\left(\int_{\widetilde{L}_{*}'}P(x,y)\left( \langle x-\tau_{2} y \rangle \langle \overline{x}-\tau_{2}\overline{y} \rangle \right)^{\lambda_{\kappa}-\lambda_{k_{0}}}d\mu_{U}(x,y)\right)_{\lambda_{\kappa} = \lambda_{k_{0}}}
\end{align*}
\end{proposition}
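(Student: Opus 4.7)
The plan is to reduce the proposition to the elementary identity $\tfrac{d}{ds}\bigl(A^{s/2} B^{s/2}\bigr)\big|_{s=0} = \tfrac{1}{2}\log A + \tfrac{1}{2}\log B = \tfrac{1}{2}\tfrac{d}{ds} A^s|_{s=0} + \tfrac{1}{2}\tfrac{d}{ds} B^s|_{s=0}$, applied under the integral sign. To make this rigorous in the present $p$-adic setting, I will pull the derivative inside the distribution $\mu_U$ using the analyticity provided by Lemma~\ref{lem:analyticity}.

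First I would introduce the two auxiliary functions
\[
A(x,y) \defeq \langle x - \tau_{1} y \rangle \langle \overline{x} - \tau_{1}\overline{y} \rangle, \qquad
B(x,y) \defeq \langle x - \tau_{2} y \rangle \langle \overline{x} - \tau_{2}\overline{y} \rangle,
\]
and observe that for each $(x,y) \in \widetilde L_*'$ these are principal units in $\overline{\Q}_p^{\times}$, so $\log_p A$ and $\log_p B$ are well defined and the functions
\[
F(\lambda_\kappa,(x,y)) = P(x,y)\,A(x,y)^{(\lambda_\kappa - \lambda_{k_0})/2}\,B(x,y)^{(\lambda_\kappa - \lambda_{k_0})/2}
\]
and its two ``monochromatic'' analogues belong to $\cA^{\lambda_\kappa}(\widetilde L_*')$. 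Lemma~\ref{lem:analyticity} (applied to each of $A$, $B$, and their product, which is again a product of bracket expressions of the same form) ensures that integration against $\mu_U$ yields $L$-analytic functions of $\lambda_\kappa$ in a neighbourhood of $\lambda_{k_0}$, so differentiation under the $\mu_U$-integral is legitimate.

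Next I would compute the derivative of the integrand at $\lambda_\kappa = \lambda_{k_0}$ pointwise in $(x,y)$. Since $A^{(\lambda_\kappa - \lambda_{k_0})/2} = \exp\bigl((\tfrac{s-k_0}{2})\log_p A\bigr)$ with $\lambda_\kappa(z) = [z]^{k_0}\langle z\rangle^s$, the chain rule gives
\[
\frac{d}{d\lambda_\kappa} \bigl(A^{(\lambda_\kappa-\lambda_{k_0})/2} B^{(\lambda_\kappa-\lambda_{k_0})/2}\bigr)_{\lambda_\kappa = \lambda_{k_0}} = \tfrac{1}{2}\log_p A + \tfrac{1}{2}\log_p B,
\]
while the two monochromatic derivatives at $\lambda_{k_0}$ are $\log_p A$ and $\log_p B$ respectively. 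Multiplying by $P(x,y)$ and interpreting via Definition~\ref{def:derivative}, the identity to prove reduces exactly to linearity of the integral against $\mu_U$:
\[
\int P(x,y)\bigl(\tfrac{1}{2}\log_p A + \tfrac{1}{2}\log_p B\bigr)\,d\mu_U = \tfrac{1}{2}\!\int P(x,y)\log_p A\,d\mu_U + \tfrac{1}{2}\!\int P(x,y)\log_p B\,d\mu_U.
\]

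No substantial obstacle is expected; the only care needed is the justification of differentiation under the integral, which is already packaged in Lemma~\ref{lem:analyticity}. Everything else is a formal application of the product rule and $\Q_p$-linearity of distributions, so the argument is essentially a one-line calculation once the setup from Definition~\ref{def:derivative} has been unwound.
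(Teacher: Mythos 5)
Your proof is correct and is essentially the same argument that the paper delegates to its citations of \cite[Remark 4.7]{BDI10} and \cite[Proposition 4.2]{Sev12}: the reduction to the pointwise identity $\tfrac{d}{ds}(A^{s/2}B^{s/2})|_{s=0}=\tfrac{1}{2}\log_p A + \tfrac{1}{2}\log_p B$, legitimized by the analyticity guaranteed in Lemma~\ref{lem:analyticity}, followed by linearity of the distribution. The only place I would tighten the write-up is the remark that the product $A^{(\lambda_\kappa-\lambda_{k_0})/2}B^{(\lambda_\kappa-\lambda_{k_0})/2}$ is ``again a product of bracket expressions of the same form''; it suffices to say directly that this family is of the type handled by (the proof of) Lemma~\ref{lem:analyticity}, since it is a locally analytic function of $(\lambda_\kappa,(x,y))$ valued in $\cA^{\lambda_\kappa}(\widetilde L_*')$ with the required growth, not merely a product of two such families.
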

\begin{proof}
This follows from the explicit formulas of \cite[Remark 4.7]{BDI10}. See also \cite[Proposition 4.2]{Sev12}. 
\end{proof}
\begin{remark} \label{rem:smoothpoints}
There is a paucity of classical points in the Bianchi eigenvariety. However striking results of Barrera Salazar--Williams (c.f. \cite[Sections 5.2 \& 5.3]{BW20a}) show that base--change points are smooth in $\mathcal{E}_{\mathrm{par}}$ -- the parallel weight eigenvariety, which is the case that we are interested in.
\end{remark}
\subsection{Families of Bianchi modular symbols} Recall that $\mathcal{F} \in S_{\underline{k_{0}}+2}(U_{0}(\mathcal{N}))^{\mathrm{new}}$ is a (parallel) weight $k_{0}+2$ newform, where $\mathcal{N} = p\mathcal{M}$ as ideals in $\cO_{F}$ and $(p,\mathcal{M}) = 1$. By \cite[Corollary 4.8]{BW19}, we know that 
\begin{equation} \label{eqn:Up-eigenvalue} \mathcal{F}\mid U_{\p} = \omega_{\p}\mathrm{N}_{F/\QQ}(p)^{k_{0}/2}\mathcal{F} = \pm p^{k_{0}}\mathcal{F} 
\end{equation}
where $U_{\p}$ is the Hecke operator at $p$ and $-\omega_{\p} \in \lbrace \pm 1 \rbrace$ is its Atkin--Lehner eigenvalue. In particular $\mathcal{F}$ is of non-critical slope in the sense of \cite{BW20a}. Let $x_{k_{0}} \in \mathcal{E}(L)$ denote the classical cuspidal point in the Bianchi eigenvariety $\mathcal{E}$ defined over $L/\QQ_{p}$. Up to shrinking $U$, we may assume that $x_{k_{0}}$ varies in a family over the affinoid $U$ (See \cite[Definition 4.1 and Theorem 3.8]{BW20a}). We will denote the connected component containing $x_{k_{0}}$ in $\mathcal{E}$ by $V = \mathrm{Sp}(T)$. It follows from \cite{BW20a} that $U \subset \mathcal{W}_{F,\mathrm{par}}$ and $V \subset \mathcal{E}_{\mathrm{BC}} \subset \mathcal{E}_{\mathrm{par}}$ where $\mathcal{E}_{\mathrm{par}}$ (resp. $\mathcal{E}_{\mathrm{BC}}$) is the \emph{parallel weight eigenvariety} (resp. \emph{base--change eigenvariety}) of \cite[Section 5]{BW20a}. 
\begin{remark} \label{rem:Up-eigenvalue}
Note that since $\cF$ is, by assumption, the base--change to $F$ of an elliptic cuspidal newform $f \in S_{k_{0}+2}(\Gamma_{0}(N))$, we have that $\mathcal{F}\mid U_{\p} = (a_{p}(f))^{2} = p^{k_{0}}\cF$. In particular $\omega_{\p} = 1$. This will be crucial when we consider the \emph{trivial zero phenomenon} in \S\ref{sec:mainresult} later. 
\end{remark}
We recall the following result of Barrera Salazar and Williams.
\begin{proposition}[Barrera Salazar--Williams] \label{prop:zariskidensity}
Every irreducible component of $\mathcal{E}_{\mathrm{par}}$ is one-dimensional and contains a Zariski--dense set of classical points.
\end{proposition}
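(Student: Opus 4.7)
The plan is to establish one-dimensionality and Zariski density of classical points in $\mathcal{E}_{\mathrm{par}}$ via the standard eigenvariety machinery, adapted to the Bianchi parallel-weight setting as in \cite{BW20a}. The two assertions should be treated separately and then combined.

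First, for the one-dimensionality of each irreducible component, I would recall that $\mathcal{E}_{\mathrm{par}}$ is constructed as a closed subspace of the spectral variety $\mathcal{Z}$ associated to the compact operator $U_{\p}$ acting on the Fr\'{e}chet module of families of overconvergent Bianchi modular symbols $\mathbf{MS}_{\Gamma_{0}(\cN)}(\mathbb{D}_{U})$ over an admissible cover of $\mathcal{W}_{F,\mathrm{par}}$ by slope-adapted affinoids. The Riesz theory of compact operators together with Buzzard's eigenvariety formalism (compare \cite[\S4]{Han17} and \cite[\S3-4]{BW20a}) shows that the natural weight map $w:\mathcal{E}_{\mathrm{par}}\to\mathcal{W}_{F,\mathrm{par}}$ is locally finite and flat, and in particular equidimensional of the same dimension as $\mathcal{W}_{F,\mathrm{par}}$. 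Since $\mathcal{W}_{F,\mathrm{par}}$ is the image of the one-dimensional rigid space $\mathcal{W}_{\QQ}=\mathrm{Hom}_{\mathrm{cts}}(\ZZ_{p}^{\times},\Gm)$, each irreducible component of $\mathcal{E}_{\mathrm{par}}$ is then one-dimensional.

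Next, for Zariski density of classical points, the key input is the control theorem of Williams (Theorem~\ref{thm:control theorem} above, together with its family version in \cite[Thm.\ 4.7]{BW20a}): for a slope-$h$ adapted affinoid $U\subset \mathcal{W}_{F,\mathrm{par}}$ and any integer weight $\lambda_{k}\in U(L)$ with $k+1>h$, specialization at $\lambda_{k}$ induces a Hecke-equivariant isomorphism between the slope-$\leq h$ part of $\mathbf{MS}_{\Gamma_{0}(\cN)}(\mathbb{D}_{U})$ fibered over $\lambda_{k}$ and the classical slope-$\leq h$ Bianchi modular symbols of weight $k+2$. The argument then proceeds as follows. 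Let $C$ be an irreducible component of $\mathcal{E}_{\mathrm{par}}$, and fix a slope-adapted affinoid open $V=\Sp(T)$ of $C$ mapping to some $U$. The $U_{\p}$-slope on $V$ is bounded by some $h<\infty$; the integer weights $\lambda_{k}\in U(L)$ satisfying $k+1>h$ form an infinite, and indeed an accumulation, subset of $U$ (they are equidistributed in the $p$-adic topology inside $\cW_{F,\mathrm{par}}$, since integers are dense in $\ZZ_{p}$ and hence accumulate at every point of $\cW_{F,\mathrm{par}}$). Above each such non-critical $\lambda_{k}$, every point of $V$ is classical by the control theorem.

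Finally, I would upgrade this accumulation property to Zariski density. Since $w\colon V\to U$ is finite flat and integers accumulate at every point of $U$, the preimages form an accumulation subset of $V$. Any rigid analytic function $g\in T$ vanishing on all classical points of $V$ would, restricted to a small affinoid disk around any point, have a zero set with an accumulation point, forcing $g=0$ by the identity principle for rigid analytic functions on a connected (reduced, one-dimensional) affinoid; hence classical points are Zariski dense in $V$, and by covering $C$ with such $V$ one concludes Zariski density in $C$.

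The main technical point, and the place where care is required, is the passage through the control theorem in families: one must ensure that the non-critical classical weights genuinely cut out Zariski dense loci in the spectral variety after passing to the base-change component $\mathcal{E}_{\mathrm{BC}}$, which for parallel weights follows from the smoothness results of Barrera Salazar--Williams at base-change points (Remark~\ref{rem:smoothpoints}); the rest of the argument is a standard application of the identity principle and the locally finite structure of the weight map.
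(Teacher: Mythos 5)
The paper gives no proof of this proposition; it is a verbatim citation of \cite[Proposition~5.1]{BW20a}. Your reconstruction follows what is indeed the expected route: the Buzzard--Chenevier eigenvariety machine for one-dimensionality, then the control theorem in families together with accumulation of non-critical classical weights and the rigid-analytic identity principle for Zariski density of classical points. Structurally this is sound and almost certainly mirrors the argument in \cite{BW20a}.

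The one place where your sketch quietly assumes a nontrivial input is the claim that the weight map $w:\mathcal{E}_{\mathrm{par}}\to\mathcal{W}_{F,\mathrm{par}}$ is locally finite and \emph{flat}, from which you deduce equidimensionality. In Buzzard's machine this is automatic only once one knows the input module --- here the slope-$\leq h$ part of the family of overconvergent Bianchi modular symbols $\mathrm{Symb}_{\Gamma_0(\mathcal{N})}(\mathbb{D}_U)$ over a slope-adapted affinoid $U$ --- satisfies property (Pr), i.e.\ is a direct summand of an orthonormalizable Banach $\mathcal{O}(U)$-module. For $\mathrm{GL}_2/\QQ$ this is essentially built in, but for Bianchi groups (which have virtual cohomological dimension $2$, so modular symbols are not the sole interesting cohomological degree) it is a genuine theorem --- the content of \cite[\S3--4]{Wil17} and of the family control theorem \cite[Prop.~6.7]{BW20a} that you already invoke for the second half of the argument. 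Citing this explicitly for the flatness, rather than only for classicality at non-critical weights, would close the gap. A smaller point: your final paragraph appeals to Remark~\ref{rem:smoothpoints} (smoothness at base-change points), but smoothness is a separate and later result in \cite{BW20a}; one-dimensionality plus Zariski density do not require it, and invoking it here slightly overstates what is needed.
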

\begin{proof}
See \cite[Proposition 5.1]{BW20a}.
\end{proof}
By assumption, $(U,k_{0})$ is a slope-adapted affinoid in $\mathcal{W}_{F,\mathrm{par}}$ containing the classical weight $\lambda_{k_{0}}$. In particular, for each classical (parallel) weight $\lambda_{k} \in U$, the point $x_{k} \in V \subset \mathcal{E}_{\mathrm{BC}}$ lying above $\lambda_{k}$ (i.e. $\lambda_{k} = w(x_{k})$ for the weight map $w$) corresponds to a classical cuspidal (base--change) Bianchi eigenform of slope $h=k_{0}$ by the Control Theorem of \cite{Wil17}. We will denote the forms corresponding to $x_k \in V$ by $\mathcal{F}_{k} \in S_{\underline{k}+2}(U_{0}(\mathcal{N}))$ and correspondingly the elliptic cuspidal eigenforms by $f_{k} \in S_{k}(\Gamma_{0}(N))$. For all $k \neq k_{0}$, the eigenforms $\mathcal{F}_{k}$ (resp. $f_{k})$ are old at $p$ and there exists a unique normalized newform $\mathcal{F}_{k}^{\#} \in S_{\underline{k}+2}(U_{0}(\mathcal{M}))^{\mathrm{new}}$ (resp. $f_{k}^{\#} \in S_{k+2}(\Gamma_{0}(M)^{\mathrm{new}}$), such that for all $g \in \mathrm{GL}_{2}(\mathbb{A}_{F})$;
\begin{equation} \label{eqn:pstabilisation}
 \mathcal{F}_{k}(g) = \mathcal{F}_{k}^{\#}(g) - \frac{1}{a_{\p}(\cF_{k})}\mathcal{F}_{k}^{\#}\left(g\begin{pmatrix}
1 & 0\\
0 & p
\end{pmatrix}\right) 
\end{equation}
and 
\begin{equation} \label{eqn:pstabilistaion2}
f_{k}(q) = f_{k}^{\#}(q) - \frac{p^{k+1}}{a_{p}(f_{k})}f_{k}^{\#}(q^{p})
\end{equation}
where $a_{\p}(\cF_{k})$ (resp. $a_{p}(f_k)$) is the $U_{\p}$-eigenvalue of $\cF_{k}$ (resp. $U_{p}$-eigenvalue of $f_{k}$). We also set $\mathcal{F}_{k_{0}}^{\#} \defeq \mathcal{F}_{k_{0}} = \cF$ (resp. $f_{k_{0}}^{\#} \defeq f_{k_{0}} = f$). To ease our notation, we will denote by $\mathbfcal{F}(g)$ (resp. $\mathbf{f}(q)$) to denote the Coleman family, over the affinoid $U$, of cupidal Bianchi eigenforms (resp. cuspidal elliptic eigenforms) passing through the forms $\cF_{k}$ (resp. $f_{k}$) for classical weights $\lambda_{k} \in U$. We will also occasionally denote by $\mathbf{f}/F$ to denote the (base--change) family $\mathbfcal{F}$.
\begin{remark}
Note that we follow Palacios' definition of $p$-stabilisation of Bianchi eigenforms as in \cite[Section 3.3]{Pal21} which is slightly different from that in \cite{BW20a}.
\end{remark}
\paragraph*{} For each classical weight $\lambda_{k} \in U(L)$, we can define a weight $\lambda_{k}$-specialization map
\[\widetilde{\rho}_{\lambda_{k}} : \mathbb{D} \longrightarrow V_{k,k}(L)^{\vee}\]
given by
\begin{equation} \label{eqn:weightkspecialisation0}
\widetilde{\rho}_{\lambda_{k}}\left(\mu \right)(P) \defeq \int_{\mu(\cO_{F})\backslash(\cO_{F_{\p}}^{\times} \oplus \cO_{F_{\p}})} \widetilde{P}(x,y)d\mu(x,y)
\end{equation}
for $\mu \in \mathbb{D}$, which gives rise to a homomorphism at the level of Bianchi modular symbols
\begin{equation} \label{eqn:weightkspecialization}
\rho_{\lambda_{k}} : \mathrm{Symb}_{\Gamma_{0}(\mathcal{N})}(\mathbb{D}_{U}) \longrightarrow \mathrm{Symb}_{\Gamma_{0}(\mathcal{N})}(\mathbb{D}) \longrightarrow \mathrm{Symb}_{\Gamma_{0}(\mathcal{N})}(V_{k,k}(L)^{\vee}) 
\end{equation}
where the first map is given by $\Phi \mapsto \Phi \mbox{ mod }\mathfrak{m}_{\lambda_{k}}$, for $\Phi \in \mathrm{Symb}_{\Gamma_{0}(\mathcal{N})}(\mathbb{D}_{U})$. The following variant of a result of Barrera Salazar and Williams will be crucial to the theory developed in this section.
\begin{theorem}[Williams, Barrera Salazar--Williams] \label{thm:weightkspecialization}
There exists $\Phi_{\infty} \in \mathrm{Symb}_{\Gamma_{0}(\mathcal{N})}(\mathbb{D}_{U})$ such that
\begin{itemize}
\item[•] For every classical weight $\lambda_{k} \in U(L)$, we have $\rho_{\lambda_{k}}(\Phi_{\infty}) = C(k)\phi_{k}$ for some $p$-adic periods $C(k) \in L^{\times}$.
\item[•] $\rho_{\lambda_{k_{0}}}(\Phi_{\infty}) = \phi_{k_{0}}$
\end{itemize}
\end{theorem}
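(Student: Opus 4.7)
The plan is to construct $\Phi_\infty$ as a carefully normalized generator of a free rank-one Hecke module of families of overconvergent Bianchi modular symbols over the slope-adapted affinoid $U$. The core mechanism combines Williams's Control Theorem (Theorem~\ref{thm:control theorem}) with the smoothness of the base-change point $x_{k_0}$ on the parallel weight eigenvariety $\mathcal{E}_{\mathrm{par}}$ recorded in Remark~\ref{rem:smoothpoints}, together with the $p$-newness of $\cF$ which ensures the non-critical slope condition via \eqref{eqn:Up-eigenvalue}.

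First, since $(U,k_0)$ is slope-$k_0$ adapted, the theory of \cite{Han17} applied in the Bianchi setting as in \cite{BW20a} furnishes a slope-$\leq k_0$ projector on the module $M_U \defeq \mathrm{Symb}_{\Gamma_0(\mathcal{N})}(\mathbb{D}_U)^{\leq k_0}$, which is finite over $\mathcal{O}(U)$. Via its Hecke action, $M_U$ defines a coherent sheaf on the eigenvariety $\mathcal{E}_{\mathrm{par}}$ above $U$; the connected component $V = \mathrm{Sp}(T)$ containing $x_{k_0}$ corresponds to the localization $(M_U)_V$, equipped with a natural $T$-module structure. Because $x_{k_0}$ is a smooth point of $\mathcal{E}_{\mathrm{par}}$ by Remark~\ref{rem:smoothpoints}, and since $T$ is therefore an integrally closed one-dimensional domain étale over $\mathcal{O}(U)$ at $x_{k_0}$, the localization $(M_U)_V$ is free of rank one over $T$. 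This is the main technical input.

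Next, I would fix any generator $\widetilde{\Phi}_\infty$ of $(M_U)_V$ as a $T$-module. Its image $\rho_{\lambda_{k_0}}(\widetilde{\Phi}_\infty)$ lies in the $\cF$-isotypic component $\mathrm{Symb}_{\Gamma_0(\mathcal{N})}(V_{k_0,k_0}(L)^\vee)_{(\cF)}$, which is one-dimensional over $L$ and spanned by $\phi_{k_0}$ (Remark~\ref{rem:heckeinjection} together with multiplicity one). By Theorem~\ref{thm:control theorem}, the restriction of $\rho_{\lambda_{k_0}}$ to the $\cF$-isotypic component of overconvergent symbols is an isomorphism, so $\rho_{\lambda_{k_0}}(\widetilde{\Phi}_\infty) = c\cdot \phi_{k_0}$ for some $c \in L^\times$. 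Setting $\Phi_\infty \defeq c^{-1}\widetilde{\Phi}_\infty$ gives the desired normalization at $\lambda_{k_0}$.

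Finally, for any classical weight $\lambda_k \in U(L)$, the point $x_k \in V$ corresponds via the embedding $V \hookrightarrow \mathcal{E}_{\mathrm{BC}} \subset \mathcal{E}_{\mathrm{par}}$ to the $p$-stabilized Bianchi eigenform $\cF_k$ of \eqref{eqn:pstabilisation}. Since $\cF_k$ has non-critical slope $h = k_0$, Williams's Control Theorem again identifies the $\cF_k$-isotypic component of $\mathrm{Symb}_{\Gamma_0(\mathcal{N})}(V_{k,k}(L)^\vee)$ with a one-dimensional $L$-line generated by $\phi_k$. Therefore $\rho_{\lambda_k}(\Phi_\infty) = C(k)\phi_k$ for some $C(k) \in L$. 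The non-vanishing $C(k) \in L^\times$ follows from the freeness of $(M_U)_V$ as a $T$-module: if $C(k)$ vanished, the generator $\Phi_\infty$ would specialize to zero at the classical smooth point $x_k$, contradicting the fact that specialization at $x_k$ of a generator of a rank-one free $T$-module produces a generator of the fibre. The main obstacle, as noted, is the smoothness of base-change points in $\mathcal{E}_{\mathrm{par}}$, which is nontrivial because such points are typically singular in the full Bianchi eigenvariety $\mathcal{E}$; once this is invoked the remainder is a routine specialization argument.
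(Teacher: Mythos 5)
Your argument reconstructs the standard strategy underlying \cite[Proposition 6.7]{BW20a}, which is all the paper actually cites, and the broad outline is right; but two points are glossed over and need repair. The first is the one the paper explicitly flags in Remark~\ref{rem:DUvsDU2}: the eigenvariety construction and lifting result of \cite{BW20a} are carried out with the ``classical'' distribution module $\cD_U$ of Definition~\ref{def:weightlambdaaction}, not with $\mathbb{D}_U$, the distributions on the big cell $\widetilde{L}_*'$. The statement being proved asks for $\Phi_\infty \in \mathrm{Symb}_{\Gamma_0(\mathcal{N})}(\mathbb{D}_U)$, so after producing the $\cD_U$-valued family one must still transport it across the $\Sigma_0(\p)$-equivariant comparison morphism $(\mathbb{D}_U,\delta)\to(\cD_U,\delta)$ of Remark~\ref{rem:DUvsDU}. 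Your proof silently elides this by assuming the projector/finiteness machinery applies verbatim to $\mathrm{Symb}_{\Gamma_0(\mathcal{N})}(\mathbb{D}_U)$, which is not where \cite{BW20a} or \cite{Han17} prove it.

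The second issue is the freeness claim. Smoothness of $\mathcal{E}_{\mathrm{par}}$ at the single point $x_{k_0}$ gives you that $T$ is regular in a neighbourhood of $x_{k_0}$ and that $(M_U)_V$ is locally free of rank one near that point; it does not by itself produce a global generator over $V$. To conclude that $(M_U)_V$ is free of rank one over $T$ you need to shrink $U$ so that $V\to U$ becomes finite \'{e}tale (hence $T$ regular with controlled Picard group), or argue directly that the relevant rank-one torsion-free $T$-module is free, which is the step that makes the phrase ``up to shrinking $U$'' in the paper do real work. With these two repairs your argument matches the intended proof via \cite[Proposition 6.7]{BW20a} and \cite[Corollary 4.3]{Wil17}; the final non-vanishing of $C(k)$ from specialization of a generator is then correct as you state it.
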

\begin{proof}
This follows from a combination of \cite[Proposition 6.7]{BW20a} and the Control Theorem \cite[Corollary 4.3]{Wil17}.
\end{proof}
\begin{remark} \label{rem:DUvsDU2}
Whilst Williams works with overconvergent modular symbols in \cite{Wil17}, the authors use overconvergent cohomology in \cite{BW20a}. However, we can easily identify the two spaces (See \cite[Eqn. (2.2)]{BW20a} or \cite[Lemma 8.2]{BW19} for instance). Moreover, in \cite{BW20a}, the authors use the ``classical'' distribution module $D_{U}$ in contrast to the space of distributions on the ``big cell'' $\mathbb{D}_{U}$ described above. However, we can obtain a lifting as in Theorem~\ref{thm:weightkspecialization} by Remark~\ref{rem:DUvsDU}. See also \cite[Theorem 6.2.1 \& Theorem 6.4.1]{AS08}. 

\end{remark}
In particular for all $\lambda_{k} \in U(L)$, we can write down the weight $\lambda_{k}$ specialization as 
\begin{equation} \label{eqn:specialization}
\int_{\mu(\cO_{F})\backslash(\cO_{F_{\p}}^{\times} \oplus \cO_{F_{\p}})} \widetilde{P}(x,y)d\Phi_{\infty}\lbrace r - s \rbrace (x,y) = C(k)\phi_{k}\lbrace r - s \rbrace \left(P(x,y)\right).
\end{equation}
for all $P(x,y) \in V_{k,k}(L)$ and $r,s \in \PP^{1}(F)$.

Recall that for a lattice $\mathscr{L} \subset (F_{\p})^{2}$, we denote by $\widetilde{L} = \mu(\cO_{F})\backslash \mathscr{L}$ its image in $\mathscr{Y}$.Then,
\begin{proposition} \label{prop:latticesymbols}
There exists a family of $\mathbb{D}_{U}^{\dagger}$-valued modular symbols $\lbrace \Phi_{\widetilde{L}} \rbrace_{\widetilde{L}}$ indexed by the sets $\widetilde{L} =  \mu(\cO_{F})\backslash \mathscr{L} \subset \mathscr{Y}$ that satisfy
\begin{itemize}
\item[•] $\Phi_{\widetilde{L}_{*}} = \Phi_{\infty}$
\item[•] For all $\gamma \in \widetilde{\Gamma}$ and for all compact open sets $Y \subset \mathscr{Y}$, 
\[ \int_{\gamma Y} (\gamma\cdot F)d\Phi_{\gamma\widetilde{L}}\lbrace \gamma r - \gamma s \rbrace = \int_{Y} (F)d\Phi_{\widetilde{L}}\lbrace r - s \rbrace \]
\end{itemize}
\end{proposition}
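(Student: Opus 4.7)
The plan is to construct $\Phi_{\widetilde{L}}$ by transporting $\Phi_\infty$ via the (essentially transitive) action of $\widetilde{\Omega}$ on the collection of sets $\widetilde{L}$. For each $\widetilde{L}$, I would choose $\gamma_{\widetilde{L}} \in R^\times$ whose image in $\widetilde{\Omega}$ sends $\widetilde{L}_*$ to $\widetilde{L}$; such an element exists since $\widetilde{\Omega}$ acts transitively on $\mathcal{V}(\mathcal{T}_\p)$, the residual scalar ambiguity being absorbed by the diagonal $\cO_{F_\p}^{\times}$--action modulo $\mu(\cO_F)$. I then define
\[ \int_Y F\, d\Phi_{\widetilde{L}}\{r-s\} \defeq \int_{\gamma_{\widetilde{L}}^{-1} Y}(\gamma_{\widetilde{L}}^{-1}\cdot F)\, d\Phi_\infty\{\gamma_{\widetilde{L}}^{-1} r - \gamma_{\widetilde{L}}^{-1} s\} \]
for compact open $Y \subset \mathscr{Y}$ and locally analytic $F$. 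Since $\gamma_{\widetilde{L}}^{-1}$ identifies $\widetilde{L}'$ with $\widetilde{L}_*'$, the distribution $\Phi_{\widetilde{L}}$ is naturally supported on $\widetilde{L}'$ and hence lies in $\mathbb{D}_U^\dagger = \mathcal{D}(\mathscr{Y})$, as required.

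The transformation property (ii) is then immediate from the construction. Given $\gamma \in \widetilde{\Gamma}$, one can take $\gamma_{\gamma\widetilde{L}} \defeq \gamma\gamma_{\widetilde{L}}$ as a valid representative for $\gamma\widetilde{L}$. Unwinding the defining formula on the left-hand side of (ii) causes the factor $\gamma$ to cancel, and yields precisely the definition of $\Phi_{\widetilde{L}}$ on the right-hand side. Taking $\gamma_{\widetilde{L}_*} = \mathrm{id}$ gives $\Phi_{\widetilde{L}_*} = \Phi_\infty$, verifying the first bullet point.

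The substantive verification, and the main obstacle, is well-definedness of $\Phi_{\widetilde{L}}$: two valid choices of $\gamma_{\widetilde{L}}$ differ (modulo the center) by an element $\delta$ in the stabilizer of $\widetilde{L}_*$ inside the image of $R^\times$, and one must show that $\Phi_\infty$ is invariant under this stabilizer. The stabilizer is identified with $\Gamma_0(\mathcal{N})$ via strong approximation: at $\p$, preserving $\mathscr{L}_* = \cO_{F_\p}\oplus\cO_{F_\p}$ forces $\delta_\p \in \mathrm{GL}_2(\cO_{F_\p})$, while at the finite places away from $\p$ the congruence condition defining $R = R_0(\mathcal{M})$ cuts out precisely the upper-triangular-mod-$\mathcal{M}$ condition. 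Since $\Phi_\infty \in \mathrm{Symb}_{\Gamma_0(\mathcal{N})}(\mathbb{D}_U)$ is $\Gamma_0(\mathcal{N})$--invariant by definition, namely
\[ \int_Y (\delta^{-1}\cdot F)\, d\Phi_\infty\{\delta^{-1} r - \delta^{-1} s\} = \int_Y F\, d\Phi_\infty\{r-s\} \]
for all $\delta \in \Gamma_0(\mathcal{N})$, the two defining expressions for $\Phi_{\widetilde{L}}$ coincide. This argument is a direct adaptation of the $\mathrm{GL}_2/\mathbb{Q}$ constructions in \cite{BDI10} and \cite{Sev12}, the only additional bookkeeping being the passage from actual lattices to their $\mu(\cO_F)$--quotients $\widetilde{L}$.
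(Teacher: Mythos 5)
Your argument follows the paper's proof in structure: transport $\Phi_\infty$ by transitivity, with well-definedness reduced to invariance of $\Phi_\infty$ under the stabilizer of $\widetilde{L}_*$, as in \cite[Prop.~1.8]{BD07}. But your stabilizer computation contradicts your conclusion. You find that preserving $\mathscr{L}_* = \cO_{F_\p}\oplus\cO_{F_\p}$ at $\p$ forces only $\delta_\p \in \mathrm{GL}_2(\cO_{F_\p})$, with the upper-triangular-mod-$\cM$ condition coming from the places away from $\p$; that describes $\Gamma_0(\cM)$, which strictly contains $\Gamma_0(\cN)=\Gamma_0(\p\cM)$, yet you then assert the stabilizer ``is identified with $\Gamma_0(\cN)$'' and invoke only $\Gamma_0(\cN)$-invariance of $\Phi_\infty$. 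That invariance does not extend automatically: elements of $\Gamma_0(\cM)$ with $c\not\equiv 0\ (\mathrm{mod}\ \p)$ lie outside $\Sigma_0(\p)$ and hence do not even act on $\mathbb{D}_U$, so there is no a priori reason the transported $\mathbb{D}_U^\dagger$-valued symbol is fixed by them.

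As written, the well-definedness argument therefore has a gap. The paper's one-line proof asserts the same $\Gamma_0(\cN)$ stabilizer without justification, so you have not introduced a new error, but you have not filled the gap either. In the prototype construction of \cite{BD07}, the mod-$\p$ congruence enters because the symbol is attached to an edge of $\mathcal{T}_\p$ (whose stabilizer at $\p$ genuinely acquires the upper-triangular-mod-$\p$ condition) rather than to a single lattice; to make the transport well defined when indexing by lattices as the proposition states, you must either choose the transporting elements $\gamma_{\widetilde{L}}$ coherently modulo $\Gamma_0(\cN)$ rather than modulo the full lattice-stabilizer, or establish directly that $\Phi_\infty$ extends to a $\mathbb{D}_U^\dagger$-valued symbol invariant under $\Gamma_0(\cM)$.
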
 
\begin{proof}
The proof follows \cite[Proposition 1.8]{BD07}. Since $\widetilde{\Gamma}$ acts transitively on the set of lattices $\mathscr{L} \subset (F_{\p})^{2}$, the induced action on the sets $\widetilde{L}$ is also transitive. The stabilizer of $\widetilde{L}_{*}$ under the action is $\Gamma_{0}(\mathcal{N})$. Since $\Phi_{\infty}$ is invariant under the action of $\Gamma_{0}(\mathcal{N})$, the family of distributions is well-defined and determines $\Phi_{\widetilde{L}}$ uniquely once we set $\Phi_{\widetilde{L}_{*}} \defeq \Phi_{\infty}$.
\end{proof}
\begin{lemma} \label{lem:latticesymbolssupport}
Let $\mathscr{L}$ be a lattice and $\widetilde{L} = \mu(\cO_{F})\backslash \mathscr{L}$ Then the distributions $\Phi_{\widetilde{L}}\lbrace r - s \rbrace$ are supported on $\widetilde{L}'$ for all $r, s \in \mathbb{P}^{1}(F)$.
\end{lemma}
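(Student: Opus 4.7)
The plan is to reduce the lemma to the case of the standard lattice $\mathscr{L}_*$ and then bootstrap to arbitrary lattices using the $\widetilde{\Gamma}$-equivariance built into the family $\{\Phi_{\widetilde{L}}\}$ from Proposition~\ref{prop:latticesymbols}. The case of the standard lattice will be immediate from how $\Phi_\infty$ was constructed; the general case will be a formal consequence of the transitivity of the $\widetilde{\Gamma}$-action on lattices, together with the fact that this action preserves primitivity.

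First I would treat $\widetilde{L} = \widetilde{L}_*$. By definition $\Phi_{\widetilde{L}_*} = \Phi_\infty$, and by Theorem~\ref{thm:weightkspecialization} we have $\Phi_\infty \in \mathrm{Symb}_{\Gamma_0(\mathcal{N})}(\mathbb{D}_U)$. The distribution module $\mathbb{D}_U = \mathcal{O}(U)\widehat{\otimes}_{\mathcal{O}(\mathcal{W}_F)}\mathbb{D}$ is, by the very definition of $\mathbb{D} = \mathcal{D}(\widetilde{L}_*')$, contained in $\mathbb{D}_U^\dagger$ as the subspace of those distributions on $\mathscr{Y}$ whose support lies in $\widetilde{L}_*'$ (this is the observation highlighted just before Lemma~\ref{lem:bigcellprimitivevectors}). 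Consequently, for every $r,s\in\mathbb{P}^1(F)$, the value $\Phi_{\widetilde{L}_*}\{r-s\}$ is a locally analytic distribution on $\mathscr{Y}$ supported on $\widetilde{L}_*'$. This settles the standard case.

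Next I would handle a general lattice $\mathscr{L}$. By the transitivity of the $\widetilde{\Omega}$-action on $\mathcal{V}(\mathcal{T}_\p)$ (Serre), together with the strong approximation which yields transitivity of $\widetilde{\Gamma}$ on lattices via projection to $\mathrm{PGL}_2(F_\p)$, choose $\gamma\in\widetilde{\Gamma}$ with $\gamma\mathscr{L}_* = \mathscr{L}$. Since the linear action of $\gamma$ sends $p\mathscr{L}_*$ bijectively onto $p\mathscr{L}$, it carries primitive vectors to primitive vectors; hence $\gamma\widetilde{L}_*' = \widetilde{L}'$. Given arbitrary cusps $r',s'\in\mathbb{P}^1(F)$, set $r = \gamma^{-1} r'$ and $s = \gamma^{-1} s'$, which again lie in $\mathbb{P}^1(F)$ since $\widetilde{\Gamma}\subseteq\mathrm{PGL}_2(F)$. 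For any locally analytic function $F$ on $\mathscr{Y}$ whose support in some compact open $Y' \ni r',s'$-neighbourhood is disjoint from $\widetilde{L}'$, the pulled-back function $\gamma^{-1}\cdot F$ has support in $\gamma^{-1}Y'$ disjoint from $\widetilde{L}_*'$. Applying the compatibility relation
\[
\int_{\gamma Y} (\gamma\cdot G)\,d\Phi_{\widetilde{L}}\{r'-s'\} \;=\; \int_{Y}G\,d\Phi_{\widetilde{L}_*}\{r-s\}
\]
with $G = \gamma^{-1}\cdot F$ and $Y = \gamma^{-1}Y'$, and invoking the support property of $\Phi_{\widetilde{L}_*}\{r-s\}$ established in the previous step, the right--hand side vanishes. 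Hence $\Phi_{\widetilde{L}}\{r'-s'\}$ is supported on $\widetilde{L}'$, as claimed.

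There is no real obstacle here: the entire content is bundled into the construction of $\Phi_\infty$ as an element of the ``big--cell'' overconvergent symbols module $\mathbb{D}_U$ (rather than of $\mathbb{D}_U^\dagger$), which is precisely what Remark~\ref{rem:DUvsDU} and Theorem~\ref{thm:weightkspecialization} arrange. The only nuance worth double--checking is that the $\gamma$ chosen sits in $\widetilde{\Gamma}$ (so that it acts on both $\mathbb{P}^1(F)$ and on lattices compatibly with the symbol relation), but this is exactly the content of the transitivity statement cited from \cite[Ch.~II.1.4]{Ser80} that was already used in the preceding section.
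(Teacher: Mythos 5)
Your proposal is correct and takes essentially the same route as the paper: the base case is exactly the observation that $\Phi_{\widetilde{L}_*} = \Phi_\infty$ is $\mathbb{D}_U$-valued, hence supported on $\widetilde{L}_*'$, and the general case follows from the $\widetilde{\Gamma}$-equivariance built into Proposition~\ref{prop:latticesymbols} together with transitivity of the $\widetilde{\Gamma}$-action on lattices. The paper states this in two sentences; you have unwound the change-of-variables argument explicitly, which is a faithful expansion rather than a different proof.
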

\begin{proof}
The locally analytic distributions $\Phi_{\widetilde{L}_{*}}\lbrace r - s \rbrace = \Phi_{\infty}\lbrace r - s \rbrace$ are supported on $\widetilde{L}_{*}'$. The lemma now follows from the $\widetilde{\Gamma}$-equivariance property defining the modular symbols $\Phi_{\widetilde{L}}$ at the other sets $\widetilde{L} \subset \mathscr{Y}$. 
\end{proof}
\begin{lemma} \label{lem:indexpmodularsymbols}
Let $\mathscr{L}_{1}, \mathscr{L}_{2}$ be two lattices in $(F_{\p})^{2}$ such that $\mathscr{L}_{2} \subset \mathscr{L}_{1}$ is a sub-lattice of index $\mathrm{N}_{F/\QQ}(p)$. Let $e \in \mathcal{E}(\mathcal{T})$ denote the (ordered) edge joining the vertices corresponding to the homothety classes $[\mathscr{L}_{1}]$ and $[\mathscr{L}_{2}]$. Then for all $\lambda_{k} \in U(L)$ and for all $F \in \mathcal{A}^{\lambda_{k}}(Y_{e})$, we have 
\[ \int_{Y_{e}}F(x,y)d\Phi_{\widetilde{L}_{2}}\lbrace r - s \rbrace = \frac{a_{\p}(\mathcal{F}_{k})}{\mathrm{N}_{F/\QQ}(p)^{k}}\int_{Y_{e}}F(x,y)d\Phi_{\widetilde{L}_{1}}\lbrace r - s \rbrace \]
where $Y_{e} = \widetilde{L}'_{1} \cap \widetilde{L}'_{2}$.
\end{lemma}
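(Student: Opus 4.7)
The plan is to exploit the $U_\p$-eigensymbol property of $\Phi_\infty$ together with a support argument on the Bruhat--Tits tree, after first reducing to the standard edge $e_\infty$. By the transitivity of $\widetilde\Gamma$ on lattices (used in the proof of Proposition~\ref{prop:latticesymbols}) and the $\widetilde\Gamma$-equivariance of the family $\{\Phi_{\widetilde L}\}$, we may assume $\mathscr{L}_1 = \mathscr{L}_*$. Since $\mathscr{L}_2 \subset \mathscr{L}_*$ is a sub-lattice of index $\mathrm{N}_{F/\QQ}(p)$, it corresponds to one of the $\mathrm{N}_{F/\QQ}(p)+1$ neighbors of $v_*$ in $\mathcal{T}_\p$; combining the $\widetilde\Omega$-transitivity on $\mathcal{E}^-(\mathcal{T}_\p)$ with a realisation of the transport element inside $\widetilde\Gamma$ (which uses the class number one hypothesis on $F$), we may further reduce to $\mathscr{L}_2 = \mathscr{L}_\infty$, $e = e_\infty$, and $Y_e = Y_\infty = \mu(\cO_F)\backslash(p\cO_{F_\p}\oplus \cO_{F_\p}^\times)$.

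\textbf{The $U_\p$-eigenvalue argument.} By Theorem~\ref{thm:weightkspecialization}, the weight-$\lambda_k$ specialisation of $\Phi_\infty$ is a $U_\p$-eigensymbol with eigenvalue $a_\p(\mathcal{F}_k)$. Writing $U_\p$ via its explicit double-coset decomposition,
\[ (U_\p \Phi)\{r-s\}(F) = \sum_{a \in \cO_{F_\p}/p\cO_{F_\p}} \Phi\{\alpha_a^{-1}(r-s)\}(F\vert_{\alpha_a}), \]
with the $\alpha_a$ indexing the $\mathrm{N}_{F/\QQ}(p)$ admissible sub-lattices of $\mathscr{L}_*$ (one of which, say $\alpha_0 = \begin{pmatrix}p & 0 \\ 0 & 1\end{pmatrix}$, corresponds to $\mathscr{L}_\infty$), the eigensymbol equation $U_\p \Phi_\infty = a_\p \Phi_\infty$ evaluated against a test function $F \in \mathcal{A}^{\lambda_k}(Y_\infty)$ (extended by zero outside $Y_\infty$) reads
\[ a_\p(\mathcal{F}_k) \int_{\widetilde{L}_*'} F\,d\Phi_\infty\{r-s\} = \sum_a \int_{\widetilde{L}_*'} (F\vert_{\alpha_a})\,d\Phi_\infty\{\alpha_a^{-1}(r-s)\}. \]
A direct inspection of the translates $\alpha_a^{-1}(Y_\infty) \cap \widetilde{L}_*'$, combined with Lemma~\ref{lem:latticesymbolssupport} (which confines the support of $\Phi_\infty\{r-s\}$ to $\widetilde{L}_*'$), shows that only the $a = 0$ term on the right-hand side has non-trivial overlap; so the sum collapses to a single integral.

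\textbf{Identification and main obstacle.} Finally, we would identify this single-term expression with $\mathrm{N}_{F/\QQ}(p)^k \int_{Y_\infty} F\,d\Phi_{\widetilde{L}_\infty}\{r-s\}$ using the $\widetilde\Gamma$-equivariance of Proposition~\ref{prop:latticesymbols} applied with $\gamma = \alpha_0 \in \widetilde\Gamma$ (noting $\alpha_0 \widetilde{L}_* = \widetilde{L}_\infty$), together with the weight-$\lambda_k$ action of $\alpha_0$: since $F$ is homogeneous of weight $\lambda_k$ and $\alpha_0$ scales the first variable by $p$, this action contributes the scalar $\lambda_k(p) = \mathrm{N}_{F/\QQ}(p)^k$; rearranging yields the claimed formula. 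The principal obstacle is the middle step --- verifying that only the $a = 0$ coset of the $U_\p$-sum contributes on $Y_\infty$, which requires a careful analysis of the intersections $\alpha_a^{-1}(Y_\infty) \cap \widetilde{L}_*'$ in the Bianchi (inert) setting --- together with the book-keeping of $\Sigma_0(\p)$-action conventions between $\mathbb{D}_U$ and $\cD_{\lambda_k}$ (cf.\ Remark~\ref{rem:DUvsDU}) needed to pin down the precise weight factor $\mathrm{N}_{F/\QQ}(p)^k$.
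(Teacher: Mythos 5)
Your plan has the right ingredients --- the $U_\p$-eigensymbol property, $\widetilde\Gamma$-equivariance, and the weight-$\lambda_k$ homogeneity to produce the factor $\mathrm{N}_{F/\QQ}(p)^k$ --- but the middle step, the claim that ``only the $a=0$ coset contributes,'' is a genuine gap, and you flag it yourself as the principal obstacle. It does not hold with the standard $U_\p$-coset representatives $\gamma_a = \smallmatrd{1}{a}{0}{p}$ appropriate to the $\Sigma_0(\p)$-action in this paper. A direct check shows that each translate $\gamma_a Y_\infty$ has second coordinate in $p\cO_{F_\p}^\times$, and hence $\gamma_a Y_\infty \cap Y_\infty = \emptyset$ for \emph{every} residue $a$, including $a=0$; so the proposed evaluation of the eigensymbol identity against $F\cdot\chi_{Y_\infty}$ collapses to $0 = a_\p \cdot \int_{Y_\infty}F\,d\Phi_\infty$, which is false. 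Replacing $\gamma_a$ by $\smallmatrd{p}{a}{0}{1}$ would make the $a=0$ translate land inside $Y_\infty$, but these matrices lie outside $\Sigma_0(\p)$ (the upper-left entry has positive $\p$-adic valuation), so they are not legitimate coset representatives for $U_\p$ on this distribution module.

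The paper circumvents this entirely by decomposing the domain of integration rather than the Hecke operator. With $\mathscr{L}_2 = \mathscr{L}_*$ and $\mathscr{L}_1 = \tfrac{1}{p}\cO_{F_\p}\oplus\cO_{F_\p}$, it introduces the $\mathrm{N}_{F/\QQ}(p)$ sublattices $\mathscr{L}_{(a)} = \{(x,y)\in\cO_{F_\p}^2 : x+ay\in p\cO_{F_\p}\}$ and uses the disjoint decomposition $Y_e = \bigsqcup_a (\widetilde L_2'\cap\widetilde L_{(a)}')$. Each piece $Y_{(a)}$ is then transported by the $\widetilde\Gamma$-equivariance of Proposition~\ref{prop:latticesymbols} applied to $\gamma_a\in\widetilde\Gamma$, a rescaling by $\mathbb{P}=\smallmatrd{p}{0}{0}{p}$ extracts the factor $\mathrm{N}_{F/\QQ}(p)^{-k}$, and the resulting sum over $a$ is recognised \emph{in full} (all $\mathrm{N}_{F/\QQ}(p)$ terms contribute, none are discarded) as the $U_\p$-action on $\Phi_{\widetilde L_1}$, whence the eigenvalue $a_\p(\cF_k)$ appears. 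So the paper's proof never needs, and in fact contradicts, the ``single surviving coset'' mechanism your plan relies on; you should adopt the domain-decomposition argument instead.
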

\begin{proof}
The proof is similar to that of \cite[Lemma 1.10]{BD07}. Since the group $\widetilde{\Gamma}$ acts transitively on pairs $(\mathscr{L}_{1},\mathscr{L}_{2})$ of lattices satisfying $[\mathscr{L}_{1}:\mathscr{L}_{2}] = \mathrm{N}_{F/\QQ}(p)$, it suffices to establish the result when 
\[ \mathscr{L}_{1} = \frac{1}{p}\cO_{F_{\p}} \oplus \cO_{F_{\p}},\mbox{                  }\mathscr{L}_{2} = \cO_{F_{\p}} \oplus \cO_{F_{\p}}\]
so that
\[ \mathscr{L}_{1}' \cap \mathscr{L}_{2}' = \cO_{F_{\p}} \oplus \cO_{F_{\p}}^{\times}. \]
For each class $a \: \mbox{mod}\:p$, let $\mathscr{L}_{(a)}$ denote the $\cO_{F_{\p}}$-lattice
\[ \mathscr{L}_{(a)} \defeq \lbrace (x,y) \in \cO_{F_{\p}}^{2}\mbox{ such that }x+ay \in p\cO_{F_{\p}} \rbrace .\]
Then,
\[ \mathscr{L}_{2}' \cap \mathscr{L}_{(a)}' = \lbrace (x,y) \in \cO_{F_{\p}} \oplus \cO_{F_{\p}}^{\times} \mbox{ such that } xy^{-1} \equiv -a\:\mbox{mod}\: p \rbrace.\]
Hence, we have a disjoint union
\[ \mathscr{L}_{1}' \cap \mathscr{L}_{2}' = \bigcup\limits_{a\:\mathrm{mod}\:p}\mathscr{L}_{2}' \cap \mathscr{L}_{(a)}'\]
which implies
\begin{equation} \label{eqn:indexpeqn1}
\int_{Y_{e}}F(x,y)d\Phi_{\widetilde{L}_{2}}\lbrace r - s \rbrace = \sum\limits_{a\:\mathrm{mod}\:p}\int_{Y_{(a)}}F(x,y)d\Phi_{\widetilde{L}_{2}}\lbrace r - s \rbrace
\end{equation}
where $Y_{(a)} \defeq \widetilde{L}_{2}' \cap \widetilde{L}_{(a)}'$. Let $\gamma_{a} \defeq \begin{pmatrix} 1 & a \\ 0 & p \end{pmatrix}$ be the matrices, as $a$ ranges over classes $\mathrm{mod}\:p$, that are used to define the Hecke operator $U_{p}$ in \cite{Wil17}. Then note that
\[ \gamma_{a}\mathscr{L}_{2} = p\mathscr{L}_{1},\mbox{    }\gamma_{a}\mathscr{L}_{a} = p\mathscr{L}_{2},\mbox{ so that }\gamma_{a}Y_{(a)} = pY_{e} \]
using which we re-write (\ref{eqn:indexpeqn1}), upon applying Proposition~\ref{prop:latticesymbols}, as
\begin{align*} \label{eqn:indexpeqn2}
\int_{Y_{e}}F(x,y)d\Phi_{\widetilde{L}_{2}}\lbrace r - s \rbrace & = \sum\limits_{a\:\mathrm{mod}\:p} \int_{\gamma_{a}Y_{(a)}}\gamma_{a}\cdot F(x,y)d\Phi_{\gamma_{a}\widetilde{L}_{2}}\lbrace \gamma_{a}r - \gamma_{a}s \rbrace \\
 & =  \sum\limits_{a\:\mathrm{mod}\:p} \int_{pY_{e}}\gamma_{a}\cdot F(x,y)d\Phi_{p\widetilde{L}_{1}}\lbrace \gamma_{a}r - \gamma_{a}s \rbrace\\
 & = \sum\limits_{a\:\mathrm{mod}\:p} \int_{Y_{e}}\mathbb{P}^{-1}\gamma_{a}\cdot F(x,y)d\Phi_{\widetilde{L}_{1}}\lbrace \mathbb{P}^{-1}\gamma_{a}r - \mathbb{P}^{-1}\gamma_{a}s \rbrace\\
\end{align*}
where $\mathbb{P} \defeq \begin{pmatrix} p & 0 \\ 0 & p \end{pmatrix}$. Since $F(x,y) \in \mathcal{A}^{\lambda_{k}}(Y_{e})$, we have $\mathbb{P}^{-1}F(x,y) = F(x,y)/\mathrm{N}_{F/\QQ}(p)^{k}$,
\begin{align*}
\int_{Y_{e}}F(x,y)d\Phi_{\widetilde{L}_{2}}\lbrace r - s \rbrace & = \frac{1}{\mathrm{N}_{F/\QQ}(p)^{k}}\sum\limits_{a\:\mathrm{mod}\:p} \int_{Y_{e}}\gamma_{a}\cdot F(x,y)d\Phi_{\widetilde{L}_{1}}\lbrace \gamma_{a}r - \gamma_{a}s \rbrace\\
 & = \frac{1}{\mathrm{N}_{F/\QQ}(p)^{k}}\int_{Y_{e}}F(x,y)dU_{p}.\Phi_{\widetilde{L}_{1}}\lbrace r - s \rbrace \\
 & = \frac{a_{\p}(\cF_{k})}{\mathrm{N}_{F/\QQ}(p)^{k}}\int_{Y_{e}}F(x,y)d\Phi_{\widetilde{L}_{1}}\lbrace r - s \rbrace
\end{align*}
where the penultimate equality follows from the definition of the $U_{p}$ operator acting on the $\mathbb{D}_{U}^{\dagger}$-valued modular symbols $\lbrace \Phi_{\widetilde{L}} \rbrace$.
\end{proof} 
For each $k \neq k_{0}$ such that $\lambda_{k} \in U(L)$, we will denote by 
\begin{equation} \label{eqn:period} \phi_{k}^{\#} \defeq \frac{\widetilde{\phi}_{k}^{\#}}{\Omega_{k}^{\#}} \in \mathrm{Symb}_{\Gamma_{0}(\mathcal{M})}(V_{k,k}(E_{k})^{\vee})
\end{equation}
for the \emph{Bianchi modular symbol} attached to $\mathcal{F}_{k}^{\#}$ in \cite{Wil17}, where $\Omega_{k}^{\#} \in \mathbb{C}^{\times}$ is some complex period and $E_{k}/\QQ$ is some number field. Similarly, let $\phi_{k} \in \mathrm{Symb}_{\Gamma_{0}(\mathcal{N})}(V_{k,k}(E_{k})^{\vee})$ denote the Bianchi modular symbol attached to $\mathcal{F}_{k}$. Note that, similar to (\ref{eqn:pstabilisation}), we have a relation between the modular symbols $\phi_{k}$ and $\phi_{k}^{\#}$,\footnote{We thank Luis Santiago Palacios for providing us this calculation.}
\begin{equation} \label{eqn:pstabilisationmodularsymbols1}
\phi_{k} = \phi_{k}^{\#} - \frac{1}{a_{\p}(\cF_{k})}\phi_{k}^{\#}\mid_{\begin{pmatrix}p & 0\\ 0 & 1 \end{pmatrix}}.  
\end{equation}
In other words, for all $r, s \in \PP^{1}(F)$ and $P(x,y) \in V_{k,k}(L)$
\begin{equation} \label{eqn:pstabilisationmodularsymbols2}
\phi_{k}\lbrace r - s \rbrace(P(x,y)) = \phi_{k}^{\#}\lbrace r - s \rbrace(P(x,y)) - \frac{1}{a_{\p}(\cF_{k})}\phi_{k}^{\#}\lbrace pr - ps \rbrace(P(x,py)). 
\end{equation}
\begin{lemma} \label{lem:pstabilisationmodularsymbols}
For all $\lambda_{k} \in U(L)$ and all $P(x,y) \in V_{k,k}(L)$, we have
\begin{equation} \label{eqn:pstabilisationmodularsymbols}
\int\limits_{\widetilde{L}_{*}'}\widetilde{P}(x,y) d\Phi_{\infty}\lbrace r - s \rbrace(x,y) = C(k)\left(1 - \frac{\mathrm{N}_{F/\QQ}(p)^{k}}{a_{\p}(\mathcal{F}_{k})^{2}}\right)(\phi_{k}^{\#})\lbrace r - s \rbrace(P)
\end{equation} 
\end{lemma}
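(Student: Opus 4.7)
The idea is to decompose the integration domain $\widetilde L_*'$ so that the specialisation formula (equation~\ref{eqn:specialization}) can be applied on one piece, and to transform the complementary piece via the coherent family $\{\Phi_{\widetilde L}\}$ of Proposition~\ref{prop:latticesymbols} until it too becomes computable in terms of $\phi_k^{\#}$ via the $p$-stabilisation identity (\ref{eqn:pstabilisationmodularsymbols2}). Concretely, write
\[
\widetilde L_*' \;=\; A \;\sqcup\; Y_{\infty},\qquad A \defeq \mu(\cO_F)\backslash(\cO_{F_\p}^{\times}\oplus\cO_{F_\p}),\qquad Y_{\infty} \defeq \mu(\cO_F)\backslash(p\cO_{F_\p}\oplus\cO_{F_\p}^{\times}).
\]
By Theorem~\ref{thm:weightkspecialization} and equation~(\ref{eqn:specialization}), the integral of $\widetilde P$ against $\Phi_{\infty}$ over $A$ equals $C(k)\phi_{k}\{r-s\}(P)$, so it remains to analyse the integral over $Y_{\infty}$.

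Next I would apply Lemma~\ref{lem:indexpmodularsymbols} with $\mathscr{L}_{1}=\mathscr{L}_{*}$, $\mathscr{L}_{2}=\mathscr{L}_{\infty}$ and $e=e_{\infty}$ to turn $\int_{Y_{\infty}}\widetilde P\,d\Phi_{\infty}$ into $\tfrac{\mathrm{N}_{F/\QQ}(p)^{k}}{a_{\p}(\cF_{k})}\int_{Y_{\infty}}\widetilde P\,d\Phi_{\widetilde L_{\infty}}$, and then invoke the $\widetilde\Gamma$-equivariance of Proposition~\ref{prop:latticesymbols} with $\gamma=\begin{pmatrix}p&0\\0&1\end{pmatrix}\in\widetilde\Gamma$, which sends $\widetilde L_{*}$ to $\widetilde L_{\infty}$ and $\mu(\cO_{F})\backslash(\cO_{F_\p}\oplus\cO_{F_\p}^{\times})$ to $Y_{\infty}$, in order to rewrite the integral as one against $\Phi_{\infty}$ itself:
\[
\int_{Y_{\infty}}\widetilde P\,d\Phi_{\infty}\{r-s\} \;=\; \frac{\mathrm{N}_{F/\QQ}(p)^{k}}{a_{\p}(\cF_{k})}\int_{\mu(\cO_{F})\backslash(\cO_{F_\p}\oplus\cO_{F_\p}^{\times})}\widetilde P(px,y)\,d\Phi_{\infty}\{r/p-s/p\}.
\]
The appearance of the ``mirror'' domain $\mu(\cO_{F})\backslash(\cO_{F_\p}\oplus\cO_{F_\p}^{\times})$ rather than $A$ is the heart of the matter and the main obstacle.

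To bridge this, I would use the complementary decomposition $\mu(\cO_{F})\backslash(\cO_{F_\p}\oplus\cO_{F_\p}^{\times}) = \widetilde L_{*}'\setminus Y_{\infty'}$, where $Y_{\infty'}\defeq\mu(\cO_{F})\backslash(\cO_{F_\p}^{\times}\oplus p\cO_{F_\p})$, and treat the ``mirror edge'' piece $Y_{\infty'}$ by the symmetric application of Lemma~\ref{lem:indexpmodularsymbols} and Proposition~\ref{prop:latticesymbols}, now with $\gamma'=\begin{pmatrix}1&0\\0&p\end{pmatrix}$ (which satisfies $\gamma'\widetilde L_{*}=\widetilde L_{\infty'}$ and $\gamma'{}^{-1}Y_{\infty'}=A$). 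After this transformation the specialisation formula applies on the nose, and the mirror integral evaluates to $\tfrac{\mathrm{N}_{F/\QQ}(p)^{k}}{a_{\p}(\cF_{k})}C(k)\phi_{k}\{pr-ps\}(P(x,py))$, where I use that $P(px,py,p\overline x,p\overline y)=\mathrm{N}_{F/\QQ}(p)^{k}P(x,y,\overline x,\overline y)$ by the $V_{k,k}$-homogeneity to absorb the change of variables into the polynomial.

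Collecting everything and substituting the $p$-stabilisation identity (\ref{eqn:pstabilisationmodularsymbols2}) to rewrite $\phi_{k}\{r-s\}(P)$ and $\phi_{k}\{pr-ps\}(P(x,py))$ in terms of $\phi_{k}^{\#}$, all the cross-terms involving shifted cusps cancel and leave exactly
\[
C(k)\phi_{k}^{\#}\{r-s\}(P) \;-\; \frac{C(k)\,\mathrm{N}_{F/\QQ}(p)^{k}}{a_{\p}(\cF_{k})^{2}}\,\phi_{k}^{\#}\{r-s\}(P) \;=\; C(k)\!\left(1-\frac{\mathrm{N}_{F/\QQ}(p)^{k}}{a_{\p}(\cF_{k})^{2}}\right)\!\phi_{k}^{\#}\{r-s\}(P),
\]
which is the claimed equality. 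The delicate step is the bookkeeping in the cancellation: one needs to verify that the factor $1/a_{\p}(\cF_{k})$ produced by the mirror term exactly matches the factor that appears when converting $\phi_{k}$ to $\phi_{k}^{\#}$ via (\ref{eqn:pstabilisationmodularsymbols2}), so that the two $1/a_{\p}(\cF_{k})$'s combine into $1/a_{\p}(\cF_{k})^{2}$ as required.
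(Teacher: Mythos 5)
Your opening decomposition $\widetilde L_*' = A \sqcup Y_\infty$ and the evaluation of the $A$-piece via the specialisation formula match the paper exactly. The problem is in the treatment of the $Y_\infty$-piece: your choice of matrix is the wrong one, and the workaround you propose for the resulting wrong domain does not close.

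The paper does \emph{not} use $\gamma=\begin{pmatrix}p&0\\0&1\end{pmatrix}$. It uses a matrix $\theta\in\cR$ of determinant $p$ satisfying \emph{both} $\theta\mathscr{L}_*=\mathscr{L}_\infty$ \emph{and} $\theta\mathscr{L}_\infty=p\mathscr{L}_*$; in particular $\theta^2=p\gamma$ for some $\gamma\in\Gamma_0(\cM)$ (an Atkin--Lehner-type involution). This matters twice. First, $\theta$ sends the domain $p\cO_{F_\p}\oplus\cO_{F_\p}^\times$ to $p(\cO_{F_\p}^\times\oplus\cO_{F_\p})$, so after a homothety by $\PP^{-1}$ (which contributes a factor $\mathrm{N}_{F/\QQ}(p)^{-k}$ you have omitted) one lands on precisely $A=\cO_{F_\p}^\times\oplus\cO_{F_\p}$ where (\ref{eqn:specialization}) applies. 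Your $\gamma$ only satisfies the first property ($\gamma\mathscr{L}_\infty = p^2\cO_{F_\p}\oplus\cO_{F_\p}\neq p\mathscr{L}_*$), so after the equivariance you land on $\cO_{F_\p}\oplus\cO_{F_\p}^\times$, which is \emph{not} $A$. Your patch --- writing $\cO_{F_\p}\oplus\cO_{F_\p}^\times = \widetilde L_*'\setminus Y_{\infty'}$ and handling $Y_{\infty'}$ by a symmetric argument --- reintroduces the quantity $\int_{\widetilde L_*'}\widetilde{P'}\,d\Phi_\infty\{r'-s'\}$ (the very quantity the lemma evaluates, now with modified polynomial and cusps), and you never explain how to resolve this; iterating gives an infinite regress, not a closed formula.

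Second, even granting your intermediate expression for $I_2$, the final step fails: you substitute (\ref{eqn:pstabilisationmodularsymbols2}) and need the term $\phi_k^{\#}\{p^2r-p^2s\}(P(x,p^2y))$ to reduce to $\mathrm{N}_{F/\QQ}(p)^{k}\phi_k^{\#}\{r-s\}(P)$, but $\begin{pmatrix}p^2&0\\0&1\end{pmatrix}$ is \emph{not} $p$ times an element of $\Gamma_0(\cM)$, so the $\Gamma_0(\cM)$-invariance of $\phi_k^{\#}$ does not help. It is exactly the identity $\theta^2=p\gamma$ with $\gamma\in\Gamma_0(\cM)$ --- which forced the paper's particular choice of $\theta$ and the $\theta$-form (\ref{eqn:pstabilisationmodularsymbols3}) of the $p$-stabilisation --- that makes this reduction and the subsequent telescoping possible. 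Finally, even your leading coefficient is off: the correct factor in front of $I_2$ is $1/a_\p(\cF_k)$ (the homothety contributes $\mathrm{N}(p)^{-k}$ which cancels the $\mathrm{N}(p)^{k}$ from Lemma~\ref{lem:indexpmodularsymbols}), not $\mathrm{N}_{F/\QQ}(p)^{k}/a_\p(\cF_k)$; with your factor the cross-terms in $I_1+I_2$ do not cancel.
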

\begin{proof}
The proof is inspired from \cite[Proposition 2.4]{BD09}. Note that we can write the set of primitive vectors of the lattice $\mathscr{L}_{*}'$ as a disjoint union
\[ \mathscr{L}_{*}' = (\cO_{F_{\p}}^{\times} \oplus \cO_{F_{\p}}) \bigsqcup (p\cO_{F_{\p}} \oplus \cO_{F_{\p}}^{\times}). \]
Let $\theta \in \cR$ be any matrix of determinant $p$ such that
\[ \theta(\mathscr{L}_{*}) = \mathscr{L}_{\infty},\mbox{     \&     } \theta(\mathscr{L}_{\infty}) = p\mathscr{L}_{*}.\]
Then,
\begin{equation} \label{eqn:theta} 
\theta(p\cO_{F_{\p}} \oplus \cO_{F_{\p}}^{\times}) = p(\cO_{F_{\p}}^{\times} \oplus \cO_{F_{\p}}). 
\end{equation} 
We write 
\[ \int\limits_{\widetilde{L}_{*}'}\widetilde{P}(x,y) d\Phi_{\infty}\lbrace r - s \rbrace(x,y) = I_{1} + I_{2} \]
where 
\begin{align*} 
 I_{1} & = \int\limits_{\mu(\cO_{F})\backslash(\cO_{F_{\p}}^{\times} \oplus \cO_{F_{\p}})} \widetilde{P}(x,y) d\Phi_{\infty}\lbrace r - s \rbrace(x,y) = C(k)\phi_{k}\lbrace r - s \rbrace(P) 
\end{align*}
and
\begin{align*} 
I_{2} & =  \int\limits_{\mu(\cO_{F})\backslash(p\cO_{F_{\p}} \oplus \cO_{F_{\p}}^{\times})}\widetilde{P}(x,y)d\Phi_{\widetilde{L}_{*}}\lbrace r - s \rbrace(x,y)\\
 & = \int\limits_{\theta(\mu(\cO_{F})\backslash(p\cO_{F_{\p}} \oplus \cO_{F_{\p}}^{\times}))}\theta\cdot\widetilde{P}(x,y)d\Phi_{\widetilde{L}_{\infty}}\lbrace \theta r - \theta s \rbrace(x,y) 
\end{align*}
where the second equality follows from Proposition~\ref{prop:latticesymbols} upon noting that $\theta(\widetilde{L}_{*}) = \widetilde{L}_{\infty}$. 
Let $\PP = \begin{pmatrix} p & 0\\0 & p\end{pmatrix}$ as before. Then by (\ref{eqn:theta}) above, we have
\begin{align*} 
I_{2} & = \int\limits_{\mu(\cO_{F})\backslash(\cO_{F_{\p}}^{\times} \oplus \cO_{F_{\p}})}(\PP^{-1}\theta\cdot\widetilde{P}(x,y))d\Phi_{\PP^{-1}\widetilde{L}_{\infty}}\lbrace \PP^{-1}\theta r - \PP^{-1}\theta s \rbrace(x,y) \\
 & = \frac{1}{\mathrm{N}_{F/\QQ}(p)^{k}}\int\limits_{\mu(\cO_{F})\backslash(\cO_{F_{\p}}^{\times} \oplus \cO_{F_{\p}})}(\theta\cdot\widetilde{P}(x,y)) d\Phi_{\frac{1}{p}\cdot \widetilde{L}_{\infty}} \lbrace \theta r - \theta s \rbrace(x,y).
\end{align*}
Since $\mathscr{L}_{*} \subset \frac{1}{p}\mathscr{L}_{\infty}$ is a sub-lattice of index $\mathrm{N}_{F/\QQ}(p)$, by Lemma~\ref{lem:indexpmodularsymbols}, we get
\begin{align*}
I_{2} & = \frac{1}{a_{\p}(\cF_{k})}\int\limits_{\mu(\cO_{F})\backslash(\cO_{F_{\p}}^{\times} \oplus \cO_{F_{\p}})}(\theta\cdot\widetilde{P}(x,y)) d\Phi_{\widetilde{L}_{*}} \lbrace \theta r - \theta s \rbrace(x,y) \\
& =  \frac{C(k)}{a_{\p}(\cF_{k})} \phi_{k}\lbrace \theta r - \theta s \rbrace (\theta\cdot P(x,y)).
\end{align*}
where the final equality follows from (\ref{eqn:specialization}) above. Note that (\ref{eqn:pstabilisationmodularsymbols2}) above may also be re-written in terms of the matrix $\theta$ as 
\begin{equation} \label{eqn:pstabilisationmodularsymbols3}
\phi_{k}\lbrace r - s \rbrace(P(x,y)) = \phi_{k}^{\#}\lbrace r - s \rbrace(P(x,y)) - \frac{1}{a_{\p}(\cF_{k})}\phi_{k}^{\#}\lbrace \theta r - \theta s \rbrace(\theta\cdot P(x,y)).
\end{equation}
In particular, we may simplify the two integrals considered above as
\begin{align*}
I_{1} & = C(k)\left(\phi_{k}^{\#}\lbrace r - s \rbrace(P(x,y)) - \frac{1}{a_{\p}(\cF_{k})}\phi_{k}^{\#}\lbrace \theta r - \theta s \rbrace(\theta\cdot P(x,y))\right)
\end{align*}
and 
\begin{align*}
I_{2} & = \frac{C(k)}{a_{\p}(\cF_{k})}\left(\phi_{k}^{\#}\lbrace \theta r - \theta s \rbrace(\theta\cdot P(x,y)) - \frac{1}{a_{\p}(\cF_{k})}\phi_{k}^{\#}\lbrace \theta^{2} r - \theta^{2} s \rbrace(\theta^{2}\cdot P(x,y))\right).
\end{align*}
Note that we can find a $\gamma \in \Gamma_{0}(\cM)$ such that $\theta^{2} = p\gamma$. Since $\phi_{k}^{\#}$ is $\Gamma_{0}(\cM)$-invariant, it follows that (again by the homogeneity of $P(x,y)$)
\[ \phi_{k}^{\#}\lbrace \theta^{2} r - \theta^{2} s \rbrace(\theta^{2}P(x,y)) = \mathrm{N}_{F/\QQ}(p)^{k}\cdot\phi_{k}^{\#} \lbrace r - s \rbrace (P(x,y)). \]
The Lemma now follows since
\[ I_{1} + I_{2} = C(k)\left( 1 - \frac{\mathrm{N}_{F/\QQ}(p)^{k}}{a_{\p}(\cF_{k})^{2}} \right) (\phi_{k}^{\#})\lbrace r - s \rbrace (P(x,y)) \]
\end{proof}
Let $\pi$ be the projection of $\mathscr{W}$ onto $\PP^{1}(F_{\p})$ as before. For every $\cO_{F_{\p}}$-lattice $\mathscr{L}$, we can define a \emph{pushforward modular symbol} $\pi_{*}(\Phi_{\widetilde{L}}) \in \mathbf{MS}_{\Gamma}(L)$ given by
\begin{equation} \label{eqn:pushforward}
\int_{\PP^{1}(F_{\p})}F(t)d\pi_{*}(\Phi_{\widetilde{L}})\lbrace r - s \rbrace(t) \defeq |\mathscr{L}|^{k_{0}/2}\int_{\mathscr{Y}}\widetilde{F}(x,y)d\Phi_{\widetilde{L}}\lbrace r - s \rbrace(x,y)
\end{equation}
where $F(t) \in \mathcal{A}_{k_{0}}(\mathbb{P}^{1}(F_{\p}), L)$ is a locally analytic function on $\mathbb{P}^{1}(F_{\p})$ except for a pole of order at most $k_{0}$ at $\infty$ and $\widetilde{F}(x,y) \defeq y^{k_{0}}F(x/y) \in \cA^{\lambda_{k_{0}}}(\mathscr{Y})$ is a locally analytic `homogeneous of weight $\lambda_{k_{0}}$' function on $\mathscr{Y}$. Recall that there exists a unique harmonic modular symbol $\Phi_{\cF}^{\mathrm{har}} \in \mathbf{MS}_{\Gamma}(L)$ that lifts $\phi_{k_{0}} = \phi_{k_{0}}^{\#}$. The following result relates the pushforward modular symbols with the harmonic modular symbol $\Phi_{\cF}^{\mathrm{har}}$.

\begin{corollary} \label{cor:pushforwardtoharmonic}
For all lattices $\mathscr{L}$, $\pi_{*}(\Phi_{\widetilde{L}}) = \Phi_{\cF}^{\mathrm{har}} \in \mathbf{MS}_{\Gamma}(L)$
\end{corollary}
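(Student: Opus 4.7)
The plan is to reduce first to the case of the standard lattice $\mathscr{L}=\mathscr{L}_*$, and then identify $\pi_*(\Phi_{\widetilde{L}_*})$ with $\Phi_{\cF}^{\mathrm{har}}$ through the isomorphism of Theorem~\ref{thm:overconvergentprojectivemodularsymbol}. For the reduction, I would invoke the $\widetilde{\Gamma}$-equivariance of Proposition~\ref{prop:latticesymbols} with $Y=\mathscr{Y}$ and test function $\widetilde{F}$ homogeneous of weight $\lambda_{k_0}$. Because $\widetilde{\Gamma}$ acts transitively on lattices (by \cite[Thm.~2, II.1.4]{Ser80}), the identity
\[
\int_{\mathscr{Y}}(\gamma\cdot\widetilde{F})d\Phi_{\gamma\widetilde{L}}\{\gamma r-\gamma s\}=\int_{\mathscr{Y}}\widetilde{F}d\Phi_{\widetilde{L}}\{r-s\}
\]
together with the identity $|\gamma\mathscr{L}|/|\mathscr{L}|=\mathrm{N}_{F/\QQ}(p)^{v_{\p}(\det\gamma_{\p})}$ and the weight-$\lambda_{k_0}$ homogeneity of $\widetilde{F}$ (which converts the Jacobian into exactly the factor $|\mathscr{L}|^{k_0/2}$ appearing in the definition of $\pi_*$) will give $\pi_*(\Phi_{\gamma\widetilde{L}})=\pi_*(\Phi_{\widetilde{L}})$ as elements of $\mathrm{Hom}(\Delta_0,\cD_{k_0}^{\p}(\PP^1_{\p},L))$. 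Specializing to $\gamma\in\Gamma$ then shows $\pi_*(\Phi_{\widetilde{L}_*})$ is $\Gamma$-invariant, hence lies in $\mathbf{MS}_{\Gamma}(L)$.

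For the identification step, I would first check that $\pi_*(\Phi_{\widetilde{L}_*})$ is a Hecke eigensymbol with eigenvalues matching $\cF$ (this follows from the Hecke equivariance of the family $\{\Phi_{\widetilde{L}}\}$ via Lemma~\ref{lem:indexpmodularsymbols}, which encodes the $U_{\p}$-action on the tree), so that it lies in the one-dimensional subspace $\mathbf{MS}_{\Gamma}(L)_{(\cF)}$. By Theorem~\ref{thm:overconvergentprojectivemodularsymbol}, the restriction map $\rho_{\cT}$ is an isomorphism on the $\cF$-isotypic part, so equality with $\Phi_{\cF}^{\mathrm{har}}$ reduces to showing
\[
\rho_{\cT}\bigl(\pi_*(\Phi_{\widetilde{L}_*})\bigr)=\Psi_{\cF},
\]
where $\Psi_{\cF}$ is the overconvergent lift of $\phi_{\cF}$ in Theorem~\ref{thm:control theorem}. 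Unwinding the restriction from $\PP^1(F_{\p})$ to $\cO_{F_{\p}}=U_{e_{\infty}}$, the support condition $x/y\in\cO_{F_{\p}}$ for $(x,y)\in\widetilde{L}_*^{\prime}$ forces $y\in\cO_{F_{\p}}^{\times}$, so the integration reduces to $\mu(\cO_F)\backslash(\cO_{F_{\p}}\oplus\cO_{F_{\p}}^{\times})$. Applying the Weyl element $\smallmatrd{0}{1}{1}{0}$ (equivalently, using the symmetry $(x,y)\leftrightarrow(y,x)$ and the associated Atkin--Lehner compatibility) converts this to an integral over $\mu(\cO_F)\backslash(\cO_{F_{\p}}^{\times}\oplus\cO_{F_{\p}})$, which is precisely the domain of the weight-$\lambda_{k_0}$ specialization~(\ref{eqn:weightkspecialisation0}); invoking $\rho_{\lambda_{k_0}}(\Phi_{\infty})=\phi_{\cF}=\phi_{k_0}$ from Theorem~\ref{thm:weightkspecialization} (and the fact that $\Psi_{\cF}$ is the unique overconvergent lift of $\phi_{\cF}$) identifies the pairing with $\Psi_{\cF}\{r-s\}(g)$.

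The main obstacle is the careful bookkeeping of normalizations: the factor $|\mathscr{L}|^{k_0/2}$ in the definition of $\pi_*$, the $v_{\p}(\det\gamma_{\p})$-contribution from the $\widetilde{\Gamma}$-action, and the mismatch between the two natural integration domains $\cO_{F_{\p}}^{\times}\oplus\cO_{F_{\p}}$ (which encodes the weight-$k_0$ specialization $\phi_{\cF}$) and $\cO_{F_{\p}}\oplus\cO_{F_{\p}}^{\times}$ (which arises from the restriction $\rho_{\cT}$). Resolving this mismatch is where the Weyl/Atkin--Lehner symmetry enters in a non-formal way, and combined with the identity $a_{\p}(\cF)^{2}=\mathrm{N}_{F/\QQ}(p)^{k_0}$ (coming from the base-change assumption $\omega_{\p}=1$, see Remark~\ref{rem:Up-eigenvalue}), the normalizations will match exactly to produce the stated equality $\pi_*(\Phi_{\widetilde{L}})=\Phi_{\cF}^{\mathrm{har}}$.
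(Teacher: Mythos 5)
Your overall strategy — reduce to the standard lattice, then identify the pushforward with $\Phi_{\cF}^{\mathrm{har}}$ via the Hecke eigenspace and the control theorem — is reasonable, but the reduction step has a genuine gap. Proposition~\ref{prop:latticesymbols} only furnishes a \emph{transformation law}: applying it with $Y=\mathscr{Y}$ and matching determinant factors shows $\pi_{*}(\Phi_{\gamma\widetilde{L}})\{\gamma r - \gamma s\}(\gamma\cdot F)=\pi_{*}(\Phi_{\widetilde{L}})\{r-s\}(F)$, i.e.\ $\pi_{*}(\Phi_{\gamma\widetilde{L}})=\gamma\cdot\pi_{*}(\Phi_{\widetilde{L}})$ as elements of the $\widetilde{\Gamma}$-module $\mathrm{Hom}(\Delta_{0},\cD_{k_{0}}^{\p}(\PP^1_{\p},L))$. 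Deducing $\pi_{*}(\Phi_{\gamma\widetilde{L}})=\pi_{*}(\Phi_{\widetilde{L}})$ from this requires the very $\Gamma$-invariance you are trying to establish, so the argument is circular as written. The Jacobian bookkeeping you describe only fixes the normalization in the coefficient, not the translation of the divisor $r-s$ and the test function, which is exactly what produces the $\gamma$-twist.

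The lattice-independence (and hence $\Gamma$-invariance) genuinely comes from Lemma~\ref{lem:indexpmodularsymbols}, not from Proposition~\ref{prop:latticesymbols}. For adjacent lattices $\mathscr{L}_{2}\subset\mathscr{L}_{1}$ of index $\mathrm{N}_{F/\QQ}(p)$, Lemma~\ref{lem:indexpmodularsymbols} gives $\int_{Y_{e}}F\,d\Phi_{\widetilde{L}_{2}}=\frac{a_{\p}(\cF_{k})}{\mathrm{N}_{F/\QQ}(p)^{k}}\int_{Y_{e}}F\,d\Phi_{\widetilde{L}_{1}}$; since $|\mathscr{L}_{2}|=\mathrm{N}_{F/\QQ}(p)|\mathscr{L}_{1}|$, after multiplying by $|\mathscr{L}_{j}|^{k_{0}/2}$ the ratio of the two pushforwards is $a_{\p}(\cF)/\mathrm{N}_{F/\QQ}(p)^{k_{0}/2}=\omega_{\p}$, which is $+1$ precisely because $\cF$ is split-multiplicative (Remark~\ref{rem:Up-eigenvalue}). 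This is the substance of the paper's one-line comment that split-multiplicativity lets one avoid restricting to even vertices as in Seveso; without $\omega_{\p}=1$ the pushforward alternates in sign between even and odd vertices and the corollary is false as stated. You do cite Lemma~\ref{lem:indexpmodularsymbols}, but only for the Hecke-eigenvalue check, and you invoke split-multiplicativity only at the very end ``so the normalizations match''; in fact both are needed earlier, in the reduction. Also note that $a_{\p}(\cF)^{2}=\mathrm{N}_{F/\QQ}(p)^{k_{0}}$ holds for either sign of $\omega_{\p}$, so it is $a_{\p}(\cF)=+\mathrm{N}_{F/\QQ}(p)^{k_{0}/2}$ (the positive sign) that one must use. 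The paper's own proof is essentially: propagate from the standard lattice along the tree by repeated use of Lemma~\ref{lem:indexpmodularsymbols}, with the weight-$\lambda_{k_{0}}$ specialization~(\ref{eqn:specialization}) pinning down the value on $U_{e_{\infty}}$; your later step relating the two domains $\cO_{F_{\p}}^{\times}\oplus\cO_{F_{\p}}$ and $\cO_{F_{\p}}\oplus\cO_{F_{\p}}^{\times}$ is then handled by the same lemma via the matrix $\theta$ (cf.\ the proof of Lemma~\ref{lem:pstabilisationmodularsymbols}), not by a bare Weyl-element/Atkin--Lehner appeal.
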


\begin{proof}
The Corollary follows by a repeated application of Lemma~\ref{lem:indexpmodularsymbols} combined with the weight $\lambda_{k}$-specialization of (\ref{eqn:specialization}). Compare with \cite[Corollary 4.7]{Sev12} and \cite[Proposition 2.12]{BD07}. Since our form $\cF_{k_{0}}$ is split--multiplicative (See Remark~\ref{rem:Up-eigenvalue}), we don't need to restrict to even vertices as in \cite{Sev12}.
\end{proof}
\begin{definition} \label{def:semidefiniteintegral}
The semidefinite integral attached to $r, s \in \mathbb{P}^{1}(F)$; $\tau \in \cH_{\p}^{\mathrm{ur}}$ and $P(x,y) \in V_{k_{0},k_{0}}$ is defined as
\begin{align*} \label{eqn:semidefinitedefinition}
 \int_{r}^{s}\int^{\tau}P(x,y)\omega_{\cF} & \defeq |\mathscr{L}_{\tau}|^{k_{0}/2} \int_{\mathscr{Y}}\widetilde{P}(x,y)\log_{p}\left(\mathrm{N}_{F/\QQ}(x-\tau y)\right)d\Phi_{\widetilde{L_{\tau}}}\lbrace{r - s\rbrace}(x,y)
 \end{align*}
where $[\mathscr{L}_{\tau}] = \mathrm{red}_{\p}(\tau)$.
\end{definition}
\begin{lemma} \label{lem:derivativelemma}
Let $\alpha(\lambda_{\kappa}) \in \cO(U)$. For all $e \in \cE(\cT)$, $\tau \in \cH_{\p}$, $r,s \in \PP^{1}(F)$ and $P(x,y) \in V_{k_{0},k_{0}}$, we have
\begin{multline*} \int_{Y_{e}} \widetilde{P}(x,y) \log_{p}\left(\mathrm{N}_{F/\QQ}(x-\tau y)\right)d\alpha\Phi_{\widetilde{L_{\tau}}}\lbrace r - s \rbrace (x,y) \\ = \alpha'(\lambda_{k_{0}})|\mathscr{L}_{\tau}|^{-k_{0}/2}\int_{U_{e}}P(t)d\Phi_{\cF}^{\mathrm{har}}\lbrace r - s \rbrace (t) \\
+ \alpha(\lambda_{k_{0}})\int_{Y_{e}} \widetilde{P}(x,y) \log_{p}\left(\mathrm{N}_{F/\QQ}(x-\tau y)\right)d\Phi_{\widetilde{L_{\tau}}}\lbrace r - s \rbrace (x,y).
\end{multline*}
\end{lemma}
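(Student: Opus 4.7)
The plan is to deduce this identity from the Leibniz product rule applied to a weight-varying family of integrals, together with the pushforward identification $\pi_*\Phi_{\widetilde{L_\tau}} = \Phi_\cF^{\mathrm{har}}$ from Corollary~\ref{cor:pushforwardtoharmonic}.

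First, I will introduce the weight-varying integrand
\[
F_{\lambda_\kappa}(x,y) := \widetilde{P}(x,y)\bigl(\langle x - \tau y\rangle \langle \overline{x} - \overline{\tau}\overline{y}\rangle\bigr)^{\lambda_\kappa - \lambda_{k_0}}
\]
and verify that it is homogeneous of weight $\lambda_\kappa$ on the $\cO_{F_\p}^\times$-stable set $Y_e$. This is a direct calculation: under $(x,y) \mapsto (zx,zy)$ with $z \in \cO_{F_\p}^\times/\mu(\cO_F)$, the polynomial factor $\widetilde{P}$ transforms by $\mathrm{N}_{F/\QQ}(z)^{k_0}$, while the exponential factor --- which by unpacking the definition equals $\exp\bigl((s-k_0)\log_p \mathrm{N}_{F/\QQ}(x-\tau y)\bigr)$ --- acquires the factor $\exp\bigl((s-k_0)\log_p \mathrm{N}_{F/\QQ}(z)\bigr)$, and their product is precisely $\lambda_\kappa(z)$.

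Second, using the $\mathcal{D}(\cO_{F_\p}^\times/\mu(\cO_F))$-module structure on $\mathbb{D}^\dagger$ together with the Amice--V\'elu transform $\nu_\alpha \leftrightarrow \alpha$, the homogeneity of $F_{\lambda_\kappa}$ immediately yields
\[
\int_{Y_e} F_{\lambda_\kappa}\, d(\alpha\Phi_{\widetilde{L_\tau}}) = \alpha(\lambda_\kappa)\int_{Y_e} F_{\lambda_\kappa}\, d\Phi_{\widetilde{L_\tau}}.
\]
I will then differentiate both sides at $\lambda_\kappa = \lambda_{k_0}$: on the left, Definition~\ref{def:derivative} identifies the derivative with the desired integral of $\widetilde{P}(x,y)\log_p \mathrm{N}_{F/\QQ}(x-\tau y)$ against $\alpha\Phi_{\widetilde{L_\tau}}$, while on the right Leibniz produces $\alpha'(\lambda_{k_0})\int_{Y_e}\widetilde{P}(x,y)\, d\Phi_{\widetilde{L_\tau}}$ (from differentiating $\alpha$ and evaluating the exponential factor at $0$) plus $\alpha(\lambda_{k_0})$ times the log-integral against $\Phi_{\widetilde{L_\tau}}$.

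Finally, I will convert the first summand on the right into the harmonic integral using the pushforward formula (\ref{eqn:pushforward}) and Corollary~\ref{cor:pushforwardtoharmonic}, obtaining $|\mathscr{L}_\tau|^{-k_0/2}\int_{U_e} P(t)\, d\Phi_\cF^{\mathrm{har}}\lbrace r-s\rbrace(t)$ and completing the identity. The only step demanding genuine care --- though it is not a real obstacle --- is the homogeneity verification in the second paragraph, since the symbol $\langle \cdot \rangle^{\lambda_\kappa - \lambda_{k_0}}$ is defined via $\log_p$ rather than as a literal principal-unit power, and the $\mathrm{N}_{F/\QQ}$-factors arising from the two embeddings of $F_\p$ into $L$ must be tracked carefully in order to recover $\lambda_\kappa(z)$ on the nose.
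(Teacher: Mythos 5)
Your proposal is correct and takes essentially the same approach as the paper's proof: interpret the $\log_p$-integral via Definition~\ref{def:derivative} as a derivative of the weight-varying integral, use the $\mathcal{D}(\cO_{F_\p}^\times/\mu(\cO_F))$-module structure (equivalently, homogeneity of the integrand) to pull out $\alpha(\lambda_\kappa)$, apply the Leibniz rule at $\lambda_\kappa = \lambda_{k_0}$, and convert the constant-term integral to the harmonic one via (\ref{eqn:pushforward}) and Corollary~\ref{cor:pushforwardtoharmonic}. The paper writes the key module-structure step tacitly in its first equality whereas you spell out the homogeneity verification explicitly, but the argument is the same.
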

\begin{proof}
By Definition~\ref{def:derivative},

\begin{align*} 
& \int_{Y_{e}} \widetilde{P}(x,y) \log_{p}\left(\mathrm{N}_{F/\QQ}(x-\tau y)\right)d\alpha(\lambda_{\kappa})\Phi_{\widetilde{L_{\tau}}}\lbrace r - s \rbrace (x,y) \\
& = \frac{d}{d\lambda_{\kappa}}\left(\alpha(\lambda_{\kappa})\int_{Y_{e}} \widetilde{P}(x,y)\left(\mathrm{N}_{F/\QQ}(x-\tau y)\right)^{\lambda_{\kappa}-\lambda_{k_{0}}}d\Phi_{\widetilde{L_{\tau}}}\lbrace r - s \rbrace (x,y)\right)_{\lambda_{\kappa} = \lambda_{k_{0}}}\\
& = \alpha'(\lambda_{k_{0}})\int_{Y_{e}} \widetilde{P}(x,y)d\Phi_{\widetilde{L_{\tau}}}\lbrace r - s \rbrace (x,y)\\
& + \alpha(\lambda_{k_{0}})\int_{Y_{e}} \widetilde{P}(x,y) \log_{p}\left(\mathrm{N}_{F/\QQ}(x-\tau y)\right)d\Phi_{\widetilde{L_{\tau}}}\lbrace r - s \rbrace (x,y) \\
& = \alpha'(\lambda_{k_{0}})|\mathscr{L}_{\tau}|^{-k_{0}/2}\int_{U_{e}}P(t)d\Phi_{\cF}^{\mathrm{har}}\lbrace r - s \rbrace (t)\\
& + \alpha(\lambda_{k_{0}})\int_{Y_{e}} \widetilde{P}(x,y) \log_{p}\left(\mathrm{N}_{F/\QQ}(x-\tau y)\right)d\Phi_{\widetilde{L_{\tau}}}\lbrace r - s \rbrace (x,y).
\end{align*}

\end{proof}

\begin{proposition} \label{prop:definiteord+log}
For $r, s \in \PP^{1}(F)$ and $\tau_{1}, \tau_{2} \in \cH_{\p}^{\mathrm{ur}}$, we have
\begin{multline*}
 \int_{r}^{s}\int^{\tau_{2}}P(x,y)\omega_{\cF} - \int_{r}^{s}\int^{\tau_{1}}P(x,y)\omega_{\cF} = \\
 \int_{r}^{s}\int_{\tau_{1}}^{\tau_{2}}P(x,y)\omega^{\mathrm{log}_{p}}_{\cF} + 2\frac{a_{\p}'(\cF)}{a_{\p}(\cF)}\int_{r}^{s}\int_{\tau_{1}}^{\tau_{2}}P(x,y)\omega^{\mathrm{ord}_{\p}}_{\cF}
 \end{multline*}
 where $a_{\p}'(\cF)$ is defined as the derivative of the $U_{\p}$-eigenvalue of the specialisations of the Coleman family $\mathbfcal{F}(g)$ at $\lambda_{k_{0}}$. 
 \end{proposition}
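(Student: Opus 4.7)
The plan is to reduce, by telescoping along the unique geodesic in the Bruhat--Tits tree $\mathcal{T}_{\p}$ joining $v_1 \defeq \mathrm{red}_{\p}(\tau_1)$ and $v_2 \defeq \mathrm{red}_{\p}(\tau_2)$, to the case of adjacent reductions, which we handle by differentiating in the Coleman family $\mathbfcal{F}$. First, in the diagonal case $v_1 = v_2$, we may take $\mathscr{L}_{\tau_1} = \mathscr{L}_{\tau_2} \defeq \mathscr{L}$, so that the two semidefinite integrals of Definition~\ref{def:semidefiniteintegral} are paired against the \emph{same} lattice-symbol $\Phi_{\widetilde{L}}$. Subtracting, using multiplicativity of $\log_p$, and invoking the pushforward identity $\pi_{\ast}\Phi_{\widetilde{L}} = \Phi_{\cF}^{\mathrm{har}}$ of Corollary~\ref{cor:pushforwardtoharmonic}, the difference collapses to $\int_r^s\int_{\tau_1}^{\tau_2} P\,\omega_{\cF}^{\log_p}$, while the $\omega_{\cF}^{\ord_{\p}}$ contribution vanishes since the geodesic contains no edges.

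For the adjacent case $v_1 \sim v_2$, let $e$ be the connecting edge, oriented so that $\mathscr{L}_{\tau_2} \subset \mathscr{L}_{\tau_1}$ has index $\mathrm{N}_{F/\QQ}(p)$. Following Definition~\ref{def:derivative}, I would realise each semidefinite integral as the $\kappa$-derivative at $\lambda_{k_0}$ of a family integral of $\widetilde{P}(x,y)(\langle x-\tau y\rangle\langle \bar x-\bar\tau\bar y\rangle)^{\kappa-k_0}$ against the lattice-family modular symbols $\Phi_{\widetilde{L_{\tau_i}}}$ of Proposition~\ref{prop:latticesymbols}. On the common piece $Y_e$, the family form of Lemma~\ref{lem:indexpmodularsymbols} allows one to pass from $\Phi_{\widetilde{L_{\tau_2}}}$ to $\Phi_{\widetilde{L_{\tau_1}}}$ at the cost of the analytic factor
\[\beta(\lambda_\kappa) \defeq \frac{a_{\p}(\mathbfcal{F}_\kappa)}{\mathrm{N}_{F/\QQ}(p)^\kappa} \cdot \frac{|\mathscr{L}_{\tau_2}|^{k_0/2}}{|\mathscr{L}_{\tau_1}|^{k_0/2}} = \frac{a_{\p}(\mathbfcal{F}_\kappa)}{\mathrm{N}_{F/\QQ}(p)^{\kappa-k_0/2}} \in \mathcal{O}(U)^{\times},\]
which, using $\omega_{\p}=1$ (so that $a_{\p}(\cF) = \mathrm{N}_{F/\QQ}(p)^{k_0/2}$) and the branch convention $\log_p(p)=0$ (so that $\log_p \circ \mathrm{N}_{F/\QQ}(p)=0$), satisfies $\beta(\lambda_{k_0})=1$ and $\beta'(\lambda_{k_0}) = a_{\p}'(\cF)/a_{\p}(\cF)$. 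Applying Lemma~\ref{lem:derivativelemma} with $\alpha=\beta$: the $\beta(\lambda_{k_0})=1$ piece rebuilds the $\omega_{\cF}^{\log_p}$-integrand across the compact open $U_e$ via the pushforward $\pi_\ast\Phi_{\widetilde{L_{\tau_1}}}=\Phi_{\cF}^{\mathrm{har}}$, while the $\beta'(\lambda_{k_0})$-piece produces a term proportional to $a_{\p}'(\cF)/a_{\p}(\cF)\int_{U_e}P\,d\Phi_{\cF}^{\mathrm{har}}\{r-s\}$. The extra factor of $2$ in the statement arises from the symmetrisation identity of Proposition~\ref{prop:derivatives}, where each of the two conjugate factors $\langle x-\tau y\rangle$ and $\langle \bar x - \bar\tau\bar y\rangle$ of the imaginary quadratic norm $\mathrm{N}_{F/\QQ}(x-\tau y)$ contributes an independent copy of the $\kappa$-derivative through the integrand $(\langle x-\tau y\rangle\langle\bar x-\bar\tau\bar y\rangle)^{\kappa-k_0}$.

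The general case then follows by induction on the tree-distance $d(v_1,v_2)$: for $d\geq 2$ one chooses $\tau'$ with $\mathrm{red}_{\p}(\tau')$ adjacent to $v_1$ along the geodesic, writes the difference $\int^{\tau_2}\!P\,\omega_\cF - \int^{\tau_1}\!P\,\omega_\cF$ as $(\int^{\tau_2}\!P\,\omega_\cF-\int^{\tau'}\!P\,\omega_\cF)+(\int^{\tau'}\!P\,\omega_\cF-\int^{\tau_1}\!P\,\omega_\cF)$, and applies the inductive hypothesis and the adjacent case to the two brackets. The $\omega_{\cF}^{\log_p}$-integrals then telescope cleanly on $\mathbb{P}^1(F_\p)$, while the $\omega_{\cF}^{\ord_{\p}}$-integrals telescope along the successive edges of the geodesic (by the edge-by-edge description in Definition~\ref{defn:doubleintegrals}(iii)). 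The principal technical obstacle lies in the adjacent step: one must justify the interchange of $\log_p$ with the Coleman-family derivative via Lemma~\ref{lem:analyticity} uniformly over the decomposition $\widetilde{L_{\tau_1}}' = Y_e \sqcup (\widetilde{L_{\tau_1}}'\setminus Y_e)$, verify that the family form of Lemma~\ref{lem:indexpmodularsymbols} applies to the varying-weight family integrand on $Y_e$, and carefully track the normalisation factors $|\mathscr{L}|^{k_0/2}$ and $\mathrm{N}_{F/\QQ}(p)^{\kappa/2}$ in order to produce the correct coefficient $2\,a_\p'(\cF)/a_\p(\cF)$ in the final formula.
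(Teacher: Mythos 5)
Your telescoping reduction to the adjacent case is sensible (the paper simply assumes WLOG adjacency and it is folklore that the general case reduces by splitting the geodesic), and your bookkeeping of the lattice-index factor $\beta$ with $\beta(\lambda_{k_0})=1$ and $\beta'(\lambda_{k_0}) = a_\p'(\cF)/a_\p(\cF)$ is essentially correct. However there is a genuine gap: you misattribute the factor of $2$ to Proposition~\ref{prop:derivatives}, and this cannot be right. The integrand of the semidefinite integral (Definition~\ref{def:semidefiniteintegral} and Definition~\ref{def:derivative}) is $\widetilde{P}(x,y)\big(\langle x-\tau y\rangle\langle \overline{x}-\overline{\tau}\overline{y}\rangle\big)^{\lambda_\kappa - \lambda_{k_0}}$ with the norm raised to the \emph{full} exponent $\lambda_\kappa-\lambda_{k_0}$; the conjugate factors are not independently raised to $(\lambda_\kappa-\lambda_{k_0})/2$, so they do not ``each contribute an independent copy'' of the derivative. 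Proposition~\ref{prop:derivatives} is designed for the very different situation in Theorem~\ref{thm:padicGZformula1}, where the integrand $\langle P_\Psi\rangle^{(\lambda_\kappa-\lambda_{k_0})/2}$ splits as a product of factors indexed by $\tau_\Psi$ and $\tau_\Psi^\theta$, two genuinely distinct points; even there, it produces factors of $1/2$, not $2$.

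The actual source of the factor of $2$ in the paper's proof is an argument you do not make. After isolating the ``$\ord$'' contribution $I_{\ord}$ to the difference of semidefinite integrals, the paper decomposes it as $I_e + I_{\bar e}$ over the \emph{two} complementary compact open pieces $Y_e$ and $Y_{\overline{e}}$ of $\mathscr{Y}$ (corresponding to $U_e \sqcup U_{\overline{e}} = \PP^1(F_\p)$), and applies Lemma~\ref{lem:derivativelemma} with $\alpha(\lambda_\kappa)=a_\p(\cF_{\lambda_\kappa})/\mathrm{N}_{F/\QQ}(p)^{k_0/2}-1$ (note $\alpha(\lambda_{k_0})=0$, which kills the second term of the lemma) to each piece. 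This yields $I_e = \tfrac{a_\p'(\cF)}{\mathrm{N}(p)^{k_0/2}}\int_{U_e}$ and $I_{\overline{e}} = -\tfrac{a_\p'(\cF)}{\mathrm{N}(p)^{k_0/2}}\int_{U_{\overline{e}}}$, and the factor $2$ then arises from the harmonicity relation $\int_{U_e} P\, d\Phi_\cF^{\mathrm{har}}\{r-s\} + \int_{U_{\overline{e}}} P\, d\Phi_\cF^{\mathrm{har}}\{r-s\} = \int_{\PP^1(F_\p)} = 0$ (from \cite[Proposition 5.8(i)]{BW19}), which makes the two contributions reinforce rather than cancel. Your proof only accounts for the $Y_e$ piece and so, correctly carried out, would produce only $a_\p'(\cF)/a_\p(\cF)$, short by a factor of $2$. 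To repair the argument you must track both $Y_e$ and $Y_{\overline{e}}$ (or equivalently both $\widetilde{L}'_{\tau_1}$ and $\widetilde{L}'_{\tau_2}$, whose supports differ outside $Y_e$) and invoke the vanishing of $\Phi_\cF^{\mathrm{har}}$ on $\PP^1(F_\p)$.
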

 \begin{proof} 
We may suppose without loss of generality that $\mathscr{L}_{\tau_{2}} \subset \mathscr{L}_{\tau_{1}}$ and that $[\mathscr{L}_{\tau_{1}}:\mathscr{L}_{\tau_{2}}] = \mathrm{N}_{F/\QQ}(p)$. We denote by $e \in \cE(\cT)$ to be the ordered edge between $\mathscr{L}_{\tau_{1}}$ and $\mathscr{L}_{\tau_{2}}$.
By Definition~\ref{def:semidefiniteintegral} above, we have
\begin{align*}
 \int_{r}^{s}\int^{\tau_{2}}P(x,y)\omega_{\cF} & -  \int_{r}^{s}\int^{\tau_{1}}P(x,y)\omega_{\cF} \\ 
 & =  |\mathscr{L}_{\tau_{2}}|^{k_{0}/2} \int_{\mathscr{Y}}\widetilde{P}(x,y)\log_{p}\left( \mathrm{N}_{F/\QQ}(x-\tau_{2} y)\right) d\Phi_{\widetilde{L_{\tau_{2}}}}\lbrace{r - s\rbrace}(x,y) \\  
 & -  |\mathscr{L}_{\tau_{1}}|^{k_{0}/2} \int_{\mathscr{Y}}\widetilde{P}(x,y)\log_{p}\left( \mathrm{N}_{F/\QQ}(x-\tau_{1} y)\right) d\Phi_{\widetilde{L_{\tau_{1}}}}\lbrace{r - s\rbrace}(x,y) \\
 & =  |\mathscr{L}_{\tau_{2}}|^{k_{0}/2} \int_{\mathscr{Y}}\widetilde{P}(x,y)\log_{p}\left( \mathrm{N}_{F/\QQ}(x-\tau_{2} y)\right) d\Phi_{\widetilde{L_{\tau_{2}}}}\lbrace{r - s\rbrace}(x,y) \\  
 & -  |\mathscr{L}_{\tau_{2}}|^{k_{0}/2} \int_{\mathscr{Y}}\widetilde{P}(x,y)\log_{p}\left( \mathrm{N}_{F/\QQ}(x-\tau_{1} y)\right) d\Phi_{\widetilde{L_{\tau_{2}}}}\lbrace{r - s\rbrace}(x,y) \\
& +  |\mathscr{L}_{\tau_{2}}|^{k_{0}/2} \int_{\mathscr{Y}}\widetilde{P}(x,y)\log_{p}\left( \mathrm{N}_{F/\QQ}(x-\tau_{1} y)\right) d\Phi_{\widetilde{L_{\tau_{2}}}}\lbrace{r - s\rbrace}(x,y) \\
& -  |\mathscr{L}_{\tau_{1}}|^{k_{0}/2} \int_{\mathscr{Y}}\widetilde{P}(x,y)\log_{p}\left( \mathrm{N}_{F/\QQ}(x-\tau_{1} y)\right) d\Phi_{\widetilde{L_{\tau_{1}}}}\lbrace{r - s\rbrace}(x,y) \\
& = I_{\ord} + I_{\log}
\end{align*}
 where 
\begin{align*}
 I_{\ord} & \defeq |\mathscr{L}_{\tau_{2}}|^{k_{0}/2} \int_{\mathscr{Y}}\widetilde{P}(x,y)\log_{p}\left( \mathrm{N}_{F/\QQ}(x-\tau_{1} y)\right) d\Phi_{\widetilde{L_{\tau_{2}}}}\lbrace{r - s\rbrace}(x,y) \\
 & - |\mathscr{L}_{\tau_{1}}|^{k_{0}/2} \int_{\mathscr{Y}}\widetilde{P}(x,y)\log_{p}\left( \mathrm{N}_{F/\QQ}(x-\tau_{1} y)\right) d\Phi_{\widetilde{L_{\tau_{1}}}}\lbrace{r - s\rbrace}(x,y).
 \end{align*} 
and  
 \begin{align*}
 I_{\log} & \defeq |\mathscr{L}_{\tau_{2}}|^{k_{0}/2} \int_{\mathscr{Y}}\widetilde{P}(x,y)\log_{p}\circ\mathrm{N}_{F/\QQ}\left(\frac{x-\tau_{2} y}{x-\tau_{1} y}\right) d\Phi_{\widetilde{L_{\tau_{2}}}}\lbrace{r - s\rbrace}(x,y). \\
 \end{align*}
By Corollary~\ref{cor:pushforwardtoharmonic} above, we have 
\begin{align*}  
I_{\log} & = \int_{\PP^{1}_{\p}} P(t)\log_{p}\circ \mathrm{N}_{F/\QQ} \left(\frac{t - \tau_{2}}{t - \tau_{1}}\right) d\Phi_{\cF}^{\mathrm{har}}\lbrace r - s \rbrace (t) \\
 & = \int_{r}^{s}\int_{\tau_{1}}^{\tau_{2}}P(x,y)\omega^{\mathrm{log}_{p}}_{\cF}.
\end{align*}
We express $I_{\ord}$ as the sum of two contributions $I_{e}$ and $I_{\overline{e}}$ obtained by evaluating the integrals over disjoint subsets $Y_{e}$ and $Y_{\overline{e}}$ of $\mathscr{Y}$ respectively. By Lemma~\ref{lem:indexpmodularsymbols},
\begin{align*}
I_{e} & = |\mathscr{L}_{\tau_{2}}|^{k_{0}/2}\frac{a_{\p}(\cF_{k_{0}})}{\mathrm{N}_{F/\QQ}(p)^{k_{0}}} \int_{{Y_{e}}}\widetilde{P}(x,y)\log_{p}\left( \mathrm{N}_{F/\QQ}(x-\tau_{1} y)\right) d\Phi_{\widetilde{L_{\tau_{1}}}}\lbrace{r - s\rbrace}(x,y) \\
 & - |\mathscr{L}_{\tau_{1}}|^{k_{0}/2} \int_{{Y_{e}}}\widetilde{P}(x,y)\log_{p}\left( \mathrm{N}_{F/\QQ}(x-\tau_{1} y)\right) d\Phi_{\widetilde{L_{\tau_{1}}}}\lbrace{r - s\rbrace}(x,y) \\
 & = |\mathscr{L}_{\tau_{1}}|^{k_{0}/2}\left(\frac{a_{\p}(\cF_{k_{0}})}{\mathrm{N}_{F/\QQ}(p)^{k_{0}/2}}-1\right) \int_{{Y_{e}}}\widetilde{P}(x,y)\log_{p}\left( \mathrm{N}_{F/\QQ}(x-\tau_{1} y)\right) d\Phi_{\widetilde{L_{\tau_{1}}}}\lbrace{r - s\rbrace}(x,y).
\end{align*}
Applying Lemma~\ref{lem:derivativelemma} with $\alpha(\lambda_{\kappa}) \defeq a_{\p}(\cF_{\lambda_{\kappa}})/\mathrm{N}_{F/\QQ}(p)^{k_{0}/2}-1$ upon noting that $\alpha(\lambda_{k_{0}}) = 0$, we get
\[ I_{e} = \frac{a_{\p}'(\cF)}{\mathrm{N}_{F/\QQ}(p)^{k_{0}/2}}\sum\limits_{e : \textup{red}_{\mathfrak{p}}(\tau_1) \rightarrow \textup{red}_{\mathfrak{p}}(\tau_2)}\int_{U_{e}}P(t)d\Phi_{\cF}^{\mathrm{har}}\lbrace r - s \rbrace (t) \] 
Similarly for $\overline{e}$ - the ordered edge between $(1/p)\mathscr{L}_{2}$ and $\mathscr{L}_{1}$, noting that $[(1/p)\mathscr{L}_{2}:\mathscr{L}_{1}] = \mathrm{N}_{F/\QQ}(p)$, the same computation using Lemma~\ref{lem:indexpmodularsymbols} gives
\begin{align*}
I_{\overline{e}} & = |\mathscr{L}_{\tau_{2}}|^{k_{0}/2}\frac{a_{\p}(\cF_{k_{0}})}{\mathrm{N}_{F/\QQ}(p)^{k_{0}}} \int_{Y_{\overline{e}}}\widetilde{P}(x,y)\log_{p}\left( \mathrm{N}_{F/\QQ}(x-\tau_{1} y)\right) d\Phi_{\widetilde{L_{\tau_{1}}}}\lbrace{r - s\rbrace}(x,y) \\
 & - |\mathscr{L}_{\tau_{1}}|^{k_{0}/2} \int_{Y_{\overline{e}}}\widetilde{P}(x,y)\log_{p}\left( \mathrm{N}_{F/\QQ}(x-\tau_{1} y)\right) d\Phi_{\widetilde{L_{\tau_{1}}}}\lbrace{r - s\rbrace}(x,y) \\
 = & -|\frac{1}{p}\mathscr{L}_{\tau_{2}}|^{k_{0}/2}\left(\frac{a_{\p}(\cF_{k_{0}})}{\mathrm{N}_{F/\QQ}(p)^{k_{0}/2}}-1\right) \int_{Y_{\overline{e}}}\widetilde{P}(x,y)\log_{p}\left( \mathrm{N}_{F/\QQ}(x-\tau_{1} y)\right) d\Phi_{\widetilde{(1/p)L_{\tau_{2}}}}\lbrace{r - s\rbrace}(x,y)\\
 & = -\frac{a_{\p}'(\cF)}{\mathrm{N}_{F/\QQ}(p)^{k_{0}/2}}\sum\limits_{e : \textup{red}_{\mathfrak{p}}(\tau_1) \rightarrow \textup{red}_{\mathfrak{p}}(\tau_2)}\int_{U_{\overline{e}}}P(t)d\Phi_{\cF}^{\mathrm{har}}\lbrace r - s \rbrace (t).
\end{align*}
Since $\PP^{1}_{\p}$ is a disjoint union of $U_{e}$ and $U_{\overline{e}}$, we have 
\begin{multline*} \int_{U_{\overline{e}}}P(t)d\Phi_{\cF}^{\mathrm{har}}\lbrace r - s \rbrace (t) + \int_{U_{e}}P(t)d\Phi_{\cF}^{\mathrm{har}}\lbrace r - s \rbrace (t) 
 = \int_{\PP^{1}_{\p}}P(t)d\Phi_{\cF}^{\mathrm{har}}\lbrace r - s \rbrace (t) = 0
 \end{multline*}
where the final equality follows from \cite[Proposition 5.8(i)]{BW19}. Hence
\begin{align*} I_{\ord} = I_{e} + I_{\overline{e}} & =  2\frac{a_{\p}'(\cF)}{\mathrm{N}_{F/\QQ}(p)^{k_{0}/2}}\sum\limits_{e : \textup{red}_{\mathfrak{p}}(\tau_1) \rightarrow \textup{red}_{\mathfrak{p}}(\tau_2)}\int_{U_{\overline{e}}}P(t)d\Phi_{\cF}^{\mathrm{har}}\lbrace r - s \rbrace (t) \\
& = 2\frac{a_{\p}'(\cF)}{\mathrm{N}_{F/\QQ}(p)^{k_{0}/2}}\int_{r}^{s}\int_{\tau_{1}}^{\tau_{2}}P(x,y)\omega^{\mathrm{ord}_{\p}}_{\cF}
\end{align*} 
by Definition~\ref{defn:doubleintegrals}(iii). Since $a_{\p}(\cF) = \mathrm{N}_{F/\QQ}(p)^{k_{0}/2}$, we get that
\begin{align*}
\int_{r}^{s}\int^{\tau_{2}}P(x,y)\omega_{\cF} & - \int_{r}^{s}\int^{\tau_{1}}P(x,y)\omega_{\cF} \\
& = I_{\log} + I_{\ord} \\
& = \int_{r}^{s}\int_{\tau_{1}}^{\tau_{2}}P(x,y)\omega^{\mathrm{log}_{p}}_{\cF} + 2\frac{a_{\p}'(\cF)}{a_{\p}(\cF)}\int_{r}^{s}\int_{\tau_{1}}^{\tau_{2}}P(x,y)\omega^{\mathrm{ord}_{\p}}_{\cF}.
\end{align*}
 \end{proof}
\begin{proposition} \label{prop:basechangeLinv}
The $\cL$-invariant attached to the Bianchi cusp form $\mathcal{F} \in S_{\underline{k_{0}}+2}(U_{0}(\mathcal{N}))^{\mathrm{new}}$ is given by
\[ \cL_{\p}^{\mathrm{BW}} = -2\frac{a_{\p}'(\cF)}{a_{\p}(\cF)} \]
\end{proposition}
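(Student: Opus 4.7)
The plan is to combine the semidefinite integral decomposition of Proposition~\ref{prop:definiteord+log} with a vanishing property on the image of the connecting morphism $\delta$ in $\Gamma$-homology. By Remark~\ref{rem:bwlinvariant}, the $\mathcal{L}$-invariant $\cL_{\p}^{\mathrm{BW}}$ is uniquely characterised by
\[
\Phi^{\log_{p}}_{\cF}\circ\delta \;=\; \cL_{\p}^{\mathrm{BW}}\cdot\Phi^{\ord_{\p}}_{\cF}\circ\delta,
\]
and by Theorem~\ref{thm:towardsabeljacobi}(i) the right-hand map is surjective onto the one-dimensional space $\mathbf{MS}_{\Gamma}(L)_{(\cF)}^{\vee}$. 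It therefore suffices to establish
\[
\Phi^{\log_{p}}_{\cF}\circ\delta \;=\; -2\,\frac{a_{\p}'(\cF)}{a_{\p}(\cF)}\,\Phi^{\ord_{\p}}_{\cF}\circ\delta.
\]

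To this end, I would introduce the \emph{semidefinite integration functional}
\[
\Phi^{\mathrm{semi}}_{\cF}:\Delta_{0}\otimes\textup{Div}(\cH_{\p}^{\ur})\otimes V_{k_{0},k_{0}}\lra L,\qquad (r-s)\otimes[\tau]\otimes P\longmapsto \int_{r}^{s}\!\!\int^{\tau} P\,\omega_{\cF},
\]
and check that it is $\Gamma$-invariant. This rests on the $\widetilde{\Gamma}$-equivariance of the lattice-indexed family $\{\Phi_{\widetilde{L}}\}$ from Proposition~\ref{prop:latticesymbols}: for $\gamma\in\Gamma$, the change of variables $(x,y)\mapsto\gamma(x,y)$ pulls back $\Phi_{\widetilde{L}_{\gamma\tau}}\{\gamma r-\gamma s\}$ to $\Phi_{\widetilde{L}_{\tau}}\{r-s\}$ and $\widetilde{\gamma\cdot P}$ to $\widetilde{P}$, while the rescaling factor $|\mathscr{L}_{\gamma\tau}|^{k_{0}/2}$ is absorbed by the weight-$\lambda_{k_{0}}$ homogeneity of $\widetilde{P}$. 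The delicate point is the behaviour of $\log_{p}\mathrm{N}_{F/\QQ}(x-\tau y)$ (in the sense of Definition~\ref{def:derivative}): a direct computation, analogous to the classical identity $\gamma z-\gamma\tau=\det(\gamma)(z-\tau)/((cz+d)(c\tau+d))$, shows it picks up an additive summand proportional to $\log_{p}\mathrm{N}_{F/\QQ}(a+c\tau)$ that is constant in $(x,y)$ and hence pairs trivially against the cuspidal divisor $r-s\in\Delta_{0}$ by cuspidality of $\cF$.

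Granting $\Gamma$-invariance, $\Phi^{\mathrm{semi}}_{\cF}$ descends to $\overline{\Phi}^{\mathrm{semi}}_{\cF}$ on $(\Delta_{0}\otimes\textup{Div}(\cH_{\p}^{\ur})\otimes V_{k_{0},k_{0}})_{\Gamma}$. From the long exact sequence (\ref{eqn:homologylongexactseq1}), the image of $\delta$ inside $(\Delta_{0}\otimes\textup{Div}^{0}(\cH_{\p}^{\ur})\otimes V_{k_{0},k_{0}})_{\Gamma}$ is precisely the kernel of the natural map into $(\Delta_{0}\otimes\textup{Div}(\cH_{\p}^{\ur})\otimes V_{k_{0},k_{0}})_{\Gamma}$; consequently $\overline{\Phi}^{\mathrm{semi}}_{\cF}$ annihilates $\delta(c)$ for every $c\in\tupH_{1}(\Gamma,\Delta_{0}\otimes V_{k_{0},k_{0}})$. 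Now evaluate Proposition~\ref{prop:definiteord+log} on a representative $(r-s)\otimes(\tau_{2}-\tau_{1})\otimes P$ of $\delta(c)$ and pair with $\Phi_{\cF}^{\mathrm{har}}$: the semidefinite contribution collapses to $\overline{\Phi}^{\mathrm{semi}}_{\cF}(\delta(c))=0$, leaving exactly the asserted identity. Comparison with the defining equation of $\cL_{\p}^{\mathrm{BW}}$ yields $\cL_{\p}^{\mathrm{BW}}=-2a_{\p}'(\cF)/a_{\p}(\cF)$.

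The main obstacle will be the careful verification of $\Gamma$-invariance of $\Phi^{\mathrm{semi}}_{\cF}$. One must track, in the derivative-of-family interpretation of Definition~\ref{def:derivative}, how the distributional measure $\Phi_{\widetilde{L}_{\tau}}\{r-s\}$ and the factor $\log_{p}\mathrm{N}_{F/\QQ}(x-\tau y)$ simultaneously transform under $\gamma\in\Gamma$, and crucially exploit that the residual constant $\log_{p}\mathrm{N}_{F/\QQ}(a+c\tau)$ arising from the Mobius denominator is killed by the cuspidality of $\phi_{k_{0}}$ against the divisor $r-s\in\Delta_{0}$.
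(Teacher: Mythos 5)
Your route is genuinely different from the paper's, and it is worth saying exactly how. The paper's proof is a one-line reduction through the base--change structure: \cite[Lemma~4.4]{VW19} gives $\cL_\p^{\mathrm{BW}} = 2\cL_p(f)$; since $p$ is inert, $a_\p(\cF)=a_p(f)^2$ so $a_\p'(\cF)/a_\p(\cF)=2a_p'(f)/a_p(f)$; and the classical identity $\cL_p(f)=-2a_p'(f)/a_p(f)$ (\cite[Theorem~4.11]{Sev12}) closes the loop. You instead propose to prove the identity intrinsically, essentially transposing Seveso's original argument over $\QQ$ to the Bianchi setting: make the semidefinite functional $\Phi^{\mathrm{semi}}_\cF$ a $\Gamma$-invariant map on $\Delta_0\otimes\mathrm{Div}(\cH_\p^{\ur})\otimes V_{k_0,k_0}$, use exactness of \eqref{eqn:homologylongexactseq1} to see it kills $\mathrm{Im}(\delta)=\ker\partial_1$, feed Proposition~\ref{prop:definiteord+log} in, and read off $\cL_\p^{\mathrm{BW}}$ against the defining relation of Remark~\ref{rem:bwlinvariant}, with Theorem~\ref{thm:towardsabeljacobi}(i) guaranteeing the comparison is non-vacuous. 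That logic is tight, it is more self-contained than the paper's, and in principle it does not need $\cF$ to be base--change.

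The trade-off is that your route hinges on the $\Gamma$-invariance of $\Phi^{\mathrm{semi}}_\cF$, a real lemma that the paper nowhere proves and in fact deliberately routes around. Your sketch has the right ingredients but is not yet a proof, and one detail is slightly off: the additive summand $\log_p\mathrm{N}_{F/\QQ}$ of the automorphy factor, being constant in $(x,y)$ and of weight $\lambda_{k_0}$ after multiplying by $\widetilde{P}$, is annihilated not by ``cuspidality of $\phi_{k_0}$ against $r-s$'' per se, but because its integral against $\Phi_{\widetilde{L}_\tau}\{r-s\}$ is, via the pushforward of Corollary~\ref{cor:pushforwardtoharmonic}, a multiple of $\int_{\PP^1(F_\p)}P\,d\Phi_\cF^{\mathrm{har}}\{r-s\}$, which vanishes by the total-mass-zero property \cite[Proposition~5.8(i)]{BW19}. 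To complete the argument you would need to carry out the Bianchi analogue of Seveso's invariance lemma, tracking simultaneously the derivative-of-family interpretation of $\log_p\mathrm{N}_{F/\QQ}(x-\tau y)$ from Definition~\ref{def:derivative}, the $\widetilde{\Gamma}$-equivariance of $\{\Phi_{\widetilde L}\}$ from Proposition~\ref{prop:latticesymbols}, the choice of $\mathscr{L}_{\gamma\tau}$ in the homothety class $\mathrm{red}_\p(\gamma\tau)$, and the compensating factor $|\mathscr{L}_{\gamma\tau}|^{k_0/2}$. Nothing here looks like it fails, but it is a genuine piece of work that the paper buys off cheaply with the base--change hypothesis.
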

\begin{proof}
Recall that the Bianchi cuspform $\cF$ is the base--change to $F$ of the elliptic cuspidal newform $f \in S_{k_{0}+2}(\Gamma_{0}(N))$. By \cite[Lemma 4.4]{VW19}, we know that $\cL_{\p}^{\mathrm{BW}} = 2\cL_{p}(f)$. Moreover, since $p$ is inert in $F$, we know that $a_{\p}(\cF) = a_{p}(f)^{2}$ where $a_{p}$ is the $U_{p}$-eigenvalue of the form $f$. The result follows since it is known that (See for eg. \cite[Theorem 4.11]{Sev12})
\[ \cL_{p}(f) = -2\frac{a_{p}'(f)}{a_{p}(f)} \]
 where $a_{p}'(f)$ is defined as the derivative of the $U_{p}$-eigenvalue of the specialisations of the Coleman family $\mathbf{f}(q)$ at $\lambda_{k_{0}}$. 
\end{proof}
\begin{corollary} \label{cor:semidefinitebranch} The semidefinite integral $\int_{r}^{s}\int^{\tau}P(x,y)\omega_{\cF}$ satisfies
\[ \int_{r}^{s}\int^{\tau_{2}}P(x,y)\omega_{\cF} - \int_{r}^{s}\int^{\tau_{1}}P(x,y)\omega_{\cF} = \int_{r}^{s}\int^{\tau_{2}}_{\tau_{1}}P(x,y)\omega_{\cF}^{\mathrm{log}_{\cL_{p}(f)}} \]
where $\log_{\cL_{p}(f)}$ is the branch of the $p$-adic logarithm such that $\log_{\cL_{p}(f)}(p) = -\cL_{p}(f)$. 

\end{corollary}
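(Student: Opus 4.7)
The plan is to combine the two results immediately preceding the corollary: Proposition~\ref{prop:definiteord+log} and Proposition~\ref{prop:basechangeLinv}. There is essentially no new analytic content beyond what has already been established, so the argument is a bookkeeping calculation involving the branches of the $p$-adic logarithm.

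Concretely, I would first apply Proposition~\ref{prop:definiteord+log} to rewrite the difference $\int^{\tau_2} - \int^{\tau_1}$ of semidefinite integrals as the sum of the normed $\log_p$ double integral and the $\ord_\p$ double integral with coefficient $2 a_\p'(\cF)/a_\p(\cF)$. Then, invoking Proposition~\ref{prop:basechangeLinv} together with the identity $\cL_\p^{\mathrm{BW}} = 2\cL_p(f)$ from Remark~\ref{rem:galoisequivarianceofL-invs}, this coefficient equals $-\cL_\p^{\mathrm{BW}} = -2\cL_p(f)$.

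To finish, I would identify the resulting expression with the normed double integral having the branch $\log_{\cL_p(f)}$. Following the summation convention of Definition~\ref{defn:doubleintegrals}(ii) and Remark~\ref{rem:omegaF}, the unsummed symbol $\omega_\cF^{\log_{\cL_p(f)}}$ stands for the sum over the two embeddings $\sigma : F_\p \hookrightarrow L$ of the $\sigma$-integrals against $\log_{\cL_p(f)}\bigl((t_\p - \tau_1)/(t_\p - \tau_2)\bigr)^\sigma$. Using the defining relation $\log_{\cL_p(f)} = \log_p - \cL_p(f)\,\ord_\p$ from (\ref{eqn:branchp-adiclog}) and the fact that $\ord_\p$ is invariant under each embedding $\sigma$, summing across the two embeddings produces exactly $\omega_\cF^{\log_p} - 2\cL_p(f)\, \omega_\cF^{\ord_\p}$, matching the expression obtained in the first step.

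The only subtlety --- hardly an obstacle --- is ensuring that the factor of $2$ arising from the Galois sum in the branch identification is consistent with the factor $2\cL_p(f)$ coming from $\cL_\p^{\mathrm{BW}} = 2\cL_p(f)$. Both ultimately reflect the doubling behaviour of the norm across the inert quadratic extension $F_\p/\QQ_p$, and the consistency check is automatic once the bookkeeping is laid out as above.
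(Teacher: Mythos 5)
Your proposal is correct and follows essentially the same route as the paper: apply Proposition~\ref{prop:definiteord+log}, use Proposition~\ref{prop:basechangeLinv} to rewrite the $\ord_\p$ coefficient as $-\cL_\p^{\mathrm{BW}}$, and then invoke the identity $\Phi_{\cF}^{\log_p} - \cL_\p^{\mathrm{BW}}\Phi_{\cF}^{\ord_\p} = \Phi_{\cF}^{\log_{\cL_p(f)}}$ from Remark~\ref{rem:galoisequivarianceofL-invs}. The paper cites that remark as a black box, whereas you unpack it by summing $\log_{\cL_p(f)} = \log_p - \cL_p(f)\,\ord_\p$ over the two embeddings and using that $\ord_\p$ is embedding-independent, which is precisely the content behind $\cL_\p^{\mathrm{BW}} = 2\cL_p(f)$; this is the same argument with one step made explicit.
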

\begin{proof}
By Propositions~\ref{prop:definiteord+log} and \ref{prop:basechangeLinv} above, we have 
\begin{multline*} \int_{r}^{s}\int^{\tau_{2}}P(x,y)\omega_{\cF} - \int_{r}^{s}\int^{\tau_{1}}P(x,y)\omega_{\cF} = \\ \int_{r}^{s}\int^{\tau_{2}}_{\tau_{1}}P(x,y)\omega_{\cF}^{\mathrm{log}_{p}} - \cL_{\p}^{\mathrm{BW}}\int_{r}^{s}\int^{\tau_{2}}_{\tau_{1}}P(x,y)\omega_{\cF}^{\mathrm{ord}_{\p}}. \end{multline*}
By Remark~\ref{rem:galoisequivarianceofL-invs} above, we have 
\[\Phi_{\cF}^{\log_{p}} - \cL_{\p}^{\mathrm{BW}}\Phi_{\cF}^{\ord_{\p}} = \Phi_{\cF}^{\log_{\cL_{p}(f)}} \]
or in terms of double integrals, we have 
\begin{equation} \label{eqn:doubleintegrals} \int_{r}^{s}\int^{\tau_{2}}_{\tau_{1}}P(x,y)\omega_{\cF}^{\mathrm{log}_{p}} - \cL_{\p}^{\mathrm{BW}}\int_{r}^{s}\int^{\tau_{2}}_{\tau_{1}}P(x,y)\omega_{\cF}^{\mathrm{ord}_{\p}} = \int_{r}^{s}\int^{\tau_{2}}_{\tau_{1}}P(x,y)\omega_{\cF}^{\mathrm{log}_{\cL_{p}(f)}} 
\end{equation}

\end{proof}
\begin{theorem} \label{thm:padicAJimage}
Let $\Psi \in \mathrm{Emb}(\cO, \cR)$ and let $r \in \mathbb{P}^{1}(F)$ be an arbitrary base point. Then
\[ \int_{r}^{\gamma_{\Psi}r}\int^{\tau_{\Psi}}P_{\Psi}^{k_{0}/2}\omega_{\cF} = \bigg( \sqrt{\mathrm{N}_{F/\QQ}(\cD_{K/F})} \bigg)^{k_{0}/2}\mathrm{log}\:\Phi^{\mathrm{AJ}}(\mathrm{D}_{[\Psi]})(\Phi_{\cF}^{\mathrm{har}})\]
\end{theorem}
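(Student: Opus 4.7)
The plan is to realise the semidefinite integral as the specialisation at $\Phi_\cF^{\mathrm{har}}$ of an explicit Abel--Jacobi lift, and then to conclude via Theorem~\ref{thm:padicabeljacobiimageofdarmoncycles}, which guarantees that the image of $[\mathrm{D}_{[\Psi]}]$ is independent of the chosen lift.

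Concretely, I would first introduce the pairing
\[ \Xi\bigl((r-s)\otimes \tau \otimes P\bigr) \defeq \int_r^s \int^{\tau} P\,\omega_\cF \in L \]
on $\Delta_0 \otimes \mathrm{Div}(\cH_\p^{\mathrm{ur}}) \otimes V_{k_0,k_0}$ and verify that it descends to the $\Gamma$-coinvariants. For $\gamma \in \Gamma = \cR_1^\times$, the change of variables $(x',y') = \gamma^{-1}(x,y)$ yields the factorisation
\[ \log_p\bigl(\mathrm{N}_{F/\QQ}(x - \gamma\tau\,y)\bigr) = \log_p\bigl(\mathrm{N}_{F/\QQ}(x' - \tau y')\bigr) - \log_p\bigl(\mathrm{N}_{F/\QQ}(c\tau+d)\bigr). \]
The principal term reassembles into $\Xi((r-s)\otimes \tau \otimes P)$ by the $\widetilde\Gamma$-equivariance of the lattice symbols $\{\Phi_{\widetilde L}\}$ (Proposition~\ref{prop:latticesymbols}), together with the invariance $|\mathscr{L}_{\gamma\tau}| = |\mathscr{L}_\tau|$ (since $\det\gamma = 1$). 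The residual cocycle contributes
\[ -\log_p\bigl(\mathrm{N}_{F/\QQ}(c\tau+d)\bigr)\cdot |\mathscr{L}_\tau|^{k_0/2}\int_\mathscr{Y}\widetilde P(x,y)\,d\Phi_{\widetilde{L_\tau}}\{r-s\}, \]
and after transporting an arbitrary lattice to $\widetilde{L}_*$ via the $\widetilde\Gamma$-action, the inner integral is handled by Lemma~\ref{lem:pstabilisationmodularsymbols} at $k=k_0$; the parenthetical factor $(1 - \mathrm{N}_{F/\QQ}(p)^{k_0}/a_\p(\cF)^2)$ vanishes thanks to $a_\p(\cF)^2 = \mathrm{N}_{F/\QQ}(p)^{k_0}$ by (\ref{eqn:Up-eigenvalue}).

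By Corollary~\ref{cor:semidefinitebranch} together with Remark~\ref{rem:galoisequivarianceofL-invs}, the restriction of $\Xi$ to degree-zero divisors on $\cH_\p^{\mathrm{ur}}$ coincides with $\mathrm{log}\:\Phi_\cF(\cdot)(\Phi_\cF^{\mathrm{har}})$. Since $\mathbf{MS}_\Gamma(L)_{(\cF)}^\vee$ is one-dimensional, $\Xi$ promotes, via the identification induced by evaluation at $\Phi_\cF^{\mathrm{har}}$, to an Abel--Jacobi lift fitting the commutative diagram of Remark~\ref{rem:logAJ}. Theorem~\ref{thm:padicabeljacobiimageofdarmoncycles} then delivers $\Xi([\mathrm{D}_{[\Psi]}]) = \mathrm{log}\:\Phi^{\mathrm{AJ}}([\mathrm{D}_{[\Psi]}])(\Phi_\cF^{\mathrm{har}})$; unwinding the normalisation $(\mathrm{N}_{F/\QQ}(\cD_{K/F}))^{-k_0/4}$ built into $\mathrm{D}_{[\Psi]}$ (Definition~\ref{defn:stark-heegnercycle1}) and multiplying both sides by $(\sqrt{\mathrm{N}_{F/\QQ}(\cD_{K/F})})^{k_0/2}$ yields the claimed identity.

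The main obstacle is the $\Gamma$-invariance of $\Xi$ in the first step: the $\tau$-dependent logarithmic cocycle $\log_p(\mathrm{N}_{F/\QQ}(c\tau+d))$ is a priori an obstruction to descent to the coinvariants, and its elimination hinges crucially on the trivial-zero identity $a_\p(\cF)^2 = \mathrm{N}_{F/\QQ}(p)^{k_0}$, which forces the inner modular-symbol pairing to vanish.
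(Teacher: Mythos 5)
Your proof takes the same route as the paper: realise the semidefinite integral pairing $\Xi$ as an explicit Abel--Jacobi lift (using Corollary~\ref{cor:semidefinitebranch} and Remark~\ref{rem:galoisequivarianceofL-invs} to identify its restriction to degree-zero divisors with $\log\Phi_{\cF}$), then invoke Theorem~\ref{thm:padicabeljacobiimageofdarmoncycles} for independence of the lift, and finally unwind the $\left(\mathrm{N}_{F/\QQ}(\cD_{K/F})\right)^{-k_{0}/4}$ normalisation in $\mathrm{D}_{[\Psi]}$. Your treatment is actually more complete than the paper's in one respect: you explicitly verify that $\Xi$ descends to $\Gamma$-coinvariants by showing that the $\log_{p}\left(\mathrm{N}_{F/\QQ}(c\tau+d)\right)$ cocycle term is killed by the vanishing of $\int_{\mathscr{Y}}\widetilde P\, d\Phi_{\widetilde{L}_{\tau}}\{r-s\}$ at $k=k_{0}$ (a consequence of the trivial-zero identity $a_{\p}(\cF)^{2}=\mathrm{N}_{F/\QQ}(p)^{k_{0}}$, equivalently of \cite[Proposition 5.8(i)]{BW19}), a step the paper's argument leaves implicit when it appeals to the uniqueness theorem.
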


\begin{proof} By Corollary~\ref{cor:semidefinitebranch},  
 
\begin{multline*}
\int_{r}^{\gamma_{\Psi}r}\int^{\tau_{\Psi}}P_{\Psi}^{k_{0}/2}\omega_{\cF} - \int_{r}^{\gamma_{\Psi}r}\int^{\tau_{\Psi}^{\theta}}P_{\Psi}^{k_{0}/2}\omega_{\cF} = \int_{r}^{\gamma_{\Psi}r}\int^{\tau_{\Psi}}_{\tau_{\Psi}^{\theta}}P_{\Psi}^{k_{0}/2}\omega_{\cF}^{\mathrm{log}_{\cL_{p}(f)}} \\
 = \Phi_{\cF}^{\mathrm{log}_{\cL_{p}(f)}}\left( \lbrace \gamma_{\Psi}r - r \rbrace \otimes \lbrace \tau_{\Psi} - \tau_{\Psi}^{\theta} \rbrace \otimes P_{\Psi}^{k_{0}/2} \right)
 \end{multline*} 
which by Remark~\ref{rem:galoisequivarianceofL-invs} is equivalent to 
\begin{equation}\label{eqn:Section4eqn1} \mathrm{log}\;\Phi_{\cF}\left( \lbrace \gamma_{\Psi}r - r \rbrace \otimes \lbrace \tau_{\Psi} - \tau_{\Psi}^{\theta} \rbrace \otimes P_{\Psi}^{k_{0}/2} \right) \in  \mathbf{MS}_{\Gamma}(L)_{(\cF)}^{\vee}. 
\end{equation}  
Then, 
\begin{equation} \label{eqn:Section4eqn2}
\mathrm{log}\;\Phi^{\mathrm{AJ}}\left( \lbrace \gamma_{\Psi}r - r \rbrace \otimes \lbrace \tau_{\Psi} \rbrace \otimes P_{\Psi}^{k_{0}/2} \right) \in  \mathbf{MS}_{\Gamma}(L)_{(\cF)}^{\vee} 
\end{equation}
is a lift of (\ref{eqn:Section4eqn1}) above as in Remark~\ref{rem:logAJ}. The theorem now follows Theorem~\ref{thm:padicabeljacobiimageofdarmoncycles} (See \cite[Theorem 6.5]{VW19} for more details) since the $\p$-adic Abel--Jacobi image of Stark--Heegner cycles are independent of the choice of the $\p$-adic Abel--Jacobi map.
\end{proof}



\section{$p$-adic $L$-functions} \label{sec:p-adicLfns}
In this section, we briefly recall the construction and interpolation properties of the various $p$-adic $L$-functions that are of relevance to this article.
\subsection{Bellaiche-Stevens-Mazur-Kitagawa $p$-adic $L$-functions} \label{subsec:smk}
Let $\chi$ be a quadratic Dirichlet character of conductor $c(\chi)$ relatively prime to $N$. For $\lambda_{k} \in U$ a classical weight, which we view as a classical weight on the weight space $\mathcal{W}_{\QQ} \defeq \mathrm{Hom}_{\mathrm{cts}}(\ZZ_{p}^{\times}, L^{\times})$ when the context is clear, let $L(f_{k}^{\#}, \chi, s)$ be the analytic continuation of the $\chi$-twisted Dirichlet $L$-series $\sum\limits_{n \geq 1}\frac{\chi(n)a_{n}(f_{k}^{\#})}{n^{s}}$ for $s \in \mathbb{C}$. We denote by $\Omega_{\infty}(f_{k}^{\#})^{\pm} \in \mathbb{C}^{\times}$ to be the Shimura periods that satisfy
\[  \Omega_{\infty}(f_{k}^{\#})^{+}\cdot \Omega_{\infty}(f_{k}^{\#})^{-} = \langle f_{k}^{\#}, f_{k}^{\#}\rangle_{Mp^{r(k)}} \]
where $r(k_{0}) = 1$ and zero otherwise, and $\langle,\rangle$ is the usual Petersson norm. 
Then the \emph{algebraic part} of the special $L$-values, for $1 \leq j \leq k+1$, are given by 
\[ L^{\mathrm{alg}}(f_{k}^{\#}, \chi, j) = \frac{(j-1)!\cdot\tau(\chi)\cdot c(\chi)^{j}\cdot L(f_{k}^{\#}, \chi, j)}{(-2\pi i)^{j-1}\cdot \Omega_{\infty}(f_{k}^{\#})^{\pm}} \in \QQ(f_{k}^{\#}, \chi) \]
where $\tau(\chi) = \sum_{a \in (\ZZ/c(\chi)\ZZ)^*} \chi(a)\cdot\zeta^{a}_{c(\chi)}$ is the Gauss sum of $\chi$. The sign of the Shimura period is chosen according to the sign of 
\[ \chi(-1) = \pm(-1)^{j-1} \]
\begin{theorem}[Bellaiche, Stevens, Mazur, Kitagawa] \label{thm:mazurkitagawa1} There exists $p$-adic periods $\Lambda^{\pm}(k) \in L$ for $\lambda_{k} \in U \cap \ZZ$ (the set of classical weights in the affinoid $U$) such that for any quadratic Dirichlet character $\chi$, there is a locally analytic $p$-adic function $L_{p}(\mathbf{f},\chi)$ on $U \times \ZZ_{p}$ that satisfies, for $1 \leq j \leq k+1$, 
\begin{equation} \label{eqn:mazurkitagawa1}
L_{p}(\mathbf{f},\chi)(\lambda_{k},j) = \begin{cases} 
\Lambda^{\pm}(k_0)\Big( 1 - \frac{\chi(p)p^{j-1}}{a_{p}(k)}\Big) L^{\mathrm{alg}}(f_{k}, \chi, j), &\text{if }k = k_{0}\\
\Lambda^{\pm}(k)\Big( 1 - \frac{\chi(p)p^{j-1}}{a_{p}(k)}\Big)\Big( 1 - \frac{\chi(p)p^{k-j+1}}{a_{p}(k)}\Big) L^{\mathrm{alg}}(f_{k}^{\#}, \chi, j), &\text{if }k \neq k_{0}
\end{cases} 
\end{equation}
\end{theorem}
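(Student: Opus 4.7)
The plan is to construct $L_p(\mathbf{f}, \chi)$ via Stevens' theory of overconvergent modular symbols applied to the Coleman family $\mathbf{f}$ passing through $f$ over the affinoid $U \subset \mathcal{W}_{\mathbb{Q}}$, following the approach of Bellaiche and the Greenberg--Stevens setup. First I would lift the family of classical $p$-stabilized modular symbols $\phi_{f_k}^{\pm} \in \mathrm{Symb}_{\Gamma_0(N)}(V_k(L)^{\vee})^{\pm}$ to a single family $\boldsymbol{\Phi}_{\mathbf{f}}^{\pm} \in \mathrm{Symb}_{\Gamma_0(N)}(\mathbb{D}_U)^{\pm}$ of overconvergent distribution-valued modular symbols. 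The existence of such a lift deforming $f$ is an instance of Stevens' Control Theorem (equivalently, Ash--Stevens or Bellaiche's eigenvariety construction), applicable here because $f$ has non-critical slope (its $U_p$-slope is $k_0/2 < k_0 + 1$).

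Given such $\boldsymbol{\Phi}_{\mathbf{f}}^{\pm}$, I would define the two-variable $p$-adic $L$-function by the Mellin transform
\[ L_p(\mathbf{f}, \chi)(\lambda_k, j) \defeq \boldsymbol{\Phi}_{\mathbf{f}}^{\mathrm{sgn}(\chi,j)}\{0 - \infty\}\bigl(\chi \cdot z^{j-1}\bigr), \]
where the sign $\mathrm{sgn}(\chi,j) = \pm$ is determined by $\chi(-1) = \pm(-1)^{j-1}$, and $z^{j-1}$ denotes the appropriate locally analytic test function supported on $\mathbb{Z}_p^{\times}$ twisted by $\chi$. Local analyticity on $U \times \mathbb{Z}_p$ is immediate from the fact that $\boldsymbol{\Phi}_{\mathbf{f}}^{\pm}$ takes values in the $\mathcal{O}(U)$-module $\mathbb{D}_U$ and $\chi \cdot z^{j-1}$ varies analytically in $j \in \mathbb{Z}_p$.

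The interpolation formula is then extracted by specializing $\boldsymbol{\Phi}_{\mathbf{f}}^{\pm}$ to classical weight $\lambda_k$. Define the $p$-adic periods $\Lambda^{\pm}(k) \in L$ by the identity $\rho_{\lambda_k}(\boldsymbol{\Phi}_{\mathbf{f}}^{\pm}) = \Lambda^{\pm}(k) \phi_{f_k}^{\pm}$, normalized so that $\Lambda^{\pm}(k_0) = 1$. Then $L_p(\mathbf{f},\chi)(\lambda_k, j)$ equals $\Lambda^{\pm}(k)$ times the standard Mazur--Swinnerton-Dyer $p$-adic $L$-value of $f_k$ at $(j,\chi)$. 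For $k = k_0$, the form $f_{k_0} = f$ is new at $p$, producing one Euler factor removal $(1 - \chi(p)p^{j-1}/a_p(k))$; for $k \neq k_0$, the form $f_k$ is the $p$-stabilization of the newform $f_k^{\#}$ of tame level $M$, so the Mellin transform of its classical modular symbol recovers $L^{\mathrm{alg}}(f_k^{\#},\chi,j)$ with \emph{both} Euler factors removed, via a standard computation using (\ref{eqn:pstabilistaion2}) applied to modular symbols (analogous to Lemma~\ref{lem:pstabilisationmodularsymbols} in the elliptic setting).

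The main obstacle is the construction of the family $\boldsymbol{\Phi}_{\mathbf{f}}^{\pm}$ with the correct specialization at every classical point: this is the content of the Ash--Stevens/Bellaiche eigenvariety construction and requires the non-critical slope hypothesis to ensure that the specialization map $\rho_{\lambda_k}$ restricted to the $\mathbf{f}$-generalized eigenspace is an isomorphism onto the corresponding classical eigenspace. A secondary subtlety is that the $p$-adic periods $\Lambda^{\pm}(k)$, while nonzero in a Zariski-dense subset of $U$, can vanish at sporadic classical weights, and controlling them (e.g. Greenberg--Stevens' proof that $\Lambda^{\pm}(k_0) \neq 0$ after normalization) is what allows the $L$-function to encode meaningful arithmetic at $\lambda_{k_0}$.
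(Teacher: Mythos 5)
Your proposal is correct and follows precisely the Bellaiche--Stevens construction in \cite{Bel12} that the paper cites without further elaboration: construct a family $\boldsymbol{\Phi}_{\mathbf{f}}^{\pm}$ of overconvergent modular symbols over $U$ (possible because the slope $k_0/2$ is non-critical), define $L_p(\mathbf{f},\chi)$ by a twisted Mellin transform of $\boldsymbol{\Phi}_{\mathbf{f}}^{\pm}\{0-\infty\}$, and use the Control Theorem at classical points to read off the $p$-adic periods $\Lambda^{\pm}(k)$ together with the Euler factors (one at $k=k_0$ where $f$ is new at $p$, two at $k\neq k_0$ where $f_k$ is a $p$-stabilization of $f_k^{\#}$). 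The paper gives no proof of its own beyond the citation, so your sketch fills in exactly the details being delegated.
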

\begin{proof} See \cite{Bel12} for instance.
\end{proof}
\begin{remark} \label{rem:mazurkitagawanormalization}
We can normalize $L_{p}(\mathbf{f},\chi)$ such that $\Lambda^{\pm}(k_{0}) = 1$. 
\end{remark}
$L_{p}(\mathbf{f},\chi)(\lambda_{\kappa}, s)$ is the two-variable $p$-adic $L$-function attached to the Coleman family $\mathbf{f}(q)$ and the character $\chi$ constructed by Mazur (unpublished) and Kitagawa (\cite{Kit94}) in the slope zero case (Hida families) and by Stevens in the finite slope scenario (\cite{PS11} \& \cite{PS12}). We will be primarily interested in the slice of the two variable $p$-adic $L$-function along the central critical line $s = k/2 + 1$. Namely, set
\[ \mathcal{L}_{p}(\mathbf{f},\chi,\lambda_{k}) \defeq L_{p}(\mathbf{f},\chi)(\lambda_{k}, k/2+1) \]
for all classical weights $\lambda_{k} \in U^{\mathrm{cl}}$. Theorem~\ref{thm:mazurkitagawa1} above implies that
\begin{equation} \label{eqn:mazurkitagawa2}
\mathcal{L}_{p}(\mathbf{f},\chi,\lambda_{k}) = 
\begin{cases} 
\Big( 1 - \frac{\chi(p)p^{k/2}}{a_{p}(k)}\Big) L^{\mathrm{alg}}(f_{k}, \chi, k/2+1), &\quad\text{if }k = k_{0}\\
\Lambda^{\pm}(k)\Big( 1 - \frac{\chi(p)p^{k/2}}{a_{p}(k)}\Big)^{2}L^{\mathrm{alg}}(f_{k}^{\#}, \chi, k/2+1), &\quad\text{if }k \neq k_{0}
\end{cases}
\end{equation}


\subsection{$p$-adic $L$-functions attached to Bianchi modular forms} \label{subsec:bianchip-adicLfns}
We briefly sumarize the construction of $p$-adic $L$-functions attached to Bianchi modular forms in \cite{Wil17} following the exposition of \cite[\S3.4]{BW19} where the $p$-adic $L$-function is described in terms of analytic functions on $\cO_{F} \otimes_{\ZZ} \ZZ_{p} \cong \cO_{F_{\p}}$ (since $p$ is inert in $F$) rather than as locally analytic distributions on $\mathrm{Cl}_{F}(p^{\infty})$ - the ray class group of $F$ of conductor $p^{\infty}$. Let $\mathfrak{g} \subseteq \cO_{F}$ be any ideal relatively prime to $p$. It can be shown that
\[ \mathrm{Cl}_{F}(\mathfrak{g}p^{\infty}) \cong [(\cO_{F}/\mathfrak{g})^{\times} \times (\cO_{F} \otimes_{\ZZ} \ZZ_{p})^{\times} ]/ \cO_{F}^{\times} \]
Let $\mathcal{G} \in S_{\underline{k_{0}}+2}(U_{0}(\cN))$ be a small slope cuspidal Bianchi eigenform and let $\Phi_{\mathcal{G}}$ be the \emph{overconvergent modular symbol} of \cite{Wil17} attached to $\mathcal{G}$. Let $\mu'_{a (\textrm{mod  }\mathfrak{g})}$ be a distribution on $\lbrace{[a]\rbrace} \times (\cO_{F} \otimes_{\ZZ} \ZZ_{p}) \subset (\cO_{F}/\mathfrak{g})^{\times} \times (\cO_{F} \otimes_{\ZZ} \ZZ_{p})$ (which can be seen as a copy of $(\cO_{F} \otimes_{\ZZ} \ZZ_{p})$) defined as 
\begin{equation} \label{eqn:distributionp-adicLfn}
   \mu'_{a (\textrm{mod  }\mathfrak{g})} \defeq (g\overline{g})^{k_{0}/2}\Bigg[ \Phi_{\mathcal{G}} \Biggl\lvert \begin{pmatrix} 1 & b\\0 & g\end{pmatrix} \Bigg] \lbrace 0 - \infty \rbrace
   \end{equation}
   where $b$ is some lift of $a (\textrm{mod  }\mathfrak{g})$ and $\mathfrak{g}\cO_{F} = (g)$. Combining the distributions for different $a \in (\cO_{F}/\mathfrak{g})^{\times}$, we get a distribution $\mu_{p}$ on $(\cO_{F}/\mathfrak{g})^{\times} \times (\cO_{F} \otimes_{\ZZ} \ZZ_{p})$. On restricting to units in the second variable and then to restricting to functions invariant under $\cO_{F}^{\times}$, we obtain a distribution on $\mathrm{Cl}_{F}(\mathfrak{g}p^{\infty})$ afforded by the identification above.
   \begin{definition} \label{defn:p-adicLfn}  
   Let $\chi$ be a finite order Hecke character of conductor $\mathfrak{gf}$ where $\mathfrak{g}$ is coprime to $p$ and $\mathfrak{f}\mid p^{\infty}$ (which can be seen as a finite order character of $\mathrm{Cl}_{F}(\mathfrak{g}p^{\infty})$). The $p$-adic $L$-function associated to $\mathcal{G} \in S_{\underline{k_{0}}+2}(U_{0}(\cN))$ is defined to be the analytic function on $(\cO_{F} \otimes_{\ZZ} \ZZ_{p})$ given by
   \[ L_{p}(\mathcal{G}, \chi, s) \defeq \int_{\mathrm{Cl}(\mathfrak{g}p^{\infty})} \langle \mathbf{z}_{p} \rangle^{s} \chi(\mathbf{z}) d\mu_{p}(\mathbf{z}), \]
 where $s \in \cO_{F} \otimes_{\ZZ} \ZZ_{p} \cong \cO_{F_{\p}}$ and $\mathbf{z}_{p}$ is the projection of $\mathbf{z} \in \mathrm{Cl}(\mathfrak{g}p^{\infty})$ to $\mathrm{Cl}(p^{\infty})$.   
   \end{definition}
Germane to this article is the two variable base--change Bianchi $p$-adic $L$-function constructed by Seveso in \cite[Section 5.3]{Sev12},
\begin{equation} \label{eqn:SevesopadicLfn}
L_{p}(\mathbfcal{F},\chi,\lambda_{\kappa}) : U \rightarrow \mathbb{C}_{p}
\end{equation}
where $\chi$ is a finite order Hecke character of $F$. This two variable $p$-adic $L$-function interpolates the central critical values of the weight $k$-specializations of the Coleman family $\mathbfcal{F} = \mathbf{f}/F$ which we recall below :-
\begin{theorem} \label{thm:bianchiinterpolation}
Let $\chi$ be a finite order Hecke character of $F$ of conductor $\mathfrak{c}_{\chi}$. Then, for all classical weights $\lambda_{k} \in U$, 
\begin{equation} \label{eqn:bianchiinterpolation1}
L_{p}(\mathbfcal{F}, \chi, \lambda_{k}) =
\begin{cases}
\left( 1 - \chi(p)\frac{\mathrm{N}_{F/\QQ}(p)^{k/2}}{a_{\p}(\cF_{k})}\right)\cdot L^{\mathrm{alg}}(\cF, \chi, k_{0}/2 + 1) &\text{if } k=k_{0} \\
 C(k)\Big( 1 - \chi(p)\frac{\mathrm{N}_{F/\QQ}(p)^{k/2}}{a_{\p}(\cF_{k})}\Big)^{2}\cdot L^{\mathrm{alg}}(\cF_{k}^{\#}, \chi, k/2 + 1) & \text{if }k\neq k_{0}
 \end{cases}
\end{equation}
where $C(k) \in L^{\times}$ is the $p$-adic period of Theorem~\ref{thm:weightkspecialization} and 
\begin{equation} \label{eqn:bianchiperiods}
L^{\mathrm{alg}}(\cF_{k}^{\#}, \chi, k/2 + 1) \defeq u_{F}\frac{D_{F}^{k/2}(k/2)!^{2}\tau(\chi^{-1})(\mathrm{N}_{F/\QQ}(\mathfrak{c}_{\chi}))^{k/2}}{(2\pi i)^{k}\Omega_{k}^{\#}}L(\cF_{k}^{\#}, \chi, k/2 + 1) \in \overline{\QQ}
\end{equation}
for  $u_{F} \defeq \left[\cO_{F}^{\times}:\ZZ^{\times}\right] = |\mu(\cO_{F})|/2$.
\end{theorem}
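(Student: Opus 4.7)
The plan is to unwind the definition of $L_{p}(\mathbfcal{F}, \chi, \lambda_{\kappa})$ at a classical weight $\lambda_k \in U$ and reduce it to the standard interpolation formula for the Bianchi $p$-adic $L$-function attached to a single eigenform, as recalled in Definition~\ref{defn:p-adicLfn}. Concretely, Seveso's two-variable $p$-adic $L$-function is built via the same recipe as in \S\ref{subsec:bianchip-adicLfns} but applied to the $\mathbb{D}_{U}$-valued family $\Phi_{\infty} \in \mathrm{Symb}_{\Gamma_{0}(\cN)}(\mathbb{D}_{U})$ of Theorem~\ref{thm:weightkspecialization}. Since weight specialization commutes with the measure-theoretic construction of the $p$-adic $L$-function, evaluating at $\lambda_k$ amounts to computing the $p$-adic $L$-function attached to the symbol $\rho_{\lambda_k}(\Phi_\infty) = C(k)\phi_k$.

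For the base point $k = k_0$, Theorem~\ref{thm:weightkspecialization} gives $C(k_0) = 1$, and $\cF_{k_0} = \cF_{k_0}^{\#} = \cF$ is already a newform at $U_0(\cN)$ with $p \mid \cN$. The standard Williams interpolation formula applied to the single symbol $\phi_{k_0}$ at the central critical point $s = k_0/2+1$ removes the single $U_\p$-Euler factor $\bigl(1 - \chi(p)\mathrm{N}_{F/\QQ}(p)^{k_0/2}/a_\p(\cF)\bigr)$, giving the first case of the theorem.

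For $k \neq k_0$, the classical specialization $\cF_k$ is the $p$-stabilization of the newform $\cF_k^{\#}$ of level $U_0(\cM)$ via (\ref{eqn:pstabilisation}), inducing the modular-symbol identity (\ref{eqn:pstabilisationmodularsymbols1}). Substituting this relation into the distribution (\ref{eqn:distributionp-adicLfn}) and integrating against the character $\mathbf{z} \mapsto \chi(\mathbf{z})\langle \mathbf{z}_p \rangle^{k/2+1}$ expresses $L_p(\cF_k, \chi, k/2+1)$ as $L_p(\cF_k^{\#}, \chi, k/2+1)$ minus its twist under $\smallmatrd{p}{0}{0}{1}$. The twist pushes the character-integration variable by $p$, producing an extra factor of $\chi(p)\mathrm{N}_{F/\QQ}(p)^{k/2}/a_\p(\cF_k)$ (this is the character-twisted analogue of Lemma~\ref{lem:pstabilisationmodularsymbols}, which handles the trivial character case), yielding the second $p$-Euler factor $\bigl(1 - \chi(p)\mathrm{N}_{F/\QQ}(p)^{k/2}/a_\p(\cF_k)\bigr)$. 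Since $\cF_k^{\#}$ is a newform at level $\cM$ coprime to $p$, Williams' interpolation formula for the single symbol $\phi_k^{\#}$ at $s=k/2+1$ produces the first copy of this Euler factor together with $L^{\mathrm{alg}}(\cF_k^{\#}, \chi, k/2+1)$ in the normalization (\ref{eqn:bianchiperiods}), and the overall $p$-adic period $C(k)$ comes from Theorem~\ref{thm:weightkspecialization}.

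The main obstacle, and the only step that requires more than routine bookkeeping, is the character-twisted version of Lemma~\ref{lem:pstabilisationmodularsymbols} needed for $k \neq k_0$: one must verify that upon inserting the Hecke character $\chi$ (of conductor $\mathfrak{c}_\chi$ coprime to $p$) into the distributional computation of that lemma, the change-of-variables argument still recombines to produce exactly the Euler factor $\bigl(1 - \chi(p)\mathrm{N}_{F/\QQ}(p)^{k/2}/a_\p(\cF_k)\bigr)^2$ rather than a related quantity mixing $\chi$ and $\chi^{-1}$ or inverse Satake parameters. This is done by unfolding the $\smallmatrd{p}{0}{0}{1}$-twist exactly as in the proof of Lemma~\ref{lem:pstabilisationmodularsymbols}, using the $\Gamma_0(\cM)$-invariance of $\phi_k^{\#}$ to absorb the conjugation by the matrix $\theta$, and absorbing the character evaluation $\chi(p)$ that arises when the integration measure is translated on $\mathrm{Cl}_F(\mathfrak{g}p^\infty)$.
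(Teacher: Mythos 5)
The paper's ``proof'' of Theorem~\ref{thm:bianchiinterpolation} is a bare citation to Barrera Salazar--Williams, Seveso, and Bertolini--Darmon; you are attempting a genuine reconstruction. Your overall scheme (specialize the family symbol to weight $k$ via Theorem~\ref{thm:weightkspecialization}, use the $p$-stabilization relations to pass from $\cF_{k}$ to $\cF_{k}^{\#}$, observe the two Euler factors at $p$ coincide at the central critical point) is the standard route underlying those references, and the $k=k_{0}$ case is fine.

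There is, however, a real gap in your treatment of $k\neq k_{0}$. You explicitly name the ``main obstacle'' --- that the character-twisted analogue of Lemma~\ref{lem:pstabilisationmodularsymbols} must be shown to produce $\bigl(1 - \chi(p)\mathrm{N}_{F/\QQ}(p)^{k/2}/a_{\p}(\cF_{k})\bigr)^{2}$ rather than the a priori product $\bigl(1 - \chi(p)\mathrm{N}_{F/\QQ}(p)^{k/2}/a_{\p}\bigr)\bigl(1 - \chi^{-1}(p)\mathrm{N}_{F/\QQ}(p)^{k/2}/a_{\p}\bigr)$ --- and then assert, without computation, that ``this is done by unfolding the $\smallmatrd{p}{0}{0}{1}$-twist exactly as in the proof.'' That assertion does not discharge the obligation: in the trivial-character Lemma~\ref{lem:pstabilisationmodularsymbols} the resulting factor is $\bigl(1 - \mathrm{N}_{F/\QQ}(p)^{k}/a_{\p}^{2}\bigr)$, which is the product of two \emph{distinct} linear factors $\bigl(1 \pm \mathrm{N}_{F/\QQ}(p)^{k/2}/a_{\p}\bigr)$, not a square. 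The collapse to a perfect square $\bigl(1 - \chi(p)\mathrm{N}_{F/\QQ}(p)^{k/2}/a_{\p}\bigr)^{2}$ happens because the theorem evaluates at the central critical value $j = k/2+1$, where the two exponents $j-1$ and $k+1-j$ coincide, \emph{and} because the characters $\chi$ in play are effectively self-dual (as they are in every application in the paper, namely the genus-character components $\chi_{1},\chi_{2}$). Your proposal never closes this loop; the step you single out as nontrivial is precisely the step that is missing.

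A second, smaller confusion compounds this. You invoke ``$L_{p}(\cF_{k}^{\#},\chi,k/2+1)$'' and ``Williams' interpolation formula for the single symbol $\phi_{k}^{\#}$,'' but $\cF_{k}^{\#}$ has level $\cM$ prime to $p$, so there is no Williams $p$-adic $L$-function attached to $\phi_{k}^{\#}$ and no $U_{\p}$-operator at that level. What is available is Williams' interpolation formula applied to the finite-slope $p$-stabilization $\phi_{k}$, which yields the factor $\bigl(1 - \chi(p)\mathrm{N}_{F/\QQ}(p)^{j-1}/a_{\p}(\cF_{k})\bigr)\,L(\cF_{k},\chi,j)/\Omega_{k}$, and separately the identity $L(\cF_{k},\chi,j) = \bigl(1 - \chi(p)\,\mathrm{N}_{F/\QQ}(p)^{k+1-j}/a_{\p}(\cF_{k})\bigr)\,L(\cF_{k}^{\#},\chi,j)$ relating the $p$-old and new $L$-series; it is the combination of these, evaluated at $j = k/2+1$, that produces the square. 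Framing one factor as coming from ``Williams for $\phi_{k}^{\#}$'' and the other from the twist by $\smallmatrd{p}{0}{0}{1}$ mixes two different decompositions and makes the accounting unclear.

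In short: the skeleton of your argument is correct and parallels the cited literature, but the pivotal Euler-factor computation for $k\neq k_{0}$ is not carried out, and the sketch that replaces it both elides the $\chi$-versus-$\chi^{-1}$ issue you raise and leans on an interpolation formula that does not exist at level prime to $p$.
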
  
\begin{proof} See \cite[Theorem 3.12]{BW19}, \cite[Theorem 5.17]{Sev12} and \cite[Theorem 3.8]{BD07}.
\end{proof}

We then have the following factorisation of the Seveso $p$-adic $L$-function from above as a product of two Stevens--Mazur--Kitagawa $p$-adic $L$-functions of Section~\ref{subsec:smk}.
\begin{theorem} \label{thm:factorisation1}
Let $\epsilon_{F/\QQ}$ denote the quadratic Dirichlet character associated to the imaginary quadratic field $F/\QQ$. Then there exists a $p$-adic analytic function $\eta$ of $\lambda_{\kappa} \in U$ such that
\begin{equation} \label{eqn:factorisation1}
L_{p}(\mathbfcal{F}, \lambda_{\kappa}) = L_{p}(\mathbf{f}/F,\lambda_{\kappa}) = \eta(\lambda_{\kappa})u_{F}\cL_{p}(\mathbf{f}, \lambda_{\kappa})\cL_{p}(\mathbf{f}, \epsilon_{F/\QQ}, \lambda_{\kappa}) 
\end{equation} 
\end{theorem}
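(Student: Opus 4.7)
The identity to be established is the $p$-adic incarnation of the classical Artin factorisation
\[
L(f_k^\#/F, s) \;=\; L(f_k^\#, s)\cdot L(f_k^\#, \epsilon_{F/\QQ}, s),
\]
applied at the central critical point $s = k/2 + 1$ for every classical weight $\lambda_k \in U^{\mathrm{cl}}$. The natural strategy is therefore to compare the interpolation formulas on both sides at classical points, and then promote the resulting identity to all of $U$ by the density of classical points in the affinoid (Proposition~\ref{prop:zariskidensity}) together with an analytic continuation argument.

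First I would compute both sides at any classical $\lambda_k \in U$ with $k \neq k_0$. By Theorem~\ref{thm:bianchiinterpolation}, the left--hand side is
\[
L_p(\mathbfcal{F}, \lambda_k) \;=\; C(k)\Bigl(1 - \tfrac{\mathrm{N}_{F/\QQ}(p)^{k/2}}{a_\p(\cF_k)}\Bigr)^{2} L^{\mathrm{alg}}(\cF_k^\#, k/2+1),
\]
whereas by Theorem~\ref{thm:mazurkitagawa1} the right--hand side is, up to the factor $\eta(\lambda_k)u_F$, equal to
\[
\Lambda^+(k)\Lambda^-(k)\Bigl(1 - \tfrac{p^{k/2}}{a_p(f_k)}\Bigr)^{2}\Bigl(1 - \tfrac{\epsilon_{F/\QQ}(p)p^{k/2}}{a_p(f_k)}\Bigr)^{2} L^{\mathrm{alg}}(f_k^\#, k/2+1)\,L^{\mathrm{alg}}(f_k^\#, \epsilon_{F/\QQ}, k/2+1).
\]
Since $p$ is inert in $F$ we have $\epsilon_{F/\QQ}(p) = -1$, so the product of Euler factors collapses:
\[
\Bigl(1 - \tfrac{p^{k/2}}{a_p(f_k)}\Bigr)^{2}\Bigl(1 + \tfrac{p^{k/2}}{a_p(f_k)}\Bigr)^{2} \;=\; \Bigl(1 - \tfrac{p^{k}}{a_p(f_k)^{2}}\Bigr)^{2} \;=\; \Bigl(1 - \tfrac{\mathrm{N}_{F/\QQ}(p)^{k/2}}{a_\p(\cF_k)}\Bigr)^{2},
\]
where I use the base--change relation $a_\p(\cF_k) = a_p(f_k)^{2}$. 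The Euler factors at $p$ therefore match exactly on both sides.

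Next, I would invoke the classical Artin formalism $L(\cF_k^\#, k/2+1) = L(f_k^\#, k/2+1)\,L(f_k^\#, \epsilon_{F/\QQ}, k/2+1)$, combine it with the explicit period formulas (the Bianchi algebraic part using $D_F^{k/2}$, $\Omega_k^\#$ as in \eqref{eqn:bianchiperiods}, and the elliptic algebraic parts using $\Omega_\infty(f_k^\#)^{\pm}$, along with the identity $\tau(\epsilon_{F/\QQ})^{2} = \pm D_F$ and $\Omega_\infty^+ \Omega_\infty^- = \langle f_k^\#, f_k^\#\rangle$), to derive an explicit formula
\[
\eta(\lambda_k) \;=\; \frac{C(k)}{\Lambda^+(k)\Lambda^-(k)} \cdot \frac{\Omega_\infty(f_k^\#)^{+}\Omega_\infty(f_k^\#)^{-}}{\Omega_k^\#}\cdot (\text{elementary explicit factor}),
\]
valid at every classical weight $\lambda_k \in U^{\mathrm{cl}}$. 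A separate check is needed at the specific weight $\lambda_{k_0}$, where the interpolation formula on the Seveso side carries only a single Euler factor (because $\cF = \cF_{k_0}$ is new at $\p$); here the same factorisation persists since $a_p(f) = p^{k_0/2}$ makes one of the Euler factors on the right-hand side collapse analogously.

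Finally, I would extend $\eta$ from $U^{\mathrm{cl}}$ to all of $U$ by analytic continuation. Define
\[
\eta(\lambda_\kappa) \;\defeq\; \frac{L_p(\mathbfcal{F}, \lambda_\kappa)}{u_F\,\cL_p(\mathbf{f}, \lambda_\kappa)\,\cL_p(\mathbf{f}, \epsilon_{F/\QQ}, \lambda_\kappa)},
\]
\emph{a priori} a meromorphic function on $U$. The matching of interpolation formulas above shows that at every classical $\lambda_k$ the numerator and denominator have matching orders of vanishing (both vanish to the same order controlled by the common Euler factor and the common algebraic $L$-values), so the ratio extends to a genuine rigid analytic function on all of $U$; the density of $U^{\mathrm{cl}}$ in $U$ (Proposition~\ref{prop:zariskidensity}) then forces the claimed identity throughout $U$. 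The main obstacle is precisely the last point: one must verify that the $p$-adic periods $C(k), \Lambda^{\pm}(k)$ and the Bianchi/elliptic complex periods $\Omega_k^\#, \Omega_\infty(f_k^\#)^{\pm}$ interact compatibly so that their ratio varies $p$-adically analytically in $\lambda_\kappa$ --- which ultimately reduces to the compatibility of the three Coleman--family-theoretic constructions of the $p$-adic $L$-functions being compared.
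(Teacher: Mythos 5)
Your strategy --- comparing interpolation formulas at classical weights $\lambda_k$, verifying that the Euler factors at $p$ collapse via $\epsilon_{F/\QQ}(p) = -1$ and $a_\p(\cF_k) = a_p(f_k)^2$, invoking classical Artin formalism, and then extending by Zariski density --- is indeed the blueprint of the argument in the references the paper cites (the paper gives no proof itself, deferring entirely to Seveso \cite[Theorem 5.21]{Sev12} and \cite[Corollary 5.3]{BD07}). Your Euler factor computation and the sign bookkeeping giving $\Lambda^+(k)\Lambda^-(k)$ are correct.

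The gap is in the final step, and it is not a minor one. You write that at every classical $\lambda_k$ ``the numerator and denominator have matching orders of vanishing\ldots so the ratio extends to a genuine rigid analytic function on all of $U$.'' That inference does not hold: a meromorphic function on the affinoid $U$ whose poles lie entirely off the classical locus still takes finite values at every classical $\lambda_k$, so pole-freeness cannot be read off from the interpolation formulas alone. You also cannot appeal to density to promote an identity proved only at classical points unless you already know both sides are analytic. What actually resolves this in BD07/Seveso is that both $L_p(\mathbfcal{F},\cdot)$ and the Mazur--Kitagawa factors $\cL_p(\mathbf{f},\cdot)$, $\cL_p(\mathbf{f},\epsilon_{F/\QQ},\cdot)$ are constructed from the \emph{same} Coleman family of overconvergent modular symbols (via base-change of $\Phi_\infty$ and the Eichler--Shimura relation between Bianchi and elliptic modular symbols), and the periods $C(k)$, $\Lambda^\pm(k)$, $\Omega_k^\#$, $\Omega_\infty(f_k^\#)^\pm$ appearing in the interpolation laws are precisely the scalars normalising specialisations of that family. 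Their quotient is therefore rigid analytic by construction, not by a compatibility one hopes for and must separately check. Your proposal correctly isolates this as the crux, but labels it ``the main obstacle'' without supplying the structural input that closes it.
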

\begin{proof}
We refer the reader to \cite[Theorem 5.21]{Sev12} which is based on \cite[Corollary 5.3]{BD07}.
\end{proof}
\subsection{Heegner cycles} \label{subsec:Heegnercycles} We briefly recall the connection between Heegner cycles and the $p$-adic $L$-functions considered above, primarily following the exposition in \cite{IS03}, \cite{Sev12}, \cite{Sev14} and \cite{GSS16}. Recall the factorization $N = pM = pN^{+}N^{-}$. Let $\cB$ (resp. $B$) be the indefinite (resp. definite) quaternion algebra ramified at the primes dividing $pN^{-}$ (resp. $N^{-}\infty$). Let $\cR' = \cR_{N^{+},pN^{-}}$ (resp. $R'$) be a fixed Eichler order of level $N^{+}$ in $\cO_{\cB}$ (resp. of level $pN^{+}$ in $\cO_{B}$) where $\cO_{\cB}$ and $\cO_{B}$ are maximal orders in $\cB$ and $B$ respectively. We set $\widehat{B} \defeq B \otimes \widehat{\ZZ}$ and $\widehat{R'} \defeq R' \otimes \widehat{\ZZ}$. Let $\Sigma = \prod\limits_{\ell} \Sigma_{\ell} \defeq \widehat{R'}^{\times}$. Fix an identification $\iota_{p}:B \otimes \QQ_{p} \cong \mathrm{M}_{2}(\QQ_{p})$ and set 
\[ \widetilde{\Gamma}'_{\Sigma} \defeq \iota_{p}\left( \cO_{B}[1/p] \cap \prod\limits_{\ell \neq p} \Sigma_{\ell} \right) = \iota_{p}\left( R'[1/p]^{\times} \right) \]
and by $\Gamma'_{\Sigma} = \Gamma'$ to be the subgroups of elements of reduced norm one of $\widetilde{\Gamma}'_{\Sigma}$. Denote by $X \defeq X_{N^{+},pN^{-}}$ the Shimura curve attached to $\cB$ and by $f^{\mathrm{JL}} \in M_{k_{0}+2}(X)$ the weight $k_{0}+2$ modular form on the Shimura curve $X$ attached to $f$ via the Jacquet--Langlands correspondence. By the Cerednik--Drinfeld Theorem of $p$-adic uniformization, we have a rigid analytic isomorphism
\[ X_{\Gamma'} \defeq \Gamma'\backslash \cH_{p} \cong X^{\mathrm{an}} \]
This identification between the Mumford curve $X_{\Gamma'}$ and the rigid analytification of the Shimura curve $X^{\mathrm{an}}$ is defined over $F_{\p} \cong \QQ_{p^{2}}$ (See \cite{BC91}). We denote by $f^{\mathrm{rig}} \in M_{k_{0}+2}(\Gamma', F_{\p})$ the rigid analytic modular form associated to $f$ via this identification.
\paragraph*{} Let $\cM_{k_{0}}$ be the Chow motive over $\QQ$, attached to the space of weight $k_{0}+2$ modular forms on the Shimura curve $X$ and let $V(k_{0}/2+1) \defeq H_{p}(\cM_{k_{0},\overline{\QQ}}, \QQ_{p}(k_{0}/2+1))$ be its $p$-adic realization. See \cite[Appendix 10.1]{IS03} for the construction. By \cite[Lemma 5.8]{IS03}, the $G_{\QQ}$-representation $V(k_{0}/2+1)$ maybe realized as the representation attached to weight $k_{0}+2$ cusp forms that are new at the primes dividing $pN^{-}$. In particular, $V_{p}(f)(k_{0}/2+1)$ maybe realized as the idempotent component (corresponding to $f$) of the representation $V(k_{0}/2+1)$. For $H/\QQ$ any number field, we have the global $p$-adic \'{e}tale Abel--Jacobi map
\[ \mathrm{cl}_{H} \defeq \mathrm{cl}_{0, H}^{k_{0}/2+1} : \mathrm{CH}^{k_{0}/2+1}\left( \cM_{k_{0}} \otimes H \right) \rightarrow \mathrm{Sel}_{\mathrm{st}}(H, V(k_{0}/2+1)) \] 
where $\cM_{k_{0}} \otimes H$ is the base--change of $\cM_{k_{0}}/\QQ$ to $H$ and $\mathrm{CH}^{k_{0}/2+1}$ denotes the Chow group of co-dimension $k_{0}/2+1$ cycles. We may also consider the projection $V(k_{0}/2+1) \rightarrow V_{p}(f)(k_{0}/2+1)$ to obtain
\[ \mathrm{cl}_{f, H}:\mathrm{CH}^{k_{0}/2+1}\left( \cM_{k_{0}} \otimes H \right) \rightarrow \mathrm{Sel}_{\mathrm{st}}(H, V_{p}(f)(k_{0}/2+1)) \] 
Let $\mathfrak{P}$ be a prime in $H$ above $p$ and let $H_{\mathfrak{P}}$ be its $\mathfrak{P}$-adic completion. Then we have a commutative diagram
\begin{equation} \label{eqn:ISabeljacobi1}
\begin{tikzcd}
\mathrm{CH}^{k_{0}/2+1}\left( \cM_{k_{0}} \otimes H \right)  \arrow[r, "\mathrm{cl}_{f,H}"] \arrow[d]
&\mathrm{Sel}_{\mathrm{st}}(H, V_{p}(f)(k_{0}/2+1)) \arrow[d, "\mathrm{res}_{\mathfrak{P}}" ] \\
\mathrm{CH}^{k_{0}/2+1}\left( \cM_{k_{0}} \otimes H_{\mathfrak{P}}\right)  \arrow[r, "\mathrm{cl}_{f,H_{\mathfrak{P}}}"]
& \mathrm{H}^{1}_{\mathrm{st}}(H_{\mathfrak{P}}, V_{p}(f)(k_{0}/2+1)) 
\end{tikzcd}
\end{equation}
Let 
\[ \mathbb{D}_{f} \defeq \mathbb{D}_{\mathrm{st}}(V_{p}(f)|_{G_{\QQ_{p}}}) \]
denote the rank-two $(\varphi, N)$-module attached to $V_{p}(f)$. By \cite[(49)]{IS03}, we have the following identification 
\begin{align*} \mathrm{IS}: \mathrm{H}^{1}_{\mathrm{st}}(H_{\mathfrak{P}}, V_{p}(f)(k_{0}/2+1)) & \overset{\mathrm{log_{BK}}}{\cong}  \frac{\mathbb{D}_{f} \otimes H_{\mathfrak{P}}}{\mathrm{Fil}^{k_{0}/2+1}(\mathbb{D}_{f} \otimes H_{\mathfrak{P}})} \\
& \cong  M_{k_{0}+2}(X, H_{\mathfrak{P}})_{(f^{\mathrm{JL}})}^{\vee} \cong M_{k_{0}+2}(\Gamma', H_{\mathfrak{P}})_{(f^{\mathrm{rig}})}^{\vee}  
\end{align*}
where $\mathrm{log}_{\mathrm{BK}}$ is the Bloch--Kato logarithm and the final identification holds assuming $H_{\mathfrak{P}} \supseteq F_{\p} \cong \QQ_{p^{2}}$. Here $(-)^{\vee}$ stands for the $H_{\mathfrak{P}}$-dual and $(f^{?})$ stands for the $f^{?}$-isotypic component. It would be useful to consider the composition
\begin{multline} \label{ISabeljacobi2}
\mathrm{log\;cl}_{f,L}: \mathrm{CH}^{k_{0}/2+1}\left( \cM_{k_{0}} \otimes F \right)  \rightarrow  \mathrm{Sel}_{\mathrm{st}}(F, V_{p}(f)(k_{0}/2+1))  \rightarrow  \mathrm{Sel}_{\mathrm{st}}(K, V_{p}(f)(k_{0}/2+1)) \\  \xrightarrow{\mathrm{res}_{\p}} \mathrm{H}^{1}_{\mathrm{st}}(L, V_{p}(f)(k_{0}/2+1)) 
\rightarrow \frac{\mathbb{D}_{f} \otimes L}{\mathrm{Fil}^{k_{0}/2+1}(\mathbb{D}_{f} \otimes L)} \\
  \rightarrow M_{k_{0}+2}(X, L)_{(f^{\mathrm{JL}})}^{\vee} \rightarrow M_{k_{0}+2}(\Gamma', L)_{(f^{\mathrm{rig}})}^{\vee}
\end{multline}
where $L/\QQ_{p}$ is as before (recall that $L \supseteq K_{\p}$) and 
\[ \mathrm{Sel}_{\mathrm{st}}(F, V_{p}(f)(k_{0}/2+1))  \rightarrow  \mathrm{Sel}_{\mathrm{st}}(K, V_{p}(f)(k_{0}/2+1)) \]
is the usual restriction map.
\paragraph*{} The following result is proved in \cite{Sev14} generalising the weight the $k_{0} = 0$ setting of \cite{BD07}. See also \cite[Section 5.3.2]{Sev12} for more details.
\begin{theorem} \label{thm:HeegnercyclespadicAJ}
There exists a global cycle
\[ \mathcal{Y} \in \mathrm{CH}^{k_{0}/2+1}\left( \cM_{k_{0}} \otimes F \right) \]
such that
\[ \frac{d^{2}}{d\lambda_{\kappa}^{2}}\left[ L_{p}(\mathbfcal{F}, \lambda_{\kappa}) \right]_{\lambda_{\kappa} = \lambda_{k_{0}}} = \frac{d^{2}}{d\lambda_{\kappa}^{2}}\left[ L_{p}(\mathbf{f}/F, \lambda_\kappa) \right]_{\lambda_{\kappa} = \lambda_{k_{0}}} = 2 \mathrm{log\;cl}_{f,L}(\mathcal{Y})(f^{\mathrm{rig}})^{2} \]
\end{theorem}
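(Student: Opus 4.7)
The first equality $L_p(\mathbfcal{F}, \lambda_\kappa) = L_p(\mathbf{f}/F, \lambda_\kappa)$ is immediate from the definition of $\mathbfcal{F}$ as the base--change family attached to the elliptic Coleman family $\mathbf{f}$. The substantive content is the second equality, for which the plan is to follow the strategy of Bertolini--Darmon \cite{BD07} (in weight two) and its higher--weight generalization due to Iovita--Spiess \cite{IS03} and Seveso \cite{Sev14}: combine the factorization of $L_p(\mathbfcal{F}, \lambda_\kappa)$ into a product of two Mazur--Kitagawa--type $p$-adic $L$-functions, exploit the exceptional zero coming from the split multiplicative reduction of $f$ at $p$, and match the resulting leading Taylor coefficient with the $p$-adic Abel--Jacobi image of an explicit Heegner cycle.

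First I would apply Theorem~\ref{thm:factorisation1} to write
\[ L_p(\mathbfcal{F}, \lambda_\kappa) \;=\; \eta(\lambda_\kappa)\, u_F\, \cL_p(\mathbf{f}, \lambda_\kappa)\, \cL_p(\mathbf{f}, \epsilon_{F/\QQ}, \lambda_\kappa). \]
Both factors are special values of two--variable Mazur--Kitagawa $p$-adic $L$-functions restricted to the central critical line. Because $a_p(f) = p^{k_0/2}$ (split multiplicative reduction), the interpolation formula (\ref{eqn:mazurkitagawa2}) shows that the Euler--type factor in $\cL_p(\mathbf{f}, \lambda_{k_0})$ vanishes, giving the familiar exceptional zero. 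Since $p$ is inert in $F$, one has $\epsilon_{F/\QQ}(p) = -1$ and the corresponding Euler factor in the twisted piece is nonzero; however, the sign conditions imposed by the Heegner hypothesis (built into the factorization $N = N^+N^-p$ with $N^-$ a squarefree product of an odd number of primes inert in $F$) force the classical central $L$-value entering $\cL_p(\mathbf{f}, \epsilon_{F/\QQ}, \lambda_{k_0})$ to vanish as well. Consequently $L_p(\mathbfcal{F}, \lambda_\kappa)$ has a zero of order at least two at $\lambda_{k_0}$, and its second derivative there is (up to a factor of $2$) the product of the first derivatives of the two Mazur--Kitagawa factors, since the Leibniz expansion collapses on the double zero.

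The main step is then a $p$-adic Gross--Zagier--type formula identifying this product of derivatives with $(\mathrm{log\;cl}_{f,L}(\mathcal{Y})(f^{\mathrm{rig}}))^2$. To carry this out, I would express both derivatives directly in terms of the $\mathbb{D}_U^\dagger$--valued modular symbols $\Phi_\infty$ of Section~\ref{sec:p-adicfamilies}: unwinding the definition of the Mazur--Kitagawa $p$-adic $L$-function through the Amice--V\'elu Fourier transform and differentiating under the integral (as in Lemma~\ref{lem:derivativelemma}), each derivative becomes a $p$-adic double integral involving $\log_p \circ \mathrm{N}_{F/\QQ}$ of the harmonic modular symbol $\Phi_\cF^{\mathrm{har}}$, supported on the Mumford curve $X_{\Gamma'}$. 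Passing through the Cerednik--Drinfeld rigid uniformization $X_{\Gamma'} \cong X^{\mathrm{an}}$, this integral is identified with the evaluation of the Coleman primitive of $f^{\mathrm{rig}}$ at a CM point on the Shimura curve $X_{N^+, pN^-}$ coming from the imaginary quadratic order $\mathcal{O}_F$. The cycle $\mathcal{Y} \in \mathrm{CH}^{k_0/2+1}(\cM_{k_0} \otimes F)$ is then the Iovita--Spiess generalized Heegner cycle \cite[Section 8]{IS03} supported on the fiber of the Kuga--Sato variety over such a CM point, and the Iovita--Spiess explicit formula for the $p$-adic \'etale Abel--Jacobi map $\mathrm{log\;cl}_{f,L}$ is designed precisely to match this double integral, at which point the square arises from the two independent Mazur--Kitagawa contributions collapsing into a single geometric invariant.

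The main obstacle is establishing the $p$-adic Gross--Zagier formula itself, which is the technical heart of \cite{Sev14}. The delicate points are: (i) comparing the Shimura periods $\Omega_{\infty}(f_k^{\#})^{\pm}$ used to normalize the classical $L$-values with the $p$-adic period $C(k)$ appearing in the overconvergent lift (Theorem~\ref{thm:weightkspecialization}), which requires analyzing the $p$-stabilization precisely as in Lemma~\ref{lem:pstabilisationmodularsymbols}; (ii) identifying the tangent space description $\mathbb{D}_f / \mathrm{Fil}^{k_0/2+1}\cong M_{k_0+2}(\Gamma', L)_{(f^{\mathrm{rig}})}^{\vee}$ with Coleman integration against $f^{\mathrm{rig}}$ on the Mumford curve; and (iii) ensuring that the two factors in the factorization produce \emph{the same} Heegner--cycle Abel--Jacobi image up to an explicit rational scalar, so that the product genuinely becomes a perfect square rather than an unrelated bilinear pairing. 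Once these comparisons are made, the factor of $2$ in the statement follows from the Leibniz expansion of the second derivative on the double zero.
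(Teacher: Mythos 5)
The paper's proof is a single citation to Seveso; the cited result is established, following Bertolini--Darmon and its higher-weight generalization in \cite{Sev14}, by the \emph{Waldspurger square-root} mechanism, not the Artin factorization. Roughly: $L_p(\mathbf{f}/F, \lambda_\kappa)$ coincides, up to periods, with the square of a partial square-root $p$-adic $L$-function $g(\lambda_\kappa)$ --- the $F/\QQ$ analogue of the $\cL_p(\ccF/K, \Psi, \lambda_\kappa)$ of Definition~\ref{def:squarerootpadicLfn} --- built from quaternionic period integrals over geodesics attached to optimal embeddings of $\cO_F$ into the Eichler order $R'$. This $g$ vanishes at $\lambda_{k_0}$ (the same geometric mechanism as Theorem~\ref{thm:vanishingofp-adicLfn1}: integrating the pushforward symbol over all of $\PP^1$ gives zero), so $\tfrac{d^2}{d\lambda_\kappa^2}[g^2]_{\lambda_{k_0}} = 2\, g'(\lambda_{k_0})^2$, and $g'(\lambda_{k_0})$ is identified with $\mathrm{log\;cl}_{f,L}(\mathcal{Y})(f^{\mathrm{rig}})$ via Cerednik--Drinfeld and the Iovita--Spiess realization of $p$-adic Abel--Jacobi on the Mumford curve.

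Your plan replaces this by the Artin factorization of Theorem~\ref{thm:factorisation1} and aims to recover the square from a Leibniz product of first derivatives. That route has a genuine gap, which you flag yourself in your point (iii): the two Mazur--Kitagawa factors are independent $p$-adic $L$-functions, and there is no mechanism by which the product $\cL_p'(\mathbf{f},\lambda_{k_0})\cdot\cL_p'(\mathbf{f},\epsilon_{F/\QQ},\lambda_{k_0})$ should equal the square of a single Abel--Jacobi image. This is not a technical step to be supplied; it is the entire content of the theorem, and the factorization does not give it to you. Two further issues. First, your assertion that $\cL_p(\mathbf{f}, \epsilon_{F/\QQ}, \lambda_{k_0}) = 0$ on sign grounds is not supported: the paper's running nonvanishing hypothesis concerns the $\epsilon_{K/F}$-twist, not $\epsilon_{F/\QQ}$, and the sign of $L(f,\epsilon_{F/\QQ},s)$ versus $L(f,s)$ is controlled by $\omega_{\cM}$, which is not fixed by (\textbf{Heeg--Hyp}); in the regime relevant to Theorem~\ref{thm:maintheoremintroduction} the twisted factor stays nonzero, with the double zero concentrated on the untwisted factor $\cL_p(\mathbf{f},\lambda_\kappa)$. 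Second, your unwinding step conflates two modular-symbol pictures: the Mazur--Kitagawa $L$-functions come from classical $\Gamma_0(N)$-modular symbols evaluated on boundary paths $\{0-\infty\}$, not from the $\mathbb{D}_U^\dagger$-valued symbols $\Phi_\infty$ that feed the Mumford-curve double integrals on the quaternionic/Bruhat--Tits side; these are only linked after Jacquet--Langlands plus a Waldspurger step, which is exactly the square-root step you are trying to bypass. In this paper, the factorization of Theorem~\ref{thm:factorisation1} is used in Section~\ref{sec:mainresult} to pass between the Heegner and Stark--Heegner sides, not to prove the Heegner-cycle formula itself.
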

\begin{proof}
This is \cite[Corollary 5.27]{Sev12} which is in turn a special case of \cite[Corollary 9.2]{Sev14}.   
\end{proof}
\begin{remark}
The global cycle $\mathcal{Y} \in \mathrm{CH}^{k_{0}/2+1}\left( \cM_{k_{0}} \otimes F \right)$ is the Heegner cycle (associated to the trivial character $\chi = 1$) constructed in \cite[Section 8]{IS03} using the theory of Complex Multiplication.
\end{remark}

\subsection{$p$-adic $L$-functions over $K$} \label{subsec:p-adicL-functionsoverK}
Recall that $K/F$ is a relative quadratic extension that satisfies the Stark--Heegner hypothesis (\textbf{SH--Hyp})
\begin{itemize}
\item[•] $\p$ is inert in $K$
\item[•] All primes $\mathfrak{l} \mid \cM$ split in $K$
\end{itemize}
Let $\Psi \in \mathrm{Emb}(\cO, \cR)$ be an optimal embedding of conductor $\mathcal{C}$ relatively prime to $\cN\mathcal{D}_{K/F}$. Let $G_{\cC} \defeq \mathrm{Gal}(H_{\cC}/K)$ be the corresponding Galois group of the ring class field of conductor $\cC$ and $(\tau_{\Psi}, P_{\psi}, \gamma_{\Psi})$ be the data attached to the embedding $\Psi$ as in \S\ref{subsec:starkheegnercycles}. Let $\mathscr{L}_{\Psi}$ be an $\cO_{F_{\p}}$-lattice corresponding to the vertex $v_{\Psi}$ and let $\widetilde{L}_{\Psi} \defeq \mu(\cO_{F})\backslash \mathscr{L}_{\Psi}$.  In particular $\mathscr{L}_{\Psi} = \mathscr{L}_{\tau_{\Psi}} = \mathscr{L}_{\tau^{\theta}_{\Psi}}$ since $v_{\Psi} = \mathrm{red}_{\p}(\tau_{\Psi}) = \mathrm{red}_{\p}(\tau^{\theta}_{\Psi})$. 

Following \cite{BD09} and \cite{Sev12}, we define a \emph{partial square root $p$-adic $L$-function} to such an embedding $\Psi$ as follows
\begin{definition} \label{def:squarerootpadicLfn}
Let $r \in \mathbb{P}^{1}(F)$ be any base point. 
\begin{itemize}
\item[(i)] The partial square root $p$-adic $L$-function attached to $(\ccF/K, \Psi)$ is defined as
\[ \cL_{p}(\ccF/K, \Psi, \lambda_{\kappa}) \defeq | \mathscr{L}_{\Psi}|^{k_{0}/2}\int_{\widetilde{L}_{\Psi}'} \langle P_{\Psi}(x,y)\rangle^{\frac{\lambda_{\kappa}-\lambda_{k_{0}}}{2}}P_{\Psi}^{k_{0}/2}(x,y)d\Phi_{\widetilde{L}_{\Psi}}\lbrace r - \gamma_{\Psi}r \rbrace.  \]

\item[(ii)] The partial square root $p$-adic $L$-function attached to $(\ccF/K, \psi_{K})$ for $\psi_{K} : G_{\cC} \rightarrow \mathbb{C}^{\times}$ is then defined as 
\[ \cL_{p}(\ccF/K, \psi_{K}, \lambda_{\kappa}) \defeq \sum\limits_{\sigma \in G_{\cC}} \psi_{K}^{-1}(\sigma)\cL_{p}(\ccF/K, \sigma\Psi, \lambda_{\kappa}) \]
and finally
\item[(iii)] the \emph{$p$-adic $L$-function} attached to $(\ccF/K, \psi_{K})$ is defined as 
\[ L_{p}(\ccF/K, \psi_{K}, \lambda_{k}) \defeq \cL_{p}(\ccF/K, \psi_{K}, \lambda_{\kappa})^{2}. \]
\end{itemize}
\end{definition} 
\begin{remark} \label{rem:choiceofbigmodularsymbol}
A priori the $p$-adic $L$-functions defined above depend on the $\mathbb{D}_{U}^{\dagger}$-valued modular symbol $\Phi_{\widetilde{L}_{\Psi}}$ of Proposition~\ref{prop:latticesymbols} associated to the lattice $\mathscr{L}_{\Psi}$. It can be shown that the definition depends only on the class of optimal embeddings $[\Psi] \in \Gamma\backslash \mathrm{Emb}(\cO, \cR)$ (See \cite[Lemma 5.1]{GSS16}). Following \cite[Remark 5.6]{Sev12}, we choose a lattice $\mathscr{L}_{\Psi}$ as follows. Since $\Gamma$ acts transitively on the set of vertices $\cV(\cT)$, let $\gamma \in \Gamma$ be such that $\gamma v_{\Psi} = v_{*}$. Then $v_{*} = v_{\gamma \Psi \gamma^{-1}}$ and $\mathscr{L}_{*} = \mathscr{L}_{\gamma \Psi \gamma^{-1}}$ is the lattice associated to the optimal embedding $\gamma \Psi \gamma^{-1} \in [\Psi]$. We show later on in Theorem~\ref{thm:padicinterpolationoverK} that this choice of a lattice associated to a class of optimal embeddings $[\Psi]$ is the natural one to consider.
\end{remark}
\paragraph*{} The $p$-adic $L$-function $L_{p}(\ccF/K, \psi_{K}, -)$ defined above interpolates $L^{\mathrm{alg}}(\cF_{k}^{\#}/K, \psi_{K}, k/2 + 1)$ -- the \emph{algebraic part} of central critical $L$-values of the newforms $\cF_{k}^{\#}$, 
\begin{multline} \label{eqn:basechangeperiods}
L^{\mathrm{alg}}(\cF_{k}^{\#}/K, \psi_{K}, k/2 + 1) \defeq \\
\frac{Tu_{K}^{2}}{(\Omega_{k}^{\#})^{2}}\cdot\frac{\prod_{\nu \in \Sigma_{\infty}^{F}} C'_{\nu}(K,\pi_{k},\psi_{K})}{\sqrt{\mathrm{N}_{F/\QQ}(\cD_{K/F})}}\cdot \frac{((k/2)!)^{4}}{(2\pi)^{2k+4}}L(\cF_{k}^{\#}/K, \psi_{K}, k/2 + 1) \in \overline{\QQ}. 
\end{multline}
where $u_{K} \defeq \left[ \mu(\cO_{K}):\mu(\cO_{F}) \right]$ and $T$ and $C'_{\nu}(K,\pi_{k},\psi_{K})$ are explicit constants (See Appendix~\ref{appendix1}). This entails rewriting the adelic toric periods appearing in Waldspurger's formula (See \cite{MW09} and \cite{FMP17}) in terms of certain geodesic cycles. 

Let  
\begin{equation} \label{eqn:popa1} \mathbb{L}(\cF_{k}^{\#}, \psi_{K}) \defeq \Bigg( \sum\limits_{\sigma \in G_{\cC}} \psi_{K}^{-1}(\sigma)\phi_{k}^{\#}\lbrace \tau - \gamma_{\sigma\Psi}\tau \rbrace \Big( (P_{\sigma\Psi}(x,y))^{k/2} \Big) \Bigg)^{2}  
\end{equation}
where $\phi_{k}^{\#}$ is the Bianchi modular symbol attached to the newform $\cF_{k}^{\#}$. Then, the following generalization of Alexandru Popa's result (\cite[Theorem 6.3.1]{Pop06}) holds.
\begin{theorem} \label{thm:popageneralization} Let $\psi_{K}: \mathrm{Gal}(H_{K}/K) \rightarrow \mathbb{C}^{\times}$ be an unramified character. Then
\[
\mathbb{L}(\cF_{k}^{\#}, \psi_{K}) =  L^{\mathrm{alg}}(\cF_{k}^{\#}/K, \psi_{K}, k/2 + 1).
\]
\end{theorem}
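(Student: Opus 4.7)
The proof will proceed by invoking a Waldspurger-type higher toric period formula and then reinterpreting the adelic toric integral as a sum of geodesic integrals against the Bianchi modular symbol $\phi_k^\#$. First, I would apply the explicit Waldspurger formula of Fukushima--Martin--Pollack (for GL(2) over a number field), or equivalently the Molina--style formulation, which expresses the central critical value $L(\cF_k^\#/K,\psi_K,k/2+1)$ as the absolute square of an adelic toric period
\[
\int_{K^\times \A_F^\times \backslash \A_K^\times} \widehat{\cF_k^\#}(t)\,\psi_K^{-1}(t)\,dt,
\]
normalized by explicit local constants at ramified, unramified, and archimedean places. The archimedean local factors contribute the constants $C'_\nu(K,\pi_k,\psi_K)$ that appear in \eqref{eqn:basechangeperiods}.

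Next, I would decompose the toric integral using strong approximation. Under the bijection of Proposition~\ref{prop:picardgrouptorsor} (or its analogue for $\cO_K$), the finite idelic part of the torus quotient is naturally parametrized by $G_{\cC}=\mathrm{Gal}(H_K/K)\cong\mathrm{Pic}(\cO_K)$, producing the sum over $\sigma\in G_{\cC}$, while the archimedean component of the integral becomes a period over the geodesic attached to each optimal embedding $\sigma\Psi$. After using the Eichler--Shimura identification between the automorphic form $\widehat{\cF_k^\#}$ and the cohomology class $\phi_k^\#$ (cf.\ \eqref{eqn:heckeinjection}), the archimedean integrand $P_\nu$ on the upper-half-space at each infinite place of $F$ translates into the evaluation of $\phi_k^\#\{\tau - \gamma_{\sigma\Psi}\tau\}$ against the homogeneous polynomial $(P_{\sigma\Psi})^{k/2}\in V_{k,k}(L)^{\Gamma_{\sigma\Psi}}$; here the path $\tau \to \gamma_{\sigma\Psi}\tau$ is a fundamental domain for the action of $\Gamma_{\sigma\Psi}$ on the geodesic joining $\tau_{\sigma\Psi}$ and $\tau_{\sigma\Psi}^\theta$, and the polynomial $P_{\sigma\Psi}$ is precisely the one built from $\sigma\Psi(\delta_K)$.

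The $\psi_K^{-1}$-twist of the toric integral passes directly to the sum $\sum_\sigma \psi_K^{-1}(\sigma)\,\phi_k^\#\{\tau-\gamma_{\sigma\Psi}\tau\}\bigl((P_{\sigma\Psi})^{k/2}\bigr)$; squaring this (which corresponds to the square appearing in Waldspurger) yields exactly $\mathbb{L}(\cF_k^\#,\psi_K)$. The factor $\sqrt{\mathrm{N}_{F/\QQ}(\cD_{K/F})}$ in the denominator of $L^{\mathrm{alg}}$ arises from the comparison between the Tamagawa measure on $\A_K^\times$ and the counting measure on $G_\cC$, while $u_K^2/(\Omega_k^\#)^2$ absorbs the ratio between the automorphic and cohomological normalizations via \eqref{eqn:period}, and the factor $((k/2)!)^4/(2\pi)^{2k+4}$ is the product of two copies (one for each complex place, via base-change) of the factor from Popa's original rationality formula.

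The main obstacle will be the precise bookkeeping of the local constants. The archimedean computation of the matrix coefficient integral at each complex place $\nu$ of $F$ -- which produces $C'_\nu(K,\pi_k,\psi_K)$ and the $((k/2)!)^2/(2\pi)^{k+2}$ factor per place -- is the most delicate point, and I would import this verbatim from the explicit Waldspurger computation for $\mathrm{GL}_2$ over $\CC$ (e.g.\ Molina's computation of archimedean toric periods for principal series of $\mathrm{GL}_2(\CC)$). Once these local factors are matched and the global comparison of measures is fixed, the identity $\mathbb{L}(\cF_k^\#,\psi_K)=L^{\mathrm{alg}}(\cF_k^\#/K,\psi_K,k/2+1)$ follows by direct substitution, in exact parallel with Popa's weight-$k$ rationality theorem \cite[Thm.~6.3.1]{Pop06}.
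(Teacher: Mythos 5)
Your proposal is strategically correct and matches the paper's approach: invoke a Waldspurger-type formula to relate the central $L$-value to a toric period, convert the adelic period to a cohomological one via the Eichler--Shimura/modular-symbol dictionary, and then match local constants. The paper carries out exactly this plan in Appendix~\ref{appendix1}. However, the paper's route through the ``adelic-to-cohomological'' translation is packaged differently than you describe. Rather than directly decomposing the raw adelic toric integral $\int_{T(F)\backslash T(\AA_F)}\widehat{\cF_k^\#}(t)\psi_K^{-1}(t)\,dt$ by strong approximation, it recasts both sides as cap products of fundamental homology classes against the class $\phi_k^\#$: it shows (Proposition~\ref{prop:comparingfundamentalclasses}) that the classical fundamental cycle $\eta\in\tupH_1(\Gamma_\Psi,\ZZ)$, induced up via Shapiro's Lemma, agrees with Molina's adelic fundamental class $\widetilde\eta\in\tupH_1(T(F),C^{\emptyset}_c(T(\AA_F),\ZZ))$, so that $\langle C_{[\Psi]}^{\psi_K},\phi_k^\#\rangle = \cP(\psi_K,\phi_k^\#)$. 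It then applies Molina's already-cohomological Waldspurger formula (\cite[Theorem~4.6]{Mol22}), with local factors pinned down by the explicit formula of Martin--Whitehouse \cite{MW09}. What you call ``Fukushima--Martin--Pollack'' should be \cite{MW09} (with \cite{FMP17} also in play for the adelic side); also, the factor $((k/2)!)^4/(2\pi)^{2k+4}$ comes from the degree-four archimedean $\Gamma$-factor of $L(s,\pi_k\times\pi_{\psi_K})$ at the \emph{single} complex place of $F$, not from ``two copies'' of a Popa-type factor for two places. These are cosmetic slips; the substance of your argument coincides with the paper's. The main thing the paper's formalism buys over the manual strong-approximation decomposition is that the $\psi_K$-twisted sum over $\mathrm{Gal}(H_K/K)\cong\mathrm{Pic}(\cO_K)$ and the geodesic integral at infinity are both absorbed at once into a single cap-product identity, with Shapiro's Lemma doing the bookkeeping you would otherwise do by hand.
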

\begin{proof} 
The proof follows from Santiago Molina's \emph{Waldspurger formula in higher cohomology} (\cite[Theorem 4.6]{Mol22}) and an explicit form of Waldspurger formula as in \cite[Theorem 4.2]{MW09} or \cite[Theorem 1.8]{CST14}.  Given that such a result could be of independent interest, we explain this in detail in Appendix~\ref{appendix1}.
\end{proof}
\begin{theorem} \label{thm:vanishingofp-adicLfn1}
The $p$-adic $L$-functions, for all $\Psi$ and $\psi_{K}$ as above, vanish at $\lambda_{k_{0}}$, i.e.
\[ \cL_{p}(\ccF/K, \Psi, \lambda_{k_{0}}) = \cL_{p}(\ccF/K, \psi_{K}, \lambda_{k_{0}}) = L_{p}(\ccF/K, \psi_{K}, \lambda_{k_{0}}) = 0. \]
Further, we also have
\[ \frac{d}{d\lambda}[L_{p}(\ccF/K, \psi_{K}, \lambda_{\kappa})]_{\kappa = k_{0}} = 0 \]
\end{theorem}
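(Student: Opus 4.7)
The vanishing is a manifestation of the trivial-zero phenomenon: under our hypothesis $\omega_{\p}=1$ (Remark~\ref{rem:Up-eigenvalue}), we have $a_{\p}(\cF)^{2} = \mathrm{N}_{F/\QQ}(p)^{k_{0}}$, so the Euler-type factor appearing in Lemma~\ref{lem:pstabilisationmodularsymbols} vanishes at $\lambda_{k_{0}}$. The plan is to reduce each of the three functions to an integral of this form at the standard lattice and then invoke this exceptional vanishing, followed by a routine application of the product rule for the derivative statement.

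First I would prove the key vanishing $\cL_{p}(\ccF/K,\Psi,\lambda_{k_{0}}) = 0$. By Remark~\ref{rem:choiceofbigmodularsymbol}, the partial square-root $p$-adic $L$-function depends only on the class $[\Psi] \in \Gamma\backslash\mathrm{Emb}(\cO,\cR)$, so after conjugating $\Psi$ by a suitable $\gamma \in \Gamma$ with $\gamma v_{\Psi} = v_{*}$ we may assume $\mathscr{L}_{\Psi} = \mathscr{L}_{*}$, hence $\widetilde{L}_{\Psi} = \widetilde{L}_{*}$ and $\Phi_{\widetilde{L}_{\Psi}} = \Phi_{\infty}$. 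Specialising the defining integral to $\lambda_{\kappa} = \lambda_{k_{0}}$ kills the homogeneity twist, and we obtain
\[
\cL_{p}(\ccF/K,\Psi,\lambda_{k_{0}}) = |\mathscr{L}_{*}|^{k_{0}/2}\int_{\widetilde{L}_{*}'} \widetilde{P_{\Psi}^{k_{0}/2}}(x,y)\,d\Phi_{\infty}\{r-\gamma_{\Psi}r\}(x,y).
\]
Applying Lemma~\ref{lem:pstabilisationmodularsymbols} at $k = k_{0}$ to $P = P_{\Psi}^{k_{0}/2} \in V_{k_{0},k_{0}}$, the right-hand side equals
\[
|\mathscr{L}_{*}|^{k_{0}/2}\,C(k_{0})\left(1 - \frac{\mathrm{N}_{F/\QQ}(p)^{k_{0}}}{a_{\p}(\cF)^{2}}\right)\phi_{k_{0}}^{\#}\{r-\gamma_{\Psi}r\}(P_{\Psi}^{k_{0}/2}).
\]
By Remark~\ref{rem:Up-eigenvalue}, we have $a_{\p}(\cF) = \mathrm{N}_{F/\QQ}(p)^{k_{0}/2}$, so the Euler factor in parentheses is exactly zero; consequently $\cL_{p}(\ccF/K,\Psi,\lambda_{k_{0}}) = 0$.

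Next, summing over $\sigma \in G_{\cC}$ according to Definition~\ref{def:squarerootpadicLfn}(ii), the same vanishing holds term by term, giving $\cL_{p}(\ccF/K,\psi_{K},\lambda_{k_{0}}) = 0$. Squaring via Definition~\ref{def:squarerootpadicLfn}(iii) then yields $L_{p}(\ccF/K,\psi_{K},\lambda_{k_{0}}) = 0$. Finally, for the derivative, by the product rule
\[
\frac{d}{d\lambda_{\kappa}}\bigl[L_{p}(\ccF/K,\psi_{K},\lambda_{\kappa})\bigr]_{\kappa = k_{0}} = 2\,\cL_{p}(\ccF/K,\psi_{K},\lambda_{k_{0}})\cdot\frac{d}{d\lambda_{\kappa}}\bigl[\cL_{p}(\ccF/K,\psi_{K},\lambda_{\kappa})\bigr]_{\kappa=k_{0}} = 0,
\]
since the first factor is zero by the step just proved.

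The main subtlety (rather than obstacle) is to ensure that one can reduce to the standard lattice $\mathscr{L}_{*}$ without altering the value of $\cL_{p}(\ccF/K,\Psi,\lambda_{\kappa})$; this is precisely the content of the conjugation-invariance in Remark~\ref{rem:choiceofbigmodularsymbol}, combined with the $\widetilde{\Gamma}$-equivariance of the family $\{\Phi_{\widetilde{L}}\}$ from Proposition~\ref{prop:latticesymbols}. Once this reduction is made, everything flows immediately from the exceptional vanishing of the $p$-stabilisation factor under split multiplicative reduction.
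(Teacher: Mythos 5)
Your proof is correct, but it takes a genuinely different route from the paper's. The paper first invokes the pushforward identity $\pi_*(\Phi_{\widetilde{L}_\Psi}) = \Phi_\cF^{\mathrm{har}}$ from Corollary~\ref{cor:pushforwardtoharmonic}, which holds for \emph{every} lattice, so no conjugation to the standard lattice is needed; this rewrites $\cL_p(\ccF/K,\Psi,\lambda_{k_0})$ as $\int_{\mathbb{P}^1(F_\p)}P_\Psi^{k_0/2}(t)\,d\Phi_\cF^{\mathrm{har}}\{r-\gamma_\Psi r\}(t)$, and the vanishing is then the structural fact (cited from \cite[Proposition 5.8(i)]{BW19}) that the harmonic modular symbol of a $p$-new form annihilates polynomials of degree at most $k_0$ integrated over all of $\mathbb{P}^1(F_\p)$. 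You instead conjugate $\Psi$ so that $\mathscr{L}_\Psi = \mathscr{L}_*$ via Remark~\ref{rem:choiceofbigmodularsymbol} and read the vanishing directly off the interpolation factor $\bigl(1 - \mathrm{N}_{F/\QQ}(p)^{k_0}/a_\p(\cF)^2\bigr)$ in Lemma~\ref{lem:pstabilisationmodularsymbols}, which dies because $a_\p(\cF) = \mathrm{N}_{F/\QQ}(p)^{k_0/2}$ in the split-multiplicative case. Both arguments are fueled by the same underlying phenomenon ($p$-newness of $\cF$ at the central weight), but the paper's route is a bit slicker since the pushforward identity sidesteps the lattice normalisation, whereas yours makes the trivial-zero mechanism visible in the explicit Euler factor. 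Your handling of the derivative via the product rule on $L_p = \cL_p^2$ is the same short observation the paper compresses into its closing sentence.
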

\begin{proof} 
By Definition~\ref{def:squarerootpadicLfn} above, we have 
\begin{align*}
\cL_{p}(\ccF/K, \Psi, \lambda_{k_{0}}) & = |\mathscr{L}_{\Psi}|^{k_{0}/2}\int_{\widetilde{L}_{\Psi}'}P_{\Psi}^{k_{0}/2}(x,y)d\Phi_{\widetilde{L}_{\Psi}}\lbrace \tau - \gamma_{\Psi}\tau \rbrace \\
& = \int_{\mathbb{P}^{1}(F_{\p})}P_{\Psi}^{k_{0}/2}(t)d\pi_{*}(\Phi_{\widetilde{L}_{\Psi}})\lbrace \tau - \gamma_{\Psi}\tau \rbrace
\end{align*}
where the second equality follows from (\ref{eqn:pushforward}). By Corollary~\ref{cor:pushforwardtoharmonic}, we then have that 
\[ \cL_{p}(\ccF/K, \Psi, \lambda_{k_{0}}) = \int_{\mathbb{P}^{1}(F_{\p})} P_{\Psi}^{k_{0}/2}(t)d\Phi_{\cF}^{\mathrm{har}}\lbrace \tau - \gamma_{\Psi}\tau \rbrace \]
The vanishing now follows from \cite[Proposition 5.8(i)]{BW19}. The defining properties of the other $p$-adic $L$-functions implies the vanishing simultaneously at $\lambda_{k_{0}}$.
\end{proof}
By Theorem~\ref{thm:popageneralization}, we have the following interpolation property of the $p$-adic $L$-functions introduced in this section
\begin{theorem} \label{thm:padicinterpolationoverK}
For all classical weights $\lambda_{k} \in U$, $k \neq k_{0}$, we have
\[ L_{p}(\ccF/K, \psi_{K}, \lambda_{k}) = C(k)^{2}\left(1 - \frac{\mathrm{N}_{F/\QQ}(p)^{k}}{a_{\p}(\mathcal{F}_{k})^{2}}\right)^{2}L^{\mathrm{alg}}(\cF_{k}^{\#}/K, \psi_{K}, k/2 + 1) \]
\end{theorem}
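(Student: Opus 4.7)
The plan is to directly specialize the partial square-root $p$-adic $L$-function at the classical parallel weight $\lambda_k$, identify the specialization with the classical Bianchi modular symbol of the newform $\cF_k^{\#}$ via the $p$-stabilisation formula of Lemma~\ref{lem:pstabilisationmodularsymbols}, and then appeal to the generalised Popa formula (Theorem~\ref{thm:popageneralization}) to convert the resulting character sum into the algebraic $L$-value $L^{\mathrm{alg}}(\cF_k^{\#}/K,\psi_{K},k/2+1)$. This mirrors the strategy of \cite[\S5.3]{Sev12} and \cite[\S5]{GSS16}, adapted to the base-change Bianchi setting.

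First I would use Remark~\ref{rem:choiceofbigmodularsymbol} to fix a representative in the conjugacy class $[\Psi]$ for which the associated vertex is $v_{*}$, so that $\mathscr{L}_{\Psi} = \mathscr{L}_{*}$ and $\Phi_{\widetilde{L}_{\Psi}} = \Phi_{\infty}$ by Proposition~\ref{prop:latticesymbols}. Setting $\lambda_{\kappa} = \lambda_{k}$ in Definition~\ref{def:squarerootpadicLfn}(i) (and using $|\mathscr{L}_{*}|=1$), the partial square-root $p$-adic $L$-function takes the form
\[ \cL_{p}(\ccF/K, \Psi, \lambda_{k}) = \int_{\widetilde{L}_{*}'} \langle P_{\Psi}(x,y) \rangle^{(k-k_{0})/2} P_{\Psi}^{k_{0}/2}(x,y)\, d\Phi_{\infty}\{r - \gamma_{\Psi} r\}(x,y). \]

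The next step is to identify the integrand with the polynomial $\widetilde{P_{\Psi}^{k/2}}$ on the essential support of $\Phi_{\infty}$. On the locus where $P_{\Psi}(x,y) = Q(x,y)\overline{Q(x,y)} = \mathrm{N}_{F_{\p}/\QQ_{p}}(Q(x,y)) \in \ZZ_{p}^{\times}$, we may write $P_{\Psi} = [P_{\Psi}]\langle P_{\Psi}\rangle$, whence
\[ \langle P_{\Psi}\rangle^{(k-k_{0})/2} P_{\Psi}^{k_{0}/2} \;=\; [P_{\Psi}]^{-(k-k_{0})/2} \cdot P_{\Psi}^{k/2}. \]
The congruence $k \equiv k_{0} \pmod{p^{2}-1}$, valid for all classical weights in the slope-adapted affinoid $U$, ensures that $(k-k_{0})/2$ is divisible by the order of any Teichm\"{u}ller element, so that the Teichm\"{u}ller factor trivializes. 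Consequently the integrand coincides with the homogeneous-of-weight-$\lambda_{k}$ polynomial $\widetilde{P_{\Psi}^{k/2}}(x,y)$, and Lemma~\ref{lem:pstabilisationmodularsymbols} converts the overconvergent integral into the $p$-stabilised classical modular symbol:
\[ \cL_{p}(\ccF/K, \Psi, \lambda_{k}) = C(k)\Bigl(1 - \frac{\mathrm{N}_{F/\QQ}(p)^{k}}{a_{\p}(\cF_{k})^{2}}\Bigr)\,\phi_{k}^{\#}\{r - \gamma_{\Psi} r\}(P_{\Psi}^{k/2}). \]

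Finally, summing against the character $\psi_{K}^{-1}$ over $\sigma \in G_{\cC}$ (Definition~\ref{def:squarerootpadicLfn}(ii)), the prefactors $C(k)\bigl(1-\mathrm{N}_{F/\QQ}(p)^{k}/a_{\p}(\cF_{k})^{2}\bigr)$ are independent of $\sigma$ and factor out; squaring as in Definition~\ref{def:squarerootpadicLfn}(iii) produces the square of the character sum appearing in \eqref{eqn:popa1}, and Theorem~\ref{thm:popageneralization} identifies this square with $L^{\mathrm{alg}}(\cF_{k}^{\#}/K, \psi_{K}, k/2+1)$, completing the proof. The principal technical obstacle is the second step: the integrand is a priori only locally analytic and homogeneous of weight $\lambda_{k}$, and one must verify both that the Teichm\"{u}ller factor genuinely trivializes under the classicality congruence and that the contribution of the locus where $P_{\Psi}$ fails to be a $p$-adic unit is absorbed correctly, which amounts to a careful compatibility check between Definition~\ref{def:homogeneous} (homogeneous of weight $\lambda_{k}$) and Remark~\ref{rem:homogeneouspolynomials} (polynomials in $V_{k,k}$) on the big cell.
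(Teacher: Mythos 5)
Your argument reproduces the paper's proof step for step: normalise the representative of $[\Psi]$ via Remark~\ref{rem:choiceofbigmodularsymbol} so that $\mathscr{L}_{\Psi}=\mathscr{L}_{*}$, trivialise the Teichm\"{u}ller factor using the classicality congruence $k\equiv k_{0}\pmod{p^{2}-1}$ to reduce the integrand to $P_{\Psi}^{k/2}$, apply Lemma~\ref{lem:pstabilisationmodularsymbols} to obtain the Euler factor $C(k)(1-\mathrm{N}_{F/\QQ}(p)^{k}/a_{\p}(\cF_{k})^{2})$ times $\phi_{k}^{\#}$, then sum against $\psi_{K}^{-1}$, square, and invoke Theorem~\ref{thm:popageneralization}. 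The only differences are cosmetic (the paper does the Teichm\"{u}ller simplification before fixing the lattice), and the ``technical obstacle'' you flag is handled implicitly by the setup: the choice of $\mathscr{L}_{\Psi}$ so that $v_{\Psi}=[\mathscr{L}_{\Psi}]=\mathrm{red}_{\p}(\tau_{\Psi})$ forces $P_{\Psi}$ to be a $p$-adic unit on $\widetilde{L}_{\Psi}'$, and $P_{\Psi}=\mathrm{N}_{F/\QQ}(Q)$ takes values in $\ZZ_{p}^{\times}$, so the relevant Teichm\"{u}ller element lies in $\mu_{p-1}$ and is killed by $(k-k_{0})/2\equiv 0\pmod{p-1}$.
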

\begin{proof}
By definition,
\[ \cL_{p}(\ccF/K, \Psi, \lambda_{k}) = |\mathscr{L}_{\Psi}|^{k_{0}/2}\int_{\widetilde{L}_{\Psi}'}P_{\Psi}^{k/2}(x,y)d\Phi_{\widetilde{L}_{\Psi}}\lbrace \tau - \gamma_{\Psi}\tau \rbrace \]
since $\langle P_{\Psi} \rangle^{\frac{\lambda_{k}-\lambda_{k_{0}}}{2}}P_{\Psi}^{k_{0}/2} = P_{\Psi}^{k/2}[P_{\Psi}]^{\frac{\lambda_{k}-\lambda_{k_{0}}}{2}} = P_{\Psi}^{k/2}$ (as $[z]^{k} = [z]^{k_{0}}$ for all $\lambda_{k} \in U$ classical). By Remark~\ref{rem:choiceofbigmodularsymbol} above, we may choose $\mathscr{L}_{\Psi}$ to be the lattice $\mathscr{L}_{*} = \cO_{F_{\p}} \oplus \cO_{F_{\p}}$ and hence we get
\[ \cL_{p}(\ccF/K, \Psi, \lambda_{k}) = \int_{\widetilde{L}_{*}'}P_{\Psi}^{k/2}(x,y)d\Phi_{\widetilde{L}_{*}}\lbrace \tau - \gamma_{\Psi}\tau \rbrace \]
which by Lemma~\ref{lem:pstabilisationmodularsymbols} implies that
\[ \cL_{p}(\ccF/K, \Psi, \lambda_{k}) = C(k)\left(1 - \frac{\mathrm{N}_{F/\QQ}(p)^{k}}{a_{\p}(\mathcal{F}_{k})^{2}}\right)\phi_{k}^{\#}\lbrace \tau - \gamma_{\Psi}\tau \rbrace \Big( (P_{\Psi}(x,y))^{k/2} \Big). \]
Hence, by Definition~\ref{def:squarerootpadicLfn} 
\[ L_{p}(\ccF/K, \psi_{K}, \lambda_{k}) = C(k)^{2}\left(1 - \frac{\mathrm{N}_{F/\QQ}(p)^{k}}{a_{\p}(\mathcal{F}_{k})^{2}}\right)^{2}\mathbb{L}(\cF_{k}^{\#}, \psi_{K}). \]
The proof follows from Theorem~\ref{thm:popageneralization}.
\end{proof}
\subsection{Factorization of $p$-adic $L$-functions I} \label{subsec:factorization}
Let $\epsilon_{K/F}$ be the quadratic id\`{e}le class character of $F$ that cuts out the relative quadratic extension $K/F$ and let $\chi_{1}$ and $\chi_{2}$ be a pair of quadratic Hecke characters such that $\chi_{1}\cdot\chi_{2} = \epsilon_{K/F}$. Let $\psi_{K}$ be the \emph{genus character} of $K$ associated to the pair $\chi_{1},\chi_{2}$, i.e.
\[ \mathrm{Ind}_{K}^{F} \psi_{K} = \chi_{1} \oplus \chi_{2}. \]
Then by the classical Artin formalism, we have the following factorization of $L$-functions :-
\begin{equation} \label{eqn:classicalartinformalism}
L(\cF_{k}^{\#}/K, \psi_{K}, s) = L(\cF_{k}^{\#}, \chi_{1}, s)L(\cF_{k}^{\#}, \chi_{2}, s). 
\end{equation}
The goal of this section is to show that a similar factorization ($p$-adic Artin formalisim) also holds at the level of $p$-adic $L$-functions.  Set 
\begin{equation} \label{eqn:introductionofeta}
\eta \defeq \frac{u_{K}^{2}.T.\prod_{\nu \in \Sigma_{\infty}^{F}} C'_{\nu}(K,\pi_{k},\psi_{K})}{u_{F}^{2}(2\pi)^{4}\mathrm{N}_{F/\QQ}(\cD_{K/F})}.
\end{equation}
\begin{theorem} \label{thm:factorizationofpadicLfns1}
For all classical weights $\lambda_{k} \in U$, 
\[ (D_{K})^{k/2}L_{p}(\mathbfcal{F}/K, \psi_{K},\lambda_{k}) = \eta.L_{p}(\mathbfcal{F}, \chi_{1},\lambda_{k}).L_{p}(\mathbfcal{F}, \chi_{2}, \lambda_{k}) \]
\end{theorem}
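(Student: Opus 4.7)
The plan is to verify the identity on the Zariski dense subset of classical weights in $U$ and conclude by analytic continuation. By Proposition~\ref{prop:zariskidensity}, the classical weights $\lambda_k \in U^{\mathrm{cl}}$ are Zariski dense in $U$, and both sides of the claimed identity define $p$-adic analytic functions of $\lambda_\kappa \in U$ (the factor $(D_K)^{\lambda_\kappa/2}$ extending to an analytic function on $U$ by $\lambda_\kappa \mapsto \exp\bigl(\tfrac{1}{2}\lambda_\kappa \log_p D_K\bigr)$ in a suitable normalization). It therefore suffices to establish the equality at each classical weight $\lambda_k \in U^{\mathrm{cl}}$ with $k \neq k_0$, where the interpolation formulas apply cleanly.

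For such $\lambda_k$, I would evaluate both sides using the interpolation formulas. By Theorem~\ref{thm:padicinterpolationoverK}, the LHS equals $C(k)^{2}\bigl(1 - \mathrm{N}_{F/\QQ}(p)^{k}/a_{\p}(\cF_{k})^{2}\bigr)^{2}L^{\mathrm{alg}}(\cF_{k}^{\#}/K, \psi_{K}, k/2+1)$ (times $(D_K)^{k/2}$), while by Theorem~\ref{thm:bianchiinterpolation} each factor on the RHS equals $C(k)\bigl(1 - \chi_{i}(p)\mathrm{N}_{F/\QQ}(p)^{k/2}/a_{\p}(\cF_{k})\bigr)^{2}L^{\mathrm{alg}}(\cF_{k}^{\#}, \chi_{i}, k/2+1)$. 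The key observation on Euler factors is that because $\p$ is inert in $K/F$ by (\textbf{SH-Hyp}), we have $\epsilon_{K/F}(\p) = -1$, so $\chi_1(\p)\chi_2(\p) = -1$ and $\chi_1(\p) + \chi_2(\p) = 0$; therefore
\[
\prod_{i=1,2}\bigl(1 - \chi_{i}(\p)\mathrm{N}_{F/\QQ}(\p)^{k/2}/a_{\p}(\cF_{k})\bigr) = 1 - \mathrm{N}_{F/\QQ}(\p)^{k}/a_{\p}(\cF_{k})^{2},
\]
and the $C(k)$ and Euler-factor contributions on both sides match upon squaring.

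It then remains to compare the algebraic $L$-values. Classical Artin formalism (\ref{eqn:classicalartinformalism}) gives $L(\cF_{k}^{\#}, \chi_{1}, k/2+1)\cdot L(\cF_{k}^{\#}, \chi_{2}, k/2+1) = L(\cF_{k}^{\#}/K, \psi_{K}, k/2+1)$, so the required identity reduces to
\[
(D_{K})^{k/2}\,L^{\mathrm{alg}}(\cF_{k}^{\#}/K,\psi_{K},k/2+1) \;=\; \eta\cdot L^{\mathrm{alg}}(\cF_{k}^{\#},\chi_{1},k/2+1)\cdot L^{\mathrm{alg}}(\cF_{k}^{\#},\chi_{2},k/2+1).
\]
Substituting the explicit period formulas from (\ref{eqn:period}) and (\ref{eqn:basechangeperiods}) and applying the conductor--discriminant relation $D_{K} = D_{F}^{2}\cdot\mathrm{N}_{F/\QQ}(\cD_{K/F})$ together with $\mathfrak{c}_{\chi_{1}}\mathfrak{c}_{\chi_{2}} = \cD_{K/F}$ for the genus character decomposition, the $(\Omega_{k}^{\#})^{2}$, $((k/2)!)^{4}$, powers of $D_{F}$, and the ratios of archimedean factors $(2\pi i)^{2k}/(2\pi)^{2k+4}$ conspire to produce exactly the constant $\eta$ of (\ref{eqn:introductionofeta}).

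The main obstacle will be the bookkeeping in the last step: one must verify that the residual constant $\eta$ is truly independent of $k$ (so that the analytic identity makes sense on all of $U$), which requires carefully tracking the Gauss sum factors $\tau(\chi_{i}^{-1})$ and the normalizations of the genus character pair $(\chi_1, \chi_2)$ relative to $\psi_K$, as well as handling the potential sign $(-1)^{k}$ coming from $(2\pi i)^{2k}/(2\pi)^{2k}$ (which is constant along $U$ since $k \equiv k_{0} \pmod{p^{2}-1}$ for $p$ odd). Once these constants are assembled, the theorem follows by Zariski density, extending the equality from $U^{\mathrm{cl}} \setminus \{\lambda_{k_0}\}$ to all of $U$.
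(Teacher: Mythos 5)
Your proposal is correct and follows essentially the same strategy as the paper: evaluate both sides at classical weights $k \neq k_0$ via the two interpolation theorems, match the Euler factors, apply classical Artin formalism, and then account for the remaining constants by comparing the explicit period formulas (\ref{eqn:bianchiperiods}) and (\ref{eqn:basechangeperiods}) using the Gauss sum and conductor identities. The Euler factor computation using $\chi_1(\p)\chi_2(\p) = -1$ and $\chi_1(\p) + \chi_2(\p) = 0$ is exactly right, and your observation that $(2\pi i)^{2k}/(2\pi)^{2k} = (-1)^k$ is constant along $U$ because $k \equiv k_0 \pmod{p^2-1}$ (with $k_0$ even) is the correct way to dispatch the sign.

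The one structural difference: the theorem as stated concerns only classical weights (including $\lambda_{k_0}$), while the extension to all of $U$ is Corollary~\ref{cor:factorizationofpadicLfns}. The paper handles $\lambda_{k_0}$ directly by noting that both sides vanish — the LHS by Theorem~\ref{thm:vanishingofp-adicLfn1}, and the RHS because exactly one of $\chi_1(\p),\chi_2(\p)$ equals $1$ (since $\p$ is inert in $K$), forcing a trivial zero of that factor owing to $a_\p(\cF) = \mathrm{N}_{F/\QQ}(p)^{k_0/2}$. You instead fold $\lambda_{k_0}$ into the analytic continuation step, which is logically sound (the functions are analytic and agree on a Zariski-dense set) but proves the corollary in the same breath and is slightly heavier machinery than needed for the theorem itself. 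Either route works; the paper's direct vanishing argument is marginally more elementary and makes the trivial-zero phenomenon at $\lambda_{k_0}$ visible.
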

\begin{proof}
Let $\lambda_{k} \in U (k \neq k_{0})$ be a classical weight. Then by (\ref{eqn:classicalartinformalism}) above, we have 
\[ L(\cF_{k}^{\#}/K, \psi_{K},k/2+1) = L(\cF_{k}^{\#}, \chi_{1},k/2+1)L(\cF_{k}^{\#},\chi_{2}, k/2+1) \]
Recall that we have the following identities (See \cite[Appendix I]{Mok11} for instance)
\[ \mathrm{N}_{F/\QQ}(\mathfrak{c}_{\chi_{1}})\mathrm{N}_{F/\QQ}(\mathfrak{c}_{\chi_{2}}) = \mathrm{N}_{F/\QQ}(\cD_{K/F}) \]
\[ \tau(\chi_{1}^{-1})\tau(\chi_{2}^{-1}) = \tau(\chi_{1})\tau(\chi_{2}) = \sqrt{\mathrm{N}_{F/\QQ}(\cD_{K/F})} \]
Further, upon combining (\ref{eqn:bianchiperiods}) and (\ref{eqn:basechangeperiods}), we get
\begin{multline} \label{eqn:factorizationofalgebraicparts} \frac{(2\pi)^{4}\sqrt{\mathrm{N}_{F/\QQ}(\cD_{K/F})}}{u_{K}^{2}.T.\prod_{\nu \in \Sigma_{\infty}^{F}} C'_{\nu}(K,\pi_{k},\psi_{K})} L^{\mathrm{alg}}(\cF_{k}^{\#}/K, k/2+1) \\ = \frac{1}{u_{F}^{2}\sqrt{\mathrm{N}_{F/\QQ}(\cD_{K/F})}D_{K}^{k/2}} L^{\mathrm{alg}}(\cF_{k}^{\#}, k/2+1)L^{\mathrm{alg}}(\cF_{k}^{\#}, \epsilon_{K/F}, k/2+1) 
\end{multline}
By Theorem~\ref{thm:bianchiinterpolation} and Theorem~\ref{thm:padicinterpolationoverK} above, we have
\[ (D_{K})^{k/2}.L_{p}(\mathbfcal{F}/K, \psi_{K},\lambda_{k}) = \eta.L_{p}(\mathbfcal{F}, \chi_{1},\lambda_{k}).L_{p}(\mathbfcal{F}, \chi_{2}, \lambda_{k}) \]
for all $\lambda_{k} \in U$ classical.  The proof for $\lambda_{k_{0}}$ is simpler as both sides vanish.
\end{proof}
Note that since by assumption that $D_{K}$ is relatively prime to $p$, the function $D_{K}^{k/2}$ extends to an analytic function on $U$, $D_{K}^{\lambda_{\kappa}/2} \defeq \langle D_{K} \rangle^{\lambda_{\kappa}/2}$. 
\begin{corollary} \label{cor:factorizationofpadicLfns}
For all genus characters $\psi_{K}$ of $K$ associated to a pair of quadratic Hecke characters $(\chi_{1},\chi_{2})$ of $F$ and for all $\lambda_{\kappa} \in U$,
\begin{equation} \label{eqn:factorizationofpadicLfns} (D_{K})^{\lambda_{\kappa}/2}L_{p}(\mathbfcal{F}/K, \psi_{K},\lambda_{\kappa}) = \eta.L_{p}(\mathbfcal{F}, \chi_{1},\lambda_{\kappa}).L_{p}(\mathbfcal{F}, \chi_{2}, \lambda_{\kappa}).
\end{equation}
\end{corollary}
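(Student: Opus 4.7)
The plan is to deduce the corollary from Theorem~\ref{thm:factorizationofpadicLfns1} by a rigid analytic continuation argument, since the theorem already establishes the identity at all classical weights $\lambda_k \in U$ (the case $\lambda_{k_0}$ being handled by the simultaneous vanishing, viz. Theorem~\ref{thm:vanishingofp-adicLfn1} together with the trivial zero of the Stevens--Mazur--Kitagawa $p$-adic $L$-functions when $\chi_i(p)p^{k_0/2}/a_p(f) = 1$).

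First I would check that both sides of (\ref{eqn:factorizationofpadicLfns}) are genuine rigid analytic functions of $\lambda_\kappa$ on the affinoid $U$. The right-hand side is obviously analytic: the Stevens--Mazur--Kitagawa functions $L_p(\mathbfcal{F},\chi_i,\lambda_\kappa) = \mathcal{L}_p(\mathbf{f},\chi_i,\lambda_\kappa)$ are the central-critical slices of the two-variable $p$-adic $L$-function on $U \times \Z_p$ recalled in \S\ref{subsec:smk}, and $\langle D_K\rangle^{\lambda_\kappa/2}$ is analytic on $U$ because $(D_K,p)=1$ forces $\langle D_K\rangle \in 1+p\Z_p$. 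For the left-hand side, recall from Definition~\ref{def:squarerootpadicLfn} that $L_p(\mathbfcal{F}/K,\psi_K,\lambda_\kappa) = \mathcal{L}_p(\mathbfcal{F}/K,\psi_K,\lambda_\kappa)^2$, and $\mathcal{L}_p(\mathbfcal{F}/K,\Psi,\lambda_\kappa)$ is given by integrating the $\mathbb{D}_U^\dagger$-valued modular symbol $\Phi_{\widetilde{L}_\Psi}$ of Proposition~\ref{prop:latticesymbols} against an $\cO(U)$-valued family of locally analytic functions on $\mathscr{Y}$. Analyticity in $\lambda_\kappa$ then follows from Lemma~\ref{lem:analyticity} applied to each of the finitely many $\sigma\Psi$ contributing to $\mathcal{L}_p(\mathbfcal{F}/K,\psi_K,-)$.

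Next I would invoke Zariski-density: the affinoid $U \subseteq \cW_{F,\mathrm{par}}$ is one-dimensional and the classical parallel weights $\lambda_k$ (those $k \in \Z_{\geq 0}$ with $k \equiv k_0 \bmod (p^2-1)$) form an infinite, hence Zariski-dense, subset of $U$. At any such classical point Theorem~\ref{thm:factorizationofpadicLfns1} gives the desired identity, where one checks that the normalization $\langle D_K\rangle^{\lambda_k/2}$ used in defining the left-hand side as an analytic function on $U$ agrees with the factor $D_K^{k/2}$ of Theorem~\ref{thm:factorizationofpadicLfns1} up to a Teichm\"uller constant that can be absorbed into $\eta$ (and which becomes trivial because both $p$-adic $L$-functions involved are normalized using principal-unit twists). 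Since two rigid analytic functions on the connected affinoid $U$ agreeing on a Zariski-dense set coincide identically, the factorization extends to every $\lambda_\kappa \in U$.

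The main (minor) obstacle is the bookkeeping at step one, namely verifying that $\cL_p(\mathbfcal{F}/K,\psi_K,-)$ really is analytic on all of $U$ rather than only in a neighborhood of $\lambda_{k_0}$; this comes down to noting that the integrand $\langle P_\Psi(x,y)\rangle^{(\lambda_\kappa - \lambda_{k_0})/2}P_\Psi^{k_0/2}(x,y)$ extends to a global $\cO(U)$-valued element of $\cA(\widetilde{L}_\Psi')$ because $P_\Psi(x,y)$ takes values in $\cO_{F_\p}^\times$ on the support $\widetilde{L}_\Psi'$ of the distribution $\Phi_{\widetilde{L}_\Psi}\{r-\gamma_\Psi r\}$ (cf.\ Lemma~\ref{lem:latticesymbolssupport}); granted this, the identity principle for $p$-adic analytic functions finishes the proof.
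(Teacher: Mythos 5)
Your proposal is correct and follows essentially the same route as the paper: establish the identity at all classical weights via Theorem~\ref{thm:factorizationofpadicLfns1}, observe that classical weights are Zariski-dense in $U$, and conclude by the identity principle for rigid analytic functions on the one-dimensional affinoid $U$. You are in fact more careful than the paper's two-line proof, which appeals only to ``continuity'' of both sides --- strictly speaking Zariski-density plus continuity is not sufficient in the $p$-adic setting, and your explicit verification that both sides are rigid analytic on $U$ (via Lemma~\ref{lem:analyticity} and the $\cO(U)$-structure of $\mathbb{D}_U^\dagger$) is the correct formulation of what makes the density argument go through.
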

\begin{proof}
Since the set of classical points in $U$ is Zariski--dense and the two sides of (\ref{eqn:factorizationofpadicLfns}) are continuous functions on $U$, they agree on all points of $U$.
\end{proof}

\subsection{A $p$-adic Gross--Zagier formula} We will now show a $p$-adic Gross--Zagier formula relating the (second derivative of the) base--change $p$-adic $L$-function $L_{p}(\mathbfcal{F}/K, \psi_{K}, \lambda_{\kappa})$ and the $p$-adic Abel--Jacobi image of Stark--Heegner cycles introduced in \S\ref{subsec:starkheegnercycles}.
\begin{theorem} \label{thm:padicGZformula1}
\begin{multline*} \frac{d}{d\lambda_{\kappa}}[\cL_{p}(\mathbfcal{F}/K, \Psi, \lambda_{\kappa})]_{\kappa = k_{0}} = \\
 \frac{1}{2}\left(\mathrm{N}_{F/\QQ}(\cD_{K/F})\right)^{k_{0}/4}\left(\mathrm{log}\:\Phi^{\mathrm{AJ}}(\mathrm{D}_{[\Psi]})(\Phi_{\cF}^{\mathrm{har}})+ (-1)^{\frac{k_{0}+2}{2}}\mathrm{log}\:\Phi^{\mathrm{AJ}}(\mathrm{D}_{[\Psi^{\theta}]})(\Phi_{\cF}^{\mathrm{har}})\right) 
\end{multline*}
\end{theorem}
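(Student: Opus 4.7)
Without loss of generality, we may replace $\Psi$ by the conjugate representative $\gamma\Psi\gamma^{-1}$ described in Remark~\ref{rem:choiceofbigmodularsymbol}, so that $\mathscr{L}_\Psi = \mathscr{L}_*$ (hence $|\mathscr{L}_\Psi|=1$) and the integration takes place over $\widetilde{L}_*'$ against $\Phi_\infty = \Phi_{\widetilde{L}_*}$. The strategy is to compute the derivative directly via the rules of Section~\ref{sec:p-adicfamilies} and then to invoke Theorem~\ref{thm:padicAJimage}. Writing $\Psi(\delta_K) = \smallmatrd{a}{b}{c}{-a}$ (trace zero since $\delta_K^2 \in \cO_F$), one factors
\[
P_\Psi(x,y) \;=\; c\bar c \,(x-\tau_\Psi y)(x-\tau_\Psi^\theta y)(\bar x - \bar\tau_\Psi \bar y)(\bar x - \bar\tau_\Psi^\theta \bar y),
\]
and correspondingly $\langle P_\Psi\rangle^{(\lambda_\kappa-\lambda_{k_0})/2}$ decomposes as a product of three factors (a constant one in $c\bar c$, and the two ``$\tau_\Psi$'' and ``$\tau_\Psi^\theta$'' pieces).

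Applying Leibniz at $\lambda_\kappa=\lambda_{k_0}$, and noting that each factor equals $1$ there, produces three contributions. The contribution from differentiating $\langle c\bar c\rangle^{(\lambda_\kappa-\lambda_{k_0})/2}$ yields the scalar $\tfrac12\log_p N_{F/\QQ}(c)$ multiplied by the value of $\cL_p(\ccF/K,\Psi,\lambda_{k_0})$, which vanishes by Theorem~\ref{thm:vanishingofp-adicLfn1} (i.e.\ by the vanishing of the pushforward through Corollary~\ref{cor:pushforwardtoharmonic} and \cite[Proposition 5.8(i)]{BW19}). The remaining two contributions, after invoking Proposition~\ref{prop:derivatives} and unwinding Definition~\ref{def:derivative}, become
\[
\tfrac12\!\int_{\widetilde L_*'}\! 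P_\Psi^{k_0/2}\log_p\bigl(N_{F/\QQ}(x-\tau_\Psi y)\bigr)d\Phi_{\widetilde L_\Psi}\{r-\gamma_\Psi r\} \;+\; \tfrac12\!\int_{\widetilde L_*'}\! P_\Psi^{k_0/2}\log_p\bigl(N_{F/\QQ}(x-\tau_\Psi^\theta y)\bigr)d\Phi_{\widetilde L_\Psi}\{r-\gamma_\Psi r\},
\]
which by Definition~\ref{def:semidefiniteintegral} are exactly the semidefinite integrals $\tfrac12\int_r^{\gamma_\Psi r}\!\int^{\tau_\Psi} P_\Psi^{k_0/2}\omega_\cF$ and $\tfrac12\int_r^{\gamma_\Psi r}\!\int^{\tau_\Psi^\theta} P_\Psi^{k_0/2}\omega_\cF$.

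For the first semidefinite integral, Theorem~\ref{thm:padicAJimage} immediately yields
\[
\int_r^{\gamma_\Psi r}\!\int^{\tau_\Psi} P_\Psi^{k_0/2}\omega_\cF \;=\; \bigl(\sqrt{\mathrm{N}_{F/\QQ}(\cD_{K/F})}\bigr)^{k_0/2}\mathrm{log}\,\Phi^{\mathrm{AJ}}(\mathrm{D}_{[\Psi]})(\Phi_\cF^{\mathrm{har}}).
\]
For the second, the key manipulation is to translate the data at $\tau_\Psi^\theta$ into data for $\Psi^\theta$ using Remark~\ref{rem:galoisactiononembeddings}: $\tau_\Psi^\theta=\tau_{\Psi^\theta}$, $P_\Psi = -P_{\Psi^\theta}$, and $\gamma_\Psi = \gamma_{\Psi^\theta}^{-1}$. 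Substituting gives a factor $(-1)^{k_0/2}$ from $P_\Psi^{k_0/2} = (-1)^{k_0/2}P_{\Psi^\theta}^{k_0/2}$ and turns the cusp endpoints from $(r,\gamma_{\Psi^\theta}^{-1}r)$ into the ``standard orientation'' $(r',\gamma_{\Psi^\theta}r')$ with $r' \defeq \gamma_{\Psi^\theta}^{-1}r$, at the cost of a further sign from swapping the limits. Base-point independence (Lemma~\ref{lem:darmoncycles1}) lets us then apply Theorem~\ref{thm:padicAJimage} to $\Psi^\theta$, producing the factor $(-1)^{k_0/2+1}=(-1)^{(k_0+2)/2}$ in front of $\mathrm{log}\,\Phi^{\mathrm{AJ}}(\mathrm{D}_{[\Psi^\theta]})(\Phi_\cF^{\mathrm{har}})$. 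Assembling the two pieces yields the claimed formula.

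The main delicate point is the sign bookkeeping when trading the $\tau_\Psi^\theta$-semidefinite integral for the $\Psi^\theta$-version (combining $(-1)^{k_0/2}$ from $P_\Psi \leftrightarrow -P_{\Psi^\theta}$ with the sign flip coming from $\gamma_\Psi = \gamma_{\Psi^\theta}^{-1}$ and the base-point change). Apart from this, the argument is essentially a Leibniz-rule computation combined with the vanishing result and the global rationality-free input provided by Theorem~\ref{thm:padicAJimage}.
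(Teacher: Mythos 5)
Your proposal is correct and follows essentially the same route as the paper: the same Leibniz decomposition of $\langle P_\Psi\rangle^{(\lambda_\kappa-\lambda_{k_0})/2}$, the same appeal to Theorem~\ref{thm:vanishingofp-adicLfn1} to kill the constant-factor contribution, Proposition~\ref{prop:derivatives} to split into the two semidefinite integrals, Theorem~\ref{thm:padicAJimage} for the $\tau_\Psi$ piece, and Remark~\ref{rem:galoisactiononembeddings} together with base-point/orientation-swap bookkeeping to produce the sign $(-1)^{k_0/2+1}=(-1)^{(k_0+2)/2}$ in the $\Psi^\theta$ piece. The only cosmetic departures are your WLOG normalization $\mathscr{L}_\Psi = \mathscr{L}_*$ and the explicit trace-zero form $\Psi(\delta_K)=\smallmatrd{a}{b}{c}{-a}$; the paper carries the factor $|\mathscr{L}_\Psi|^{k_0/2}$ along and denotes the leading coefficient abstractly as $A$, but the computation is the same.
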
 
\begin{proof}
Consider the factorization 
\begin{align*} 
P_{\Psi}(x,y) & = A(x - \tau_{\Psi}y)(x - \tau_{\Psi}^{\theta}y)\overline{A}(\overline{x} - \tau_{\Psi}\overline{y})(\overline{x} - \tau_{\Psi}^{\theta}\overline{y}) \\
& = \mathrm{N}_{F/\QQ}(A(x - \tau_{\Psi}y)(x - \tau_{\Psi}^{\theta}y)). 
\end{align*}
Here $\theta$ is the non-trivial automorphism of $\mathrm{Gal}(K/F)$ (and the over line denotes the non-trivial automorphism of $\mathrm{Gal}(F/\QQ)$). Then, we can write
\begin{multline*} 
\cL_{p}(\mathbfcal{F}/K, \Psi, \lambda_{\kappa}) = |\mathscr{L}_{\Psi}|^{k_{0}/2}\langle \mathrm{N}_{F/\QQ}(A) \rangle^{\frac{\lambda_{\kappa}-\lambda_{k_{0}}}{2}} \\ \int_{\widetilde{L}_{\Psi}'} \langle\mathrm{N}_{F/\QQ}(x - \tau_{\Psi}y)\rangle^{\frac{\lambda_{\kappa}-\lambda_{k_{0}}}{2}}\langle\mathrm{N}_{F/\QQ}(x - \tau_{\Psi}^{\theta}y)\rangle^{\frac{\lambda_{\kappa}-\lambda_{k_{0}}}{2}}P_{\Psi}^{k_{0}/2}(x,y)d\Phi_{\widetilde{L}_{\Psi}}\lbrace r - \gamma_{\Psi}r \rbrace  
\end{multline*}
By the product rule for derivatives,
\begin{multline*}
\frac{d}{d\lambda_{\kappa}}[\cL_{p}(\mathbfcal{F}/K, \Psi, \lambda_{\kappa})]_{\kappa = k_{0}} = |\mathscr{L}_{\Psi}|^{k_{0}/2}\langle \mathrm{N}_{F/\QQ}(A) \rangle^{\frac{\lambda_{k_{0}}-\lambda_{k_{0}}}{2}} \\
\frac{d}{d\lambda_{\kappa}}\left[\int_{\widetilde{L}_{\Psi}'} \langle \mathrm{N}_{F/\QQ}((x - \tau_{\Psi}y)(x - \tau_{\Psi}^{\theta}y))\rangle^{\frac{\lambda_{\kappa}-\lambda_{k_{0}}}{2}}P_{\Psi}^{k_{0}/2}(x,y)d\Phi_{\widetilde{L}_{\Psi}}\lbrace r - \gamma_{\Psi}r \rbrace  \right]_{\kappa = k_{0}}  \\ 
+ |\mathscr{L}_{\Psi}|^{k_{0}/2}  \int_{\widetilde{L}_{\Psi}'} P_{\Psi}^{k_{0}/2}(x,y)d\Phi_{\widetilde{L}_{\Psi}}\lbrace r - \gamma_{\Psi}r \rbrace  \cdot \frac{d}{d\lambda_{\kappa}}\left[\langle \mathrm{N}_{F/\QQ}(A) \rangle^{\frac{\lambda_{\kappa}-\lambda_{k_{0}}}{2}}\right] 
\end{multline*}

By the proof of Theorem~\ref{thm:vanishingofp-adicLfn1}, we have the vanishing,
\[ |\mathscr{L}_{\Psi}|^{k_{0}/2}  \int_{\widetilde{L}_{\Psi}'} P_{\Psi}^{k_{0}/2}(x,y)d\Phi_{\widetilde{L}_{\Psi}}\lbrace r - \gamma_{\Psi}r \rbrace = 0 \]
Hence, we get
\begin{multline*}
\frac{d}{d\lambda_{\kappa}}[\cL_{p}(\mathbfcal{F}/K, \Psi, \lambda_{\kappa})]_{\kappa = k_{0}} = |\mathscr{L}_{\Psi}|^{k_{0}/2} \\
\frac{d}{d\lambda_{\kappa}}\left[\int_{\widetilde{L}_{\Psi}'} \langle \mathrm{N}_{F/\QQ}((x - \tau_{\Psi}y)(x - \tau_{\Psi}^{\theta}y))\rangle^{\frac{\lambda_{\kappa}-\lambda_{k_{0}}}{2}}P_{\Psi}^{k_{0}/2}(x,y)d\Phi_{\widetilde{L}_{\Psi}}\lbrace r - \gamma_{\Psi}r \rbrace  \right]_{\kappa = k_{0}}
\end{multline*}
By Proposition~\ref{prop:derivatives}, 
\begin{multline*}
|\mathscr{L}_{\Psi}|^{k_{0}/2}\frac{d}{d\lambda_{\kappa}}\left[\int_{\widetilde{L}_{\Psi}'} \langle \mathrm{N}_{F/\QQ}((x - \tau_{\Psi}y)(x - \tau_{\Psi}^{\theta}y))\rangle^{\frac{\lambda_{\kappa}-\lambda_{k_{0}}}{2}}P_{\Psi}^{k_{0}/2}(x,y)d\Phi_{\widetilde{L}_{\Psi}}\lbrace r - \gamma_{\Psi}r \rbrace  \right]_{\kappa = k_{0}} \\
= \frac{1}{2}|\mathscr{L}_{\Psi}|^{k_{0}/2}\frac{d}{d_{\lambda_{\kappa}}}\left(\int_{\widetilde{L}_{\Psi}'} \langle \mathrm{N}_{F/\QQ}((x - \tau_{\Psi}y)\rangle^{\lambda_{\kappa}-\lambda_{k_{0}}}P_{\Psi}^{k_{0}/2}(x,y)d\Phi_{\widetilde{L}_{\Psi}}\lbrace r - \gamma_{\Psi}r \rbrace  \right)_{\kappa = k_{0}} \\
+ \frac{1}{2}|\mathscr{L}_{\Psi}|^{k_{0}/2}\frac{d}{d_{\lambda_{\kappa}}}\left(\int_{\widetilde{L}_{\Psi}'} \langle \mathrm{N}_{F/\QQ}((x - \tau^{\theta}_{\Psi}y)\rangle^{\lambda_{\kappa}-\lambda_{k_{0}}}P_{\Psi}^{k_{0}/2}(x,y)d\Phi_{\widetilde{L}_{\Psi}}\lbrace r - \gamma_{\Psi}r \rbrace  \right)_{\kappa = k_{0}} 
\end{multline*}
Note now that $\mathscr{L}_{\Psi} = \mathscr{L}_{\tau_{\Psi}} = \mathscr{L}_{\tau^{\theta}_{\Psi}}$ and recall from Remark~\ref{rem:galoisactiononembeddings} that $(\tau_{\Psi^{\theta}},P_{\Psi^{\theta}},\gamma_{\Psi^{\theta}}) = (\tau_{\Psi}^{\theta}, -P_{\Psi}, \gamma_{\Psi}^{-1})$. By Definition~\ref{def:semidefiniteintegral} on semidefinite integrals, we have 
\begin{multline*}
\frac{d}{d\lambda_{\kappa}}[\cL_{p}(\mathbfcal{F}/K, \Psi, \lambda_{\kappa})]_{\kappa = k_{0}} = \\ \frac{1}{2}\left(\int_{r}^{\gamma_{\Psi}r}\int^{\tau_{\Psi}}P_{\Psi}(x,y)^{k_{0}/2}\omega_{\cF} + (-1)^{k_{0}/2} \int_{r}^{\gamma_{\Psi^{\theta}}^{-1}r}\int^{\tau_{\Psi^{\theta}}}P_{\Psi^{\theta}}(x,y)^{k_{0}/2}\omega_{\cF} \right)
\end{multline*}
Replacing the arbitrary base point $r \in \mathbb{P}^{1}(F)$ by $\gamma_{\Psi^{\theta}}r$, the second term on the RHS becomes 
\[ \int_{r}^{\gamma_{\Psi^{\theta}}^{-1}r}\int^{\tau_{\Psi^{\theta}}}P_{\Psi^{\theta}}(x,y)^{k_{0}/2}\omega_{\cF} = \int^{r}_{\gamma_{\Psi^{\theta}}r}\int^{\tau_{\Psi^{\theta}}}P_{\Psi^{\theta}}(x,y)^{k_{0}/2}\omega_{\cF} = - \int_{r}^{\gamma_{\Psi^{\theta}}r}\int^{\tau_{\Psi^{\theta}}}P_{\Psi^{\theta}}(x,y)^{k_{0}/2}\omega_{\cF} \]
The result now follows from Theorem~\ref{thm:padicAJimage} above.
\end{proof}
An immediate consequence of Theorem~\ref{thm:vanishingofp-adicLfn1} and Theorem~\ref{thm:padicGZformula1} above is 
\begin{corollary} \label{cor:padicGZformula2} For all unramified characters $\psi_{K} : \mathrm{Gal}(H_{K}/K) \rightarrow \mathbb{C}^{\times}$,
\begin{multline*} \frac{d^{2}}{d\lambda_{\kappa}^{2}}[L_{p}(\mathbfcal{F}/K, \psi_{K}, \lambda_{\kappa})]_{\kappa = k_{0}} = \\ 
\frac{1}{2}\left(\mathrm{N}_{F/\QQ}(\cD_{K/F})\right)^{k_{0}/2}\left(\mathrm{log}\:\Phi^{\mathrm{AJ}}(\mathrm{D}_{\psi_{K}})(\Phi_{\cF}^{\mathrm{har}})+ (-1)^{\frac{k_{0}+2}{2}}\mathrm{log}\:\Phi^{\mathrm{AJ}}(\mathrm{D}_{\psi_{K}}^{\theta})(\Phi_{\cF}^{\mathrm{har}})\right)^{2} 
\end{multline*}
\end{corollary}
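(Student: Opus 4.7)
The plan is to combine the vanishing statement of Theorem~\ref{thm:vanishingofp-adicLfn1} with the first--derivative formula of Theorem~\ref{thm:padicGZformula1}, via the defining identity $L_{p}(\ccF/K, \psi_{K}, \lambda_{\kappa}) = \cL_{p}(\ccF/K, \psi_{K}, \lambda_{\kappa})^{2}$ from Definition~\ref{def:squarerootpadicLfn}(iii). By the product rule,
\[ \frac{d^{2}}{d\lambda_{\kappa}^{2}}\left[\cL_{p}(\ccF/K, \psi_{K}, \lambda_{\kappa})^{2}\right] = 2\cL_{p}(\ccF/K, \psi_{K}, \lambda_{\kappa})\cdot\frac{d^{2}}{d\lambda_{\kappa}^{2}}[\cL_{p}(\ccF/K, \psi_{K}, \lambda_{\kappa})] + 2\left(\frac{d}{d\lambda_{\kappa}}[\cL_{p}(\ccF/K, \psi_{K}, \lambda_{\kappa})]\right)^{2}. \]
Evaluating at $\lambda_{\kappa} = \lambda_{k_{0}}$ and invoking Theorem~\ref{thm:vanishingofp-adicLfn1} to kill $\cL_{p}(\ccF/K, \psi_{K}, \lambda_{k_{0}})$, the first summand vanishes, so the problem reduces to squaring the first derivative of $\cL_{p}(\ccF/K, \psi_{K}, -)$ at $\lambda_{k_{0}}$.

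Next I would use linearity of the differentiation operator to expand the partial square root $p$-adic $L$-function from Definition~\ref{def:squarerootpadicLfn}(ii) as
\[ \frac{d}{d\lambda_{\kappa}}[\cL_{p}(\ccF/K, \psi_{K}, \lambda_{\kappa})]_{\kappa = k_{0}} = \sum_{\sigma \in G_{\cC}} \psi_{K}^{-1}(\sigma)\,\frac{d}{d\lambda_{\kappa}}[\cL_{p}(\ccF/K, \sigma\Psi, \lambda_{\kappa})]_{\kappa = k_{0}}, \]
and apply Theorem~\ref{thm:padicGZformula1} to each summand. Collecting terms and pulling the constant $\tfrac{1}{2}(\mathrm{N}_{F/\QQ}(\cD_{K/F}))^{k_{0}/4}$ outside, I would then invoke the linearity of $\log\Phi^{\mathrm{AJ}}$ in the first argument together with Definition~\ref{defn:stark-heegnercycle2} to recognise
\[ \sum_{\sigma \in G_{\cC}} \psi_{K}^{-1}(\sigma)\mathrm{D}_{[\sigma\Psi]} = \mathrm{D}_{\psi_{K}}, \qquad \sum_{\sigma \in G_{\cC}} \psi_{K}^{-1}(\sigma)\mathrm{D}_{[(\sigma\Psi)^{\theta}]} = \mathrm{D}_{\psi_{K}}^{\theta}, \]
obtaining
\[ \frac{d}{d\lambda_{\kappa}}[\cL_{p}(\ccF/K, \psi_{K}, \lambda_{\kappa})]_{\kappa = k_{0}} = \frac{1}{2}\left(\mathrm{N}_{F/\QQ}(\cD_{K/F})\right)^{k_{0}/4}\left(\mathrm{log}\:\Phi^{\mathrm{AJ}}(\mathrm{D}_{\psi_{K}})(\Phi_{\cF}^{\mathrm{har}}) + (-1)^{\frac{k_{0}+2}{2}}\mathrm{log}\:\Phi^{\mathrm{AJ}}(\mathrm{D}_{\psi_{K}}^{\theta})(\Phi_{\cF}^{\mathrm{har}})\right). \]
Squaring this identity and multiplying by $2$ (coming from the product rule above) then produces the claimed formula, the factor $\tfrac{1}{2}$ emerging from $2 \cdot \tfrac{1}{4}$.

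The proof is therefore essentially a formal bookkeeping exercise, and I do not expect a substantive obstacle: the two deep inputs, namely the vanishing of the partial square root $p$-adic $L$-function at the base--change point and the first--derivative formula linking it to the logarithm of the Abel--Jacobi image of an individual Stark--Heegner cycle, have already been established. The only minor care that needs to be taken is tracking the interaction of the $G_{\cC}$--action on embeddings with the $\theta$--involution so that the reshuffled sum $\sum_{\sigma}\psi_{K}^{-1}(\sigma)\mathrm{D}_{[(\sigma\Psi)^{\theta}]}$ is genuinely equal to $\mathrm{D}_{\psi_{K}}^{\theta}$ in $(\Delta_{0} \otimes \textup{Div}(\mathcal{H}_{\mathfrak{p}}^{\textup{ur}}) \otimes V_{k_{0},k_{0}})_{\Gamma}\otimes \psi_{K}$; this is immediate from Definition~\ref{defn:stark-heegnercycle2} together with the compatibility recorded in Remark~\ref{rem:galoisactiononembeddings}.
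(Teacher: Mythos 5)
Your proof is correct and coincides with the argument the paper leaves implicit when it calls the corollary ``an immediate consequence'' of Theorem~\ref{thm:vanishingofp-adicLfn1} and Theorem~\ref{thm:padicGZformula1}: differentiate the square $L_{p} = (\cL_{p})^{2}$ twice, kill the $\cL_{p}\cL_{p}''$ term using the vanishing at $\lambda_{k_{0}}$, and substitute the first--derivative formula termwise in the $G_{\cC}$--sum, matching the result against Definition~\ref{defn:stark-heegnercycle2}. The bookkeeping of constants ($2\cdot\tfrac14 = \tfrac12$ and $k_{0}/4 \mapsto k_{0}/2$ after squaring) is right, and your use of the equality $\sum_{\sigma}\psi_{K}^{-1}(\sigma)\mathrm{D}_{[(\sigma\Psi)^{\theta}]} = \mathrm{D}_{\psi_{K}}^{\theta}$ is exactly the definition, so no further justification via Remark~\ref{rem:galoisactiononembeddings} is actually needed there.
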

In particular, for $\psi_{K} = \psi_{\mathrm{triv}}$ - the trivial character, we may further simplify the expression of Corollary~\ref{cor:padicGZformula2} above as 
\begin{corollary} \label{cor:padicGZformula3}
\begin{multline*} \frac{d^{2}}{d\lambda_{\kappa}^{2}}[L_{p}(\mathbfcal{F}/K, \psi_{\mathrm{triv}}, \lambda_{\kappa})]_{\kappa = k_{0}} = \\ 
\frac{1}{2}\left(\mathrm{N}_{F/\QQ}(\cD_{K/F})\right)^{k_{0}/2}\left(1 + (-1)^{\frac{k_{0}+2}{2}}\omega_{\cM}\right)^{2}
\left(\mathrm{log}\:\Phi^{\mathrm{AJ}}(\mathrm{D}_{\mathbbm{1}})(\Phi_{\cF}^{\mathrm{har}})\right)^{2}
\end{multline*}
\end{corollary}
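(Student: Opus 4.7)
The plan is to derive Corollary~\ref{cor:padicGZformula3} from Corollary~\ref{cor:padicGZformula2} by specializing to $\psi_{K} = \psi_{\mathrm{triv}}$ and then relating $\mathrm{D}_{\mathbbm{1}}^{\theta}$ to $\mathrm{D}_{\mathbbm{1}}$ via the Atkin--Lehner operator at $\cM$. Setting $\psi_{K} = \psi_{\mathrm{triv}}$ in Corollary~\ref{cor:padicGZformula2} directly gives
\[
\frac{d^{2}}{d\lambda_{\kappa}^{2}}[L_{p}(\mathbfcal{F}/K,\psi_{\mathrm{triv}},\lambda_{\kappa})]_{\kappa=k_{0}} = \tfrac{1}{2}\mathrm{N}_{F/\QQ}(\cD_{K/F})^{k_{0}/2}\bigl(\mathrm{log}\:\Phi^{\mathrm{AJ}}(\mathrm{D}_{\mathbbm{1}})(\Phi_{\cF}^{\mathrm{har}}) + (-1)^{\frac{k_{0}+2}{2}}\mathrm{log}\:\Phi^{\mathrm{AJ}}(\mathrm{D}_{\mathbbm{1}}^{\theta})(\Phi_{\cF}^{\mathrm{har}})\bigr)^{2},
\]
so the only thing left to prove is the identity
\[
\mathrm{log}\:\Phi^{\mathrm{AJ}}(\mathrm{D}_{\mathbbm{1}}^{\theta})(\Phi_{\cF}^{\mathrm{har}}) = \omega_{\cM}\cdot\mathrm{log}\:\Phi^{\mathrm{AJ}}(\mathrm{D}_{\mathbbm{1}})(\Phi_{\cF}^{\mathrm{har}}).
\]

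First, I would exploit the structure of $\mathrm{D}_{\mathbbm{1}}^{\theta}$ as a sum over $\sigma \in \textup{Gal}(H_{K}/K)$ of the classes $\mathrm{D}_{(\sigma\Psi)^{\theta}}$, which by Remark~\ref{rem:galoisactiononembeddings} have associated data $(\tau_{\sigma\Psi}^{\theta},-P_{\sigma\Psi},\gamma_{\sigma\Psi}^{-1})$. The key observation is Remark~\ref{rem:atkinlehnerorientation}: conjugating by the Atkin--Lehner matrix $\alpha_{\cM}$ sends an oriented optimal embedding $\sigma\Psi^{\theta}$ to another oriented optimal embedding $\alpha_{\cM}\sigma\Psi^{\theta}\alpha_{\cM}^{-1}$, which for the trivial character corresponds to some element $\sigma'\Psi$ in the same $\mathrm{Pic}(\cO_{K})$--torsor.

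Next, I would translate this symmetry into an identity between cycles: using $\Gamma$-invariance in $(\Delta_{0}\otimes\mathrm{Div}(\cH_{\p}^{\mathrm{ur}})\otimes V_{k_{0},k_{0}})_{\Gamma}$ together with the conjugation action (\ref{eqn:conjugationaction}), the cycle $\mathrm{D}_{\mathbbm{1}}^{\theta}$ equals the Atkin--Lehner translate $[\alpha_{\cM}]\cdot\mathrm{D}_{\mathbbm{1}}$ (up to the class of $\alpha_{\cM}$ in a suitably extended group acting on the cycles). Since the harmonic modular symbol $\Phi_{\cF}^{\mathrm{har}}$ lifts $\phi_{\cF}$ uniquely and $\cF$ is a newform at $\cM$ with Atkin--Lehner eigenvalue $\omega_{\cM}$, evaluation against $\Phi_{\cF}^{\mathrm{har}}$ picks up this eigenvalue: $\Phi_{\cF}^{\mathrm{har}}|_{\alpha_{\cM}} = \omega_{\cM}\Phi_{\cF}^{\mathrm{har}}$. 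Pushing this through the $\p$-adic Abel--Jacobi map (which is canonical up to the freedom described in Theorem~\ref{thm:padicabeljacobiimageofdarmoncycles}, so it transforms compatibly) yields the desired eigenvalue relation.

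The main obstacle will be correctly bookkeeping the Atkin--Lehner action on all pieces: one must check that the action of $\alpha_{\cM}$ on $\tau_{\Psi}^{\theta}$, the polynomial $-P_{\Psi}$, the cusp $\gamma_{\Psi^{\theta}}x - x$, and the embedding conjugacy class $[\sigma\Psi^{\theta}]$ all coherently reassemble $\mathrm{D}_{\mathbbm{1}}^{\theta}$ into $[\alpha_{\cM}]\mathrm{D}_{\mathbbm{1}}$, and that the sign $(-1)^{k_{0}/2}$ coming from $P_{\Psi^{\theta}} = -P_{\Psi}$ raised to the $k_{0}/2$-th power is absorbed correctly (i.e. is an even power, giving $+1$). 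Once the Atkin--Lehner translation identity is established at the level of cycles, the $\omega_{\cM}$ factor on $\Phi_{\cF}^{\mathrm{har}}$ is immediate, substituting into Corollary~\ref{cor:padicGZformula2} and factoring out $\mathrm{log}\:\Phi^{\mathrm{AJ}}(\mathrm{D}_{\mathbbm{1}})(\Phi_{\cF}^{\mathrm{har}})$ from the square yields the claimed formula.
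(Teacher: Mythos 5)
Your proposal follows the same strategy as the paper: specialize Corollary~\ref{cor:padicGZformula2} to the trivial character, then reduce to the identity $\mathrm{log}\:\Phi^{\mathrm{AJ}}(\mathrm{D}_{\mathbbm{1}}^{\theta})(\Phi_{\cF}^{\mathrm{har}}) = \omega_{\cM}\,\mathrm{log}\:\Phi^{\mathrm{AJ}}(\mathrm{D}_{\mathbbm{1}})(\Phi_{\cF}^{\mathrm{har}})$ via the Atkin--Lehner eigensymbol property, the orientation-preserving conjugation from Remark~\ref{rem:atkinlehnerorientation}, and the Picard-torsor structure from Proposition~\ref{prop:picardgrouptorsor}. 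One minor clarification on your worry in the final paragraph: the sign $(-1)^{k_0/2}$ arising from $P_{\Psi^\theta} = -P_{\Psi}$ is not generically $+1$ (that would require $k_0/2$ even), and you do not need it to be; it was already combined with the sign from reversing the divisor into the factor $(-1)^{(k_0+2)/2}$ appearing in Corollary~\ref{cor:padicGZformula2}, so it never resurfaces in the passage to Corollary~\ref{cor:padicGZformula3}. Also, the paper is slightly more careful than your phrasing ``$\mathrm{D}_{\mathbbm{1}}^{\theta} = [\alpha_{\cM}]\cdot\mathrm{D}_{\mathbbm{1}}$'': rather than define an action of $\alpha_{\cM}$ on $\Gamma$-coinvariant cycles (since $\alpha_{\cM}\notin\Gamma$), it moves $W_{\cM}$ onto the harmonic modular symbol in the pairing, showing $\mathrm{log}\:\Phi^{\mathrm{AJ}}(\mathrm{D}_{(\sigma\Psi)^{\theta}})(\Phi_{\cF}^{\mathrm{har}}\mid W_{\cM}) = \mathrm{log}\:\Phi^{\mathrm{AJ}}(\mathrm{D}_{\alpha_{\cM}(\sigma\Psi)^{\theta}\alpha_{\cM}^{-1}})(\Phi_{\cF}^{\mathrm{har}})$, and then summing and relabeling via the torsor action — which is what you are doing in effect, just expressed at the dual side.
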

\begin{proof}
For $\sigma \in \mathrm{Gal}(H_{K}/K)$, let $\sigma\Psi \in \Gamma/\mathrm{Emb}^{\mathfrak{o}_{\sigma\Psi}}(\cO,\cR)$ be a $\Gamma$-conjugacy class of oriented optimal embeddings. Since $\Phi_{\cF}^{\mathrm{har}}$ is an eigensymbol for the Atkin--Lehner involution $W_{\cM}$, we have
\[ \mathrm{log}\:\Phi^{\mathrm{AJ}}(\mathrm{D}_{(\sigma\Psi)^{\theta}})(\Phi_{\cF}^{\mathrm{har}}\mid W_{\cM}) = \mathrm{log}\:\Phi^{\mathrm{AJ}}(\mathrm{D}_{\alpha_{\cM}(\sigma\Psi)^{\theta}\alpha_{\cM}^{-1}})(\Phi_{\cF}^{\mathrm{har}}) = \omega_{\cM}\mathrm{log}\:\Phi^{\mathrm{AJ}}(\mathrm{D}_{(\sigma\Psi)^{\theta}})(\Phi_{\cF}^{\mathrm{har}}). \]
Whilst $(\sigma\Psi)^{\theta}$ doesn't have the same orientation (at $\cM$) as $\sigma\Psi$, we know that $\alpha_{\cM}(\sigma\Psi)^{\theta}\alpha_{\cM}^{-1} \in \Gamma/\mathrm{Emb}^{\mathfrak{o}_{\sigma\Psi}}(\cO,\cR)$ (See Remark~\ref{rem:atkinlehnerorientation}). By Proposition~\ref{prop:picardgrouptorsor} which exhibits the set of the $\Gamma$-conjugacy class of oriented optimal embeddings as a $\mathrm{Gal}(H_{K}/K)$-torsor, we  know that there exists $\delta_{\sigma\Psi} \in \mathrm{Gal}(H_{K}/K)$ such that
\[\alpha_{\cM}(\sigma\Psi)^{\theta}\alpha_{\cM}^{-1} = \delta_{\sigma\Psi}\sigma\Psi. \] 
Thus we have
\begin{align*}
\sum\limits_{\sigma \in \mathrm{Gal}(H_{K}/K)}\mathrm{log}\:\Phi^{\mathrm{AJ}}(\mathrm{D}_{(\sigma\Psi)^{\theta}})(\Phi_{\cF}^{\mathrm{har}}) 
 = \omega_{\cM}\sum\limits_{\sigma \in \mathrm{Gal}(H_{K}/K)}\mathrm{log}\:\Phi^{\mathrm{AJ}}(\mathrm{D}_{\delta_{\sigma\Psi}\sigma\Psi})(\Phi_{\cF}^{\mathrm{har}})
\end{align*}
Yet again using the fact that the $\mathrm{Gal}(H_{K}/K)$-action on $\Gamma/\mathrm{Emb}^{\mathfrak{o}_{\sigma\Psi}}(\cO,\cR)$ is transitive, we get
\begin{align*} \mathrm{log}\:\Phi^{\mathrm{AJ}}(\mathrm{D}_{\mathbbm{1}}^{\theta})(\Phi_{\cF}^{\mathrm{har}}) & = \sum\limits_{\sigma \in \mathrm{Gal}(H_{K}/K)}\psi_{\mathrm{triv}}^{-1}(\sigma)\mathrm{log}\:\Phi^{\mathrm{AJ}}(\mathrm{D}_{(\sigma\Psi)^{\theta}})(\Phi_{\cF}^{\mathrm{har}}) \\
& = \omega_{\cM}\sum\limits_{\sigma \in \mathrm{Gal}(H_{K}/K)}\psi_{\mathrm{triv}}^{-1}(\sigma)\mathrm{log}\:\Phi^{\mathrm{AJ}}(\mathrm{D}_{\delta_{\sigma\Psi}\sigma\Psi})(\Phi_{\cF}^{\mathrm{har}}) \\
 & = \omega_{\cM}\sum\limits_{\sigma \in \mathrm{Gal}(H_{K}/K)}\psi_{\mathrm{triv}}^{-1}(\sigma)\mathrm{log}\:\Phi^{\mathrm{AJ}}(\mathrm{D}_{\sigma\Psi})(\Phi_{\cF}^{\mathrm{har}}) \\
 & = \omega_{\cM}\mathrm{log}\:\Phi^{\mathrm{AJ}}(\mathrm{D}_{\mathbbm{1}})(\Phi_{\cF}^{\mathrm{har}})
 \end{align*}
 and the result follows from Corollary~\ref{cor:padicGZformula2} above.
\end{proof}
\begin{corollary} \label{cor:padicGZformula4}
\begin{multline*} \frac{d^{2}}{d\lambda_{\kappa}^{2}}[L_{p}(\mathbfcal{F}/K, \psi_{\mathrm{triv}}, \lambda_{\kappa})]_{\kappa = k_{0}} = \\
\begin{cases}
2\left(\mathrm{N}_{F/\QQ}(\cD_{K/F})\right)^{\frac{k_{0}}{2}}\left(\mathrm{log}\:\Phi^{\mathrm{AJ}}(\mathrm{D}_{\mathbbm{1}})(\Phi_{\cF}^{\mathrm{har}})\right)^{2} & \text{if }\omega_{\cM} = (-1)^{\frac{k_{0}+2}{2}}  \\
0 & \text{if }\omega_{\cM} = (-1)^{\frac{k_{0}}{2}}
\end{cases}
\end{multline*}
\end{corollary}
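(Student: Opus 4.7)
The plan is to obtain Corollary~\ref{cor:padicGZformula4} as an essentially immediate consequence of Corollary~\ref{cor:padicGZformula3}, which already expresses the second derivative of $L_{p}(\mathbfcal{F}/K, \psi_{\mathrm{triv}}, \lambda_{\kappa})$ at $\lambda_{k_{0}}$ as
\[ \frac{1}{2}\left(\mathrm{N}_{F/\QQ}(\cD_{K/F})\right)^{k_{0}/2}\left(1 + (-1)^{\frac{k_{0}+2}{2}}\omega_{\cM}\right)^{2}\left(\mathrm{log}\:\Phi^{\mathrm{AJ}}(\mathrm{D}_{\mathbbm{1}})(\Phi_{\cF}^{\mathrm{har}})\right)^{2}. \]
Hence the only work remaining is to evaluate the scalar $\frac{1}{2}\left(1 + (-1)^{\frac{k_{0}+2}{2}}\omega_{\cM}\right)^{2}$ in the two cases permitted by the Atkin--Lehner eigenvalue $\omega_{\cM} \in \{\pm 1\}$.

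First I will record the parity observation that, since $k_{0}$ is even, one has $(-1)^{(k_{0}+2)/2} = -(-1)^{k_{0}/2}$. Then a case-by-case computation gives the result. In the case $\omega_{\cM} = (-1)^{(k_{0}+2)/2}$, the product $(-1)^{(k_{0}+2)/2}\omega_{\cM} = \omega_{\cM}^{2} = 1$, so the parenthesised factor is $(1+1)^{2} = 4$, and multiplying by $1/2$ yields the leading coefficient $2$. In the complementary case $\omega_{\cM} = (-1)^{k_{0}/2} = -(-1)^{(k_{0}+2)/2}$, the product $(-1)^{(k_{0}+2)/2}\omega_{\cM} = -1$, so the parenthesised factor becomes $(1-1)^{2} = 0$ and the whole expression vanishes identically.

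There is no real obstacle here: the statement is simply the specialisation of Corollary~\ref{cor:padicGZformula3} according to the sign of the Atkin--Lehner eigenvalue. The only point that requires minor care is the parity check relating $(-1)^{(k_{0}+2)/2}$ and $(-1)^{k_{0}/2}$, which uses that $k_{0}$ is even (guaranteed by our standing assumption that $\cF$ has parallel weight $k_{0}+2$ with $k_{0}$ even, arising as the base-change of an elliptic eigenform of the same weight). Thus the proof reduces to this one-line case analysis, and the Corollary follows.
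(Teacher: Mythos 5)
Your proposal is correct and matches the paper's (implicit) argument: Corollary~\ref{cor:padicGZformula4} is presented without its own proof, being understood as the immediate specialisation of Corollary~\ref{cor:padicGZformula3} according to the two possible values of $\omega_{\cM}$, exactly as you do. The parity check $(-1)^{(k_0+2)/2} = -(-1)^{k_0/2}$ (valid since $k_0$ is even) and the evaluation $\tfrac{1}{2}(1\pm 1)^2 \in \{2,0\}$ are both correct and are the only content required.
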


\section{Proof of the main result} \label{sec:mainresult}
We begin by noting the vanishing of the several $p$-adic $L$-functions introduced above. For $\cF \in S_{\underline{k_{0}}+2}(U_{0}(\cN))^{\mathrm{new}}$, we know that the sign of the functional equation of the base--change $L$-function $L(\cF/K, s)$ is $-1$ by the (\textbf{SH--Hyp}). In particular the central critical $L$-value $L(\cF/K, k_{0}/2+1)$ vanishes to odd order. The classical Artin formalism (\ref{eqn:classicalartinformalism}) above;
\[ L(\cF/K, s) = L(\cF, s)L(\cF, \epsilon_{K/F}, s)  \]
along with the Heegner hypothesis (\textbf{Heeg--Hyp}) shows that the sign of the functional equation of $L(f/F, s) = L(\cF, s)$ is $-1$, which forces an odd order of vanishing of the central critical $L$-value $L(\cF, k_{0}/2+1)$, whilst that of $L(f/F, \epsilon_{K/F}, s) = L(\cF, \epsilon_{K/F}, s)$ is $+1$ (See \cite[Theorem 1.1]{Pal21}). We shall assume that $L(\cF, \epsilon_{K/F}, k_{0}/2+1) \neq 0$. 
\paragraph*{}
By Remark~\ref{rem:Up-eigenvalue}, $\omega_{\p} = 1$ and hence by \cite[Theorem 9.3]{BW19}, we know that the $p$-adic $L$-function $L_{p}(\cF, s)$ has a trivial zero at $s = k_{0}/2 + 1$. Similarly, the $p$-adic $L$-function $L_{p}(\cF, \epsilon_{K/F}, s)$ doesn't have a trivial zero at $s = k_{0}/2 + 1$ since $\epsilon_{K/F}(p) = -1$ (recall that $p$ is inert in $K$ by (\textbf{SH--Hyp})). In particular, we may summarize that
\[ \mathrm{ord}_{s = k_{0}/2+1}L_{p}(\cF, s) \geq 2\]
from which we conclude, by Theorem~\ref{thm:bianchiinterpolation} above, that
\begin{equation} \label{eqn:orderofvanishingoftwovariablepadicLfn}
\mathrm{ord}_{\lambda_{\kappa} = \lambda_{k_{0}}} L_{p}(\mathbfcal{F}, \lambda_{\kappa}) \geq 2.
\end{equation}

We know, a priori, that the quantity
\begin{equation} \label{eqn:sF} S_{\cF} \defeq 2\frac{L^{\mathrm{alg}}(\cF, \epsilon_{K/F}, k_{0}/2 + 1)}{\mathrm{N}_{F/\QQ}(\cD_{K/F})}  \in \QQ(\cF)^{\times}. 
\end{equation}
We shall work under the following assumption for the rest of the paper which is consistent with the Birch and Swinnerton--Dyer conjecture (See  \cite[Remark 6.6]{Mok11} for instance) :-
\begin{assumption} \label{ass:sFisasquare} We shall assume that the quantity $S_{\cF}$ is a square, i.e.
\[ S_{\cF} \in \left(\QQ(\cF)^{\times}\right)^{2} \]
\end{assumption}
We can now compare the $\p$-adic Abel--Jacobi image of Stark--Heegner cycles introduced in \S\ref{subsec:starkheegnercycles} with that of the Heegner cycles that appear in \S\ref{subsec:Heegnercycles}
\begin{theorem} \label{thm:comparingAJimages}
Suppose that $\omega_{\cM} = (-1)^{\frac{k_{0}+2}{2}}$. Then, under Assumption~\ref{ass:sFisasquare}, there exists $\mathcal{Y} \in \mathrm{CH}^{k_{0}/2+1}\left( \cM_{k_{0}} \otimes F \right) \subset \mathrm{CH}^{k_{0}/2+1}\left( \cM_{k_{0}} \otimes K \right)$ and $s_{\cF} \in \QQ(\cF)^{\times}$ such that
\[ \mathrm{log}\:\Phi^{\mathrm{AJ}}(\mathrm{D}_{\mathbbm{1}})(\Phi_{\cF}^{\mathrm{har}}) = s_{\cF}\cdot \mathrm{log\;cl}_{f,L}(\mathcal{Y})(f^{\mathrm{rig}}). \]
\end{theorem}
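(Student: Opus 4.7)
The plan is to compute the second derivative at $\lambda_{k_{0}}$ of both sides of the factorization in Theorem~\ref{thm:factorizationofpadicLfnsintro}, and then match the $p$-adic Gross--Zagier formulas on either side. First I would differentiate the identity
\[ (D_{K})^{\lambda_{\kappa}/2}L_{p}(\mathbfcal{F}/K, \lambda_{\kappa}) = \eta\, L_{p}(\mathbfcal{F}, \lambda_{\kappa})\,L_{p}(\mathbfcal{F}, \epsilon_{K/F}, \lambda_{\kappa}) \]
twice in $\lambda_\kappa$ and evaluate at $\lambda_{k_{0}}$. On the left, Theorem~\ref{thm:vanishingofp-adicLfn1} tells us that $L_{p}(\mathbfcal{F}/K,-)$ vanishes to order at least two at $\lambda_{k_{0}}$, so all lower-order Leibniz terms drop out and we are left with $(D_K)^{k_0/2}\tfrac{d^2}{d\lambda_\kappa^2}[L_p(\mathbfcal{F}/K,\lambda_\kappa)]_{\lambda_{k_0}}$. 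On the right, the trivial zero of $L_{p}(\mathbfcal{F},-)$ at $\lambda_{k_{0}}$ (implied by $\omega_\p = 1$ and discussed at the start of \S\ref{sec:mainresult}) gives $\mathrm{ord}_{\lambda_{k_0}}L_p(\mathbfcal{F},-) \geq 2$, so again only the pure second derivative of $L_p(\mathbfcal{F},-)$ survives, multiplied by $\eta\,L_p(\mathbfcal{F},\epsilon_{K/F},\lambda_{k_0})$.

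Next I would substitute the two $p$-adic Gross--Zagier formulas: Theorem~\ref{thm:padicGZformula1intro} (in the case $\omega_{\cM} = (-1)^{(k_0+2)/2}$) on the left, expressing the second derivative of $L_p(\mathbfcal{F}/K,-)$ in terms of $(\mathrm{log}\,\Phi^{\mathrm{AJ}}(\mathrm{D}_{\mathbbm{1}})(\Phi_\cF^{\mathrm{har}}))^2$; and Theorem~\ref{thm:HeegnercyclespadicAJintro} on the right, expressing the second derivative of $L_p(\mathbfcal{F},-)$ in terms of $(\mathrm{log\;cl}_{f,L}(\mathcal{Y})(f^{\mathrm{rig}}))^2$. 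To finish evaluating the right-hand side I would compute $L_p(\mathbfcal{F},\epsilon_{K/F},\lambda_{k_0})$ via the interpolation formula of Theorem~\ref{thm:bianchiinterpolation}, using $\epsilon_{K/F}(p) = -1$ (from \textbf{SH--Hyp}) and $a_\p(\cF) = p^{k_0}$ (from $\omega_\p = 1$), which makes the Euler-type factor equal to $2$ and gives $L_p(\mathbfcal{F},\epsilon_{K/F},\lambda_{k_0}) = 2L^{\mathrm{alg}}(\cF,\epsilon_{K/F},k_0/2+1)$ (which is nonzero by the assumption $L(\cF,\epsilon_{K/F},k_0/2+1)\neq 0$).

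Combining all constants I would then solve for the ratio
\[ \frac{(\mathrm{log}\,\Phi^{\mathrm{AJ}}(\mathrm{D}_{\mathbbm{1}})(\Phi_\cF^{\mathrm{har}}))^{2}}{(\mathrm{log\;cl}_{f,L}(\mathcal{Y})(f^{\mathrm{rig}}))^{2}}, \]
and verify -- by tracking the constant $\eta$ through its definition \eqref{eqn:introductionofeta}, the powers of $D_K$ and $\mathrm{N}_{F/\QQ}(\cD_{K/F})$, and the relationship between these discriminants -- that this ratio equals $S_\cF = 2L^{\mathrm{alg}}(\cF,\epsilon_{K/F},k_0/2+1)/\mathrm{N}_{F/\QQ}(\cD_{K/F})$, recovering the unconditional identity of Remark~\ref{rem:assumption}. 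Under Assumption~\ref{ass:sFisasquare}, $S_\cF \in (\QQ(\cF)^\times)^2$, so we may take a square root $s_\cF \in \QQ(\cF)^\times$ with $s_\cF^2 = S_\cF$; then $\mathrm{log}\,\Phi^{\mathrm{AJ}}(\mathrm{D}_{\mathbbm{1}})(\Phi_\cF^{\mathrm{har}}) = \pm s_\cF \cdot \mathrm{log\;cl}_{f,L}(\mathcal{Y})(f^{\mathrm{rig}})$, and we absorb the sign either into $s_\cF$ or into the choice of representative of the Heegner cycle $\mathcal{Y}$ (which has a sign ambiguity from its CM construction in \cite{IS03}).

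The main obstacle will be bookkeeping the constants carefully: the factor $\eta$ of \eqref{eqn:introductionofeta}, the interpolation ratios appearing in $L^{\mathrm{alg}}(\cF,-,-)$ versus $L^{\mathrm{alg}}(\cF/K,-,-)$, and the compatibility $D_K = D_F^2\cdot \mathrm{N}_{F/\QQ}(\cD_{K/F})$, all need to combine so that the proportionality constant between the two squared quantities is exactly $S_\cF$ (which is the substantive output of the Waldspurger/Popa-type formula in Theorem~\ref{thm:popageneralization}). Once this identity of squares is established, extracting the square root is only a formality given Assumption~\ref{ass:sFisasquare}.
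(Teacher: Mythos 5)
Your proposal follows essentially the same route as the paper's proof: differentiate the factorization $(D_K)^{\lambda_\kappa/2}L_p(\mathbfcal{F}/K,\lambda_\kappa) = \eta\,L_p(\mathbfcal{F},\lambda_\kappa)\,L_p(\mathbfcal{F},\epsilon_{K/F},\lambda_\kappa)$ twice at $\lambda_{k_0}$, use the vanishing of $L_p(\mathbfcal{F}/K,-)$ to order $\geq 2$ and the trivial zero of $L_p(\mathbfcal{F},-)$ to kill the cross terms, substitute the two $p$-adic Gross--Zagier formulas (Corollary~\ref{cor:padicGZformula4} and Theorem~\ref{thm:HeegnercyclespadicAJ}), evaluate $L_p(\mathbfcal{F},\epsilon_{K/F},\lambda_{k_0}) = 2L^{\mathrm{alg}}(\cF,\epsilon_{K/F},k_0/2+1)$ from the interpolation formula, and extract square roots using Assumption~\ref{ass:sFisasquare}. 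One small caveat: in the paper, the actual proportionality constant in the identity of squares is not bare $S_\cF$ but $\frac{D_F^{k_0}u_K^2 T(16\pi)^2}{D_K^{k_0}u_F^2 16\pi^4}\,S_\cF$ (equation just before the end of the proof), and one must separately observe that the auxiliary factor is itself a perfect square ($T$ being a ratio of Petersson norms of a test vector and its translate, $k_0$ even, etc.) in order to reduce the issue to the squareness of $S_\cF$ alone. You gesture at the bookkeeping but assert the ratio ``equals exactly $S_\cF$''; that would need to be corrected to ``equals $S_\cF$ times a manifest square,'' which is the actual output of the computation.
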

\begin{proof}
By Corollary~\ref{cor:factorizationofpadicLfns} above, we have
\[ (D_{K})^{\lambda_{\kappa}/2}L_{p}(\mathbfcal{F}/K, \psi_{K},\lambda_{\kappa}) = \eta.L_{p}(\mathbfcal{F}, \chi_{1},\lambda_{\kappa}).L_{p}(\mathbfcal{F}, \chi_{2}, \lambda_{\kappa}) \]
Further, by Theorem~\ref{thm:vanishingofp-adicLfn1} and (\ref{eqn:orderofvanishingoftwovariablepadicLfn}) (See also Theorem~\ref{thm:bianchiinterpolation}), we have
\begin{equation}
\frac{d^{2}}{d\lambda_{\kappa}^{2}}\left[L_{p}(\mathbfcal{F}/K, \lambda_{\kappa})\right]_{\lambda_\kappa = \lambda_{k_{0}}} = \frac{\eta}{D_{K}^{k_{0}/2}}\frac{d^{2}}{d\lambda_{\kappa}^{2}}\left[L_{p}(\mathbfcal{F}, \lambda_{\kappa}))\right]_{\lambda_\kappa = \lambda_{k_{0}}} L_{p}(\mathbfcal{F}, \epsilon_{K/F}, \lambda_{k_{0}}) 
\end{equation}
By (\ref{eqn:bianchiinterpolation1}), we know that 
\begin{align*} 
L_{p}(\mathbfcal{F}, \epsilon_{K/F}, \lambda_{k_{0}}) & = \left( 1 - \epsilon_{K/F}(p)\frac{\mathrm{N}_{F/\QQ}(p)^{k_{0}/2}}{a_{\p}(\cF_{k_{0}})}\right)\cdot L^{\mathrm{alg}}(\cF, \epsilon_{K/F}, k_{0}/2 + 1) \\
& = 2L^{\mathrm{alg}}(\cF, \epsilon_{K/F}, k_{0}/2 + 1).
\end{align*}
By Corollary~\ref{cor:padicGZformula4} \& Theorem~\ref{thm:HeegnercyclespadicAJ} above and Remark~\ref{rem:archimedeanconstants} in Appendix~\ref{appendix1}, we know that
\begin{equation} \label{eqn:squaresofAJmaps}
\left(\mathrm{log}\:\Phi^{\mathrm{AJ}}(D_{\mathbbm{1}})(\Phi_{\cF}^{\mathrm{har}})\right)^{2} =  \frac{D_{F}^{k_{0}}u_{K}^{2}T(16\pi)^{2}}{D_{K}^{k_{0}}u_{F}^{2}16\pi^{4}} S_{\cF} \cdot \left(\mathrm{log\;cl}_{f,L}(\mathcal{Y})(f^{\mathrm{rig}})\right)^{2}
\end{equation}
for $\mathcal{Y} \in \mathrm{CH}^{k_{0}/2+1}\left( \cM_{k_{0}} \otimes F \right) \subset \mathrm{CH}^{k_{0}/2+1}\left( \cM_{k_{0}} \otimes K \right)$. Note that $T = \left(\phi,\phi\right)/\left(\phi_{\pi},\phi_{\pi}\right)$ is a square since $\phi$ is a translate of $\phi_{\pi}$ (See Appendix~\ref{appendix1} for more details). The result now follows, under Assumption~\ref{ass:sFisasquare}, upon extracting square-roots on both sides of (\ref{eqn:squaresofAJmaps}).
\end{proof}
Recall that since $\cF$ is the base--change $f/F$, at the level of Galois representations, we have
\begin{equation} \label{eqn:galoisreps}
V_{p}(\cF) = V_{p}(f)|_{G_{F}} 
\end{equation}
Let $\mathbb{D}_{\cF} \defeq \mathbb{D}_{\mathrm{st}}(V_{p}(\cF))$ (resp. $\mathbb{D}_{f} \defeq \mathbb{D}_{\mathrm{st}}(V_{p}(f))$ be Fontaine's semistable Dieudonn\'{e} module attached to the Galois representation $V_{p}(\cF)$ (resp. $V_{p}(f)$). By (\ref{eqn:galoisreps}) above, we have an identification
\[ \mathbb{D}_{\cF} \cong \mathbb{D}_{f} \otimes_{\QQ_{p}} F_{\p} \]
Further by \cite[Theorem 4.5]{VW19}, we have an isomorphism of $(\varphi, N)$-modules over $F_{\p}$ with coefficients in $L$
\[ \bigoplus\limits_{\sigma} \mathbf{D}^{\sigma}_{\cF, L} = \mathbf{D}_{\cF} \overset{\varphi}{\cong} \mathbb{D}_{\cF} = \bigoplus\limits_{\sigma} \mathbb{D}_{\cF, L}^{\sigma} \]
where $\mathbb{D}_{\cF, L}^{\sigma} \defeq \mathbb{D}_{\cF} \otimes_{F_{\p} \otimes L, \sigma} L$, which induces an identification of the tangent spaces
\[ \frac{\mathbf{D}_{\cF, L}^{\sigma}}{\mathrm{Fil}^{\frac{k_{0}+2}{2}}(\mathbf{D}_{\cF, L}^{\sigma})} \cong \frac{\mathbb{D}_{\cF, L}^{\sigma}}{\mathrm{Fil}^{\frac{k_{0}+2}{2}}(\mathbb{D}_{\cF, L}^{\sigma})} \cong \frac{\mathbb{D}_{f, L}}{\mathrm{Fil}^{\frac{k_{0}+2}{2}}(\mathbb{D}_{f, L})} \]
for each $\sigma : F_{\p} \hookrightarrow L$. We fix an isomorphism 
\[ \mathbf{MS}_{\Gamma}(L)_{(\mathcal{F})}^{\vee} \overset{\alpha}{\cong} M_{k_{0}+2}(\Gamma', L)_{(f^{\mathrm{rig}})}^{\vee} \]
defined as follows 
\begin{align} \label{eqn:identificationofdifferenttangentspaces}
\alpha : \mathbf{MS}_{\Gamma}(L)_{(\mathcal{F})}^{\vee} \overset{(\mathrm{Pr}^{\sigma})^{-1}}{\cong} & \frac{\mathbf{D}_{\mathcal{F},L}^{\sigma}}{\mathrm{Fil}^{\frac{k_{0}+2}{2}}(\mathbf{D}^{\sigma}_{\mathcal{F},L})} \overset{\varphi}{\cong} \frac{\mathbb{D}^{\sigma}_{\mathcal{F},L}}{\mathrm{Fil}^{\frac{k_{0}+2}{2}}(\mathbb{D}^{\sigma}_{\mathcal{F},L})}\\
 \cong & \frac{\mathbb{D}_{f,L}}{\mathrm{Fil}^{\frac{k_{0}+2}{2}}(\mathbb{D}_{f,L})} \overset{\mathrm{exp}_{\mathrm{BK}}}{\cong} \mathrm{H}^{1}_{\mathrm{st}}(L, V_{p}(f)(k_{0}/2+1)) \nonumber \\
\overset{\mathrm{IS}}{\cong} & M_{k_{0}+2}(X, L)_{(f^{\mathrm{JL}})}^{\vee} \cong M_{k_{0}+2}(\Gamma', L)_{(f^{\mathrm{rig}})}^{\vee} \nonumber
\end{align}
for either choice of an embedding $\sigma:F_{\p} \hookrightarrow L$ (See Remark~\ref{rem:choiceofembedding}). In particular, we have a commutative diagram
\begin{equation} \label{eqn:commutativediagram2}
\begin{tikzcd}[row sep=large, column sep=large]
(\Delta_0 \otimes \textup{Div}(\uhp_{\pri}^{\textup{ur}}) \otimes V_{k_{0},k_{0}})_{\Gamma}  \arrow[r, "\Phi^{\mathrm{AJ}}"] \arrow[d, equal] & \frac{\mathbf{D}^{\sigma}_{\mathcal{F},L}}{\mathrm{Fil}^{\frac{k_{0}+2}{2}}(\mathbf{D}^{\sigma}_{\mathcal{F},L})}  \arrow[r, "\mathrm{exp}_{\mathrm{BK}}\circ \varphi"] \arrow[d, "\mathrm{Pr^{\sigma}}"] 
& \mathrm{H}^{1}_{\mathrm{st}}(L, V_{p}(f)(k_{0}/2+1)) \arrow[d, "\mathrm{IS}" ] \\
(\Delta_0 \otimes \textup{Div}(\uhp_{\pri}^{\textup{ur}}) \otimes V_{k_{0},k_{0}})_{\Gamma} \arrow[r, "\mathrm{log}\;\Phi^{\mathrm{AJ}}"] & \mathbf{MS}_{\Gamma}(L)_{(\mathcal{F})}^{\vee}  \arrow[r, "\alpha"]
& M_{k_{0}+2}(\Gamma', L)_{(f^{\mathrm{rig}})}^{\vee} 
\end{tikzcd}
\end{equation}
\begin{remark} \label{rem:isomorphismofphiNmodules}
Note that the isomorphism of \cite[Theorem 4.5]{VW19}, $\mathbf{D}_{\cF} \overset{\varphi}{\cong} \mathbb{D}_{\cF}$ is conditional on \cite[Conjecture 4.2]{VW19} in addition to the semistability of the local Galois representation $V_{p}(\cF)|_{G_{F_{\p}}}$. However, in the base--change scenario both these conditions are satisfied, making the isomorphism $\varphi$ unconditional. See \cite[Lemma 4.4]{VW19} for more details. 
\end{remark}
We can now prove our main theorem, Theorem~\ref{thm:maintheoremintroduction} from the Introduction, that sheds evidence towards the global rationality conjectures formulated in \cite[Section 6.2]{VW19} :-
\paragraph*{}
By Theorem~\ref{thm:comparingAJimages} above, we have
\begin{equation} \label{eqn:maintheoremeqn1}
\mathrm{log}\:\Phi^{\mathrm{AJ}}(\mathrm{D}_{\mathbbm{1}})(\Phi_{\cF}^{\mathrm{har}}) = \mathrm{log\;cl}_{f,L}(s_{\cF}\cdot\mathcal{Y})(f^\mathrm{rig}),
\end{equation}
and hence
\begin{equation} \label{eqn:maintheoremeqn2}
\alpha\left( \mathrm{log}\:\Phi^{\mathrm{AJ}}(\mathrm{D}_{\mathbbm{1}})\right) = \mathrm{log\;cl}_{f,L}(s_{\cF}\mathcal{Y}) = \mathrm{IS}\left( \mathrm{cl}_{f,L}(s_{\cF}\mathcal{Y}) \right) 
\end{equation}
where the last equality follows from (\ref{ISabeljacobi2}). The commutative diagram (\ref{eqn:commutativediagram2}) implies that
\[ \alpha\left(\mathrm{log}\:\Phi^{\mathrm{AJ}}(\mathrm{D}_{\mathbbm{1}})\right) = \alpha\left(\mathrm{Pr}^{\sigma}\left(\Phi^{\mathrm{AJ}}(\mathrm{D}_{\mathbbm{1}})\right)\right) = \mathrm{IS}\left(\mathrm{exp}_{\mathrm{BK}}\circ\varphi\left(\Phi^{\mathrm{AJ}}(\mathrm{D}_{\mathbbm{1}})\right)\right). \]
In particular,  
\[\mathrm{IS}\left(\mathrm{exp}_{\mathrm{BK}}\circ\varphi\left(\Phi^{\mathrm{AJ}}(\mathrm{D}_{\mathbbm{1}})\right)\right) = \mathrm{IS}\left(\mathrm{cl}_{f,L}(s_{\cF}\mathcal{Y})\right) \] 
from which we conclude that
\[ \mathrm{exp}_{\mathrm{BK}}\circ\varphi\left(\Phi^{\mathrm{AJ}}(\mathrm{D}_{\mathbbm{1}})\right) = \mathrm{cl}_{f,L}(s_{\cF}\mathcal{Y}) \in \mathrm{H}^{1}_{\mathrm{st}}(L, V_{p}(f)(k_{0}/2+1)). \]
Let $\mathcal{S}_{K} \in \mathrm{Sel}_{\mathrm{st}}(K, V_{p}(f)(k_{0}/2+1))$ denote the image of the global cycle $s_{\cF}\mathcal{Y}$ under
\[ \mathrm{cl}_{f, K}:\mathrm{CH}^{k_{0}/2+1}\left( \cM_{k_{0}} \otimes K \right) \rightarrow \mathrm{Sel}_{\mathrm{st}}(K, V_{p}(f)(k_{0}/2+1)). \] 
Then,
\[\mathrm{exp}_{\mathrm{BK}}\circ\varphi\left(\Phi^{\mathrm{AJ}}(\mathrm{D}_{\mathbbm{1}})\right) =\mathrm{res}_{\p}\left(\mathcal{S}_{K}\right) \in \mathrm{H}^{1}_{\mathrm{st}}(L, V_{p}(f)(k_{0}/2+1)).\]
Theorem~\ref{thm:maintheoremintroduction} now follows since we have an identification of the Bloch--Kato Selmer groups owing to (\ref{eqn:galoisreps}) above
\[ \mathrm{Sel}_{\mathrm{st}}(K, V_{p}(\cF)(k_{0}/2+1)) = \mathrm{Sel}_{\mathrm{st}}(K, V_{p}(f)(k_{0}/2+1)). \]
\begin{remark} \label{rem:heckeaction}
Let $\mathbb{T}_{(\cF)}$ denote the $\cF$-isotypic component of the usual Hecke algebra $\mathbb{T}$ acting on $S_{\underline{k_{0}}+2}(U_{0}(\cN))^{\mathrm{new}}$. Then, via the isomorphism $\mathbb{T}_{(\cF)} \cong \QQ(\cF)$, we may regard $s_{\cF} \in \QQ(\cF)^{\times}$ as a Hecke operator in $\mathbb{T}_{(\cF)}$ acting on the Chow groups. 
\end{remark}
\section{Concluding Remarks} \label{sec:conclusion}
\begin{itemize}
\item[(1)] Conjectures of Calegari--Mazur (\cite[Conjecture 1.3]{CM09}) and Barrera Salazar--Williams (\cite[Conjecture 5.13]{BW20a}) predict that the only cuspidal $p$-adic families of Bianchi eigenforms come from (twisted) base--change or CM families over $\QQ$. Since the ideas explored in this article heavily rely on Hida/Coleman families of Bianchi eigenforms, the crucial assumption that the Bianchi eigenform $\cF$ is the base--change to $F$ of a classical cuspidal eigenform $f$ is indispensable. In fact, the reader will realize that no genuine Stark--Heegner cycles are constructed in this article. Similar to \cite{BD09} and \cite{Sev12}, in scenarios where the theory of Heegner cycles overlaps with that of their Stark--Heegner counterparts such as base--change,  we show that the Stark--Heegner cycles can be expressed in terms of Heegner cycles. 
\item[(2)] It would be evident to the reader that we have restricted ourselves to the \emph{analytic rank one} setting in this article. In a forthcoming work with Lennart Gehrmann, we will consider \emph{Plectic Stark--Heegner cycles} (See \cite{FG21} for example) that would account for higher orders of vanishing of the $L$-series $L(\cF/K, s)$ at the central critical point.
\item[(3)] It would be interesting to give some computational evidence when the Bianchi eigenform $\cF$ corresponds to the quadratic base--change of an elliptic curve $E_{/F}$ as in \S\ref{subsec:literature}. We hope to get back to this in the future. 
\end{itemize}
\appendix
\section{A Central $L$-value formula} \label{appendix1}
The goal of this appendix is to prove Theorem~\ref{thm:popageneralization}. In particular, we have to show that, for each classical weight $\lambda_{k} \in U$, 
\[
\mathbb{L}(\cF_{k}^{\#}, \psi_{K}) = L^{\mathrm{alg}}(\cF_{k}^{\#}/K, \psi_{K}, k/2 + 1) \in E_{k}(\psi_{K})
\]
where 
\begin{equation} \label{eqn:popaperiod} \mathbb{L}(\cF_{k}^{\#}, \psi_{K}) = \Bigg( \sum\limits_{\sigma \in \mathrm{Gal}(H_{K}/K)} \psi_{K}^{-1}(\sigma)\phi_{k}^{\#}\lbrace \tau - \gamma_{\sigma\Psi}\tau \rbrace \Big( (P_{\sigma\Psi}(x,y))^{k/2} \Big) \Bigg)^{2}.  
\end{equation}
\subsection{Classical formulation} \label{subsec:classicalformulation}
Recall that $\psi_{K} : \mathrm{Gal}(H_{K}/K) \rightarrow \mathbb{C}^{\times}$ is an unramified character and that 
\[\phi_{k}^{\#} \in \tupH^{0}(\Gamma_{0}(\cM),\Delta(V_{k,k}(E_{k})^{\vee}))\]
is the Bianchi modular symbol attached to the normalised newform $\cF_{k}^{\#} \in S_{\underline{k}+2}(U_{0}(\cM))^{\new}.$ Note that we have a natural injective map
\[  \tupH^{0}(\Gamma_{0}(\cM),\Delta(V_{k,k}(E_{k})^{\vee})) \hookrightarrow \tupH^{1}(\Gamma_{0}(\cM), V_{k,k}(E_{k})^{\vee}) \] 
and hence we may consider $\phi_{k}^{\#}$ as a cohomology class in $\tupH^{1}(\Gamma_{0}(\cM), V_{k,k}(E_{k})^{\vee})$. 

For an optimal embedding $\Psi \in \mathrm{Emb}(\cO_{K},\cR)$, let $P_{\Psi}(x,y) \in (V_{2,2})^{\Gamma_{\Psi}}$ be as in Section~\ref{subsec:starkheegnercycles}.  By Dirichlet's Unit theorem, we have that $\cO_{K}^{\times}/\lbrace\mathrm{torsion}\rbrace \cong \ZZ. $
In particular,  $\tupH_{1}(\Gamma_{\Psi},\ZZ) \cong \ZZ$ where $\Gamma_{\Psi}$ is the cyclic group generated by $\Psi(u)$ for $u$ a fundamental unit of $\cO_{K}^{\times}$. We fix a generator $\eta \in \tupH_{1}(\Gamma_{\Psi},\ZZ)$.  Consider the cap product
\[ C_{\Psi} := \eta \cap (P_{\Psi}(x,y))^{k/2} \in \tupH_{1}(\Gamma_{\Psi},\ZZ) \times \tupH^{0}(\Gamma_{\Psi}, V_{k,k}) \cong \tupH_{1}(\Gamma_{\Psi},V_{k,k})\]
Note that $\Gamma_{\Psi} \subseteq \Gamma_{0}(\cM)$. We set \[C_{[\Psi]} \defeq \mathrm{corres}^{\Gamma_{0}(\cM)}_{\Gamma_{\Psi}}[C_{\Psi}] \in \tupH_{1}(\Gamma_{0}(\cM), V_{k,k}).\] For $\psi_{K} : \Gal(H_{K}/K) \rightarrow \mathbb{C}^{\times}$ as above, we define the $\psi_{K}$-twisted cycle
\[ C^{\psi_{K}}_{[\Psi]} \defeq \sum\limits_{\sigma \in \mathrm{Cl}(K)} \psi_{K}^{-1}(\sigma)C_{\sigma\cdot[\Psi]} \in \tupH_{1}(\Gamma_{0}(\cM), V_{k,k}).  \]
Under the pairing given by cap product, 
\begin{equation} \label{eqn:capproductclassical} \langle\cdot,\cdot\rangle : \tupH_{1}(\Gamma_{0}(\cM), V_{k,k}) \times \tupH^{1}(\Gamma_{0}(\cM),V_{k,k}(E_{k})^{\vee}) \rightarrow E_{k}(\psi_{K}), 
\end{equation}
we have 
\[ \mathbb{L}(\cF_{k}^{\#}, \psi_{K}) = \langle C^{\psi_{K}}_{[\Psi]},\phi_{k}^{\#} \rangle^{2}.\] 
\subsection{Waldspurger formulas in higher cohomology} \label{subsec:waldspurger} In this section,  we relate the cap product $\langle C^{\psi_{K}}_{[\Psi]},\phi_{k}^{\#} \rangle$ with the Waldspurger period integral $\cP(\psi_{K},\phi_{k}^{\#})$ considered by Santiago Molina in \cite[Theorem 4.6]{Mol22}. Since the formulation of \cite{Mol22} is of an adelic nature, we introduce some notation.  Let $B \defeq M_{2}(F)$ be the split quaternion algebra over $F$. We may view the newform $\cF_{k}^{\#}$ as an automorphic newform of $B^{\times}$ and let $\pi_{k}$ be the automorphic representation it generates.  Note that $\pi_{k}$ has trivial central character. Let $G$ (resp. $T$) be the algebraic group associated to $B^{\times}/F^{\times}$ (resp. to $K^{\times}/F^{\times}$). Via the embedding $\Psi$, we have an inclusion of algebraic groups $T \subseteq G$. 
\subsubsection{Fundamental classes associated to tori} \label{subsubsec:fundamentalclasses} We briefly recall the construction of fundamental classes in homology following the exposition in \cite[Section 2.1]{Mol22}. Note that by Dirichlet's Unit theorem, the $\ZZ$-rank of $T(\cO_{F})$ is $1$ and we fix a generator 
\[ \xi \in \tupH_{1}(T(\cO_{F}),\ZZ) \cong \ZZ \]
Let us fix the compact subgroup $U \defeq T(\cO_{F} \otimes \hat{\ZZ}) \subset T(\AA_{F}^{\infty})$.  Note that $T(\cO_{F}) = T(F) \cap U$. Further, let us fix a fundamental domain $\widetilde{\cF} \subset T(\AA_{F}^{\infty})$ for the action of $T(F)/T(\cO_{F})$ on $T(\AA_{F}^{\infty})/U$. Then the set of continuous functions $C(\widetilde{\cF},\ZZ)$ has an action of $T(\cO_{F})$ (Note that $\widetilde{\cF}$ is $U$-invariant).  In particular, the characteristic function $\mathbbm{1}_{\widetilde{\cF}} \in \tupH^{0}(T(\cO_{F}),C(\widetilde{\cF},\ZZ))$ is $T(\cO_{F})$-invariant.
\begin{definition} \label{def:fundamentalclass}
The fundamental class associated to the tori $T \subset G$ is defined as the cap product
\[ \widetilde{\eta} \defeq \xi \cap \mathbbm{1}_{\widetilde{\cF}} \in \tupH_{1}(T(\cO_{F}),C(\widetilde{\cF},\ZZ)). \] 

\end{definition}
Further, by Shapiro's lemma
\begin{equation} \label{eqn:Uinvariant} 
\tupH_{1}(T(\cO_{F}),C(\widetilde{\cF},\ZZ)) \cong \tupH_{1}(T(F),C_{c}(T(\AA_{F}^{\infty})/U,\ZZ)) 
\end{equation}   
from which we conclude that the class $\widetilde{\eta}$ is $U$-invariant. Here $C_{c}(T(\AA_{F}^{\infty})/U,\ZZ))$ is the set of $\ZZ$-valued continuous functions on $T(\AA_{F}^{\infty})/U$ with compact support.
 
On the other hand, let $C^{\emptyset}(T(\AA_{F}),\ZZ)$ denote the set of locally constant functions on $T(\AA_{F})$ and $C_{c}^{\emptyset}(T(\AA_{F}),\ZZ)$ be the subset of functions in $C^{\emptyset}(T(\AA_{F}),\ZZ)$ that are compactly supported when restricted to $T(\AA_{F}^{\infty})$. By \cite[Lemma 2.4]{Mol22}, there is an isomorphism of $T(F)$-modules
\[ \mathrm{Ind}_{T(\cO_{F})}^{T(F)}(C(\widetilde{\cF},\ZZ)) \cong C_{c}^{\emptyset}(T(\AA_{F}),\ZZ)\]
and hence by Shapiro's Lemma, we have an isomorphism
\[ \tupH_{1}(T(F),C_{c}^{\emptyset}(T(\AA_{F}),\ZZ)) \cong \tupH_{1}(T(\cO_{F}),C(\widetilde{\cF},\ZZ))\]
This way we may regard the fundamental class $\widetilde{\eta}$ as a class in $\tupH_{1}(T(F),C_{c}^{\emptyset}(T(\AA_{F}),\ZZ))$.
\subsubsection{Cohomology of arithmetic groups and Eichler--Shimura morphism} \label{subsubsec:cohomologyofarithmeticgroups} Let $\cA^{\infty}(V_{k,k}^{\vee})$ be the set of functions $\phi : G(\AA_{F}^{\infty})/U_{0}(\cM) \longrightarrow V_{k,k}^{\vee}$ with a natural $G(F)$-action defined by 
\[\gamma.\phi(g) \defeq \phi(\gamma^{-1}g). \]
Since the class number of $F$ is one, the double quotient space $G(F)\backslash G(\AA_{F}^{\infty})/U_{0}(\cM)$ is trivial and $\Gamma_{0}(\cM) = G(F) \cap U_{0}(\cM)$. Shapiro's lemma then induces
\begin{equation} \label{eqn:cohomology}  \tupH^{1}(G(F), \cA^{\infty}(V_{k,k}^{\vee})) \cong \tupH^{1}(\Gamma_{0}(\cM), V_{k,k}^{\vee}).
\end{equation}  In particular, by (\ref{eqn:cohomology}), we may consider $\phi_{k}^{\#} \in \tupH^{1}(G(F), \cA^{\infty}(V_{k,k}^{\vee}))$.  

Let 
\[ C^{(k,k)}(T(\AA_{F}),\mathbb{C}) \defeq C^{0}(T(\AA_{F}),\mathbb{C}) \otimes V_{k,k} \]
be the space of locally polynomial functions in $T(\AA_{F})$ considered by Molina in \cite[Section 4.2]{Mol22} which has a natural $T(F)$-action.  Now, via the Artin reciprocity map we may view the unramified character $\psi_{K} : \Gal(H_{K}/K) \rightarrow \mathbb{C}^{\times}$ as a locally constant character $T(\AA_{F}) \rightarrow \mathbb{C}^{\times}$ that is $T(F)$-invariant. Similarly the element $P_{\Psi}(x,y)^{k/2} \in V_{k,k}$ is $T(F)$-invariant.  In particular, we have 
\[ \psi_{K} \otimes P_{\Psi}(x,y)^{k/2} \in \tupH^{0}(T(F), C^{(k,k)}(T(\AA_{F}),\mathbb{C})).\]

We recall the $T(F)$-equivariant pairing of \cite[Section 4.3]{Mol22} (See Remark 4.2 in particular) 
\begin{align*} \label{eqn:Santiagopairing}
\varphi : C^{(k,k)}(T(\AA_{F}),\mathbb{C}) \otimes \cA^{\infty}(V_{k,k}^{\vee}) & \longrightarrow C^{\emptyset}(T(\AA_{F}),\mathbb{C}) \\
\varphi((f \otimes P) \otimes \phi)(z,t) & \defeq  f(z,t)\cdot \phi(t)(P) 
\end{align*}
for all $z \in T(F_{\infty})$ and $t \in T(\AA_{F}^{\infty})$. The natural pairing $\langle \cdot, \cdot \rangle_{T} : C^{\emptyset}(T(\AA_{F}),\mathbb{C}) \times C_{c}^{\emptyset}(T(\AA_{F}),\mathbb{Z}) \rightarrow \mathbb{C}$ given by the Haar measure on $T(\AA_{F})$ induces a cap product 
\begin{equation} \label{eqn:capproductwaldspurger}
\langle \cdot,\cdot \rangle : \tupH^{1}(T(F), C^{\emptyset}(T(\AA_{F}),\mathbb{C})) \times \tupH_{1}(T(F),C_{c}^{\emptyset}(T(\AA_{F}),\mathbb{Z})) \longrightarrow{\mathbb{C}}
\end{equation}
We set \[\cP(\psi_{K},\phi_{k}^{\#}) \defeq \varphi((\psi_{K} \otimes P_{\psi}(x,y)) \otimes \phi_{k}^{\#}) \cap \widetilde{\eta}\]
under the cap product of (\ref{eqn:capproductwaldspurger}).
\begin{proposition} \label{prop:comparingfundamentalclasses}
With notation as above, we have an equality of cap products, i.e.
\[ \langle C_{[\Psi]}^{\psi_{K}},\phi_{k}^{\#} \rangle = \cP(\psi_{K},\phi_{k}^{\#}) \]
\end{proposition}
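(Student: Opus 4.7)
The plan is to unfold the adelic pairing $\cP(\psi_{K},\phi_{k}^{\#})$ via Shapiro's lemma and the decomposition of the idele class group of $T$, reducing it to a finite sum of classical cap products indexed by $\mathrm{Gal}(H_{K}/K)$, and then to match this expression with $\langle C_{[\Psi]}^{\psi_{K}},\phi_{k}^{\#}\rangle$ using Frobenius reciprocity for corestriction.

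First I would analyze the fundamental class $\widetilde{\eta}$. Since $\psi_{K}$ is unramified, we may take $U = T(\cO_{F}\otimes\widehat{\mathbb{Z}})$ and by strong approximation for $T$ we have
\[ T(F)\backslash T(\mathbb{A}_{F}^{\infty})/U \;\cong\; \mathrm{Pic}(\cO_{K}) \;\cong\; \mathrm{Gal}(H_{K}/K). \]
Choose representatives $\{t_{\sigma}\}_{\sigma\in\mathrm{Gal}(H_{K}/K)}$ that, via the torsor structure in Proposition~\ref{prop:picardgrouptorsor}, conjugate $\Psi$ to oriented optimal embeddings $\sigma\Psi = t_{\sigma}\Psi t_{\sigma}^{-1}$ representing each orbit in $\Gamma/\mathrm{Emb}^{\mathfrak{o}}(\cO,\cR)$. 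Then the fundamental domain may be written as $\widetilde{\mathcal{F}} = \bigsqcup_{\sigma} t_{\sigma} U$, so that $\mathbbm{1}_{\widetilde{\mathcal{F}}} = \sum_{\sigma}\mathbbm{1}_{t_{\sigma}U}$. Under the Shapiro isomorphism (\ref{eqn:Uinvariant}), the class $\widetilde{\eta} = \xi\cap \mathbbm{1}_{\widetilde{\mathcal{F}}}$ becomes the sum over $\sigma$ of the classes $\xi\otimes \mathbbm{1}_{t_{\sigma}U}$; since $\xi$ is a generator of $\tupH_{1}(T(\cO_{F}),\mathbb{Z}) \cong \mathbb{Z}$, its image under the embedding $\sigma\Psi\colon T(\cO_{F})\hookrightarrow \Gamma_{\sigma\Psi}\subset \Gamma_{0}(\cM)$ is precisely the classical generator $\eta\in \tupH_{1}(\Gamma_{\sigma\Psi},\mathbb{Z})$ used to build $C_{\sigma\Psi}$.

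Next I would evaluate the pairing piece by piece. The function $\varphi((\psi_{K}\otimes P_{\Psi}^{k/2})\otimes\phi_{k}^{\#})$ sends $(z,t)$ to $\psi_{K}(zt)\cdot P_{\Psi}(x,y)^{k/2}\cdot\phi_{k}^{\#}(t)$, and by (\ref{eqn:cohomology}) the value of $\phi_{k}^{\#}(t)$ is computed by restricting $\phi_{k}^{\#}$ to the stabilizer $t\Gamma_{0}(\cM)t^{-1}\cap\Gamma_{0}(\cM)$. On the coset indexed by $\sigma$, conjugation by $t_{\sigma}$ transforms the triple $(\tau_{\Psi},P_{\Psi},\gamma_{\Psi})$ into $(\tau_{\sigma\Psi},P_{\sigma\Psi},\gamma_{\sigma\Psi})$ (compare~(\ref{eqn:conjugationaction})), and Artin reciprocity gives $\psi_{K}(t_{\sigma}) = \psi_{K}^{-1}(\sigma)$ with our sign convention. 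Thus the contribution of the $\sigma$-coset to the cap product of Section~\ref{subsubsec:cohomologyofarithmeticgroups} is
\[ \psi_{K}^{-1}(\sigma)\cdot \phi_{k}^{\#}\{\tau-\gamma_{\sigma\Psi}\tau\}\bigl(P_{\sigma\Psi}(x,y)^{k/2}\bigr) \;=\; \psi_{K}^{-1}(\sigma)\cdot \langle C_{\sigma\Psi},\, \phi_{k}^{\#}|_{\Gamma_{\sigma\Psi}}\rangle. \]
Summing over $\sigma$ and invoking Frobenius reciprocity $\langle \mathrm{corres}^{\Gamma_{0}(\cM)}_{\Gamma_{\sigma\Psi}}(C_{\sigma\Psi}),\phi_{k}^{\#}\rangle = \langle C_{\sigma\Psi},\phi_{k}^{\#}|_{\Gamma_{\sigma\Psi}}\rangle$ then assembles the total into $\sum_{\sigma}\psi_{K}^{-1}(\sigma)\langle C_{\sigma\cdot[\Psi]},\phi_{k}^{\#}\rangle = \langle C_{[\Psi]}^{\psi_{K}},\phi_{k}^{\#}\rangle$, as desired.

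The main obstacle will be the careful bookkeeping of the successive Shapiro isomorphisms (\ref{eqn:Uinvariant}) and (\ref{eqn:cohomology}) together with the cap-product adjunction, in particular verifying that the generator $\xi\in\tupH_{1}(T(\cO_{F}),\mathbb{Z})$ is transported, coset by coset, to the classical generator $\eta\in\tupH_{1}(\Gamma_{\sigma\Psi},\mathbb{Z})$ with compatible orientations. The hypothesis $U = T(\cO_{F}\otimes\widehat{\mathbb{Z}})$ and the orientation convention of Definition~\ref{def:orientedembeddings} guarantee that the chosen $t_{\sigma}$ normalize the Eichler level-$\cM$ structure, so that each $\sigma\Psi$ lies in $\mathrm{Emb}^{\mathfrak{o}}(\cO,\cR)$ and is correctly matched to an element of $\mathrm{Gal}(H_{K}/K)$ via Proposition~\ref{prop:picardgrouptorsor}. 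Once that dictionary is in place, the proposition reduces to the formal compatibility of the two cap-product pairings.
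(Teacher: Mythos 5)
Your argument is correct and it reaches the same identity by the same essential mechanism as the paper's proof — the Shapiro isomorphism coming from $\Gamma_\Psi = T(F)\cap U$, together with the $U$-invariance of $\widetilde{\eta}$ from (\ref{eqn:Uinvariant}) — but you carry it out at a finer granularity. The paper identifies $\eta$ and $\widetilde{\eta}$ at once inside $\tupH_1(T(F),C_c^\emptyset(T(\AA_F)/U,\ZZ))$ (up to normalization) and then asserts that the cap products (\ref{eqn:capproductclassical}) and (\ref{eqn:capproductwaldspurger}) agree, leaving implicit how the $\psi_K$-twisted sum over $\mathrm{Gal}(H_K/K)$ — which sits inside $C_{[\Psi]}^{\psi_K}$ on the classical side and inside the test function $\psi_K\otimes P_\Psi^{k/2}$ on the adelic side — is matched piece by piece. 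You instead decompose $T(F)\backslash T(\AA_F^\infty)/U$ into $\mathrm{Gal}(H_K/K)$-cosets via strong approximation, transport $\xi$ to the classical generator of $\tupH_1(\Gamma_{\sigma\Psi},\ZZ)$ coset by coset, evaluate the pairing term by term, and reassemble the classical side via Frobenius reciprocity for corestriction. This unwinding makes visible precisely the bookkeeping — orientation transport of $\xi$, the conversion $\psi_K(t_\sigma)=\psi_K^{-1}(\sigma)$ from Artin reciprocity, and the matching of triples under (\ref{eqn:conjugationaction}) — that the paper's more compact argument asks the reader to supply silently. Both routes are sound; yours is more explicit and easier to audit, provided (as you yourself flag) the $t_\sigma$ are chosen to preserve the orientation at $\cM$, so that each $\sigma\Psi$ lands in $\mathrm{Emb}^\mathfrak{o}(\cO,\cR)$ and Proposition~\ref{prop:picardgrouptorsor} applies.
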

\begin{proof}
Note that it suffices to show that the two fundamental homology classes viz. $\eta$ and $\widetilde{\eta}$ coincide.  Recall that $\eta \in \tupH_{1}(\Gamma_{\Psi},\ZZ).$ Since $\Gamma_{\Psi} = T(F) \cap U$, where $U = T(\cO_{F} \otimes \hat{\ZZ})$, we have
\[ C_{c}^{\emptyset}(T(\AA_{F})/U,\ZZ) = \mathrm{Ind}_{\Gamma_{\Psi}}^{T(F)}\ZZ. \]
Again, by Shapiro's Lemma, we may hence consider the class $\eta \in \tupH_{1}(T(F), C_{c}^{\emptyset}(T(\AA_{F})/U,\ZZ))$.  Now, since $\widetilde{\eta} \in \tupH_{1}(T(F), C_{c}^{\emptyset}(T(\AA_{F}),\ZZ))$ is $U$-invariant (See (\ref{eqn:Uinvariant}) above), we conclude that the two fundamental homology classes $\eta$ and $\widetilde{\eta}$ coincide upto renormalizing.  In particular, it follows that the two pairings defined via cap product in (\ref{eqn:capproductclassical}) and (\ref{eqn:capproductwaldspurger}) respectively are the same.
\end{proof}
\subsubsection{A Waldspurger type formula} \label{subsubsec:Waldspurger}
We can now prove Theorem~\ref{thm:popageneralization} in the main text which relates the cap products considered above to the central $L$-value of the quadratic base change of Bianchi newforms.
\begin{theorem} \label{thm:popageneralizationappendix} Let $\psi_{K}: \mathrm{Gal}(H_{K}/K) \rightarrow \mathbb{C}^{\times}$ be an unramified character as above. Then
\[
\mathbb{L}(\cF_{k}^{\#}, \psi_{K}) =  L^{\mathrm{alg}}(\cF_{k}^{\#}/K, \psi_{K}, k/2 + 1) \in E_{k}(\psi_{K})
\]
where \begin{equation} \label{eqn:appendixbasechangeperiods}
L^{\mathrm{alg}}(\cF_{k}^{\#}/K, \psi_{K}, k/2 + 1) \defeq \frac{Tu_{K}^{2}}{(\Omega_{k}^{\#})^{2}} \times \frac{\prod_{\nu \in \Sigma_{\infty}^{F}} C'_{\nu}(K,\pi_{k},\psi_{K})}{\sqrt{\mathrm{N}_{F/\QQ}(\cD_{K/F})}}\cdot\frac{((k/2)!)^{4}}{(2\pi)^{2k+4}}L(\cF_{k}^{\#}/K, \psi_{K}, k/2 + 1) 
\end{equation}
where $u_{K} \defeq \left[ \mu(\cO_{K}):\mu(\cO_{F}) \right]$ and $T$ and $C'_{\nu}(K,\pi_{k},\psi_{K})$ are some explicit constants.
\end{theorem}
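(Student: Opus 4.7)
The plan is to reduce the identity to Santiago Molina's Waldspurger formula in higher cohomology (\cite[Theorem 4.6]{Mol22}) via the comparison between the classical cap product and the adelic period already established in Proposition~\ref{prop:comparingfundamentalclasses}. Concretely, I would first observe that by the construction of $\mathbb{L}(\cF_k^{\#},\psi_K)$ in \S\ref{subsec:classicalformulation}, one has
\[
\mathbb{L}(\cF_k^{\#},\psi_K) \;=\; \langle C^{\psi_K}_{[\Psi]},\phi_k^{\#}\rangle^{2},
\]
and then invoke Proposition~\ref{prop:comparingfundamentalclasses} to rewrite this as $\mathcal{P}(\psi_K,\phi_k^{\#})^{2}$, the square of Molina's toric period attached to the cohomological eigenclass associated to $\cF_k^{\#}$ twisted by $\psi_K\otimes P_\Psi(x,y)^{k/2}$.

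Next, I would apply Molina's higher-cohomological Waldspurger formula: it asserts that $\mathcal{P}(\psi_K,\phi_k^{\#})^{2}$ equals a product of the central $L$-value $L(\pi_K\otimes\psi_K,1/2) = L(\cF_k^{\#}/K,\psi_K,k/2+1)$ (after appropriate normalization of $s$) with an explicit quotient of Petersson-type inner products and a product of local orbital integrals/matrix-coefficient integrals at the places where $\pi_k$ or $\psi_K$ is ramified. This is the step where the archimedean factors $\prod_{\nu\in\Sigma_\infty^F}C'_\nu(K,\pi_k,\psi_K)$, the discriminant normalization $\sqrt{\mathrm{N}_{F/\QQ}(\cD_{K/F})}$, and the constants $T$ and $u_K^2$ appear; they arise from the choice of Haar measures on $T(\AA_F)$, the comparison of the fundamental class $\widetilde{\eta}$ with its normalized counterpart (accounting for $[\mu(\cO_K):\mu(\cO_F)]$), and from the ratio $\langle\phi,\phi\rangle/\langle\phi_\pi,\phi_\pi\rangle$ between the test vector used in Molina's formula and the new vector $\phi_\pi$ used to define $\Omega_k^{\#}$.

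Then I would match the archimedean local factors with the explicit form of Waldspurger given in \cite[Theorem 4.2]{MW09} or \cite[Theorem 1.8]{CST14}. At the real/complex places of $F$ (which in our setting are both complex), these references give a closed formula for the local period integral against a lowest-weight vector in terms of $((k/2)!)^{4}/(2\pi)^{2k+4}$, which, when combined with the factor $(2\pi i)^{-2k}$ inherent in passing from the completed $L$-function to $L(\cF_k^{\#}/K,\psi_K,k/2+1)$, produces precisely the shape of $L^{\mathrm{alg}}(\cF_k^{\#}/K,\psi_K,k/2+1)$ in \eqref{eqn:appendixbasechangeperiods}. At the finite places dividing $\cN\cD_{K/F}$, one uses the split hypothesis $\mathfrak{l}\mid\cM$ splits in $K$ (part of (\textbf{SH-Hyp})) together with the fact that $\psi_K$ is unramified to ensure that each local factor contributes trivially or is absorbed into the normalizing constants $T$ and $C'_\nu$, so that no extra ramified local $L$-factors survive.

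The main technical obstacle I expect is reconciling the normalizations: Molina's formula is phrased with a specific choice of test vector and Haar measure, whereas the Bianchi newform $\cF_k^{\#}$ comes with the Shimura-style period $\Omega_k^{\#}$ coming from the Eichler--Shimura isomorphism. Showing that their ratio equals $u_K^2 T/(\Omega_k^{\#})^2$ requires carefully identifying the generator $\xi$ of $\tupH_1(T(\cO_F),\ZZ)$ with the unit group $\cO_K^\times$ modulo torsion (producing the factor $u_K$, squared because we are squaring the period) and checking that the change from the automorphic test vector $\phi_\pi$ to the Hecke-new cohomological class $\phi_k^{\#}$ introduces the scalar $T=\langle\phi,\phi\rangle/\langle\phi_\pi,\phi_\pi\rangle$. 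Once these normalization bookkeeping steps are in place, extracting the square root of the Waldspurger identity (which is what makes $\mathbb{L}$, rather than its square root, directly comparable to the $L$-value times the explicit archimedean constants) completes the argument.
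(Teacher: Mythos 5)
Your approach follows the paper's argument essentially verbatim: reduce $\mathbb{L}(\cF_k^{\#},\psi_K) = \langle C^{\psi_K}_{[\Psi]},\phi_k^{\#}\rangle^2 = \cP(\psi_K,\phi_k^{\#})^2$ via Proposition~\ref{prop:comparingfundamentalclasses}, apply Molina's higher-cohomological Waldspurger formula, and then pass to the explicit Martin--Whitehouse version to see that the sets $S(\psi_K)$ and $S_2(\pi_k,K)$ are empty under \textbf{(SH-Hyp)} and that the remaining factors assemble into $T$, $u_K^2/(\Omega_k^{\#})^2$, $\sqrt{\mathrm{N}_{F/\QQ}(\cD_{K/F})}$, and the archimedean constants, after converting $L(1/2,\pi_k\times\pi_{\psi_K})$ into $L(\cF_k^{\#}/K,\psi_K,k/2+1)$. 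One small caution on the final sentence: no square root is extracted at any point --- both $\mathbb{L}$ and Molina's toric period formula are intrinsically identities of squares, so the two sides match directly, and phrasing the last step as ``extracting the square root'' misdescribes what actually closes the argument.
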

\begin{proof}
The main ingredient in the proof is Molina's Waldspurger formula in higher cohomology. More precisely, it is shown in \cite[Theorem 4.6]{Mol22} that
\[ \cP(\psi_{K},\phi_{k}^{\#})^{2} = \frac{u_{K}^{2}}{(\Omega_{k}^{\#})^{2}} \cdot L(1/2, \pi_{k} \times \pi_{\psi_{K}})\cdot\prod_{\nu \nmid \infty} \alpha_{\pi_{k,\nu},\psi_{K,\nu}}(\phi_{k}^{\#}) \]
where $\pi_{k}$ is the automorphic representation of $G(\AA_{F})$ that the newform $\cF_{k}^{\#} \in S_{\underline{k}+2}(U_{0}(\cM))$ generates, $\Omega_{k}^{\#} \in \mathbb{C}^{\times}$ is the complex period of (\ref{eqn:period}) and $\alpha_{\pi_{k,\nu},\psi_{K,\nu}}(\phi_{k}^{\#})$ are explicit local factors (almost all one). Since $\phi_{k}^{\#}$ is the modular symbol associated to the newform $\cF_{k}^{\#}$, we may use the explicit Waldspurger formula of \cite{MW09} given in terms of period integrals of Gross--Prasad test vectors $\phi \in \pi_{k}$. In particular, by \cite[Theorem 4.2]{MW09}, we get that
\begin{align*}
\cP(\psi_{K},\phi_{k}^{\#})^{2} & =  \frac{u_{K}^{2}}{(\Omega_{k}^{\#})^{2}} \times \lvert \int \phi(t)\psi_{K}^{-1}(t)dt \rvert^{2} \\
& = \frac{u_{K}^{2}}{(\Omega_{k}^{\#})^{2}} \times \frac{L^{S_{2}(\pi_{k},K)}(1/2,\pi_{k} \times \pi_{\psi_{K}})\left(\phi,\phi\right)}{\left(\phi_{\pi},\phi_{\pi}\right)} \times \frac{L_{S(\psi_{K})}(1,\epsilon_{K/F})^{2}}{\sqrt{\mathrm{N}_{F/\QQ}(\cD_{K/F})}} \\ 
& \times \frac{L_{S_{1}(\pi_{k},K)}(1,\epsilon_{K/F})}{L_{S_{1}(\pi_{k},K)}(1,1_{F})} \times \prod\limits_{\nu \in S_{2}(\pi_{k},K)} C'(\pi_{k,\nu}) \times \prod\limits_{\nu \in \Sigma_{\infty}^{F}} C'_{\nu}(K,\pi_{k},\psi_{K}).
\end{align*}
where
\begin{itemize}
\item $\phi_{\pi}$ is a new vector for $\pi_{k}$,
\item $S_{1}(\pi,K) = \lbrace\mbox{places of }F\mbox{ where }\pi_{k} \mbox{ ramifies but } K \mbox{ doesn't.}\rbrace$
\item $S_{2}(\pi,K) = \lbrace\mbox{places of }F\mbox{ where both }\pi_{k} \mbox{ and } K \mbox{ ramify.}\rbrace$  
\item $S(\psi_{K}) =  \lbrace\mbox{places of }F\mbox{ above which } \psi_{K} \mbox{ ramifies.}\rbrace$
\item $C'(\pi_{k,\nu})$ and $C'_{\nu}(K,\pi_{k},\psi_{K})$ are certain non-archimedean and archimedean constants defined in Section 4.2.1 and 4.2.2 of \cite{MW09} respectively.
\end{itemize} 
In our setting, note that $S(\psi_{K})$ and $S_{2}(\pi, K)$ are both empty while $S_{1}(\pi,K) = \lbrace \mathfrak{l} \mid \cM \rbrace$. In particular, all places in $S_{1}(\pi,K)$ split in $K$ under \textbf{(SH-Hyp)}. Further, the Gross--Prasad test vector $\phi$ can be chosen to be a translate of the new vector $\phi_{\pi}$ so that $\left( \phi, \phi \right) = T\left(\phi_{\pi},\phi_{\pi}\right)$ for some scalar $T$. We can simplify the above equation as
\begin{equation} \label{eqn:waldspurgerfinal1}
\mathbb{L}(\cF_{k}^{\#}, \psi_{K}) = \frac{Tu_{K}^{2}}{(\Omega_{k}^{\#})^{2}} \times \frac{\prod_{\nu \in \Sigma_{\infty}^{F}} C'_{\nu}(K,\pi_{k},\psi_{K})}{\sqrt{\mathrm{N}_{F/\QQ}(\cD_{K/F})}} \times L(1/2,\pi_{k} \times \pi_{\psi_{K}}). 
\end{equation}
Using the relation
\[ L(s,\pi_{k} \times \pi_{\psi_{K}}) = \left(\frac{\Gamma(s + (k+1)/2)}{(2\pi)^{s+(k+1)/2}}\right)^{4} L(\cF_{k}^{\#}/K, \psi_{K},k/2+1), \]
we get that 
\begin{align*}
\mathbb{L}(\cF_{k}^{\#}, \psi_{K}) & = \frac{Tu_{K}^{2}}{(\Omega_{k}^{\#})^{2}} \times \frac{\prod_{\nu \in \Sigma_{\infty}^{F}} C'_{\nu}(K,\pi_{k},\psi_{K})}{\sqrt{\mathrm{N}_{F/\QQ}(\cD_{K/F})}}\cdot\frac{((k/2)!)^{4}}{(2\pi)^{2k+4}} \times L(\cF_{k}^{\#}/K, \psi_{K},k/2+1)  \\
& = L^{\mathrm{alg}}(\cF_{k}^{\#}/K, \psi_{K},k/2+1) \in E_{k}(\psi_{K}).
\end{align*}
\end{proof}
\begin{remark} \label{rem:archimedeanconstants}
When $\psi_{K,\nu}$ is trivial for each archimedean place $\nu$ of $F$, then \cite[Section 4.2.2]{MW09} calculates $C'_{\nu}(K,\pi_{k},\psi_{K}) = 16\pi$. In particular, for the trivial character, we have 
\begin{equation} \label{eqn:trivialcharacter}
L^{\mathrm{alg}}(\cF_{k}^{\#}/K, k/2+1) = \frac{Tu_{K}^{2}}{(\Omega_{k}^{\#})^{2}} \times \frac{(16\pi)^{2}}{\sqrt{\mathrm{N}_{F/\QQ}(\cD_{K/F})}}\cdot\frac{((k/2)!)^{4}}{(2\pi)^{2k+4}} \times L(\cF_{k}^{\#}/K, k/2+1)
\end{equation}
\end{remark}
\begin{remark} \label{rem:Santiagonormalization}
Note that $P_{\Psi}(x,y)^{k/2} \in V_{k,k}$ that appears in the definition of the period integral $\cP(\psi_{K},\phi_{k}^{\#})$ corresponds to the dual element $\mu_{\underline{0}} \in V_{k,k}^{\vee}$, for $\underline{0} = (0,0)$, in \cite{Mol22} via the isomorphism (cf. \cite[Section 3.1]{Mol22})
\[ V_{k,k}^{\vee} \cong V_{k,k},\quad\quad\mu \mapsto \mu((Xy - Yx)^{k}). \]
\end{remark}

\bibliographystyle{alpha}
\bibliography{references}
\end{document}